\newcommand\R{{\mathbf{R}}}
\newcommand\C{{\mathbf{C}}}
\newcommand\Z{{\mathbf{Z}}}
\newcommand\e{{\mathbf{e}}}
\renewcommand\H{{\mathbf{H}}}
\renewcommand\S{{\mathcal{S}}}
\newcommand\SC{{\mathcal{SC}}}
\newcommand\bigO{{\mathcal{O}}}
\newcommand\Sch{{\operatorname{Schwartz}}}
\newcommand\E{{\mathrm{E}}}
\newcommand\Hom{{\operatorname{Hom}}}
\newcommand\WM{{\operatorname{WM}}}
\newcommand\WMC{{\operatorname{WMC}}}
\newcommand\Rot{{\operatorname{Rot}}}
\newcommand\Frame{{\operatorname{Frame}}}
\newcommand\Energy{{\dot{\mathcal{H}^1}}}
\newcommand\loc{{\operatorname{loc}}}
\newcommand\const{{\operatorname{const}}}
\newcommand\eps{{\varepsilon}}
\newcommand\dist{{\operatorname{dist}}}
\theoremstyle{plain}
  \newtheorem{theorem}[subsection]{Theorem}
  \newtheorem{proposition}[subsection]{Proposition}
  \newtheorem{lemma}[subsection]{Lemma}
  \newtheorem{corollary}[subsection]{Corollary}
\theoremstyle{remark}
  \newtheorem{remark}[subsection]{Remark}
  \newtheorem{example}[subsection]{Example}
\theoremstyle{definition}
  \newtheorem{definition}[subsection]{Definition}
\begin{document}

\title[Global regularity of wave maps V]{Global regularity of wave maps V.  Large data local wellposedness and perturbation theory in the energy class}
\author{Terence Tao}
\address{Department of Mathematics, UCLA, Los Angeles CA 90095-1555}
\email{tao@math.ucla.edu}
\subjclass{35L70}

\vspace{-0.3in}
\begin{abstract}
Using the harmonic map heat flow and the function spaces of Tataru \cite{tataru:wave2} and the author \cite{tao:wavemap2}, we establish a large data local well-posedness result in the energy class for wave maps from two-dimensional Minkowski space $\R^{1+2}$ to hyperbolic spaces $\H^m$.  This is one of the five claims required in \cite{tao:heatwave} to prove global regularity for such wave maps.  
\end{abstract}

\maketitle

\section{Introduction}

\subsection{The energy space}

This paper is a technical component of a larger program \cite{tao:heatwave} to establish large data global regularity for the initial value problem for two-dimensional wave maps into hyperbolic spaces.  Specifically, we establish in this paper a large data local well-posedness result for this problem in the energy space $\Energy$ constructed in \cite{tao:heatwave2}, which is a key ingredient in the arguments of \cite{tao:heatwave}. 

We begin by recalling some key features of this energy space. Fix $m \geq 1$; we allow all implied constants to depend on $m$.  Let $\H = (\H^m,h) \equiv SO(m,1) / SO(m)$ be the $m$-dimensional \emph{hyperbolic space}, i.e. the simply-connected $m$-dimensional Riemannian manifold of constant negative sectional curvature $-1$.  Here $SO(m,1)$ and $SO(m)$ are the special orthogonal groups of Minkowski space $\R^{1+m}$ and Euclidean space $\R^m$ respectively.  We define \emph{classical data} to be a pair $\Phi = (\phi_0,\phi_1)$, where $\phi_0: \R^2 \to \H$ is a smooth map which differs from a constant $\phi_0(\infty)$ by a Schwartz function (where we embed $\H$ in $\R^{1+m}$ to define the Schwartz space), and $\phi_1: \R^2 \to T \H$ is a Schwartz function such that $\phi_1(x)$ lies in the tangent plane $T_{\phi_0(x)} \H$ of $\H$ at $\phi_0(x)$ for every $x \in \R^2$, and let $\S$ be the space of all classical data; this can be given the structure of a topological space by using the Schwartz topology.  With regards to wave maps, one should interpret $\phi_0$ and $\phi_1$ as being the initial position and initial velocity respectively of a (classical) wave map at some time.  We observe the Lorentz rotation symmetry
\begin{align}
\Rot_U: (\phi_0(x), \phi_1(x)) &\mapsto (U\phi_0(x), dU(\phi_0(x))(\phi_1(x))) \label{rotate-data}  
\end{align}
for any Lorentz rotation $U \in SO(m,1)$ that acts continuously on $\S$.

Given any classical initial data $\Phi = (\phi_0,\phi_1)$, one can define the \emph{energy}
\begin{equation}\label{energy-def}
 \E(\Phi) := \frac{1}{2} \int_{\R^2} |\partial_x \phi_0|_{\phi_0^* h}^2 + |\phi_1|_{\phi_0^* h}^2\ dx
\end{equation}
where $\phi_0^* h$ is the pullback of the metric $h$ by $\phi_0$.  The energy functional $\E$ is a continuous map from $\S$ to $[0,+\infty)$.

In \cite{tao:heatwave2}, an energy space $\Energy$ was constructed with the following properties\footnote{In \cite{tao:heatwave2}, several additional properties of this space were also established, but they will not be needed in this paper and so we have omitted them.}:

\begin{theorem}[Energy space]\label{energy-claim}\cite{tao:heatwave2}  There exists a complete metric space $\Energy$ with a continuous map $\iota: \S \to \Energy$, that obeys the following properties:
\begin{itemize}
\item[(i)] $\iota(\S)$ is dense in $\Energy$.
\item[(ii)] $\iota$ is invariant under the action \eqref{rotate-data} of the Lorentz rotation group $SO(m,1)$, thus $\iota(\Rot_U \Phi) = \iota(\Phi)$ for all $\Phi \in \S$. Conversely, if $\iota(\Phi) = \iota(\Psi)$, then $\Psi = \Rot_U(\Phi)$ for some $U \in SO(m,1)$.
\item[(iii)] The energy functional $\E: \S \to [0,+\infty)$ extends to a continuous map $\E: \Energy \to [0,+\infty)$ (after quotienting out by Lorentz rotations).  
\item[(iv)] For any $\Phi \in \Energy$, we have $\E(\Phi) = d( \Phi, \const )^2$, where $\const := \iota(p,0)$ is the image of the constant map $(p,0)$ for any $p \in \H$.
\end{itemize}
\end{theorem}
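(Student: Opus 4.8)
The plan is to realize $\Energy$ as (the closure of) the space of \emph{caloric-gauge differentiated fields} of classical data, equipped with the natural $L^2$ metric. Given $\Phi=(\phi_0,\phi_1)\in\S$, I would first run the harmonic map heat flow $\partial_s\phi=\tau(\phi)$, $\phi(0,\cdot)=\phi_0$, where $\tau(\phi)$ is the tension field (the tangential part of the Euclidean Laplacian of $\phi$ regarded in $\R^{1+m}$). Because $\H^m$ is negatively curved and $\phi_0$ differs from a constant by a Schwartz function, this flow should exist globally on $s\in[0,\infty)$, remain of classical type at each $s$, and converge as $s\to\infty$ to a constant map $\phi(\infty)\in\H^m$, with rapid decay of all derivatives of $\phi$ and of $\tau(\phi(s))$ --- all of which is part of the heat-flow analysis underlying \cite{tao:heatwave}. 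I would then fix the \emph{caloric frame}: an orthonormal frame $e=(e_a)_{a=1}^m$ of $\phi^*T\H$ along the flow with vanishing heat-direction connection $\langle\partial_s e_a,e_b\rangle_h=0$, obtained by parallel transport in $s$ from an arbitrary orthonormal frame at $\phi(\infty)$, which pins down $e$ uniquely up to one constant rotation in $SO(m)$. Setting $\psi_i^a:=\langle\partial_i\phi_0,e_a(0)\rangle_h$ for $i=1,2$ and $\psi_0^a:=\langle\phi_1,e_a(0)\rangle_h$, and $\Psi(\Phi):=(\psi_0,\psi_1,\psi_2)$, I would define $\iota(\Phi)$ to be the class of $\Psi(\Phi)\in\bigl(L^2(\R^2;\R^m)\bigr)^3$ modulo the diagonal $SO(m)$-action $\psi\mapsto R\psi$, and let $\Energy$ be the closure of $\iota(\S)$ in $\bigl(L^2(\R^2;\R^m)\bigr)^3/SO(m)$ with metric $d(\iota(\Phi),\iota(\Phi')):=\tfrac1{\sqrt2}\inf_{R\in SO(m)}\|\Psi(\Phi)-R\,\Psi(\Phi')\|_{L^2_x}$. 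As $SO(m)$ is compact this is a genuine complete metric and $\iota(\S)$ is dense by fiat, giving (i).

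For (ii), the invariance half is a direct computation: the tension field is natural under isometries, so the heat flow of $\Rot_U\Phi$ is $U\circ\phi$ and $dU(e)$ is a caloric frame for it; comparing $dU(e(\infty))$ with whatever frame normalizes the caloric gauge of $\Rot_U\Phi$ (both orthonormal frames at $U\phi(\infty)$, hence differing by a constant $R_U\in SO(m)$) yields $\Psi(\Rot_U\Phi)=\Psi(\Phi)\,R_U$, so $\iota(\Rot_U\Phi)=\iota(\Phi)$ --- and this mismatch is exactly why the $SO(m)$ quotient is forced, there being no $SO(m,1)$-equivariant choice of reference frames on $\H^m$. For the converse I would argue that classical data is reconstructible from its differentiated fields: first recover the spatial connection coefficients $A_i^{ab}:=\langle D_ie_a,e_b\rangle$ at $s=0$ by integrating the $(s,i)$-component of the structure equations inward from $s=\infty$ (where $A_i$ vanishes) using $\langle\partial_s e_a,e_b\rangle=0$, expressing $A_i(0)$ through the heat evolution of $\Psi(\Phi)$; observe that $(\psi_i,A_i)$ at $s=0$ satisfies the Gauss--Codazzi/zero-torsion structure equations for a map into the curvature $-1$ space form; and conversely note that any solution of these equations on the simply connected domain $\R^2$ integrates, by solving a compatible overdetermined ODE system (developing a flat $SO(m,1)$-connection), to a classical map, unique up to the action of $SO(m,1)$, which acts simply transitively on the orthonormal frame bundle of $\H^m$, with $\phi_1$ recovered as $\psi_0^ae_a(0)$. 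Hence $\iota(\Phi)=\iota(\Phi')$ forces $\Phi'=\Rot_U\Phi$.

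Properties (iii) and (iv) should then be essentially formal. Pointwise one has $|\partial_x\phi_0|_{\phi_0^*h}^2=|\psi_1|^2+|\psi_2|^2$ and $|\phi_1|_{\phi_0^*h}^2=|\psi_0|^2$, so $\E(\Phi)=\tfrac12\|\Psi(\Phi)\|_{L^2_x}^2$; since $\const=\iota(p,0)$ has vanishing differentiated fields and the $L^2$ norm is rotation invariant, $d(\iota(\Phi),\const)^2=\tfrac12\|\Psi(\Phi)\|_{L^2_x}^2=\E(\Phi)$, which is (iv) on the dense set $\iota(\S)$; as $d(\cdot,\const)^2$ is continuous on $\Energy$ this extends to all of $\Energy$, giving (iv), and then $\E:=d(\cdot,\const)^2$ furnishes the continuous extension required in (iii).

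The real work, and the main obstacle, is showing that $\iota:\S\to\Energy$ is continuous for the Schwartz topology --- equivalently, that $\Phi\mapsto\Psi(\Phi)$ is continuous into $L^2_x$. This will need two quantitative stability facts, neither routine: continuous dependence of the harmonic map heat flow on its initial data together with decay estimates uniform over bounded sets of data, and continuous dependence of the caloric frame on the flow. The second is the crux, because the caloric frame is fixed by a normalization at the $s=\infty$ end and must be carried back to $s=0$: one must control quantities like $\int_0^\infty\|A_i(s)\|_{L^\infty_x}\,ds$ and their differences for two nearby flows, so that the $s=\infty$ normalization depends stably on $\Phi$. These are precisely the estimates supplied by the heat-flow and caloric-gauge machinery of the companion papers; with them in hand, continuity of $\iota$ --- hence the theorem --- follows, and the same decay bounds also underpin the regularity needed in the converse part of (ii), namely that the reconstructed data is again Schwartz.
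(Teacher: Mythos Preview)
Your construction is valid but differs from the paper's in the choice of metric. The paper (recalling \cite{tao:heatwave2} in Section~\ref{energydef-sec}) measures the energy distance through the heat-tension field $\psi_s$ over \emph{all} heat times,
\[
d_\Energy(\Psi,\Psi')^2 = \int_0^\infty \|\delta\psi_s(s)\|_{L^2_x}^2\,ds + \tfrac12\|\delta\psi_t(0)\|_{L^2_x}^2,
\]
rather than through the derivative fields $(\psi_0,\psi_1,\psi_2)$ at $s=0$ as you do. The two agree on the distance to $\const$ by the heat-flow energy identity $\int_0^\infty\|\psi_s\|_{L^2}^2\,ds=\tfrac12\|\psi_x(0)\|_{L^2}^2$ (integrate $\partial_s\tfrac12\|\psi_x\|^2=-\|\psi_s\|^2$ from $0$ to $\infty$), so both give (iv); for generic pairs the corresponding difference identity picks up cross terms in $\delta A_x$ and the two metrics are not equal, though one expects them to be comparable with energy-dependent constants. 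What the paper's choice buys is a built-in Littlewood--Paley resolution --- $\psi_s(s)$ is morally the frequency-$s^{-1/2}$ component of $\partial\phi$ --- and this is not cosmetic: the frequency-envelope machinery (Definition~\ref{freqenv}), the abstract parabolic theory of Section~\ref{parab-sec}, and the entire $S^1_\mu$ apparatus of Section~\ref{ap-sec} are all keyed to $\psi_s$, and the specific form \eqref{psipsi} of the metric is invoked directly when verifying Theorem~\ref{apriori-thm2}(iii),(iv). Your metric suffices for Theorem~\ref{energy-claim} in isolation but would have to be proved equivalent to the paper's before it could plug into the downstream analysis. One small clarification on your converse in (ii): the structure equations at $s=0$ under-determine $A_i$ once $m\ge 3$, so the reconstruction does not come from the developing-map algebra alone but from uniqueness of the caloric gauge --- i.e.\ uniqueness for the coupled $(\psi_x,A_x)$-system in $s$ with data $\psi_x(0)$ and boundary condition $A_x(\infty)=0$ --- which you correctly defer to the companion papers.
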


\begin{remark} From (i), (ii) one can interpret $\Energy$ as a metric completion of the quotient $SO(m,1) \backslash \S$ of $\S$ by the action of the Lorentz group.  The construction of this space $\Energy$ involves the harmonic map heat flow and will be reviewed in Section \ref{energy-sec}, as the details of this construction will be important in our arguments.
\end{remark}

\subsection{Wave maps}

Define a \emph{classical wave map} to be a pair $(\phi,I)$, where $I$ is a time interval, and $\phi: I \times \R^2 \to \H$ is a smooth map which differs from a constant $\phi(\infty) \in \H$ by a Schwartz function in space\footnote{We say that a function $\phi: I \times \R^2 \to \R$ is \emph{Schwartz} if it is smooth, and $\partial_t^j \partial_x^k \phi$ is rapidly decreasing in space for all $j,k \geq 0$, uniformly in time.  We let $\Sch(I)$ denote the space of all Schwartz functions, with the usual Frechet space topology.}, and which obeys the equation
\begin{equation}\label{cov}
 (\phi^*\nabla)^\alpha \partial_\alpha \phi = 0,
\end{equation}
where $\phi^*\nabla$ is the pullback of the Levi-Civita connection $\nabla$ on $\H$ by $\phi$.  Here and in the sequel we use Greek indices $\alpha, \beta =0,1,2$ to range over the coordinates of Minkowski space $\R^{1+2}$ (with the usual metric $-dt^2+dx_1^2+dx_2^2$), with the usual summation, raising, and lowering conventions (and setting $x_0 := t$).  Roman indices $i,j=1,2$ will be summed over the spatial coordinates only (with no attempt to raise or lower coordinates).

Observe that for any time $t \in I$, the data $\phi[t] := (\phi(t), \partial_t \phi(t))$ of a classical wave map $(\phi,I)$ lies in $\S$, and indeed $\phi$ can be viewed as a smooth curve $\phi: I \to \S$.  We refer to \cite{kman.barrett}, \cite{kman.selberg:survey}, \cite{shatah-struwe}, \cite{struwe.barrett}, \cite{tataru:survey}, \cite[Chapter 6]{tao:cbms}, \cite{rod}, \cite{krieger:survey} for surveys of the initial value problem for wave maps, which is of course the primary concern of this project.  As is well known, wave maps have a conserved energy $\E(\phi) = \E(\phi(t))$, which in this two spatial dimensional setting is also invariant under the natural scale invariance $\phi^{(\lambda)}(t,x) := \phi( \frac{t}{\lambda}, \frac{x}{\lambda} )$ for this problem.

The purpose of this paper is to establish the following large data local well-posedness result for wave maps in the energy class:

\begin{theorem}[Large data local-wellposedness in the energy space]\label{lwp-claim}  For every time $t_0 \in \R$ and every initial data $\Phi_0 \in \Energy$ there exists a \emph{maximal lifespan} $I \subset \R$, and a \emph{maximal Cauchy development} $\phi: t \mapsto \phi[t]$ from $I \to \Energy$, which obeys the following properties:
\begin{itemize}
\item[(i)] (Local existence) $I$ is an open interval\footnote{An \emph{interval} in this paper is a connected subset of $\R$ with non-empty interior.  It can be closed, open, or half-open, and it can be bounded or unbounded.} containing $t_0$.
\item[(ii)] (Strong solution) $\phi: I \to \Energy$ is continuous.
\item[(iii)] (Persistence of regularity) If $\Phi_0 = \iota( \tilde \Phi_0 )$ for some classical data $\tilde \Phi_0$, then there exists a classical wave map $(\tilde \phi,I)$ with initial data $\tilde \phi[t_0] = \tilde \Phi_0$ such that $\phi[t] = \iota(\tilde \phi[t])$ for all $t \in I$.
\item[(iv)] (Continuous dependence)  If $\Phi_{0,n}$ is a sequence of data in $\Energy$ converging to a limit $\Phi_{0,\infty}$, and $\phi_n: I_n \to \Energy$ and $\phi_\infty: I_\infty \to \Energy$ are the associated maximal Cauchy developments on the associated maximal lifespans, then for every compact subinterval $K$ of $I_\infty$, we have $K \subset I_n$ for all sufficiently large $n$, and $\phi_n$ converges uniformly to $\phi$ on $K$ in the $\Energy$ topology.
\item[(v)] (Maximality)  If $t_* \in \R$ is a finite endpoint of $I$, then $\phi(t)$ has no convergent subsequence in $\Energy$ as $t \to t_*$.
\end{itemize}
\end{theorem}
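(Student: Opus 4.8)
The plan is to reduce the large-data statement in the energy class to a \emph{quantitative} local existence, uniqueness, and perturbation theory for \emph{classical} wave maps, phrased in terms of the differentiated fields in the caloric gauge, and then transfer everything to a general datum $\Phi_0 \in \Energy$ using the density of $\iota(\S)$ in $\Energy$ from Theorem \ref{energy-claim}(i) together with the perturbation estimates. In particular, since $\E(\Phi) = d(\Phi,\const)^2$, the energy-space distance will be comparable to the energy-type norm of the caloric-gauge fields, so all the bounds I prove for classical solutions must be made continuous in the $\Energy$-topology in order to pass to the limit.

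First I would set up the dynamic (caloric) gauge following the construction of $\Energy$ in \cite{tao:heatwave2}: given a classical wave map $(\tilde\phi,I)$, extend it along the harmonic map heat flow to $\tilde\phi(t,x,s)$ on $I \times \R^2 \times [0,\infty)$, choose the caloric frame, and record the resulting differentiated fields $\psi_\alpha(t,x,s)$ and connection coefficients $A_\alpha(t,x,s)$. In this gauge the wave map equation \eqref{cov} becomes a coupled wave--heat system for $(\psi_\alpha,A_\alpha)$ whose $s=0$ slice encodes $\tilde\phi[t]$ and whose energy-space norm is (by the construction of $\Energy$) equivalent to $d(\iota(\tilde\phi[t]),\const)$. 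I would then introduce the solution spaces $S$ of Tataru \cite{tataru:wave2} and \cite{tao:wavemap2}, restricted to a time interval, in which the $\psi_\alpha(0)$ fields are to be measured, and verify that membership in $S$ on a subinterval forces the wave map to be classical there (so that persistence of regularity will come for free).

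The technical heart is a combined a priori estimate and perturbation (stability) lemma in the $S$-spaces: for data of energy at most $E$ lying $\Energy$-close to a fixed $\Phi_0$, one must produce an interval $I \ni t_0$ and a bound $\|\psi\|_{S(I)} \le F$ with $I$ and $F$ depending semicontinuously on $\Phi_0$, together with a Lipschitz bound controlling the difference of two (approximate) solutions by the difference of their data and of their nonlinear errors. The genuine difficulty is that the problem is energy-critical, so shrinking $|I|$ does not make the $S$-norm small. I would instead decompose $\Phi_0$ along a frequency envelope: the high-frequency tail has small energy and is handled by the small-energy global theory of the earlier papers in this series and of Tataru, while the low-frequency, essentially smooth remainder has $S$-norm over a short interval controlled by energy and frequency; one then closes a continuity/bootstrap argument using the null-form and multilinear estimates in the $S/N$ calculus and the parabolic smoothing of the heat flow to absorb the interaction terms and the dependence on $A_\alpha$. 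Making this critical-regularity iteration close on a data-dependent interval while keeping every constant continuous in the datum is where essentially all of the work lies, and is the main obstacle.

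Granting these core estimates, the assembly is comparatively routine. For classical data $\tilde\Phi_0$, the standard classical local theory produces a smooth maximal solution; the $S$-estimates identify it with a continuous $\Energy$-valued curve, yielding (iii) after matching lifespans by uniqueness. Density plus the perturbation lemma then upgrade this to arbitrary $\Phi_0 \in \Energy$: approximate $\Phi_0$ by $\iota(\tilde\Phi_{0,n})$, show the classical solutions are Cauchy in $S$ on compact subintervals, define $\phi$ as the limit, and let $I$ be the union of all intervals on which such a limit exists, giving (i), (ii), and, from the same Lipschitz bound, the continuous dependence (iv); uniqueness of the maximal development (so that it is well-defined) is again the perturbation lemma applied to two solutions with identical data. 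Finally, for maximality (v), if $\phi(t_n) \to \Phi_*$ in $\Energy$ along some $t_n \to t_*$ finite, then applying local existence at $\Phi_*$ and continuous dependence extends the solutions issuing from $\phi(t_n)$ to a fixed neighborhood of $t_*$; gluing with $\phi$ by uniqueness contradicts the maximality of $I$. Throughout, finite speed of propagation is used to localize in space when patching solutions and to reconcile the $\Energy$-topology, which is defined through the heat flow, with convergence of the caloric-gauge fields.
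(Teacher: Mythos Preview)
Your overall architecture is correct and matches the paper: reduce Theorem~\ref{lwp-claim} to an abstract package of \emph{a priori} estimates and a perturbation/stability lemma for classical wave maps in the caloric gauge (the paper's Theorem~\ref{apriori-thm} / Lemma~\ref{ltb}), then pass to general $\Phi_0\in\Energy$ by density and assemble (i)--(v) by gluing ``good'' intervals. Your treatment of (ii)--(v), including the maximality contradiction, is essentially what the paper does.

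The genuine gap is in the mechanism you propose for closing the large-data iteration. You suggest splitting $\Phi_0$ along a frequency envelope into a small-energy high-frequency tail (handled by small-energy theory) and a smooth low-frequency remainder (made small by shrinking $I$). The paper does \emph{not} do this, and for good reason. First, wave maps are geometric: even after passing to the caloric gauge, the connection $A_\alpha$ and the nonlinear forcing depend nonlinearly on $\psi$, so a linear frequency splitting of the data does not decouple the dynamics; the high--low interaction terms are still at critical scaling and are not covered by the small-energy theory applied to the tail alone. Second, you rely at the end on finite speed of propagation ``to reconcile the $\Energy$-topology'' with spatial localisation; the paper explicitly flags this as an unresolved technical obstacle (see the remark following Corollary~\ref{small-reg}), because the energy metric is defined via the heat flow and is not obviously compatible with spatial cutoffs.

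What the paper does instead is introduce an extra parameter $0<\mu\le 1$ into the function spaces themselves: the norms $S_{\mu,k}$ are built from two kinds of atoms, ``small'' atoms (size $\le\mu$ in $S_k$) and ``integrable'' atoms (bounded in $S_k$ but with $\|\partial_{t,x}\phi\|_{L^1_tL^\infty_x}\le\mu^5$). On a sufficiently short interval any classical solution is decomposable into such atoms, and the key gain is that the trilinear null-form estimate \eqref{trilinear-improv}, \eqref{trilinear-improv2} then acquires an extra factor of $\mu^{1-\eps}$ or $\mu^{2-\eps}$. This is what allows the stability estimate \eqref{apriori2a} and the bootstrap in Lemma~\ref{ltb} to close for \emph{any} energy, by taking $\mu$ small depending on $E,M$. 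Your proposal is missing this device (or an equivalent one), and without it the step ``close a continuity/bootstrap argument \ldots\ to absorb the interaction terms'' does not go through at the critical regularity for large data.
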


This result is one of five claims required in \cite{tao:heatwave} to establish global regularity for classical wave maps; of the other four claims, three\footnote{One of these claims, namely the non-existence of non-trivial travelling or self-similar wave maps, was only proven conditionally on Theorem \ref{lwp-claim}.  However we will not use that claim in this paper, so the arguments are not circular.} of them (including the construction of the energy space, Theorem \ref{energy-claim}) were proven in \cite{tao:heatwave2}; the remaining claim, concerning the existence of almost periodic maximal Cauchy developments in the case when global regularity fails, will be established in the sequels \cite{tao:heatwave4}, \cite{tao:heatwave5} to this paper.

Theorem \ref{lwp-claim} has the following corollary:

\begin{corollary}[Small data global-wellposedness in the energy space]\label{lwp-claim-small}  There exists an absolute constant $\eps_0 > 0$ (depending only on $m$) for which the following statements hold: given every time $t_0 \in \R$ and every initial data $\Phi_0 \in \Energy$ with $\E(\Phi_0) < \eps_0$ there exists a \emph{global Cauchy development} $\phi[t]$ from $\R$ to $\Energy$, which obeys the following properties:
\begin{itemize}
\item[(i)] (Strong solution) $\phi: \R \to \Energy$ is continuous.
\item[(ii)] (Persistence of regularity) If $\Phi_0 = \iota( \tilde \Phi_0 )$ for some classical data $\tilde \Phi_0$, then there exists a classical wave map $(\tilde \phi,\R)$ with initial data $\tilde \phi[t_0] = \tilde \Phi_0$ such that $\phi[t] = \iota(\tilde \phi[t])$ for all $t \in \R$.
\item[(iii)] (Continuous dependence)  If $\Phi_{0,n}$ is a sequence of data in $\Energy$ with $\E(\Phi_{0,n}) < \eps_0$ converging to a limit $\Phi_{0,\infty}$ with $\E(\Phi_{0,\infty}) < \eps_0$, and $\phi_n: I_n \to \Energy$ and $\phi_\infty: I_\infty \to \Energy$ are the associated global Cauchy developments, then $\phi_n$ converges locally uniformly to $\phi$ on compact intervals in the $\Energy$ topology.
\end{itemize}
\end{corollary}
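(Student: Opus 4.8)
The plan is to deduce Corollary \ref{lwp-claim-small} from Theorem \ref{lwp-claim} together with the small-energy \emph{a priori} spacetime estimate that underlies its proof (and which ultimately rests on the function space theory of \cite{tataru:wave2}, \cite{tao:wavemap2}). Concretely, I would take $\eps_0>0$ small enough that any maximal Cauchy development $\phi$ whose energy stays below $\eps_0$ obeys a uniform bound $\|\phi\|_{S(J)} \lesssim \E(\phi[t_0])^{1/2}$ on every compact subinterval $J$ of its lifespan, with implied constant independent of $J$; this is exactly the small-data input already needed inside the proof of Theorem \ref{lwp-claim}.

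First I would establish conservation of energy along the maximal Cauchy development: for every $\Phi_0\in\Energy$ the map $t\mapsto \E(\phi[t])$ is constant on the maximal lifespan $I$. Since $\iota(\S)$ is dense in $\Energy$ (Theorem \ref{energy-claim}(i)) and $\E$ is continuous on $\Energy$ (Theorem \ref{energy-claim}(iii)), it suffices to prove this for classical data: approximate $\Phi_0$ by $\Phi_{0,n}=\iota(\tilde\Phi_{0,n})$ with $\tilde\Phi_{0,n}\in\S$; by persistence of regularity (Theorem \ref{lwp-claim}(iii)) the development $\phi_n$ of $\Phi_{0,n}$ is the $\iota$-image of a classical wave map on its lifespan $I_n$, so the classical conservation law gives $\E(\phi_n[t])=\E(\Phi_{0,n})$ for all $t\in I_n$; and for any compact $K\subset I$ we have $K\subset I_n$ and $\phi_n\to\phi$ uniformly on $K$ once $n$ is large (Theorem \ref{lwp-claim}(iv)), so passing to the limit and using continuity of $\E$ yields $\E(\phi[t])=\E(\Phi_0)$ on $K$, hence on all of $I$. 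In particular, if $\E(\Phi_0)<\eps_0$ then $\E(\phi[t])<\eps_0$ throughout $I$, and (by Theorem \ref{energy-claim}(iv)) $\phi[t]$ stays on the fixed sphere $\{\Phi\in\Energy : d(\Phi,\const)^2=\E(\Phi_0)\}$.

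Next I would show that the lifespan is all of $\R$. Suppose instead that $T_+:=\sup I<\infty$. By the previous step and the choice of $\eps_0$, $\phi$ obeys a finite spacetime bound $\|\phi\|_{S([t_0,T_+))}<\infty$; this norm can be subdivided into finitely many subintervals on each of which it lies below the perturbation-theory threshold, and on the last such subinterval the small-data well-posedness and stability theory (part of the proof of Theorem \ref{lwp-claim}) forces the $\Energy$-valued curve $t\mapsto\phi[t]$ to be uniformly continuous near $T_+$, hence to have a limit in the complete space $\Energy$ as $t\to T_+$. This contradicts the maximality statement (Theorem \ref{lwp-claim}(v)); arguing symmetrically at the left endpoint gives $I=\R$. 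Properties (i), (ii) of the corollary are then immediate from (ii), (iii) of Theorem \ref{lwp-claim}, and continuous dependence (iii) follows from Theorem \ref{lwp-claim}(iv): all the developments $\phi_n$ and $\phi_\infty$ are now global, so for each compact $K\subset\R$ we trivially have $K\subset I_n=\R$ and $\phi_n\to\phi_\infty$ uniformly on $K$, i.e.\ locally uniformly on $\R$.

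The main obstacle is the no-blowup step. The soft information alone --- that the energy, equivalently the distance to $\const$, stays bounded along the flow --- does not yield precompactness of $\{\phi(t)\}$ in the infinite-dimensional metric space $\Energy$, so ruling out a finite endpoint genuinely requires the global-in-time small-energy spacetime estimate from the Tataru--Tao function space framework, together with the local extension/stability theory. Everything else --- energy conservation and the two passages to the limit in the continuous-dependence statements --- is soft, using only density of $\iota(\S)$, continuity of $\E$ on $\Energy$, and property (iv) of Theorem \ref{lwp-claim}.
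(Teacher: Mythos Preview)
Your approach is correct in outline but takes a genuinely different route from the paper. You work ``from the inside'': establish energy conservation by approximation, invoke a uniform small-energy $S$-norm bound on all compact subintervals, and then argue that this forces $\phi[t]$ to extend continuously to any putative finite endpoint $T_+$, contradicting Theorem \ref{lwp-claim}(v). The paper instead gives a purely soft contradiction argument that never reopens the function space machinery: if no $\eps_0$ works, there is a sequence $\Phi_n$ with $\E(\Phi_n)\to 0$ and non-global lifespans; by time translation and scaling one normalises so that $\dist(\partial I_n,0)\to 0$; but Theorem \ref{energy-claim}(iv) gives $\Phi_n\to\const$ in $\Energy$, and continuous dependence (Theorem \ref{lwp-claim}(iv)) applied at the constant solution forces every fixed compact interval eventually to lie in $I_n$, a contradiction. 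Thus the paper treats Theorem \ref{lwp-claim} as a black box and exploits only the symmetries of the problem, whereas your argument re-enters the $S^1_\mu$ theory.

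One point in your version that needs tightening is the step ``the small-data well-posedness and stability theory \ldots forces $t\mapsto\phi[t]$ to be uniformly continuous near $T_+$''. A uniform $S^1_\mu$ bound on $[t_0,T_+)$ does not by itself yield uniform continuity of $\phi$ in the $\Energy$ metric; what one actually does is pass to classical approximants, use the persistence-of-regularity estimate (Theorem \ref{apriori-thm}(v)) to bound their $\mathcal H^{1+\delta_0/2}_{\loc}$ norms uniformly on $[t_0,T_+)$, and then apply the subcritical local theory (Theorem \ref{Class}) to extend them a fixed amount past $T_+$, contradicting maximality via Corollary \ref{ltb-cor}. This is recoverable, but your phrasing elides it. Note also that the energy-conservation step, while correct, is not needed in the paper's argument.
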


\begin{proof} In view of Theorem \ref{lwp-claim}, the only claim that needs verification is that the maximal lifespan of any data of sufficiently small energy is global.  Assume for contradiction that this were not the case, then one can find a sequence $\Phi_n \in \Energy$ with energy going to zero, whose maximal Cauchy developments $\phi_n: I_n \to \Energy$ from the initial time $t_n$ were not global.  By time translation symmetry\footnote{See \cite{tao:heatwave} for a justification of the various symmetries of the wave maps equation in the energy class.} we can take $t_n = 0$; by scaling symmetry one can ensure that $\dist( \partial I_n, 0 ) \to 0$ as $n \to \infty$.  By Theorem \ref{energy-claim}(iv), we know that $\Phi_n$ converges to constant data, which of course has a global classical solution to the wave maps problem.  But this now contradicts Theorem \ref{lwp-claim}(iii).
\end{proof}

Corollary \ref{lwp-claim-small}, when specialised to classical data, gives a global regularity result for small energy:

\begin{corollary}[Small energy regularity]\label{small-reg}  Let $\eps_0$ be as in the preceding corollary.  If $\Phi \in \S$ with $\E(\Phi) < \eps_0$ and $t_0 \in \R$, then there is a unique global classical wave map $\phi: \R \to \S$ with $\phi[t_0] = \Phi$.
\end{corollary}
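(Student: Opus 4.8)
The plan is to derive Corollary~\ref{small-reg} from Corollary~\ref{lwp-claim-small}, together with the standard uniqueness theory for smooth solutions of the wave maps equation \eqref{cov}. For existence, I would first pass to the energy space: set $\Phi_0 := \iota(\Phi) \in \Energy$. By Theorem~\ref{energy-claim}(iii) we have $\E(\Phi_0) = \E(\Phi) < \eps_0$, so Corollary~\ref{lwp-claim-small} applies and produces a global Cauchy development $\phi : \R \to \Energy$ of $\Phi_0$. Because $\Phi_0 = \iota(\Phi)$ with $\Phi$ classical, the persistence-of-regularity statement Corollary~\ref{lwp-claim-small}(ii) then furnishes a classical wave map $(\tilde\phi,\R)$ with $\tilde\phi[t_0] = \Phi$ and $\phi[t] = \iota(\tilde\phi[t])$ for all $t$; viewing $\tilde\phi$ as a smooth curve from $\R$ to $\S$ as in the introduction, this is the required global classical wave map.

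For uniqueness, suppose $\psi : \R \to \S$ is any classical wave map with $\psi[t_0] = \Phi$. Since classical wave maps are smooth and prescribe their Cauchy data exactly, with no Lorentz-rotation ambiguity, this is a purely classical assertion, which I would argue directly. Embedding $\H^m$ into $\R^{1+m}$, the equation \eqref{cov} becomes a semilinear wave equation $\partial^\alpha\partial_\alpha u = \mathrm{S}(u)(\partial^\beta u, \partial_\beta u)$, where $\mathrm{S}$ (essentially the second fundamental form of $\H^m$) is smooth with bounded derivatives on a neighbourhood of $\H^m$ in $\R^{1+m}$. Differencing the equations satisfied by $\tilde\phi$ and $\psi$ and running the standard backward-light-cone energy estimate together with Gronwall's inequality shows that $\tilde\phi$ and $\psi$ agree on a spacetime neighbourhood of $\{t_0\}\times\R^2$; hence the set of times $t$ with $\tilde\phi[t] = \psi[t]$ is nonempty, open, and closed in $\R$, and therefore equals $\R$. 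One could alternatively simply invoke the classical well-posedness theory for wave maps, e.g.\ \cite{shatah-struwe} or \cite[Chapter~6]{tao:cbms}.

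This corollary is essentially a bookkeeping exercise, so I do not expect any step to be a genuine obstacle; the only point needing a little care is the interface between the energy-space Cauchy development---which is canonical only modulo Lorentz rotations, by Theorem~\ref{energy-claim}(ii)---and honest classical solutions. On the existence side this is resolved entirely by persistence of regularity, Corollary~\ref{lwp-claim-small}(ii), while the uniqueness argument never leaves the classical category, where the Cauchy data $\Phi$ is prescribed exactly and the smooth Cauchy problem for \eqref{cov} is unconditionally well-posed, so the rotation ambiguity plays no role.
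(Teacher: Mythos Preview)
Your proposal is correct and matches the paper's intended argument: the paper states this corollary without proof, presenting it as the specialisation of Corollary~\ref{lwp-claim-small} to classical data, so existence comes from persistence of regularity (Corollary~\ref{lwp-claim-small}(ii)) exactly as you describe. For uniqueness the paper likewise appeals to ``standard energy arguments'' (stated just before Corollary~\ref{class} in Section~\ref{subcrit-sec}), which is precisely the mechanism you invoke.
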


\begin{remark} Corollary \ref{small-reg} for the case $m=2$ of the hyperbolic plane target was established in \cite{krieger:2d} (see also \cite{tao:wavemap2}, \cite{tataru:wave3} for closely related results in other manifolds).  It is likely that one can partially reverse the above implications and deduce Theorem \ref{lwp-claim} from Theorem \ref{lwp-claim-small} by exploiting finite speed of propagation (see \cite{sogge:wave} for an example of such an argument for the energy-critical wave equation, or Section \ref{subcrit-sec} below for such an argument in the subcritical regularity setting).  However there is a technical obstacle to doing so, namely that the energy topology that we use for $\Energy$ is constructed via the harmonic map heat flow, and so it is not immediately obvious that this topology is compatible with localisation in space.  While it is likely that this obstacle could be resolved with additional effort, we have chosen not to do so here.
\end{remark}

\begin{remark} Very recently, a small energy global well-posedness result for Schr\"odinger maps into the sphere $S^2$ was established in \cite{bej}.  There are many common features between the results here and those in \cite{bej}, most notably the reliance on the caloric gauge.  However, the function spaces used for the Schr\"odinger map and wave map equation are very different from each other; also, there are additional technical difficulties in setting up the iteration scheme in the large energy case which can be avoided in the small energy setting.
\end{remark}

\subsection{Organisation of the paper}

The large data local wellposedness claims in Theorem \ref{lwp-claim} will be proven as follows.  After setting out our basic notation in Section \ref{notation-sec}, we shall establish a subcritical local well-posedness theory in Section \ref{subcrit-sec}.  In Section \ref{reduce-sec}, we use this theory to reduce matters to the task of establishing some abstract \emph{a priori} estimates for classical wave maps (Theorem \ref{apriori-thm}).  In order to prove this theorem, we introduce the caloric gauge in Section \ref{energy-sec}, and reduce matters to establishing analogous \emph{a priori} estimates for classical wave maps in the caloric gauge (Theorem \ref{apriori-thm2}).

To achieve this, we develop some abstract parabolic regularity theory in Section \ref{parab-sec} and use it to obtain good control on the heat flow at the initial time in Section \ref{initial-sec}.  Then, in Section \ref{func0-sec} we abstractly describe the hyperbolic function spaces we will need for the \emph{a priori} estimates (Theorem \ref{func}).  In Section \ref{ap-sec} we use the estimates in Theorem \ref{func} (and the abstract parabolic regularity theory mentioned earlier) to establish Theorem \ref{apriori-thm2}.  Finally, in Section \ref{func0-sec} we recall the spaces constructed in \cite{tao:wavemap2} which will allow us to verify Theorem \ref{func} relatively quickly.

\subsection{Acknowledgements}

This project was started in 2001, while the author was a Clay Prize Fellow.  The author thanks Andrew Hassell and the Australian National University for their hospitality when a substantial portion of this work was initially conducted, and to Ben Andrews and Andrew Hassell for a crash course in Riemannian geometry and manifold embedding, and in particular to Ben Andrews for explaining the harmonic map heat flow.  The author also thanks Mark Keel for background material on wave maps, Daniel Tataru for sharing some valuable insights on multilinear estimates and function spaces, and to Igor Rodnianski and Jacob Sterbenz for valuable discussions.  The author is supported by NSF grant DMS-0649473 and a grant from the Macarthur Foundation.

\section{Notation and basic estimates}\label{notation-sec}

In order to efficiently manage the rather large amount of technical computations, it is convenient to introduce a substantial amount of notation, as well as some basic estimates that we will use throughout the paper.

\subsection{Small exponents}

We will need to fix three small exponents
$$ 0 < \delta_0 \ll \delta_1 \ll \delta_2 \ll 1$$
which are absolute constants, with each $\delta_i$ being sufficiently small depending on all higher $\delta_i$.  The exact choice of constants is not important, but for sake of concreteness one could take $\delta_i := 10^{-10^{3-i}}$, for instance.  All the implied constants in the asymptotic notation below can depend on these exponents.

\begin{remark}
The interpretation of these constants in our argument will be as follows. The largest constant $\delta_2$ is the exponent that quantifies certain useful exponential decays in frequency-localised linear, bilinear, and trilinear estimates when the ratio of two frequencies becomes favorable.  The intermediate constant $\delta_1$ is used to design the weakly frequency localised space $S_k$ (and its variant $S_{\mu,k}$, adapted to the large data theory) that we will encounter later in this paper.  The smallest constant $\delta_0$ is used to control the fluctuation of the frequency envelopes $c(s)$ that we will use to control solutions.  
\end{remark}

It will be useful to have some standard cutoff functions that compare two frequency parameters $k, k'$:

\begin{definition}[Cutoff functions]  Given any integers $k, k'$, we define $\chi_{k \geq k'} = \chi_{k' \leq k} := \min( 1, 2^{-(k-k')} )$ and $\chi_{k = k'} := 2^{-|k-k'|}$.
\end{definition}

Thus $\chi_{k' \leq k}$ is weakly localised to the region $k' \leq k$, and similarly for the other cutoffs. In practice we shall usually raise these cutoffs to an exponent such as $\pm \delta_0$, $\pm \delta_1$, or $\pm \delta_2$.

\subsection{Asymptotic notation}

The dimension $m$ of the target hyperbolic space $\H^m$ is fixed throughout the paper, and all implied constants can depend on $m$.

We use $X = O(Y)$ or $X \lesssim Y$ to denote the estimate $|X| \leq CY$ for some absolute constant $C > 0$, that can depend on the $\delta_i$ and the dimension $m$ of the target hyperbolic space.  If we wish to permit $C$ to depend on some further parameters, we shall denote this by subscripts, e.g. $X = O_k(Y)$ or $X \lesssim_k Y$ denotes the estimate $|X| \leq C_k Y$ where $C_k > 0$ depends on $k$.  

Note that parameters can be other mathematical objects than numbers.  For instance, the statement that a function $u: \R^2 \to \R$ is Schwartz is equivalent to the assertion that one has a bound of the form $|\partial_x^k u(x)| \lesssim_{j,k,u} \langle x \rangle^{-j}$ for all $j,k \geq 0$ and $x \in \R^2$, where $\langle x \rangle := (1+|x|^2)^{1/2}$.

\subsection{Schematic notation}

We use $\partial_x$ as an abbreviation for $(\partial_1, \partial_2)$, thus for instance 
\begin{align*}
|\partial_x \phi|_{\phi^* h}^2 &= \langle \partial_i \phi, \partial_i \phi \rangle_{\phi^* h}^2 \\
|(\phi^* \nabla)_x \partial_x \phi| &= \langle (\phi^* \nabla)_i \partial_j \phi, (\phi^* \nabla)_i \partial_j \phi \rangle_{\phi^* h}^2
\end{align*}
with the usual summation conventions.

We use juxtaposition to denote tensor product; thus for instance if $\psi_x := (\psi_1, \psi_2)$, then $\psi_x^2 = \psi_x \psi_x$ denotes the rank $2$ tensor with the four components $\psi_i \psi_j$ for $i,j = 1,2$; similarly, $\partial_x^2$ is the rank $2$ tensor operator with four components $\partial_i \partial_j$ for $i,j=1,2$, and so forth.

If $X$ is a tensor-valued quantity, we use $\bigO( X )$ to denote an expression which is \emph{schematically} of the form $X$, which means that it is a tensor whose components are linear combinations of those in $X$, with coefficients being absolute constants (depending only on $m$).  Thus for instance, if $v, w \in \R^m$, then the anti-symmetric matrix $v \wedge w := v w^\dagger - w v^\dagger$ has the schematic form $v \wedge w = \bigO( v w )$.  If the coefficients in the schematic representation depend on a parameter, we will denote this by subscripts.  Thus for instance we have the \emph{Leibniz rule}
\begin{equation}\label{leibnitz}
\partial_x^j \bigO( \phi \psi ) = \sum_{j_1,j_2 \geq 0: j_1+j_2=j} \bigO_j( \partial_x^{j_1} \phi \partial_x^{j_2} \psi )
\end{equation}
and similarly for products of three or more functions.

\subsection{Difference notation}

Throughout this paper, we adopt the notational conventions $\delta f := f' - f$ and $f^* := (f,f')$ for any field $f$ for which the right-hand side makes sense.  For future reference we observe the discretised Leibniz rule
\begin{equation}\label{disc-leib-eq}
\delta(fg) = \bigO( (\delta f) g^* ) + \bigO( f^* \delta g )
\end{equation}
and similarly for products of three or more functions.

\subsection{Function spaces}  
We use the usual Lebesgue spaces $L^p_x(\R^2)$ and Sobolev spaces $H^s_x(\R^2)$, and create spacetime norms such as $L^q_t L^p_x(I \times \R^2)$ in the usual manner.  Later on we shall also use the more complicated spaces adapted to the wave maps problem from \cite{tao:wavemap2} (see also \cite{tataru:wave2}).

If $X$ is a normed space for scalar-valued functions, we also extend $X$ to functions $\phi := (\phi_1,\ldots,\phi_m)$ taking values in a standard finite-dimensional vector space such as $\R^m$ with the convention
$$ \|\phi\|_X := (\sum_{j=1}^m \|\phi_j\|_X^2)^{1/2}.$$
Note that if $X$ was already a Hilbert space on scalar functions, it continues to be a Hilbert space on vector-valued functions, and the orthogonal group $O(m)$ on that space acts isometrically on this space.  If $X$ is merely a normed vector space, then the orthogonal group is no longer isometric, but the action of an element of this group has operator norm bounded above and below by constants depending only on $m$.

\begin{definition}[Littlewood-Paley projections] 
Let $\varphi(\xi)$ be a radial bump function supported in the ball $\{ \xi \in \R^2: |\xi| \leq \tfrac {11}{10} \}$ and equal to $1$ on the ball $\{ \xi \in \R^2: |\xi| \leq 1 \}$.  For each integer $k$, we define the Fourier multipliers
\begin{align*}
\widehat{P_{\leq k} f}(\xi) &:= \varphi(\xi/2^k) \hat f(\xi)\\
\widehat{P_{>k} f}(\xi) &:= (1 - \varphi(\xi/2^k)) \hat f(\xi)\\
\widehat{P_k f}(\xi) &:= (\varphi(\xi/2^k) - \varphi(2\xi/2^k)) \hat f(\xi).
\end{align*}
We similarly define $P_{<k}$ and $P_{\geq k}$.
\end{definition}

\subsection{The linear heat equation}\label{linear-heat}

Throughout the paper we use $\Delta := \frac{\partial^2}{\partial x_1^2} + \frac{\partial^2}{\partial x_2^2}$ to denote the (spatial) Laplacian on $\R^2$. We use $e^{s\Delta}$ for $s > 0$ to denote the free heat propagator
\begin{equation}\label{heat-prop}
e^{s\Delta} u(x) := \frac{1}{4\pi s} \int_{\R^2} e^{-|x-y|^2/4s} u(y)\ dy.
\end{equation}
From Young's inequality we easily establish the parabolic regularity estimate
\begin{equation}\label{heat-lp}
\| \partial_x^j e^{s\Delta} u \|_{L^q_x(\R^2)} \lesssim_{p,q,j} s^{\frac{1}{p}-\frac{1}{q}+\frac{j}{2}} \| e^{s\Delta} u \|_{L^p_x(\R^2)} 
\end{equation}
valid for all $s > 0$, $j \geq 0$, and $1 \leq p \leq q \leq \infty$.  

We recall \emph{Duhamel's formula}
\begin{equation}\label{duh}
 u(s_1) = e^{(s_1-s_0)\Delta} u(s_0) + \int_{s_0}^{s_1} e^{(s_1-s)\Delta} (\partial_s u - \Delta u)(s)\ ds
\end{equation}
for any continuous map $s \mapsto u(s)$ from the interval $[s_0,s_1]$ to the space of tempered distributions on $\R^2$, which can be either scalar or vector valued.  

We also need a Strichartz-type estimate:

\begin{lemma}[Parabolic Strichartz estimate]\label{ubang} Let $\psi: \R^+ \times \R^2 \to \R$ be smooth, with all derivatives uniformly Schwartz in space.  Then we have
$$ (\int_0^\infty \| \psi \|_{L^\infty_x(\R^2)}^2\ ds)^{1/2} \lesssim \|\psi(0)\|_{L^2_x(\R^2)} + \int_0^\infty \| (\partial_s - \Delta) \psi(s) \|_{L^2_x(\R^2)}\ dx.$$
\end{lemma}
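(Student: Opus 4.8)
The plan is to prove the parabolic Strichartz estimate
$$\Bigl(\int_0^\infty \|\psi\|_{L^\infty_x(\R^2)}^2\ ds\Bigr)^{1/2} \lesssim \|\psi(0)\|_{L^2_x(\R^2)} + \int_0^\infty \|(\partial_s - \Delta)\psi(s)\|_{L^2_x(\R^2)}\ ds$$
by reducing, via Duhamel's formula \eqref{duh} and Minkowski's inequality, to the homogeneous estimate
$$\Bigl(\int_0^\infty \|e^{s\Delta} f\|_{L^\infty_x(\R^2)}^2\ ds\Bigr)^{1/2} \lesssim \|f\|_{L^2_x(\R^2)}$$
for Schwartz $f$. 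Indeed, writing $F := (\partial_s-\Delta)\psi$ and applying \eqref{duh} with $s_0 = 0$, we get $\psi(s) = e^{s\Delta}\psi(0) + \int_0^s e^{(s-s')\Delta} F(s')\ ds'$. The first term is handled directly by the homogeneous estimate; for the second, Minkowski's integral inequality in the $L^2_s L^\infty_x$ norm bounds it by $\int_0^\infty \bigl(\int_{s'}^\infty \|e^{(s-s')\Delta} F(s')\|_{L^\infty_x}^2\ ds\bigr)^{1/2}\ ds'$, and each inner factor is $\le \bigl(\int_0^\infty \|e^{\sigma\Delta} F(s')\|_{L^\infty_x}^2\ d\sigma\bigr)^{1/2} \lesssim \|F(s')\|_{L^2_x}$ by the homogeneous estimate and the semigroup property, giving exactly the claimed right-hand side.

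It remains to prove the homogeneous bound, which is where the real content lies. I would establish it by a $TT^*$ / duality argument. The operator $T: f \mapsto (s \mapsto e^{s\Delta} f)$ maps $L^2_x$ into $L^2_s L^\infty_x$; equivalently $T^*: L^2_s L^1_x \to L^2_x$, $T^* G = \int_0^\infty e^{s\Delta} G(s)\ ds$, and $\|TT^*\|_{L^2_sL^1_x \to L^2_sL^\infty_x} = \|T\|^2$. Now $TT^* G(s) = \int_0^\infty e^{(s+s')\Delta} G(s')\ ds'$, so by \eqref{heat-lp} with $p=1$, $q=\infty$, $j=0$ we have the kernel bound $\|e^{(s+s')\Delta} G(s')\|_{L^\infty_x} \lesssim \frac{1}{s+s'}\|G(s')\|_{L^1_x}$. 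Thus it suffices to show that the integral operator on $L^2(\R^+, ds)$ with kernel $K(s,s') = \frac{1}{s+s'}$ is bounded, which is the classical Hilbert inequality (Schur test with weight $s^{-1/2}$: $\int_0^\infty \frac{1}{s+s'}\,(s')^{-1/2}\ ds' = \pi s^{-1/2}$). This closes the estimate.

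The main obstacle — or rather the only point requiring care — is the interchange-of-integration and boundary-term bookkeeping when invoking Duhamel: one must make sure $\psi$ genuinely vanishes in an appropriate sense as $s \to \infty$ so that the one-sided Duhamel formula $\psi(s) = e^{s\Delta}\psi(0) + \int_0^s\cdots$ holds without an incoming term at $s = +\infty$. Under the hypothesis that $\psi$ has all derivatives uniformly Schwartz in space, together with the implicit integrability of $(\partial_s-\Delta)\psi$ in $L^1_s L^2_x$ forced by the finiteness of the right-hand side (the estimate is vacuous otherwise), the function $s \mapsto \|\psi(s)\|_{L^2_x}$ is controlled and the representation is legitimate; alternatively one can prove the estimate first on any finite interval $[0,S]$ with constants uniform in $S$ and let $S \to \infty$. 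Everything else — Minkowski, the semigroup property, \eqref{heat-lp}, and Schur's test — is routine.
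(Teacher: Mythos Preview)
Your proof is correct and follows exactly the paper's approach: reduce via Duhamel's formula \eqref{duh} and Minkowski's inequality to the homogeneous estimate $\|e^{s\Delta} f\|_{L^2_s L^\infty_x(\R^2)} \lesssim \|f\|_{L^2_x(\R^2)}$, which the paper simply cites from \cite[Lemma 2.5]{tao:heatwave2}. Your $TT^\ast$ argument with the kernel bound $\|e^{(s+s')\Delta}\|_{L^1_x\to L^\infty_x}\lesssim (s+s')^{-1}$ and Hilbert's inequality is a clean self-contained proof of that cited homogeneous estimate.
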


\begin{proof} By \eqref{duh} and Minkowski's inequality it suffices to establish this claim for free solutions to the heat equation, thus $\psi(s) = e^{s\Delta} \psi(0)$.  But this follows from \cite[Lemma 2.5]{tao:heatwave2}.
\end{proof}

\subsection{Frequency envelopes}

We first recall a useful definition from \cite{tao:wavemap}.

\begin{definition}[Frequency envelope]\label{freqenv}\cite{tao:wavemap}  Let $E > 0$.  A \emph{frequency envelope} of energy $E$ is a map $c: \R^+ \to \R^+$ with
\begin{equation}\label{cse}
\int_0^\infty c(s)^2 \frac{ds}{s} = E
\end{equation}
such that
\begin{equation}\label{sm}
c(s') \leq \max( (s'/s)^{\delta_0}, (s/s')^{\delta_0} ) c(s)
\end{equation}
for all $s, s' > 0$.
\end{definition}

\begin{remark} The estimate \eqref{sm} is asserting that $c(s)$ can grow at most as fast as $s^{\delta_0}$, and decay at most as rapidly as $s^{-\delta_0}$.  In \cite{tao:wavemap} a discretised version of this concept was used, with the continuous variable $s$ being replaced by the discrete variable $2^{-2k}$ for integer $k$.  However, as the heat flow uses a continuous time variable $s$, it is more natural to use the continuous version of a frequency envelope.  In \cite{tataru:wave3} it was observed that one could take asymmetric envelopes, in which $c(s)$ is allowed to decay faster as $s$ decreases than when $s$ increases.  However, due to our use of heat flow (which does not have as good frequency damping properties near the frequency origin as Littlewood-Paley operators) it is not convenient to use these asymmetric envelopes in our arguments.
\end{remark}

\begin{remark} Observe from \eqref{cse}, \eqref{sm} that if $c$ is a frequency envelope of energy $E$, then we have the pointwise bounds
\begin{equation}\label{cse-sup}
c(s) \lesssim \sqrt{E}
\end{equation}
for all $s > 0$.
\end{remark}

We also define the frequency $k(s) \in \Z$ of a heat-temporal variable $s>0$ by the formula
\begin{equation}\label{ks-def}
k(s) := \lfloor \log_2 s^{-1/2} \rfloor,
\end{equation}
thus $2^{-2k(s)} \sim s$.  

We record two useful Gronwall-type inequalities relating to these frequency envelopes.  The first lemma involves integral inequalities coming from $s=+\infty$:

\begin{lemma}[Gronwall-type inequality from $s=+\infty$]\label{gron-lem} Let $c$ be a frequency envelope of energy at most $E$.  Let $f, g: \R^+ \to \R^+$ be locally integrable functions with $\lim_{s \to \infty} f(s) = 0$ such that $g$ obeys \eqref{sm}, and
$$ f(s) \leq g(s) + \int_s^\infty f(s') c(s') ((s'-s)/s')^{-\theta} (s'/s)^{-3\delta_0}\ \frac{ds'}{s'}$$
for all $s > 0$ and some $0 \leq \theta < 1$.  Then we have
$$ f(s) \lesssim_{E,\theta} g(s)$$
for all $s > 0$.
\end{lemma}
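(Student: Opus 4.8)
The plan is to treat the hypothesis as a backward Volterra integral inequality in the logarithmic variable $t = \log s$ and to sum the resulting Neumann series; the essential point is that the $L^2$-type normalisation \eqref{cse} of the envelope $c$, together with the nested structure of the iterated integrals, forces the $k$-th term $T^k 1$ of the series to decay like $(C\sqrt{E})^k/(k!)^{1/p'}$ for a suitable exponent $p' \geq 2$, so that it converges super-exponentially no matter how large $E$ is.

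We begin with some reductions. Replacing $g$ by $g + \eps$, $\eps > 0$, only weakens the hypothesis, and $g+\eps$ still obeys \eqref{sm} (since $\max((s'/s)^{\delta_0},(s/s')^{\delta_0}) \geq 1$); the conclusion for $g$ then follows from that for $g+\eps$ on letting $\eps \to 0$, so we may assume $g > 0$. Dividing the hypothesis by $g(s)$, setting $\psi := f/g \geq 0$, and using \eqref{sm} in the form $g(s')/g(s) \leq (s'/s)^{\delta_0}$ for $s' \geq s$ to absorb the ratio $g(s')/g(s)$ into the weight, we reduce to
\[ \psi(s) \leq 1 + \int_s^\infty \psi(s')\, c(s')\, ((s'-s)/s')^{-\theta}\, (s'/s)^{-2\delta_0}\ \frac{ds'}{s'} =: 1 + (T\psi)(s), \]
where $T$ is the monotone, positivity-preserving linear operator on the right; note the surviving weight $(s'/s)^{-2\delta_0}$ has a genuinely negative exponent, which we will need for integrability at $s' = \infty$. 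Finally, it suffices to prove the bound under the extra assumption that $\psi$ is bounded on each half-line $\{s \geq S\}$, with a constant not depending on $S$: in practice $f$ will be continuous, so this is automatic from $f(s) \to 0$ as $s \to \infty$, and the general $L^1_{\loc}$ case can be recovered by an approximation argument. (This last reduction is the one genuinely delicate point; see the end.)

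Now for the main estimate. Iterating $\psi \leq 1 + T\psi$ and using the monotonicity of $T$ gives, for every $n \geq 1$,
\[ \psi(s) \leq \sum_{k=0}^{n-1} (T^k 1)(s) + (T^n \psi)(s). \]
Passing to $t = \log s$, so that \eqref{cse} reads $\|\tilde c\|_{L^2(\R)}^2 \leq E$ with $\tilde c(t) := c(e^t)$ and \eqref{cse-sup} gives $\|\tilde c\|_{L^\infty(\R)} \lesssim \sqrt{E}$, the $k$-fold iterate unwinds to
\[ (T^k 1)(s) = \int_{t < t_1 < \dots < t_k} \Bigl( \prod_{j=1}^k \tilde c(t_j) \Bigr) \Bigl( \prod_{j=1}^k \tilde L(t_j - t_{j-1}) \Bigr)\ dt_1 \cdots dt_k , \qquad t_0 := t, \]
with $\tilde L(r) := (1 - e^{-r})^{-\theta}\, e^{-2\delta_0 r}$. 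Fix any $p$ with $1 < p \leq 2$ and $p < 1/\theta$ (reading $1/0 := +\infty$); this is possible since $\theta < 1$, and the conjugate exponent $p' := p/(p-1)$ then obeys $p' \geq 2$. Applying Hölder's inequality in $(t_1,\dots,t_k)$ with the $\tilde L$-factors in $L^p$ and the $\tilde c$-factors in $L^{p'}$, the change of variables to the gaps $r_j := t_j - t_{j-1} > 0$ turns the $L^p$-factor into $\|\tilde L\|_{L^p(\R^+)}^k$ — finite because $p\theta < 1$ tames the $r^{-p\theta}$ singularity at $r = 0$ while $e^{-2p\delta_0 r}$ controls $r = \infty$ — whereas the symmetry of $\prod_j \tilde c(t_j)^{p'}$ under permutations, together with the ordering of the variables, produces the decisive combinatorial gain
\[ \int_{t < t_1 < \dots < t_k} \prod_{j=1}^k \tilde c(t_j)^{p'}\ dt_1 \cdots dt_k = \frac{1}{k!} \Bigl( \int_t^\infty \tilde c(\sigma)^{p'}\, d\sigma \Bigr)^k \leq \frac{1}{k!}\bigl( \|\tilde c\|_{L^\infty(\R)}^{p'-2}\, \|\tilde c\|_{L^2(\R)}^2 \bigr)^k , \]
which by \eqref{cse} and \eqref{cse-sup} is $\lesssim_\theta E^{kp'/2}/k!$. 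Combining the two factors gives $(T^k 1)(s) \leq (C(\theta)\sqrt{E})^k/(k!)^{1/p'}$ uniformly in $s$, and since $1/p' > 0$ the resulting series converges:
\[ \sum_{k=0}^\infty (T^k 1)(s) \leq \sum_{k=0}^\infty \frac{(C(\theta)\sqrt{E})^k}{(k!)^{1/p'}} =: M(E,\theta) < \infty. \]

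Finally, since $\psi$ is bounded on $\{s \geq S\}$ and $(T^n\psi)(s)$ for $s \geq S$ only sees the values of $\psi$ on $\{s' \geq S\}$, we have $(T^n\psi)(s) \leq \|\psi\|_{L^\infty(\{s\geq S\})}\,(T^n 1)(s) \leq \|\psi\|_{L^\infty(\{s\geq S\})}\,(C(\theta)\sqrt{E})^n/(n!)^{1/p'} \to 0$ as $n \to \infty$, for each $s \geq S$. Letting $n \to \infty$ in the iterated inequality yields $\psi(s) \leq M(E,\theta)$ for all $s \geq S$, hence, $M(E,\theta)$ being independent of $S$ and $S > 0$ arbitrary, for all $s > 0$; undoing the reductions gives $f(s) \lesssim_{E,\theta} g(s)$. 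The main obstacle, as flagged above, is the reduction to locally bounded $\psi$: with only the $L^1_{\loc}$ hypothesis one must first rule out a divergence from the near-diagonal behaviour $((s'-s)/s')^{-\theta}$ of the kernel of $T$, which I expect to handle via the (automatic) continuity of $f$ in the intended application, or by a brief mollification; the remaining work is the routine verification that the model integrals $\int_0^\infty \tilde L(r)^p\, dr$ and $\int_\R \tilde c(\sigma)^{p'}\, d\sigma$ are finite with the stated bounds.
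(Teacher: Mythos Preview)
Your argument is correct and takes a genuinely different route from the paper. The paper first disposes of the weak singularity at $s'=s$ by passing to $\tilde f(s):=\sup_{[s,2s]}f$, which reduces matters to $\theta=0$; it then truncates the integral at a large $S$, applies the classical (backward) Gronwall inequality on $[s,S]$, and controls the resulting exponential factor by Cauchy--Schwarz, namely $\int_s^{s'} c\,\tfrac{d\sigma}{\sigma}\lesssim_E\sqrt{\log(s'/s)}$, so that $\exp(\int c)\lesssim_E (s'/s)^{\delta_0}$ is absorbed by the surviving $(s'/s)^{-3\delta_0}$ weight before sending $S\to\infty$. You instead keep $\theta$ throughout and iterate the Volterra operator $T$ directly, separating the kernel $\tilde L$ from the envelope $\tilde c$ by H\"older with $p<1/\theta$ and extracting the combinatorial $1/k!$ from the ordered simplex. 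Your approach treats all $\theta<1$ uniformly without a reduction step and produces an explicit summable series bound; the paper's is shorter once the $\theta$ reduction is made, resting on a single off-the-shelf Gronwall. Both proofs ultimately feed on the same envelope condition \eqref{cse}, used respectively through Cauchy--Schwarz on $\int c$ and through $\int\tilde c^{\,p'}\le\|\tilde c\|_\infty^{p'-2}\|\tilde c\|_2^2$. The caveat you flag about merely $L^1_{\loc}$ (rather than locally bounded) $f$ is real but minor: the paper's $\tilde f=\sup$ device for $\theta>0$ carries the same implicit requirement, and in every application in the paper the function $f$ is continuous, so no further argument is needed.
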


\begin{proof}   We allow implied constants to depend on $E,\theta$.  If we let $\tilde f(s) := \sup_{s \leq s' \leq 2s} f(s)$, then we easily compute (using \eqref{sm}) that
$$ \int_s^\infty f(s') c(s') ((s'-s)/s')^{-\theta} (s'/s)^{-3\delta_0}\ \frac{ds'}{s'}
\lesssim 
\int_s^\infty \tilde f(s') c(s') (s'/s)^{-3\delta_0}\ \frac{ds'}{s'}$$
and thus
$$ \tilde f(s) \lesssim g(s) + \int_s^\infty \tilde f(s') c(s')  (s'/s)^{-3\delta_0}\ \frac{ds'}{s'}.$$
Because of this, we see that the $\theta \neq 0$ case follows from the $\theta = 0$ case.  Henceforth we take $\theta=0$.

Since $f(s') c(s')$ goes to zero as $s' \to \infty$, we see for $S$ sufficiently large that
$$ f(s) \leq s^{3\delta_0} S^{-3\delta_0} + g(s) + \int_s^S f(s') c(s') (s'/s)^{-3\delta_0}\ \frac{ds'}{s'}$$
for all $0 < s \leq S$.  If we set $G(s) := S^{-3\delta_0} + g(s) s^{-3\delta_0}$ and
$F(s) := \max( f(s) s^{-3\delta_0} - G(s), 0)$, we thus have
$$ F(s) \leq \int_s^S (F(s')+G(s)) c(s') \frac{ds'}{s'}.$$
Applying Gronwall's inequality, we conclude that
$$ F(s) \leq \exp( \int_s^S c(s') \frac{ds'}{s'} ) F(S) + \int_s^S \exp( \int_s^{s'} c(s'') \frac{ds''}{s''} ) G(s') c(s')\ \frac{ds'}{s'}.$$
From \eqref{cse} and Cauchy-Schwarz we have
$$ \int_s^{s'} c(s'') \frac{ds''}{s''}  \lesssim \sqrt{\log \frac{s'}{s}}$$
and hence
$$ \exp( \int_s^{s'} c(s'') \frac{ds''}{s''} ) \lesssim (s'/s)^{\delta_0}.$$
We thus have
$$ F(s) \lesssim (S/s)^{\delta_0} F(S) + \int_s^S (s'/s)^{\delta_0} G(s') c(s') \frac{ds'}{s'}.$$
If we let $S \to \infty$, then $(S/s)^{\delta_0} F(S) \to 0$ by hypothesis on $f$, and thus
$$ f(s) s^{-3\delta_0} - g(s) s^{-3\delta_0} \lesssim \int_s^\infty (s'/s)^{\delta_0} g(s') (s')^{-3\delta_0} c(s')\ \frac{ds'}{s'}.$$
Using \eqref{cse-sup} we obtain the claim.
\end{proof}

By making the change of variables $s \mapsto 1/s$ we also have a variant from $s=0$:

\begin{lemma}[Gronwall-type inequality from $s=0$]\label{gron-lem-2} Let $c$ be a frequency envelope of energy at most $E$.  Let $f, g: \R^+ \to \R^+$ be locally integrable functions with $\lim_{s \to 0} f(s) = 0$ such that $g$ obeys \eqref{sm}, and
$$ f(s) \leq g(s) + \int_s^\infty f(s') c(s') ((s-s')/s)^{-\theta} (s'/s)^{3\delta_0}\ \frac{ds'}{s'}$$
for all $s > 0$ and some $0 \leq \theta < 1$.  Then we have
$$ f(s) \lesssim_{E,\theta} g(s)$$
for all $s > 0$.
\end{lemma}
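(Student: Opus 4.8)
The plan is to reduce this statement to Lemma~\ref{gron-lem} by the change of variables $s \mapsto 1/s$, which interchanges the two endpoints $s = 0$ and $s = +\infty$ of the heat-temporal axis. Given $f,g,c$ as in the hypotheses, set $\tilde f(s) := f(1/s)$, $\tilde g(s) := g(1/s)$, and $\tilde c(s) := c(1/s)$ for $s > 0$. The substitution $u = 1/s$ sends $\tfrac{ds}{s}$ to $-\tfrac{du}{u}$, so $\int_0^\infty \tilde c(s)^2\,\tfrac{ds}{s} = \int_0^\infty c(u)^2\,\tfrac{du}{u} \le E$; moreover the slowly-varying bound \eqref{sm} is manifestly invariant under simultaneously inverting both of its arguments, so $\tilde c$ (and likewise $\tilde g$) still obeys \eqref{sm}. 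Thus $\tilde c$ is a frequency envelope of energy at most $E$ and $\tilde g$ obeys \eqref{sm}. Finally $\lim_{s\to\infty}\tilde f(s) = \lim_{u\to 0^+} f(u) = 0$, so $\tilde f$ decays at $+\infty$.

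Next I would transform the integral inequality. Evaluating the hypothesis at the point $s = 1/\sigma$ and substituting $s' = 1/\sigma'$ in the integral — so that $\tfrac{ds'}{s'} = -\tfrac{d\sigma'}{\sigma'}$ and the range $0 < s' < s$ over which we integrate becomes $\sigma < \sigma' < \infty$ — one uses the elementary identities $\tfrac{s-s'}{s} = 1 - \tfrac{\sigma}{\sigma'} = \tfrac{\sigma'-\sigma}{\sigma'}$ and $\tfrac{s'}{s} = \tfrac{\sigma}{\sigma'}$ to see that the inequality becomes exactly
$$ \tilde f(\sigma) \le \tilde g(\sigma) + \int_\sigma^\infty \tilde f(\sigma')\, \tilde c(\sigma') \Bigl(\tfrac{\sigma'-\sigma}{\sigma'}\Bigr)^{-\theta} \Bigl(\tfrac{\sigma'}{\sigma}\Bigr)^{-3\delta_0}\ \frac{d\sigma'}{\sigma'}, $$
which is precisely the hypothesis of Lemma~\ref{gron-lem} for $\tilde f, \tilde g, \tilde c$; note that inversion turns the weight $(\tfrac{s'}{s})^{3\delta_0}$ into $(\tfrac{\sigma'}{\sigma})^{-3\delta_0}$, matching the sign convention there. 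Lemma~\ref{gron-lem} then yields $\tilde f(\sigma) \lesssim_{E,\theta} \tilde g(\sigma)$ for all $\sigma > 0$, i.e. $f(s) \lesssim_{E,\theta} g(s)$ for all $s > 0$, which is the claim.

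The argument is essentially mechanical, so I do not expect a genuine obstacle; the only things requiring a little care are the bookkeeping of the weight factors under inversion (as noted above) and the observation that the integral in the present lemma is to be read over the range $0 < s' < s$, the natural ``from $s = 0$'' counterpart of the range $s < s' < \infty$ appearing in Lemma~\ref{gron-lem}.
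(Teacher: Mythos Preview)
Your proof is correct and follows exactly the approach indicated in the paper, which simply says ``By making the change of variables $s \mapsto 1/s$'' before stating the lemma without further argument. You have also correctly noted that the integration limits in the stated hypothesis should read $\int_0^s$ rather than $\int_s^\infty$ (otherwise the factor $((s-s')/s)^{-\theta}$ would be ill-defined), consistent with how the lemma is actually applied later in the paper.
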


\section{Subcritical local existence theory}\label{subcrit-sec}

Before we study wave maps in the energy class, we first review the (much simpler) subcritical local existence theory for classical wave maps, in which the bounds are allowed to depend on smoother norms than the energy norm, and in particular on norms which are subcritical with respect to scaling, of $H^{1+\eps}$ type. This theory was essentially worked out by Klainerman and Selberg \cite{kman.selberg} (see also the critical Besov space refinement in \cite{tataru:wave2}); the only (slight) wrinkle we need to deal with here is the Lorentz rotation-invariance of the problem, which can distort the local coordinates slightly.  This theory is not used in the ``quantitative'' components of the argument, but is needed for more ``qualitative'' components, such as ensuring that various continuity arguments can be justified, or that an \emph{a priori} hypothesis can be removed.

For the purposes of this qualitative analysis it is convenient to represent the hyperbolic space $\H = (\H^m,h)$ concretely in an ambient Minkowski space $\R^{1+m}$ as the upper unit hyperboloid
\begin{equation}\label{hyperdef}
 \H := \{ (t,x) \in \R^{1+m}: t = +\sqrt{1 + |x|^2} \} \subset \R^{1+m}
\end{equation}
with the metric $dh^2$ induced from the Minkowski metric $dg^2 = -dt^2 + dx^2$ on $\R^{1+m}$, and with the obvious action of $SO(m,1)$.  The connection
$\phi^*\nabla$ on the pullback tangent bundle $\phi^* T\H$ can then be written in these coordinates as
\begin{equation}\label{phicor}
(\phi^*\nabla)_i \psi(x) = \partial_i \psi(x) - \langle \psi(x), \partial_i \phi(x) \rangle_{\R^{1+m}} \phi(x).
\end{equation}
whenever $\psi$ is Minkowski-orthogonal to $\phi$ (i.e. $\langle \psi, \phi \rangle_{\R^{1+m}}=0$), while the wave maps equation \eqref{cov} in these coordinates becomes
\begin{equation}\label{wavemap-hyper}
\Box \phi = \langle \partial^\alpha \phi, \partial_\alpha \phi \rangle_{\R^{1+m}} \phi
\end{equation}
where $\Box := \partial^\alpha \partial_\alpha$ is the d'Lambertian operator.

We observe the zero tension property
\begin{equation}\label{zerotor-phi}
 (\phi^*\nabla)_i \partial_j \phi = (\phi^*\nabla)_j \partial_i \phi
\end{equation}
and the constant negative curvature property
\begin{equation}\label{curv-phi}
(\phi^*\nabla)_i (\phi^*\nabla)_j \psi - (\phi^*\nabla)_j (\phi^*\nabla)_i \psi 
= - (\partial_i \phi \wedge \partial_j \phi) \psi
\end{equation}
for any section $\psi$ of $\phi^* T\H$, where $\partial_i \phi \wedge \partial_j \phi \in \Gamma( \Hom( \phi^* T\H \to \phi^* T\H ) )$ is the anti-symmetric rank $(1,1)$ tensor defined by the formula
$$(\partial_i \phi \wedge \partial_j \phi) \psi = \partial_i \phi \langle \partial_j \phi, \psi \rangle_{\phi^* h} - (\partial_j \phi \wedge \partial_i \phi) \psi.$$

We fix a smooth scalar cutoff function $\eta \in C^\infty_0(\R^2)$ that equals $1$ on the ball $B(0,1)$ and vanishes outside of $B(0,2)$. Given classical initial data $\Phi := (\phi_0,\phi_1) \in \S$ and $s \geq 1$, we define the Sobolev-type norms
\begin{equation}\label{hkdef}
 \| \Phi \|_{{\mathcal H}^s_\loc} := \|f\|_{L^\infty_x(\R^2)}
 \end{equation}
and 
\begin{equation}\label{hkdef-loc}
 \| \Phi \|_{{\mathcal H}^s} := \| \Phi \|_{{\mathcal H}^s_\loc} + \|f\|_{L^2_x(\R^2)}
\end{equation}
where $f(x_0)$ is the local Sobolev norm near $x_0$, defined as
$$ f(x_0) := \inf_{U \in SO(m,1)} \| \eta(\cdot-x_0) U(\phi_0) \|_{H^s_x(\R^2)} + \| \eta(\cdot-x_0) U(\phi_1) \|_{H^{s-1}_x(\R^2)}.$$
\begin{remark} These (rather artificial) quantities are not exactly norms (after all, $\S$ is not a vector space) but should be viewed as nonlinear analogues of the more familiar Sobolev class $H^s_x(\R^2) \times H^{s-1}_x(\R^2)$ (and their localised counterpart) in the theory of scalar nonlinear wave equations.  Observe from construction that these ``norms'' are invariant with respect to Lorentz rotations.
\end{remark}
 
We have the following (reasonably standard) subcritical local existence result:

\begin{theorem}[Subcritical local existence]\label{Class}  If $\Phi = (\phi_0,\phi_1)$ is classical initial data, $s > 1$, and $t_0 \in \R$, then there exists a time $T > 0$ depending only on $s$ and $\|\Phi\|_{H^s_\loc}$, and a classical wave map $(\phi, [t_0-T,t_0+T])$ with initial data $\phi[t_0] = \Phi$.  Furthermore, we have the persistence of regularity result
\begin{equation}\label{phit-grow-energy}
\| \phi[t] \|_{{\mathcal H}^{s'}_\loc} \lesssim_{s, s', \| \Phi \|_{{\mathcal H}^s_\loc}, \| \Phi \|_{{\mathcal H}^{s'}_\loc}} 1
\end{equation}
and
\begin{equation}\label{phit-grow-energy-2}
\| \phi[t] \|_{{\mathcal H}^{s'}} \lesssim_{s, s', \| \Phi \|_{{\mathcal H}^s_\loc}, \| \Phi \|_{{\mathcal H}^{s'}}} 1
\end{equation}
for all $s' > 1$.
\end{theorem}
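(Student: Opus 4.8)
The plan is to reduce the curved wave map problem \eqref{wavemap-hyper} to a standard quasilinear wave equation for which the Klainerman--Selberg theory \cite{kman.selberg} applies, being careful that all constructions respect the Lorentz rotation symmetry so that the (Lorentz-invariant) norms $\|\cdot\|_{\mathcal H^s_\loc}$ and $\|\cdot\|_{\mathcal H^s}$ control everything uniformly. First I would fix the base point $x_0 \in \R^2$ and choose a near-optimal Lorentz rotation $U = U(x_0) \in SO(m,1)$ in the infimum defining $f(x_0)$, so that on a unit ball around $x_0$ the rotated map $U\phi_0$ is within $O(\|\Phi\|_{\mathcal H^s_\loc})$ of a fixed point $p \in \H$ in the $H^s_x$ norm (and similarly for $\phi_1$); since $s > 1 > 1$, Sobolev embedding puts $U\phi_0$ uniformly close in $L^\infty_x$ to $p$ on that ball, so $U\phi_0$ takes values in a fixed coordinate chart of $\H$ on which $h$ and the Christoffel symbols are smooth and bounded with all derivatives. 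In that chart, \eqref{cov} becomes a system $\Box \phi^a = \Gamma^a_{bc}(\phi)\, \partial^\alpha \phi^b \partial_\alpha \phi^c$, which is the semilinear wave equation of \cite{kman.selberg}, and one gets a local solution on $[t_0-T,t_0+T] \times B(x_0,1/2)$ with $T$ depending only on $s$ and the local $H^s$ data size; because these sizes are, by construction, bounded by $\|\Phi\|_{\mathcal H^s_\loc}$ uniformly in $x_0$, the time $T$ is uniform in $x_0$.

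Next I would patch these local solutions into a global-in-space classical wave map on $[t_0-T,t_0+T] \times \R^2$ using finite speed of propagation: the domain-of-dependence property for \eqref{wavemap-hyper} (which holds for classical, i.e.\ smooth Schwartz-class, solutions and is preserved by the Lorentz rotations since the equation is $SO(m,1)$-covariant) guarantees that two such local solutions agree on the overlap of their domains of dependence, so they glue to a single smooth $\phi$ on the slab; and because $\phi_0$ differs from a constant $\phi_0(\infty)$ by a Schwartz function, the Schwartz decay is propagated (again by \cite{kman.selberg}-type energy estimates in the good chart near spatial infinity, using one fixed Lorentz rotation there), giving a genuine classical wave map $(\phi,[t_0-T,t_0+T])$ with $\phi[t_0] = \Phi$.

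For the persistence of regularity bounds \eqref{phit-grow-energy} and \eqref{phit-grow-energy-2}, I would run the standard energy-method argument for the chart equation $\Box \phi^a = \Gamma^a_{bc}(\phi)\partial^\alpha \phi^b \partial_\alpha \phi^c$: differentiating the equation, using the algebra/Moser estimates for $\Gamma^a_{bc}(\phi)$ composed with $\phi$, and the Strichartz/energy inequalities of \cite{kman.selberg}, one obtains on each ball $B(x_0,1/2)$ that the local $H^{s'}_x$ norm of $\phi[t]$ is controlled by the initial local $H^{s'}_x$ norm, with constants depending only on $s$, $s'$, the lower-regularity norm $\|\Phi\|_{\mathcal H^s_\loc}$ (which keeps the chart valid and the Moser estimates uniform over $[t_0-T,t_0+T]$), and the initial $H^{s'}$ data size. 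Taking the supremum over $x_0$ gives \eqref{phit-grow-energy}; taking the $\ell^2_{x_0}$ sum (i.e.\ the $L^2_x$-version of $f$) gives \eqref{phit-grow-energy-2}, using that the Lorentz-rotated norms are equivalent up to $O_m(1)$ constants on any fixed chart. I expect the main obstacle to be the bookkeeping around the Lorentz rotations: the optimal $U(x_0)$ in the definition of $f$ varies with $x_0$, so one cannot simply rectify $\phi_0$ globally by a single rotation; the fix is to only ever use the chart and the \cite{kman.selberg} theory locally on balls of a fixed size, exploit that the equation and its domain-of-dependence structure are $SO(m,1)$-covariant so the patched object is rotation-independent, and note that every estimate we need is local and Lorentz-invariant, so the $x_0$-dependence of $U(x_0)$ never actually enters the final bounds.
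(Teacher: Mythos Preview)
Your proposal is correct and follows essentially the same route as the paper: localise around each $x_0$, choose a near-optimal Lorentz rotation $U_{x_0}$ to bring the data into a bounded region, invoke the Klainerman--Selberg subcritical theory to get a solution on a uniform time interval, glue via finite speed of propagation and $SO(m,1)$-covariance, and read off persistence of regularity from the local theory. The paper works in the hyperboloid embedding \eqref{wavemap-hyper} rather than a chart with Christoffel symbols, and it is explicit about extending the localised data $(U_{x_0}\phi_0, U_{x_0}\phi_1)|_{B(x_0,1)}$ to globally defined classical data before applying \cite{kman.selberg} (via a radial cutoff construction), whereas you leave this implicit; but these are presentational differences, not substantive ones.
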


\begin{proof}  By time translation invariance we may take $t_0=0$.  We allow all implied constants to depend on $s$ and the quantity $\| \Phi \|_{{\mathcal H}^s_\loc}$, thus for every $x_0$ there exists $U_{x_0} \in SO(m,1)$ such that
\begin{equation}\label{eta-sog}
 \| \eta(\cdot-x_0) U_{x_0}(\phi_0) \|_{H^s_x(\R^2)} + \| \eta(\cdot-x_0) U_{x_0}(\phi_1) \|_{H^{s-1}_x(\R^2)} \lesssim 1.
\end{equation}
In particular, from Sobolev embedding we see that $U_{x_0}(\phi_0(x_1)) = O(1)$ whenever $|x_0-x_1| \leq 1$.  As a consequence, we see that if $|x_0-x_1|=O(1)$, then $U_{x_0}$ and $U_{x_1}$ only differ by a left-multiplication by a bounded element of $SO(m,1)$.

For any $x_0 \in \R^2$, we can find classical data $(\phi_{0,x_0}, \phi_{1,x_0})$ with
\begin{equation}\label{phio} 
 \| \phi_{0,x_0} \|_{H^s_x(\R^2)} +  \| \phi_{1,x_0} \|_{H^{s-1}_x(\R^2)} \lesssim 1
 \end{equation}
which agrees with $(U_{x_0} (\phi_0), U_{x_0}(\phi_1))$ on $B(x_0,1/2)$; for instance we can take
$$ \tilde \phi_0( x_0 + r \omega ) := U_{x_0}(\phi_0)( x_0 + f(r) \omega )$$
and
$$ \tilde \phi_1( x_0 + r \omega ) := g(r) U_{x_0}(\phi_1)( x_0 + f(r) \omega )$$
for $i=0,1$, $r \geq 0$, and $\omega \in S^1$, where $f: [0,+\infty) \to [0,1)$ is a smooth function with $f(r) = r$ for $r \leq 1/2$ and $f(r)=0$ for $r \geq 1$, and $g: [0,+\infty) \to [0,1]$ is a smooth function with $g(r)=1$ for $r \leq 1/2$ that vanishes for $r \geq 1$; the bound \eqref{phio} follows from \eqref{eta-sog} in the case when $s \geq 1$ is an integer by direct computation, and the general case follows by an interpolation argument (one can also use the fractional chain rule).  For similar reasons we have the more general estimate
\begin{equation}\label{phio-2} 
\begin{split}
 \| \phi_{0,x_0} \|_{H^{s'}_x(\R^2)} +  \| \phi_{1,x_0} \|_{H^{s'-1}_x(\R^2)} &\lesssim_{s'} \\ 
  \| \eta(\cdot-x_0) U_{x_0}(\phi_0) \|_{H^{s'}_x(\R^2)} &+ \| \eta(\cdot-x_0) U_{x_0}(\phi_1) \|_{H^{s'-1}_x(\R^2)} 
  \end{split}
\end{equation}
for all $s' > 1$.

We may now apply the local existence theory from\footnote{Strictly speaking, the results in that paper only handle the case when the above norm is sufficiently small, and for a model equation closely related to \eqref{wavemap-hyper}.  However, it is not too difficult to adapt the arguments there to the case at hand. Alternatively, one can use the results in \cite{tataru:wave2}.} \cite{kman.selberg}, and construct a classical local wave map $(\phi_{x_0}, [-T,T])$ for each fixed $x_0$ with initial data $\phi_{x_0}[0] = (U_{x_0}(\phi_{0,x_0}), U_{x_0}(\phi_{1,x_0}))$, for some $T \sim 1$ independent of $x_0$.  Furthermore, by \eqref{phio-2} and persistence of regularity theory in \cite{tataru:wave2}, we will have
\begin{equation}\label{phixot}
 \| \phi_{x_0}[t] \|_{H^{s'}_x(\R^2) \times H^{s'-1}_x(\R^2)} \lesssim_{s'}   \| \eta(\cdot-x_0) U_{x_0}(\phi_0) \|_{H^{s'}_x(\R^2)} + \| \eta(\cdot-x_0) U_{x_0}(\phi_1) \|_{H^{s'-1}_x(\R^2)} 
\end{equation}
for all $t \in [-T,T]$.

By the Lorentz rotation symmetry \eqref{rotate-data}, the rotated solutions $U_{x_0}^{-1} \circ \phi_{x_0}$ are classical wave maps on $[-T,T]$ with initial data that agree with $\Phi$ on $B(x_0,1/2)$.  Using finite speed of propagation (and shrinking $T$ slightly if necessary), we may thus glue all these solutions together to obtain a classical wave map $\phi$ on $[-T,T]$ with initial data $\phi[0] = \Phi$.  The bounds \eqref{phit-grow-energy}, \eqref{phit-grow-energy-2} then follow from \eqref{phixot}, the stability of Sobolev spaces under multiplication by smooth cutoff functions, and the fact mentioned earlier that $U_{x_0}$ and $U_{x_1}$ differ only by bounded rotations when $|x_1-x_0| \lesssim 1$.
\end{proof}

Standard energy arguments show that classical wave maps are uniquely determined by their initial data.  Iterating the above local existence result in the usual fashion, and taking contrapositives, we thus conclude

\begin{corollary}[Subcritical blowup criterion]\label{class} If $\Phi = (\phi_0,\phi_1)$ is classical initial data and $t_0 \in \R$, then there exists a unique \emph{maximal classical lifespan} $I \subset \R$ which is an open interval containing $t_0$, and a unique classical wave map $\phi: I \to \S$ with $\phi[t_0] = \Phi$, (which we call the \emph{maximal classical development} from the  initial data $\Phi$ at time $t_0$) such that if $t_*$ is any finite endpoint of $I$ then $\| \phi[t] \|_{{\mathcal H}^s_\loc} \to \infty$ as $t \to t_*$ for every $s > 1$.
\end{corollary}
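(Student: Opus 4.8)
The plan is the standard one for quasilinear evolution equations: establish uniqueness of classical wave maps from given data, glue the local solutions produced by Theorem~\ref{Class} into a single maximal solution, and then read off the blowup criterion by contraposing the local existence statement. The only step that is not purely mechanical is the uniqueness argument, where one must compare the two solutions inside a single ambient embedding of $\H$ (the hyperboloid model \eqref{hyperdef} supplies this, in contrast with the local-coordinate patches used to prove Theorem~\ref{Class}).

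For uniqueness, suppose $\phi, \tilde\phi$ are classical wave maps on a common open interval $J \ni t_0$ with $\phi[t_0] = \tilde\phi[t_0]$. In the hyperboloid coordinates \eqref{hyperdef} both solve \eqref{wavemap-hyper}, so their difference $w := \phi - \tilde\phi$, an $\R^{1+m}$-valued function that is Schwartz in space, satisfies $\Box w = \bigO(\partial_{t,x}\phi\, \partial_{t,x} w) + \bigO(\partial_{t,x}\phi\, \partial_{t,x}\tilde\phi\, w)$, where the coefficients, on any compact subinterval $J' \subset J$, are bounded in $L^\infty_{t,x}(J' \times \R^2)$ together with all their derivatives. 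The usual energy inequality for $\Box w$ then gives $\frac{d}{dt} E_w(t) \lesssim_{J',\phi,\tilde\phi} E_w(t)$ with $E_w(t) := \int_{\R^2} (|\partial_{t,x} w(t,x)|^2 + |w(t,x)|^2)\,dx < \infty$; since $E_w(t_0) = 0$, Gronwall forces $w \equiv 0$ on $J'$, hence on all of $J$. (Finite speed of propagation could localise this, but the Schwartz decay already makes $E_w$ finite, so it is not needed.)

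Next, let $\mathcal F$ be the collection of smooth maps $\psi: J \to \S$ on open intervals $J \ni t_0$ that restrict to a classical wave map on every compact subinterval and satisfy $\psi[t_0] = \Phi$. Theorem~\ref{Class} (after passing to interiors) shows $\mathcal F \neq \emptyset$, and since the intersection of two such intervals is again an open interval containing $t_0$, uniqueness shows any two members of $\mathcal F$ agree on the overlap. Setting $I := \bigcup_{(\psi,J) \in \mathcal F} J$, which is an open interval containing $t_0$, and defining $\phi$ on $I$ by patching, one checks that $\phi$ is smooth, solves \eqref{cov} throughout $I$, restricts to a classical wave map on every compact subinterval, and lies in $\mathcal F$; by construction $I$ and $\phi$ are the maximal (hence unique) such interval and solution.

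For the blowup criterion, let $t_*$ be a finite endpoint of $I$, say the right one, and suppose for contradiction that $\liminf_{t \to t_*^-} \|\phi[t]\|_{{\mathcal H}^s_\loc} < \infty$ for some $s > 1$. Choose $t_n \uparrow t_*$ with $\|\phi[t_n]\|_{{\mathcal H}^s_\loc}$ uniformly bounded. Since each $\phi[t_n] \in \S$, Theorem~\ref{Class} produces a classical wave map with data $\phi[t_n]$ at time $t_n$ on $[t_n-T,t_n+T]$, where $T>0$ depends only on $s$ and the uniform bound, not on $n$ — this is precisely why the localised norm ${\mathcal H}^s_\loc$ rather than ${\mathcal H}^s$ is the relevant quantity. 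Once $t_* - t_n < T$, uniqueness lets us glue this solution to $\phi$ and thereby extend $\phi$ to an element of $\mathcal F$ defined on an open interval strictly larger than $I$, contradicting the maximality of $I$. Hence $\|\phi[t]\|_{{\mathcal H}^s_\loc} \to \infty$ as $t \to t_*$ for every $s > 1$, and likewise at a finite left endpoint.
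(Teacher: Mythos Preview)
Your proof is correct and follows exactly the approach the paper sketches: the paper's own argument is the single sentence ``Standard energy arguments show that classical wave maps are uniquely determined by their initial data. Iterating the above local existence result in the usual fashion, and taking contrapositives, we thus conclude [the corollary],'' and you have simply supplied the standard details for each of those three steps.
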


\begin{remark} \emph{A posteriori}, we will be able to show that the maximal classical lifespan in Corollary \ref{class} is identical to the maximal energy class lifespan in Theorem \ref{lwp-claim} (indeed, this follows immediately from parts (iii) and (v) of that theorem), and so the maximal classical development is essentially the same as the maximal Cauchy development.
\end{remark}

\section{Reduction to abstract a priori estimates}\label{reduce-sec}

For each compact interval $I$ and $E>0$, let $\WM(I,E)$ denote the space of classical wave maps $\phi: I \to \S$ on $I$ of energy less than W$E$, quotiented out by the action \eqref{rotate-data} of the Lorentz rotation group $SO(m,1)$.  Note that if $J \subset I$ then every element of $\WM(I,E)$ can also be viewed as an element of $\WM(J,E)$ (indeed, by uniqueness of classical wave maps, we can embed $\WM(I,E)$ as a subset of $\WM(J,E)$).  Also, if $\phi \in \WM(I,E)$ and $t \in I$ then $\phi[t]$ can be viewed as an element of the energy space $\Energy$.  We use $\const \in \WM(I,E)$ to denote the constant wave map (the exact choice of constant is irrelevant, thanks to Lorentz rotation invariance).

In this section we use standard continuity arguments to reduce Theorem \ref{lwp-claim} to that of establishing a collection of \emph{a priori} estimates for classical wave maps with respect to various ``function space norms'' (or more precisely, metrics) on $\WM(I,E)$.  More precisely, we have 

\begin{theorem}[A priori estimates]\label{apriori-thm} There exist metrics $d_{S^1_\mu,I}$ on $\WM(I,E)$ for each compact interval $I$, $E > 0$ and $0 < \mu \leq 1$ with the following properties, where we abbreviate $\| \phi \|_{S^1_\mu(I)}$ for $d_{S^1_\mu,I}(\phi,\const)$:
\begin{itemize}
\item[(i)] (Monotonicity)  If $I \subset J$, $0 < \mu \leq 1$, and $\phi, \phi' \in \WM(J,E)$ then $d_{S^1_\mu,I}(\phi,\phi') \leq d_{S^1_\mu,J}(\phi,\phi')$.
\item[(ii)] (Continuity)  If $\phi, \phi' \in \WM([t_-,t_+])$ and $0 < \mu \leq 1$, then $d_{S^1_\mu,[a,b]}(\phi,\phi')$ is a continuous function of $a,b$ in the region $t_- \leq a < b \leq t_+$.
\item[(iii)] (Vanishing) If $I_n$ is a decreasing sequence of compact intervals with $\bigcap_n I_n = \{t_0\}$, $0 < \mu \leq 1$, and $\phi, \phi' \in \WM(I_1,E)$, then $\lim_{n \to \infty} d_{S^1_\mu,I_n}(\phi,\phi') \lesssim_E d_{\Energy}(\phi[t_0], \phi'[t_0])$.
\item[(iv)] ($S^1$ controls energy)  For any $I$, any $0 < \mu \leq 1$ and $M > 0$, any $\phi, \phi' \in \WM(I,E)$ with $\|\phi\|_{S^1_\mu(I)}, \|\phi'\|_{S^1_\mu(I)} \leq M$, and any $t \in I$, we have $d_{\Energy}(\phi[t], \phi'[t]) \lesssim_{M,E} d_{S^1_\mu,I}(\phi,\phi')$.
\item[(v)] (Persistence of regularity estimate) If $M > 0$, $0 < \mu \leq 1$ is sufficiently small depending on $M$ and $E$, $I$ is an interval, $t_0 \in I$, and $\phi \in \WM(I,E)$ is such that $\|\phi\|_{S^1_\mu(I)} \leq M$, then
\begin{equation}\label{apriori3}
\| \phi[t] \|_{{\mathcal H}^{1+\delta_0/2}_{\loc}} \lesssim_{M, E, \mu, \| \phi[t_0] \|_{{\mathcal H}^{100}_{\loc}}} 1
\end{equation}
for all $t \in I$.
\item[(vi)] (Stability estimate) If $M > 0$, $0 < \mu \leq 1$ is sufficiently small depending on $M$ and $E$, $I$ is an interval, $t_0 \in I$, and $\phi,\phi' \in \WM(I,E)$ are such that $\|\phi\|_{S^1_\mu(I)}, \|\phi'\|_{S^1_\mu(I)} \leq M$, then
\begin{equation}\label{apriori2}
d_{S^1_\mu,I}(\phi,\phi') \lesssim_{M,\mu,E} d_{\Energy}(\phi[t_0], \phi'[t_0]).
\end{equation}
\end{itemize}
\end{theorem}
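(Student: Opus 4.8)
The plan is to construct the metrics $d_{S^1_\mu,I}$ through the caloric gauge furnished by the harmonic map heat flow, and to reduce the two substantive properties (v) and (vi) to the corresponding caloric-gauge \emph{a priori} estimates (Theorem \ref{apriori-thm2}), which in turn will be proven in Sections \ref{parab-sec}--\ref{ap-sec} using the abstract parabolic regularity theory and the multilinear function-space estimates of Theorem \ref{func}.

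\emph{Construction.} Given a classical wave map $\phi: I \to \S$ of energy $<E$, extend it by the harmonic map heat flow (as in Section \ref{energy-sec}) to a family $\phi(s): I \to \S$, $s \in [0,\infty)$, with $\phi(0)=\phi$, and build the caloric frame on $\phi^* T\H$ by parallel transport along the heat-time direction from $s=+\infty$; this produces the differentiated fields $\psi_i(s,t,x)$ and the connection coefficients $A_\alpha(s,t,x)$. Define $\|\phi\|_{S^1_\mu(I)}$ to be an $\ell^2$-type superposition over $s \in (0,\infty)$ (against $\frac{ds}{s}$, with $k(s)$ as in \eqref{ks-def}) of the norms $\|\psi_x(s)\|_{S_{\mu,k(s)}(I)}$ of these fields in the weakly frequency-localised large-data spaces $S_{\mu,k}$ of \cite{tao:wavemap2}, and define $d_{S^1_\mu,I}(\phi,\phi')$ by the same expression applied to a difference field $\delta\psi$ obtained after matching the two caloric frames at one reference time. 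The triangle inequality for $d_{S^1_\mu,I}$ then follows from that of the $S_{\mu,k}$ norms together with the cocycle structure of the frame-matching.

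\emph{The soft properties.} Property (i) is immediate, since $S_{\mu,k}(I)$ is defined by restriction of spacetime distributions to $I \times \R^2$. Property (ii) follows from the corresponding continuity of the $S_{\mu,k}$-norms in the time interval --- these being built from $U^2$/$V^2$-type structures whose norms vary continuously with the slab --- together with the smooth dependence of the caloric gauge on classical data. For (iii), as $I_n \downarrow \{t_0\}$ the norms $\|\psi_x(s)\|_{S_{\mu,k(s)}(I_n)}$ collapse, up to constants, to the fixed-time spatial norm of $\psi_x(s,t_0)$, whose $s$-superposition is precisely the energy-space distance $d_\Energy(\phi[t_0],\phi'[t_0])$ by the construction of $\Energy$ recalled in Theorem \ref{energy-claim}; the loss of a constant depending on $E$ absorbs the frame-matching discrepancy. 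Property (iv) is proven the same way: the $L^\infty_t$ component of $S_{\mu,k}(I)$ controls $\psi_x(s,t)$ for every $t \in I$, and superposing over $s$ bounds $d_\Energy(\phi[t],\phi'[t])$ by $d_{S^1_\mu,I}(\phi,\phi')$, the a priori bound $M$ being used to keep the caloric frames of $\phi$ and $\phi'$ close enough that $\delta\psi$ genuinely measures the energy distance.

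\emph{The core estimates and the main obstacle.} By Section \ref{energy-sec}, properties (v) and (vi) reduce to the caloric-gauge a priori estimates of Theorem \ref{apriori-thm2} for the system satisfied by $(\psi_i,A_\alpha)$ --- a covariant wave equation for $\psi_i$ with nonlinearity bilinear and trilinear in $\psi$ and $A$, coupled to the parabolic and elliptic compatibility equations determining $A_\alpha$ from $\psi$. Using the control of the heat flow at the initial time obtained in Section \ref{initial-sec} (which produces a frequency envelope $c$ dominating the initial fields via the parabolic regularity theory of Section \ref{parab-sec}), one runs a continuity/bootstrap argument for the $S^1_\mu$-norm in which the nonlinearity is estimated by Theorem \ref{func} and the contributions are summed over the heat-time $s$ by the Gronwall-type Lemmas \ref{gron-lem} and \ref{gron-lem-2}; for (vi) the same scheme is run on the difference fields, expanding differences of products via the discretised Leibniz rule \eqref{disc-leib-eq}. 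The main difficulty --- and the reason for the qualifier ``$\mu$ sufficiently small depending on $M,E$'' --- is the large-data regime, where one cannot iterate the nonlinear map globally in time and where the caloric gauge is nonlocal in $s$ and in space, hence does not restrict trivially to subintervals. The crux will be to show that, after taking $\mu$ small, any $\phi \in \WM(I,E)$ with $\|\phi\|_{S^1_\mu(I)} \le M$ admits a partition of $I$ into $O_{M,E}(1)$ subintervals on each of which the restricted $S^1_\mu$-norm lies below the small-data threshold, that the caloric gauge and frequency envelope behave stably under this partition, and that the small-data stability estimate is then chained across the partition with only an $O_{M,E}(1)$ loss; simultaneously controlling the connection $A_\alpha$ (quadratic in $\psi$, solving its own equation) throughout, and matching the two caloric frames of $\phi$ and $\phi'$ with errors controlled by $d_\Energy(\phi[t_0],\phi'[t_0])$, are the remaining delicate points.
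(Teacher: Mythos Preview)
Your overall strategy---pass to the caloric gauge, define the metric via function-space norms on the differentiated fields, and reduce (v),(vi) to multilinear estimates in the $S_{\mu,k}$/$N_k$ spaces---matches the paper's. Two concrete discrepancies are worth flagging. First, the paper builds the metric from $\psi_s$, not $\psi_x$: one takes (schematically) $d_{S^1_\mu,I}(\Psi,\Psi')^2 = \sum_k \sup_{s \sim 2^{-2k}} \big(\sum_{j=0}^{10} s^{1+j/2}\|\partial_x^j \delta\psi_s(s)\|_{S_{\mu,k}(I)}\big)^2$. This is not cosmetic: $\psi_s$ is the ``scalar'' dynamic variable obeying the clean covariant wave equation $D^\alpha D_\alpha \psi_s = \partial_s w - (\psi_\alpha\wedge\psi_s)\psi^\alpha$, which is what one actually estimates; $\Psi_{t,x}$ is then recovered from $\psi_s$ by integrating \eqref{a-eq},\eqref{psi-eq}. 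Second, the passage from the caloric-gauge metric on $\WMC(I,E)$ to the metric on $\WM(I,E)$ is done by Hausdorff distance on $SO(m)$-orbits, not by a frame-matching cocycle; this sidesteps having to control frame discrepancies directly.

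The main gap is in your mechanism for (v) and (vi). You propose to partition $I$ into $O_{M,E}(1)$ subintervals on which the $S^1_\mu$-norm drops below a small-data threshold, and then chain small-data estimates. This will not work: the $S_{\mu,k}$ norm contains an $L^\infty_t$-type energy component (cf.\ \eqref{outgo}), so restricting to a short subinterval does \emph{not} make $\|\phi\|_{S^1_\mu}$ small---indeed property (iii) says it only shrinks to something of size $\sim_E d_\Energy$, which for large data is not below any absolute threshold. (The interval-partitioning in the paper occurs one level up, in the application of Theorem \ref{apriori-thm} to prove Lemma \ref{ltb}, not in the proof of Theorem \ref{apriori-thm} itself.) The actual mechanism is that the parameter $\mu$ is built into the \emph{definition} of $S_{\mu,k}$ (via ``small'' and ``integrable'' atoms) so that the trilinear null-form estimate \eqref{trilinear-improv2} gains a factor $\mu^{2-\eps}$ outright, on the whole interval, under the hypothesis $\|\Psi\|_{S^1_\mu(I)}\le M$. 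One then shows $\|P_k\Box\psi_s\|_{N_k}\lesssim \mu^{2-\eps}c(s)s^{-1}\chi_{k\sim k(s)}^{\delta_2/10}$ (and $\mu^{1-\eps}\delta c(s)\cdots$ for differences), feeds this into the energy estimate \eqref{energy-est}, and closes via $\delta c(s)\lesssim \delta c_0(s)+\mu^{1-\eps}\delta c(s)$, absorbing the last term for $\mu$ small. The control of $\partial_s w$ (which appears as a forcing term) requires a separate parabolic argument from $s=0$ using \eqref{w-eq} and Lemma \ref{gron-lem-2}; this, together with the ``dynamic separation'' needed to expose the null structure in the remaining forcing terms, is where the real work lies.
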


\begin{remark}  Suppose, as a gross caricature, that wave maps $\phi$ behaved like scalar solutions $\phi: I \times \R^3 \to \R$ to the energy-critical nonlinear wave equation (NLW) $\Box \phi = \phi^5$. Then one possible choice of the metrics $d_{S^1_\mu,I}$ would be
\begin{align*}
d_{S^1_\mu,I}(\phi,\phi') &:=
\| \partial_{t,x}(\phi-\phi')\|_{L^\infty_t L^2_x(I \times \R^3)} + \frac{1}{\mu} \| \partial_{t,x}((\phi-\phi') \|_{L^5_t L^{30/13}_x(I \times \R^3)}.
\end{align*}
For the small energy theory, it is well known (using Strichartz estimates) that one can iterate in the $S^1_\mu$ space with $\mu=1$ in order to obtain (global) well-posedness for the energy-critical NLW.  For large energy, one can similarly iterate\footnote{Actually, this is a slight oversimplification.  The more precise statement is that boundedness in the $S^1_\mu$ space allows one to perform a contraction mapping argument in the $S^1_1$ space.} in the $S^1_\mu$ space, with $\mu$ now chosen sufficiently small depending on the energy, and with the interval $I$ chosen small enough so that the $S^1_\mu$ norm stays bounded, to obtain local existence. Thus one should view $\mu$ as a parameter designed to compensate for large energy.  In practice, the Strichartz spaces are insufficient to control the nonlinearities we will encounter, but readers who are familiar with the Strichartz theory for the NLW (or NLS) equations may find the above analogies to be helpful in what follows.  
\end{remark}

\begin{remark} The persistence of regularity estimate \eqref{apriori3} is quite crude.  One would expect in fact that the ${\mathcal H}^s_{\loc}$ norms should grow at most linearly whenever the $S^1_\mu$ norm is controlled for sufficiently small $\mu$, but this seems to require a more delicate analysis and is not actually needed for our applications, so we have elected not to establish this result.  Such a claim may however be useful if one eventually wants to obtain scattering for classical wave maps.
\end{remark}

We shall prove Theorem \ref{apriori-thm} in later sections. In the remainder of this section, we assume Theorem \ref{apriori-thm} and use it to prove Theorem \ref{lwp-claim}.  The basic lemma is the following perturbation theory result.

\begin{lemma}[Long-time perturbations]\label{ltb}  Let $I$ be a compact interval let $E>0$, and let $t_0 \in I$.  Let $\phi,\phi' \in \WM(I,E)$ be such that 
\begin{equation}\label{energy-close-bound}
d_{\Energy}(\phi[t_0],\phi'[t_0]) \leq \eps
\end{equation}
for some $\eps > 0$.  Suppose also that one can cover $I$ by intervals $I_1,\ldots,I_J$ such that
\begin{equation}\label{s1-bound}
\| \phi \|_{S^1_\mu(I_j)} \leq M
\end{equation}
for all $1 \leq j \leq J$ and some $M, \mu > 0$.  If $\mu$ is sufficiently small depending on $M, E$, and $\eps$ is sufficiently small depending on $M, J, \mu, E$, then we have
\begin{equation}\label{energy-distance}
d_{\Energy}(\phi[t], \phi'[t]) \lesssim_{M,E,J,\mu} \eps
\end{equation}
and
\begin{equation}\label{perturb-regularity}
\| \phi'[t] \|_{{\mathcal H}^{100}} \lesssim_{M,E,J,L,\mu, \| \phi'[t_0] \|_{{\mathcal H}^{100}}} 1.
\end{equation}
for all $t \in I$, where $L$ is any upper bound for the length of $I$.  
\end{lemma}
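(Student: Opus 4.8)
The plan is to run a standard "divide the interval into finitely many pieces and iterate the small-data stability estimate" argument, using Theorem \ref{apriori-thm}(vi) on each subinterval and tracking how the energy separation $\eps$ can grow from one piece to the next. First I would reduce to the case $J=1$ of a single interval $I_j$ on which $\|\phi\|_{S^1_\mu(I_j)} \leq M$ and the endpoint data are $\eps$-close in $\Energy$: on such an interval, Theorem \ref{apriori-thm}(vi) gives $d_{S^1_\mu,I_j}(\phi,\phi') \lesssim_{M,\mu,E} \eps$ (provided $\mu$ is small enough depending on $M,E$, and provided $\phi' \in \WM(I_j,E')$ for a suitable $E'$ — see below), and then Theorem \ref{apriori-thm}(iv) upgrades this to $d_{\Energy}(\phi[t],\phi'[t]) \lesssim_{M,E} \eps$ for every $t \in I_j$, since $\|\phi\|_{S^1_\mu(I_j)}, \|\phi'\|_{S^1_\mu(I_j)} \leq O_{M,\mu,E}(1)$. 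In particular the $\Energy$-distance at the far endpoint of $I_j$ is at most $C_{M,\mu,E}\,\eps$ for some constant $C_{M,\mu,E} \geq 1$.

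Next I would iterate this over the cover. Order the intervals $I_1,\dots,I_J$ so that they form a chain emanating from $t_0$ (possible since $I$ is a compact interval containing $t_0$ and the $I_j$ cover it; concretely, split $I$ at $t_0$ and handle the two sides separately, relabelling so that consecutive intervals overlap). Starting from $d_{\Energy}(\phi[t_0],\phi'[t_0]) \leq \eps$, apply the single-interval estimate on $I_1$ to get $\Energy$-closeness $\leq C\eps$ at the shared endpoint with $I_2$, then on $I_2$ to get $\leq C^2 \eps$, and so on, so that after $J$ steps we obtain $d_{\Energy}(\phi[t],\phi'[t]) \leq C^J \eps \lesssim_{M,E,J,\mu}\eps$ for all $t \in I$, which is \eqref{energy-distance}. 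For this to be legitimate we need $\eps$ small enough (depending on $M,J,\mu,E$) that at each stage the intermediate $\Energy$-distance $C^j \eps$ stays below the threshold implicitly required by Theorem \ref{apriori-thm}(vi), and small enough that $\phi'$ genuinely lies in $\WM(I_j, E)$ (or at worst $\WM(I_j, 2E)$, say) on each piece; the latter follows because $\E(\phi'[t])$ differs from $\E(\phi[t]) \leq E$ by at most $O(C^J \eps)$ using Theorem \ref{energy-claim}(iii) and the energy-distance control, so choosing $\eps$ small keeps $\phi'$ of energy $\leq 2E$, which we absorb into the hypotheses by replacing $E$ with $2E$ throughout (harmlessly, since all constants are allowed to depend on $E$).

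For the regularity bound \eqref{perturb-regularity}, I would feed the $S^1_\mu$ control of $\phi'$ obtained above — namely $\|\phi'\|_{S^1_\mu(I_j)} \lesssim_{M,\mu,E} 1$ on each $I_j$ — into Theorem \ref{apriori-thm}(v), which (for $\mu$ small depending on this bound and on $E$) gives $\|\phi'[t]\|_{{\mathcal H}^{1+\delta_0/2}_\loc} \lesssim 1$ with the implied constant depending on $\|\phi'[t_0]\|_{{\mathcal H}^{100}_\loc}$. One then bootstraps up to the ${\mathcal H}^{100}$ norm: propagate along the chain of intervals $I_1,\dots,I_J$, at each stage invoking the subcritical persistence-of-regularity theory (Corollary \ref{class}, equations \eqref{phit-grow-energy}, \eqref{phit-grow-energy-2}) to pass from ${\mathcal H}^{1+\delta_0/2}_\loc$-control plus ${\mathcal H}^{100}$-control of the data at the left endpoint of $I_j$ to ${\mathcal H}^{100}$-control at the right endpoint — here the point is that $s = 1+\delta_0/2 > 1$ is a legitimate subcritical exponent, so the local-existence time in Theorem \ref{Class} is bounded below in terms of $M,E,\mu$, and $I_j$ may be further subdivided into $O_{M,E,\mu}(L)$ many such short intervals if necessary. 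Accumulating the (finitely many, each bounded) growth factors over all of $I$ yields \eqref{perturb-regularity}, with the stated dependence on $L$.

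The main obstacle is the bookkeeping of the two intertwined quantitative dependencies: ensuring that the threshold on $\eps$ demanded at the $j$-th stage (so that Theorem \ref{apriori-thm}(vi) applies with its geometrically-amplified input $C^{j-1}\eps$) can be chosen uniformly in a way that depends only on $M,J,\mu,E$ and not circularly on the conclusion, and simultaneously checking that $\phi'$ stays in the relevant energy ball $\WM(I_j,O(E))$ throughout the iteration. Both are routine once one fixes the order of quantifiers — pick $\mu$ small depending on $M,E$ first, then $\eps$ small depending on $M,J,\mu,E$ — but this ordering must be stated carefully to avoid the appearance of circularity.
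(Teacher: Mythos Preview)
Your argument has a genuine gap: Theorem \ref{apriori-thm}(vi) requires \emph{both} $\|\phi\|_{S^1_\mu(I_j)} \leq M$ and $\|\phi'\|_{S^1_\mu(I_j)} \leq M$ as hypotheses, but the lemma only supplies the first. You implicitly try to derive the second from (vi) together with the triangle inequality (writing $\|\phi'\|_{S^1_\mu} \leq \|\phi\|_{S^1_\mu} + d_{S^1_\mu}(\phi,\phi')$), but this is circular --- you need the $S^1_\mu$ bound on $\phi'$ before you can invoke (vi) at all. Your closing paragraph about ``the threshold on $\eps$'' suggests you think (vi) places a smallness condition on $\eps$; it does not, the condition is on $\|\phi'\|_{S^1_\mu}$.

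The paper breaks this circularity with a continuity (bootstrap) argument, which is precisely why properties (ii) and (iii) of Theorem \ref{apriori-thm} are listed. On a single interval $I_1 = [t_0,t_1]$, one considers those $T \in (t_0,t_1]$ for which the bootstrap hypothesis $\|\phi'\|_{S^1_\mu([t_0,T])} \leq 3M$ holds; under this hypothesis (vi) applies (with $3M$ in place of $M$) and yields $d_{S^1_\mu,[t_0,T]}(\phi,\phi') \lesssim_M \eps$, whence the triangle inequality and smallness of $\eps$ improve the bootstrap to $\|\phi'\|_{S^1_\mu([t_0,T])} \leq 2M$. Continuity (ii) makes the set of admissible $T$ closed, vanishing (iii) makes it contain a neighbourhood of $t_0$, and the strict improvement $3M \to 2M$ makes it open; connectedness of $(t_0,t_1]$ then gives $\|\phi'\|_{S^1_\mu(I_1)} \leq 2M$ on the whole interval. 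Only at that point do you have the $S^1_\mu$ control on $\phi'$ needed to feed into (iv) and (v), after which the rest of your scheme (iteration over $I_1,\dots,I_J$, and the subcritical bootstrap to ${\mathcal H}^{100}$ via Theorem \ref{Class}) goes through essentially as you wrote.
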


\begin{proof}  Fix $E$; we allow all constants to depend on $E$.

It suffices to verify the claim when $J=1$, since the case of higher $J$ then follows by iteration (and by energy conservation and Theorem \ref{apriori-thm}(i)).  By a further subdivision we may then assume that $t_0$ is either the upper endpoint or lower endpoint of $I$.  For sake of exposition we shall assume that $t_0 = \inf(I)$ is the lower endpoint of $I$, as the other case is of course exactly analogous.  Thus $I = [t_0,t_1]$ for some $t_1 > t_0$.  

Our main tool here will be the continuity method. Suppose that $t_0 < T \leq t_1$ is such that $\| \phi' \|_{S^1_\mu([t_0,T] \times \R^2)} \leq 3M$.  Then from \eqref{apriori2}, \eqref{energy-close-bound}, and Theorem \ref{apriori-thm}(i) we see that if $\mu$ is sufficiently small depending on $M$, then
$$ d_{S^1_\mu, [t_0,T]}( \phi, \phi' ) \lesssim_{M} \eps.$$
Since $\eps$ is assumed to be sufficiently small depending on $M$, we thus conclude from \eqref{s1-bound} and the triangle inequality that
$$ \| \phi' \|_{S^1_\mu([t_0,T] \times \R^2)} \leq 2M.$$
By Theorem \ref{apriori-thm}(ii), the set of $T \in (t_0,t_1]$ such that $\| \phi' \|_{S^1_\mu([t_0,T]\times \R^2)} \leq 2M$ is thus both closed and open in $(t_0,t_1]$.  By Theorem \ref{apriori-thm}(iii) (if $M$ is large enough depending on $E$), this set also contains all times sufficiently close to $t_0$.  As $(t_0,t_1]$ is connected, we conclude that this set is all of $(t_0,t_1]$.  In particular we have
$$ \| \phi' \|_{S^1_\mu(I)} \leq 2M.$$
Since $\mu$ is assumed to be sufficiently small depending on $M$, we can apply Theorem \ref{apriori-thm}(v) and \eqref{energy-close-bound} to conclude that
$$ d_{S^1,I}(\phi,\phi') \lesssim_{M,\mu} \eps.$$
The claim \eqref{energy-distance} then follows from Theorem \ref{apriori-thm}(iv).  To prove \eqref{perturb-regularity}, observe from Theorem \ref{apriori-thm}(v) that
$$ \sup_{t \in I} \| \phi'[t] \|_{{\mathcal H}^{1+\delta_0/2}_\loc} \lesssim_{M, \mu, \| \phi'[t_0] \|_{{\mathcal H}^{100}}} 1,$$
and the claim then follows from repeated application of Lemma \ref{Class}.
\end{proof}

\begin{corollary}[Lifespan stability]\label{ltb-cor}  Let $I$ be a compact interval, let $t_0 \in I$ and $E>0$, and let $\phi\in \WM(I,E)$.  Suppose that one can cover $I$ by intervals $I_1,\ldots,I_J$ such that $\|\phi\|_{S^1_\mu(I_j)} \leq M$ for all $1 \leq j \leq J$ and some $M, \mu > 0$.  Let $\Phi' \in \S$ be such that $d_{\Energy}(\phi[t_0],\Phi') \leq \eps$ for some $\eps > 0$.  If $\mu$ is sufficiently small depending on $M$ and $E$, and $\eps$ is sufficiently small depending on $M, J, \mu$, then there exists $\phi' \in \WM(I,E)$ such that $\phi'[t_0] = \Phi$.
\end{corollary}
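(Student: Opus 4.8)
The plan is to construct $\phi'$ as the maximal classical development of $\Phi'$ furnished by Corollary \ref{class}, and then to show, by a continuity argument, that its maximal classical lifespan $J'$ contains all of $I$: Lemma \ref{ltb}, applied on compact subintervals of $I$, keeps the ${\mathcal H}^{100}_{\loc}$ norm of $\phi'$ bounded, and the subcritical blowup criterion then rules out $J'$ terminating inside $I$.

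In detail: since $\Phi' \in \S$, Corollary \ref{class} gives a unique maximal classical development $\phi': J' \to \S$ with $\phi'[t_0] = \Phi'$, where $J'$ is an open interval containing $t_0$. I would first observe that for $\eps$ small enough one has $\E(\Phi') < E$: by Theorem \ref{energy-claim}(iv) and the triangle inequality, $\sqrt{\E(\Phi')} = d_{\Energy}(\Phi',\const) \le d_{\Energy}(\phi[t_0],\const) + \eps = \sqrt{\E(\phi)} + \eps$, which is $< \sqrt{E}$ for $\eps$ small, using energy conservation $\E(\phi[t_0]) = \E(\phi) < E$ (the harmless mild dependence of this threshold on $\E(\phi)$ can, if desired, be absorbed by allowing a slightly larger energy bound). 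Consequently $\phi'$ restricted to any compact $K \subset J'$ lies in $\WM(K,E)$, so it suffices to prove $I \subset J'$, since then $\phi'|_I \in \WM(I,E)$ is the desired map.

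To prove $I \subset J'$ I would argue by contradiction. Write $J' = (\alpha,\beta)$ and $I = [a,b]$; if $I \not\subset J'$ then $\alpha \ge a$ or $\beta \le b$, and I treat the case $\beta \le b$ (the other being symmetric). Then $t_* := \beta$ is a finite endpoint of the maximal lifespan $J'$ with $t_0 < t_* \le b$, so Corollary \ref{class} gives $\|\phi'[t]\|_{{\mathcal H}^{100}_{\loc}} \to \infty$ as $t \to t_*$. For any $t_0 < T < t_*$ one has $[t_0,T] \subset I \cap J'$, hence $\phi,\phi' \in \WM([t_0,T],E)$ and $d_{\Energy}(\phi[t_0],\phi'[t_0]) = d_{\Energy}(\phi[t_0],\Phi') \le \eps$; intersecting the given cover $I_1,\ldots,I_J$ with $[t_0,T]$ and using Theorem \ref{apriori-thm}(i) produces a cover of $[t_0,T]$ by at most $J$ intervals on each of which $\|\phi\|_{S^1_\mu} \le M$. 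The smallness thresholds needed in Lemma \ref{ltb} (for $\mu$, depending on $M,E$; for $\eps$, depending on $M,J,\mu,E$) are implied by the hypotheses here, all constants being allowed to depend on $E$. Applying \eqref{perturb-regularity} with length bound $L := |I|$ then yields $\|\phi'[t]\|_{{\mathcal H}^{100}} \lesssim_{M,E,J,|I|,\mu,\|\Phi'\|_{{\mathcal H}^{100}}} 1$, hence in particular $\|\phi'[t]\|_{{\mathcal H}^{100}_{\loc}} \lesssim 1$, for all $t \in [t_0,T]$, with an implied constant that is finite (since $\Phi' \in \S$) and, crucially, independent of $T$. Letting $T \to t_*$ contradicts the blowup just stated, so $I \subset J'$ and the proof is complete.

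The only genuine obstacle is conceptual: one cannot invoke Lemma \ref{ltb} directly on $I$, because that lemma presupposes $\phi' \in \WM(I,E)$ — i.e. existence on all of $I$ — which is exactly what must be proved. The continuity/bootstrap argument through the maximal classical development and the blowup criterion removes this circularity, and the quantitative point that makes it work is that the bound \eqref{perturb-regularity} on $[t_0,T]$ depends on $T$ only through $L = |I|$ and $\|\Phi'\|_{{\mathcal H}^{100}}$, both of which are fixed; the remaining steps are routine.
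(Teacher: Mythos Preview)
Your proof is correct and follows essentially the same approach as the paper's own proof: argue by contradiction via the maximal classical development from Corollary~\ref{class}, use the subcritical blowup criterion to force $\|\phi'[t]\|_{{\mathcal H}^s_\loc} \to \infty$ at a finite endpoint inside $I$, and contradict this with the uniform ${\mathcal H}^{100}$ bound \eqref{perturb-regularity} from Lemma~\ref{ltb} applied on compact subintervals. Your write-up is in fact more explicit than the paper's (which compresses the argument to two sentences), particularly in verifying $\E(\Phi') < E$, in spelling out why one must work on $[t_0,T]$ with $T < t_*$ rather than invoking Lemma~\ref{ltb} directly on $I$, and in noting that the bound from \eqref{perturb-regularity} is uniform in $T$.
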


\begin{proof} If the conclusion failed, then by Corollary \ref{class} there would exist an open subinterval $I' \subset I$ containing $t_0$ and a classical solution $\phi': I' \to \S$ with $\phi'[t_0] = \Phi'$ such that $\| \phi'[t]\|_{{\mathcal H}^{1+\delta_0/2}_\loc}$ (and thus $\| \phi'[t]\|_{{\mathcal H}^{100}}$) was unbounded on $I'$.  But this contradicts Lemma \ref{ltb} (restricted to $I'$).
\end{proof}

We are now ready to prove Theorem \ref{lwp-claim}, which will be a completely soft argument relying on Lemma \ref{ltb}, Corollary \ref{ltb-cor}, and gluing of intervals.

\begin{proof}[Proof of Theorem \ref{lwp-claim}]  We first observe that it suffices to establish the claim under the additional assumption that all solutions involved have energy less than an arbitrary parameter $E > 0$, since the general case then follows by letting $E \to \infty$ and using the various claims in Theorem \ref{lwp-claim} to ensure that the solutions constructed for different choices of $E$ are compatible with each other.

We now fix this energy threshold $E > 0$, choose a sufficiently large parameter $M > 0$ depending on $E$, and also choose a sufficiently small parameter $\mu$ depending on $E, M$.  We allow all implied constants to depend on $E$.

Given $\Phi_0 \in \Energy$ with $\E(\Phi_0) < E$ and $t_0 \in \R$, let us say that a compact interval $I$ containing $t_0$ is \emph{good} for this choice of initial data if there exists a sequence $\phi^{(n)} \in \WM(I,E)$ such that $\phi^{(n)}(t_0)$ converges to $\Phi_0$ in $\Energy$ (and in particular has energy less than $E$ for sufficiently large $n$, by Theorem \ref{energy-claim}(iii)), and such that $I$ can be covered by finitely many compact intervals $I_1,\ldots,I_J$, such that $\limsup_{n \to \infty} \| \phi^{(n)} \|_{S^1_\mu(I_j)} < M$ for each $1 \leq j \leq J$.  From Lemma \ref{ltb}, we see (for appropriate choices of $M, \mu$) that $\phi^{(n)}$ forms a Cauchy sequence in the uniform topology in $\Energy$ on $I$, and so (by completeness of $\Energy$) converges uniformly in $\Energy$ to a limit $\phi: I \to \Energy$, which is then continuous in $\Energy$ since each of the $\phi^{(n)}$, being classical, are automatically continuous in $\Energy$ by Theorem \ref{energy-claim}.  Another application of Lemma \ref{ltb} shows that this limit is independent of the choice of sequence $\phi^{(n)}$.  Similarly, if there are two good compact intervals $I, I'$ containing $t_0$, and $\phi: I \to \Energy$ and $\phi': I' \to \Energy$ are constructed as above from the same initial data, then another application of Lemma \ref{ltb} (applied to the common intersection $I \cap I'$ and using monotonicity) shows that $\phi$ and $\phi'$ agree on $I \cap I'$.  Finally, another application of Lemma \ref{ltb} shows that the union of finitely many good intervals is still good.  Thus, if we define the \emph{maximal lifespan} $I$ of $\Phi_0$ from $t_0$ to be the union of all the good compact intervals, we can define a unique maximal Cauchy development $\phi: I \to \Energy$ by gluing together all the partial developments $\phi: J \to \Energy$ associated to good intervals.

We now establish claim (i), namely that $I$ is an open interval containing $t_0$.  We first show that $I$ contains a neighbourhood of $t_0$.  By Theorem \ref{energy-claim}(i), there exists a sequence $\Phi^{(n)} \in \S$ that converges in $\Energy$ to $\Phi$, and thus (by Theorem \ref{energy-claim}(iii)) we have $\E(\Phi^{(n)}) < E$ after discarding at most finitely many $n$.  Let $\eps > 0$ be sufficiently small depending on $E, M, \mu$.  We find an $n$ such that $d_{\Energy}(\Phi^{(n)},\Phi) \leq \eps/2$, and then let $\phi^{(n)}: I^{(n)} \to \S$ be the maximal classical development of $\Phi^{(n)}$ from $t_0$ given by Lemma \ref{class}, thus $I^{(n)}$ is an open interval containing $t_0$.  Applying Theorem \ref{apriori-thm}(iii), we can find a compact interval $I' \subset I^{(n)}$ containing $t_0$ in its interior such that $\limsup_{n \to \infty} \| \phi^{(n)} \|_{S^1_\mu(I')} < M/2$, if $M$ is large enough depending on $E$.  Now for all sufficiently large $m$, we have $d_{\Energy}(\Phi^{(n)},\Phi^{(m)}) \leq \eps$ by the triangle inequality, so by Corollary \eqref{ltb-cor} we have a classical wave map $\phi^{(m)}: I' \to \S$ with $\phi^{(m)}[t_0] = \Phi^{(m)}$.  Inspecting the proof of Lemma \ref{ltb} we see that $\limsup_{m \to \infty} \| \phi^{(m)} \|_{S^1_\mu(I')} < M$, and so $I'$ is good.  Thus $I$ contains an open neighbourhood of $t_0$.

It remains to show that $I$ itself is open.  Let $t_1 \in I$, then by construction there exists a good interval $I'$ containing both $t_0$ and $t_1$.  Thus we can find a sequence $\phi^{(n)} \in \WM(J,E)$ which converges uniformly to $\phi$ in $\Energy$ on $I'$, in particular $\phi^{(n)}[t_1]$ converges to $\phi[t_1]$ in $\Energy$.  Applying the previous argument at time $t_1$ rather than $t_0$, we conclude that there exists a compact interval $K$ containing $t_1$ in its interior such that the maximal classical developments of $\phi^{(n)}$ exist on $K$ and $\limsup_{n \to \infty} \| \phi^{(n)} \|_{S^1_\mu(K)} < M$.  This implies that $J \cup K$ is good, and so $t_1$ is in the interior of $I$.  Thus $I$ is open as claimed.

The claim (ii) of Theorem \ref{lwp-claim} is clear from construction.  Claim (iii) follows easily from Corollary \ref{ltb-cor} (and the uniqueness of classical wave maps), so we now turn to claim (iv).  From the proof of (i) we know that every element of the maximal lifespan of a solution is contained in the interior of a good interval, and so by compactness, we see that to prove (iv), it suffices to do so in the case when $K$ is a good interval.  Let $\phi_\infty: K \to \Energy$ be the restriction of a maximal Cauchy development to a good interval $K$, with energy strictly less than $E$.  Let $\phi_\infty^{(m)} \in \WM(K)$ be a sequence converging uniformly to $\phi_\infty$ as in the definition of a good interval, thus $\E(\phi_\infty^{(m)}) < E$ for sufficiently large $m$, and one can cover $K$ by a finite number of intervals $K_1,\ldots,K_k$ such that $\limsup_{m \to \infty} \| \phi_\infty^{(m)} \|_{S^1_\mu(K_j)} < M$ for all $1 \leq j \leq k$.  Now let $\eps > 0$ be a sufficiently small quantity depending on $E, M, k, \mu$.  For $n$ large enough, $\Phi_{0,n}$ lies within $\eps/3$ of $\Phi_{0,\infty}$ in the energy metric and has energy less than $E$, and so if one writes $\Phi_{0,n}$ as the limit of smooth data $\Phi_{0,n}^{(m)}$ of energy less than $E$, we see that $\Phi_{0,n}^{(m)}$ lies within $\eps$ of $\phi_\infty^{(m')}[t_0]$ if $m,m'$ are large enough.  Applying Corollary \ref{ltb-cor} and Lemma \ref{ltb} one can then (if $\eps$ is small enough) find $\phi_n^{(m)} \in \WM(K)$ with $\phi_n^{(m)}[t_0] = \Phi_{0,n}^{(m)}$, and furthermore (by inspection of the proof of Lemma \ref{ltb}) we have $\limsup_{m \to \infty} \| \phi_n^{(m)} \|_{S^1_\mu(I)} < M$ on $O_{M,k,K}(1)$ intervals $I$ (depending on $m'$, but independent of $m$) covering $K$.  From this we see that $K$ is good for $\Phi_n$, and from Lemma \ref{ltb} we then see that the resulting Cauchy development $\phi_n: K \to \Energy$ differs from $\phi_\infty: K \to \Energy$ by $O_{M,k,K}(\eps)$ in the uniform energy metric.  Sending $\eps \to 0$ we conclude that $\phi_n$ converges uniformly to $\phi_\infty$ on $K$, and Claim (iv) follows.

Finally, we establish Claim (v).  Suppose for contradiction that there was a sequence $t_n \in I$ converging to a finite endpoint $t_* \in \R$ of $I$ such that $\phi[t_n]$ converged in $\Energy$ to a limit $\Phi_*$.  Since $\phi$ had energy less than $E$, we see that $\Phi_*$ does too.  By the arguments used to prove (i), we can find a compact interval $J$ containing $t_*$ in its interior for which there exists a sequence $\phi_*^{(m)} \in \WM(J,E)$ with $\limsup_{m \to \infty} \|\phi_*^{(m)} \|_{S^1_\mu(J)} < M$ and with $\phi_*^{(m)}(t_*)$ converging in $\Energy$ to $\Phi_*$ (and thus has energy less than $E$ for sufficiently large $m$).  As before, $\phi_*^{(m)}$ converges uniformly in $\Energy$ to a continuous limit $\phi_*: J \to \Energy$.  Since $t_n \in J$ for sufficiently large $n$, we see from continuity that $\phi_*[t_n]$ converges in $\Energy$ to $\Phi_*$, and thus by the triangle inequality $d_{\Energy}(\phi_*[t_n],\phi[t_n])$ converges to zero.  

Let $\eps$ be sufficiently small depending on $M$, $E$ and $\mu$.  Then for $n$ large enough, we have
$d_{\Energy}(\phi_*[t_n],\phi[t_n]) \leq \eps/2$, and thus if $\Phi^{(m)}_n \in \S$ is a sequence of classical data converging in $\Energy$ to $\phi[t_n]$ (and thus with energy strictly less than $E$ for sufficiently large $n$) then $d_{\Energy}(\phi^{(m')}_*[t_n],\Phi^{(m)}_n) \leq \eps$ whenever $m, m'$ are sufficiently large.  Applying Corollary \ref{ltb-cor} and Lemma \ref{ltb}, we see (if $\eps$ is small enough) that there is $\phi^{(m)}_n \in \WM(J,E)$ with $\phi^{(m)}_n[t_n] = \Phi^{(m)}_n$ such that $\limsup_{m \to \infty} \| \phi^{(m)}_n \|_{S^1_\mu(J)} < M$.  From this (and the uniqueness of classical wave maps) we see that if $K$ is any good interval for $\phi$ that contains both $t_0$ and $t_n$, then $K \cup J$ is also good for $\phi$.  Thus $J \subset I$, and therefore $t_*$ is in fact an interior point of $I$, a contradiction.  This establishes Claim (v), and the proof of Theorem \ref{lwp-claim} is complete.
\end{proof}

It remains to prove Theorem \ref{apriori-thm}.  This is the purpose of the remaining sections of this paper.

\section{The caloric gauge and the energy space}\label{energy-sec}

Our arguments thus far have been extremely abstract; the exact form of the wave maps equation, or even of the energy space $\Energy$, has not played any real role in previous sections, beyond such basic properties as energy conservation and classical well-posedness.  To proceed further, we will have to use the specific structure of the equation and the energy space.  In this section we recall from \cite{tao:heatwave2} the construction of the energy space, and in particular the \emph{caloric gauge} that is used in that construction.  We refer the reader to \cite{tao:heatwave2} for a more detailed treatment of the material here.

\subsection{The caloric gauge}

We first recall a global existence theorem for the harmonic map heat flow, essentially due to Eells and Sampson \cite{eells}.

\begin{theorem}[Global existence for heat flow]\label{ghp}  Let $I$ be a compact interval, and let $\phi: I \times \R^2 \to \H$ be a smooth map which differs from a constant $\phi(\infty)$ by a Schwartz function.  Then there exists a unique extension $\phi: \R^+ \times I \times \R^2 \to H$ of $\phi$ (thus $\phi(0,t,x) = \phi(t,x)$ for all $(t,x) \in I \times \R^2$) which is smooth with all derivatives bounded, and obeys the harmonic map heat flow equation
$$ \partial_s \phi = (\phi^* \nabla)_i \partial_i \phi$$
on $\R^+ \times I \times \R^2 := \{ (s,t,x): s \in \R^+, t \in I, x \in \R^2\}$, and converges in $C^\infty_{\loc}(I \times \R^2)$ to $\phi(\infty)$ as $s \to \infty$.
\end{theorem}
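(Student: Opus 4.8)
The plan is to regard the extension variable $s$ as the time variable of a two-dimensional parabolic flow, with the Minkowski time $t \in I$ playing the role of an inert parameter, and to combine standard semilinear parabolic theory with the Eells--Sampson maximum principle, which applies precisely because $\H^m$ is nonpositively curved. In the hyperboloid coordinates \eqref{hyperdef}, \eqref{phicor}, the identity $(\phi^*\nabla)_i \partial_i \phi = \Delta \phi - \langle \partial_i \phi, \partial_i \phi\rangle_{\R^{1+m}}\phi$ shows that the heat-flow equation is the \emph{semilinear} parabolic system
$$\partial_s \phi = \Delta \phi - \langle \partial_i \phi, \partial_i \phi \rangle_{\R^{1+m}}\, \phi,$$
the parabolic analogue of the wave-map equation \eqref{wavemap-hyper}. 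For each fixed $t$, a routine Duhamel iteration for $\phi - \phi(\infty)$ in $C^0_s H^N_x(\R^2)$ for $N$ large, using the heat smoothing \eqref{heat-lp} to bootstrap, produces a unique smooth solution on a slab $[0,S_0) \times \R^2$ with all $x$-derivatives bounded and Schwartz; letting $t$ vary and differentiating the equation in $t$ upgrades this to joint smoothness in $(s,t,x)$ with bounds uniform over the compact interval $I$. That the solution stays on $\H$, i.e. that $g := \langle \phi,\phi\rangle_{\R^{1+m}} + 1$ vanishes, follows by computing $(\partial_s - \Delta) g = -2\langle \partial_i \phi,\partial_i\phi\rangle_{\R^{1+m}}\, g$ with $g(0)=0$ and invoking uniqueness for this linear parabolic equation; uniqueness of the extension itself is a one-line energy estimate for the difference of two solutions.

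The crucial point is the global-in-$s$ a priori bound, which comes from the Bochner identity of Eells--Sampson: along the flow the energy density $e(\phi) := \tfrac12 |\partial_x \phi|_{\phi^*h}^2$ obeys
$$(\partial_s - \Delta) e(\phi) = -|(\phi^*\nabla)_i \partial_j \phi|^2 + (\text{curvature term}),$$
and since the domain $\R^2$ is flat and the target has sectional curvature $-1 \le 0$ (cf. \eqref{curv-phi}), the curvature term is $\le 0$; hence $e(\phi)$ is a nonnegative subsolution of the heat equation. By Duhamel \eqref{duh} against the positive heat kernel this yields the pointwise comparison $e(\phi)(s,t,\cdot) \le e^{s\Delta}\bigl(e(\phi)(0,t,\cdot)\bigr)$. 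In particular $e(\phi)$ stays bounded by $\sup_x e(\phi)(0,t,x)$, and stays Schwartz-decaying in $x$, uniformly for $t \in I$; feeding this $L^\infty_x$ gradient bound back into parabolic regularity \eqref{heat-lp} controls all higher derivatives, so the solution cannot blow up and $S_0 = +\infty$, with all derivatives bounded on $\R^+ \times I \times \R^2$. Moreover, since $e(\phi)(0,t,\cdot) \in L^1_x$ with $L^1_x$ norm bounded uniformly in $t \in I$, the same comparison and the $L^1 \to L^\infty$ bound for the heat kernel give the quantitative decay $\|e(\phi)(s,t,\cdot)\|_{L^\infty_x} \lesssim s^{-1}$.

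For convergence as $s \to \infty$ I would combine this decay with the standard Eells--Sampson reasoning. The energy dissipation identity $\frac{d}{ds}\int_{\R^2} e(\phi)(s,t,x)\,dx = -\int_{\R^2} |\partial_s \phi(s,t,x)|^2_{\phi^*h}\,dx$ together with the uniform higher-derivative bounds shows that along a sequence $s_n \to \infty$ the maps $\phi(s_n,t,\cdot)$ converge in $C^\infty_\loc$ to a harmonic map $\R^2 \to \H^m$; by the energy-density decay this limit has vanishing energy density, hence is constant, and the propagated spatial decay of $\phi - \phi(\infty)$ forces that constant to be $\phi(\infty)$. Finally, once $e(\phi)(s_0,t,\cdot)$ is small — which happens for $s_0$ large — the flow lies in the perturbative regime near the constant map $\phi(\infty)$, where it converges genuinely (not merely subsequentially) in $C^\infty_\loc$ to $\phi(\infty)$, promoting subsequential convergence to full convergence. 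I expect the main obstacle to be exactly this last step: unlike the maximum-principle argument for global existence, which is robust, pinning down convergence to the specific constant $\phi(\infty)$ on the noncompact domain $\R^2$ requires care (propagation of spatial decay, the near-constant perturbative analysis, and the scaling-criticality of the $2$D heat flow), and it is here that one leans on the detailed treatment in \cite{tao:heatwave2}.
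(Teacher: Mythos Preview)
The paper does not give a proof here at all---it simply cites \cite[Theorem 3.16]{tao:heatwave2}. Your sketch is a correct outline of the standard Eells--Sampson argument (local semilinear theory plus the Bochner subsolution inequality $(\partial_s - \Delta) e(\phi) \leq 0$ in nonpositive curvature, then energy dissipation for convergence), which is indeed the method behind the cited result; the issues you flag at the end---propagation of Schwartz decay on the noncompact domain $\R^2$, and upgrading subsequential convergence to full convergence via a small-energy perturbative regime---are exactly the points where the detailed treatment in \cite{tao:heatwave2} does the real work. So there is nothing in this paper to compare against; your proposal supplies the content that the paper outsources.
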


\begin{proof} See \cite[Theorem 3.16]{tao:heatwave2}.
\end{proof}

This result allows us to extend any classical wave map $\phi \in WM(I)$ by harmonic map heat flow into the region $\R^+ \times I \times \R^2$.  In order to analyse this heat flow we will place an orthonormal frame on this map (or more precisely on the pullback tangent bundle $\phi^* T\H$).

Given any point $p \in \H$, define an \emph{orthonormal frame} at $p$ to be any orthogonal orientation-preserving map $e: \R^m \to T_p \H$ from $\R^m$ to the tangent space at $p$ (with the metric $h(p)$, of course), and let $\Frame(T_p \H)$ denote the space of such frames; note that this space has an obvious transitive action of the special orthogonal group $SO(m)$.  We then define the \emph{orthonormal frame bundle} $\Frame( \phi^* T\H)$ of $\phi$ to be the space of all pairs $((s,t,x), e)$ where $(s,x) \in \R^+ \times I \times \times \R^2$ and $e \in \Frame(T_{\phi(s,x)}\H)$; this is a smooth vector bundle over $\R^+ \times I \times \R^2$.  We then define an \emph{orthonormal frame} $e \in \Gamma(\Frame(\phi^* T\H))$ for $\phi$ to be a section of this bundle, i.e. a smooth assignment $e(s,x) \in \Frame(T_{\phi(s,x)}\H)$ of an orthonormal frame at $\phi(s,x)$ to every point $(s,x) \in \R^+ \times I \times \R^2$.

Each orthonormal frame $e \in \Gamma(\Frame(\phi^* T\H))$ provides an orthogonal, orientation-preserving identification between the vector bundle $\phi^* T \H$ (with the metric $\phi^* h$) and the trivial bundle $(\R^+ \times I \times \R^2) \times \R^m$ (with the Euclidean metric on $\R^m$), thus sections $\Psi \in \Gamma(\phi^* T\H)$ can be pulled back to functions $e^* \Psi: \R^+ \times I \times \R^2 \to \R^m$ by the formula $e^* \Psi := e^{-1} \circ \Psi$.  The connection $\phi^* \nabla$ on $\phi^* T\H$ can similarly be pulled back to a connection $D$ on the trivial bundle $(\R^+ \times \R^2) \times \R^m$, defined by
\begin{equation}\label{D-def}
 D_i := \partial_i + A_i
 \end{equation}
where $A_i \in \mathfrak{so}(m)$ is the skew-adjoint $m \times m$ matrix field is given by the formula
\begin{equation}\label{Adef}
(A_i)_{ab} = \langle (\phi^* \nabla)_i e_a, e_b \rangle_{\phi^* h}
\end{equation}
where $e_1,\ldots,e_m$ are the images of the standard orthonormal basis for $\R^m$ under $e$.  Of course one similarly has covariant derivatives $D_t = \partial_t + A_t$,  $D_s = \partial_s + A_s$ in the $t$ and $s$ directions.  

Given such a frame, we define the \emph{derivative fields}
$\psi_j: \R^+ \times I \times \R^2 \to \R^m$ by the formula 
\begin{equation}\label{psij-def}
\psi_j := e^* \partial_j \phi,
\end{equation}
and similarly define 
\begin{equation}\label{psis-def}
\psi_s := e^* \partial_s \phi; \quad \psi_t := e^* \partial_t \phi.
\end{equation}
We record the zero-torsion property
\begin{equation}\label{zerotor-frame}
D_i \psi_j = D_j \psi_i
\end{equation}
and the constant negative curvature property
\begin{equation}\label{curv-frame}
[D_i, D_j] = \partial_i A_j - \partial_j A_i + [A_i,A_j] = - \psi_i \wedge \psi_j
\end{equation}
where $\psi_i \wedge \psi_j$ is the anti-symmetric matrix field
$$ \psi_i \wedge \psi_j := \psi_i \psi_j^* - \psi_j \psi_i^*.$$
The harmonic map heat flow equation becomes
\begin{equation}\label{heatflow}
\psi_s = D_i \psi_i = \partial_i \psi_i + A_i \psi_i.
\end{equation}
The wave maps equation \eqref{cov} becomes
\begin{equation}\label{w-vanish}
w|_{s=0} = 0
\end{equation}
where $w: \R^+ \times I \times \R^2 \to \R^m$ is the \emph{wave-tension field}
\begin{equation}\label{w-def}
w := D^\alpha \psi_\alpha = \partial^\alpha \psi_\alpha + A^\alpha \psi_\alpha.
\end{equation}

Following \cite{tao:heatwave2}, we say that an orthonormal frame $e$ is a \emph{caloric gauge} if one has 
\begin{equation}\label{ass}
A_s = 0
\end{equation}
throughout $\R^+ \times I \times \R^2$, and if we have 
\begin{equation}\label{esx}
\lim_{s \to \infty} e(s,t,x) = e(\infty)
\end{equation}
for all $x \in \R^2$ and some constant $e(\infty) \in \Frame(T_{\phi(\infty)}\H)$.  

We have the following existence theorem for this gauge:

\begin{theorem}[Existence of caloric gauge]\label{dynamic-caloric}  Let $I$ be a time interval with non-empty interior, let $\phi: I \times \R^2 \to \H$ be a smooth map differing from a constant $\phi(\infty)$ by a Schwartz function, and let $e(\infty) \in \Frame(T_{\phi(\infty)}\H)$ be a frame for $\phi(\infty)$.  Let $\phi: \R^+ \times I \times \R^2 \to \H$ be the heat flow extension from Theorem \ref{ghp}.  Then there exists a unique smooth frame $e \in \Gamma(\Frame(\phi^* T\H))$ such that $e(t)$ is a caloric gauge for $\phi(t)$ which equals $e(\infty)$ at infinity for each $t \in I$.   All derivatives of $\phi - \phi(\infty)$ in the variables $t,x,s$ are Schwartz in $x$ for each fixed $t,s$.  In particular, these derivatives are uniformly Schwartz for $t, s$ in a compact range.
\end{theorem}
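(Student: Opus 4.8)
The plan is to construct the caloric gauge by solving an ODE in the heat-time variable $s$, integrating from $s = +\infty$ backwards to $s = 0$, and then to bootstrap smoothness and Schwartz decay from the known regularity of the heat flow $\phi$. First I would set up the ODE: once we insist on the gauge condition \eqref{ass}, i.e. $A_s = 0$, the frame $e$ is forced to satisfy $D_s e = \partial_s e$, which unwinds to a linear ODE $\partial_s e_a = -\,\langle \partial_s \phi, \text{(something)}\rangle\, e_b$-type system; more precisely, writing the connection coefficients relative to a fixed reference frame (e.g.\ one obtained by parallel transport along geodesics from $\phi(\infty)$, or simply the ambient frame from the hyperboloid model \eqref{hyperdef}), the condition $A_s = 0$ becomes a first-order linear ODE in $s$ for the $SO(m)$-valued gauge transformation relating $e$ to the reference frame, with coefficients that are smooth and (by Theorem \ref{ghp}) have all $x$-derivatives Schwartz, uniformly for $s,t$ in compact sets, and decaying as $s \to \infty$ since $\partial_s\phi \to 0$. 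The boundary condition \eqref{esx} is imposed at $s = +\infty$, so I would solve the ODE on $[s,\infty)$ for each $s > 0$; existence and uniqueness of the solution is immediate from the linear ODE theory (Picard--Lindel\"of), once we check that the coefficient is integrable in $s$ near $+\infty$, which follows because $|\partial_s \phi(s)| = |(\phi^*\nabla)_i\partial_i\phi(s)|$ decays (indeed, from parabolic smoothing and the convergence in Theorem \ref{ghp}, it decays faster than any polynomial in $s$).

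Next I would verify that the frame $e$ produced this way is genuinely smooth in all variables $s,t,x$ jointly, not merely in $s$. This is the standard ``smooth dependence on parameters'' statement for ODEs: since the coefficient matrix depends smoothly on $(t,x)$ (as a parameter) and the integral $\int_s^\infty$ converges locally uniformly together with all its $(t,x)$-derivatives (again using the uniform Schwartz bounds on $\partial_s\phi$ and its derivatives from Theorem \ref{ghp}), one can differentiate under the integral sign / through the ODE flow to get smoothness and the desired decay. The Schwartz property in $x$ then transfers directly: each spatial derivative $\partial_x^k(e - e(\infty))$ is controlled by $\int_s^\infty \|\partial_x^{\le k}(\text{coefficient})\|\,ds'$ times lower-order frame quantities, all of which are Schwartz in $x$; a Gronwall/bootstrap argument in the number of derivatives closes this. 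Having $e$ smooth with these decay properties, one reads off that $\psi_j = e^*\partial_j\phi$, $\psi_s$, $\psi_t$, and the connection coefficients $A_i, A_t$ are all smooth and uniformly Schwartz in $x$ for $s,t$ in a compact range, which is the last assertion of the theorem. The claim that $e(t)$ is a caloric gauge for $\phi(t)$ for each fixed $t$ is automatic: \eqref{ass} holds by construction, and \eqref{esx} is the boundary condition we imposed.

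The main obstacle I expect is the convergence at $s = +\infty$ and the uniformity of all the estimates there: one must show not only that $\partial_s\phi \to 0$ but that it and all its $x$-derivatives decay at a rate (say, faster than $1/s^{1+\epsilon}$, or exponentially) sufficient to make the backward integral converge and to make $e(s,t,x) \to e(\infty)$ at the stated rate, uniformly and with derivatives. This is really a quantitative sharpening of Theorem \ref{ghp}; in the reference \cite{tao:heatwave2} this is presumably available, and I would cite the relevant estimates there (the heat flow decays to its constant limit in $C^\infty_{\loc}$, and in fact with Schwartz control). A secondary, more bookkeeping-level difficulty is choosing the reference frame cleanly so that the ODE for the gauge transformation has a manifestly skew-symmetric (hence $SO(m)$-preserving) right-hand side, ensuring the solution stays in the frame bundle $\Frame(\phi^*T\H)$ rather than drifting out of the orthogonal group; this is handled by observing that $\partial_s$ of the inner products $\langle e_a, e_b\rangle_{\phi^*h}$ vanishes when $A_s = 0$, so orthonormality is propagated automatically from $s = +\infty$. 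Uniqueness follows from uniqueness of solutions to the linear ODE with the prescribed data at $s = +\infty$.
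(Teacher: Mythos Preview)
Your proposal is correct and is exactly the standard construction of the caloric gauge; the paper itself does not give an argument here but simply cites \cite[Theorem 3.16]{tao:heatwave2}, where this construction is carried out along the lines you describe (parallel transport of the frame backward in $s$ from the limiting frame $e(\infty)$, using the decay of the heat flow at $s=+\infty$ to make the ODE well-posed, then differentiating through to get joint smoothness and Schwartz control). There is nothing further to compare: your sketch fills in precisely the content that the paper outsources to the earlier reference.
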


\begin{proof} See \cite[Theorem 3.16]{tao:heatwave2}.
\end{proof}

In the caloric gauge we have the derivative fields $\psi_t, \psi_x, \psi_s$ and the connection fields $A_t$, $A_x$ (recall in the caloric gauge that the field $A_s$ is trivial).  It will be convenient to introduce the combined vector or tensor fields
\begin{align*}
\Psi_x &:= (\psi_x, A_x) \\
\psi_{t,x} &:= (\psi_t, \psi_x) \\
A_{t,x} &:= (A_t, A_x) \\
\Psi_{t,x} &:= (\psi_{t,x}, A_{t,x}) \\
\Psi_{s,t,x} &:= (\psi_s, \psi_{t,x}, A_{t,x}).
\end{align*}
We refer to $\Psi_{s,t,x}$ as the \emph{differentiated fields} of $\phi$ in the caloric gauge associated with $e(\infty)$.  We also introduce the \emph{wave-tension field} $w := D^\alpha \psi_\alpha$; the wave map equation \eqref{cov} is then equivalent to the vanishing of this quantity on the $s=0$ boundary of $\R^+ \times I \times \R^2$.  We recall the basic equations of motion for these fields:

\begin{proposition}[Equations of motion]\label{abound}  Let $I$ be an interval, let $\phi \in \WM(I,E)$ with energy $\E(\phi) \leq E$, let $e$ be a caloric gauge for $\phi$, and let $\phi_t, \psi_x, \psi_s, A_t, A_x, w$ be the differentiated fields, connection fields, and wave-tension field.  Then we have the equations of motion
\begin{align}
A_{t,x}(s,t,x) &= \int_s^\infty \psi_s \wedge \psi_{t,x}(s',t,x)\ ds' \label{a-eq}\\
\psi_{t,x}(s,t,x) &= -\int_s^\infty D_{t,x} \psi_s(s',t,x)\ ds' \label{psi-eq}\\
\partial_s \psi_s &= D_i D_i \psi_s - (\psi_s \wedge \psi_i) \psi_i \label{psis-eq}\\
\partial_s \psi_{t,x} &= D_i D_i \psi_{t,x} - (\psi_{t,x} \wedge \psi_i) \psi_i \label{psit-eq}\\
\partial_s w &= D_i D_i w - (w \wedge \psi_i) \psi_i + 2 (\psi_\alpha \wedge \psi_i) D_i \psi^\alpha	 \label{w-eq} \\
D^\alpha D_\alpha \psi_s &= \partial_s w - (\psi_\alpha \wedge \psi_s) \psi^\alpha.   \label{psis-box}
\end{align}
\end{proposition}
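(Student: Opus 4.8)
All six identities will be obtained by direct computation on $\R^+\times I\times\R^2$ from four structural inputs: \emph{(i)} the zero-torsion property $D_a\psi_b=D_b\psi_a$ for every pair of coordinates $a,b\in\{s,t,x_1,x_2\}$ (the extension of \eqref{zerotor-frame} to the $s$- and $t$-directions, coming from $[\partial_a,\partial_b]=0$ and torsion-freeness of $\phi^*\nabla$); \emph{(ii)} the constant-curvature property $[D_a,D_b]=-\psi_a\wedge\psi_b$, again for all coordinate pairs (the general form of \eqref{curv-frame}); \emph{(iii)} the heat flow equation \eqref{heatflow}, $\psi_s=D_i\psi_i$; and \emph{(iv)} the caloric gauge conditions from Theorem \ref{dynamic-caloric}: $A_s=0$ (so $D_s=\partial_s$, and combining with (i), (ii) one gets $\partial_s\psi_\alpha=D_s\psi_\alpha=D_\alpha\psi_s$ and $\partial_sA_\alpha=[D_s,D_\alpha]=-\psi_s\wedge\psi_\alpha$ for $\alpha=t,x$), together with $e(s)\to e(\infty)$, which forces $\psi_{t,x},\psi_s,A_{t,x}\to 0$ as $s\to\infty$ at a rate — established in \cite{tao:heatwave2} — more than sufficient to make the improper $s$-integrals below absolutely convergent.

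Granting these, \eqref{a-eq} and \eqref{psi-eq} are merely the integrated forms of $\partial_sA_{t,x}=-\psi_s\wedge\psi_{t,x}$ and $\partial_s\psi_{t,x}=D_{t,x}\psi_s$, using that $A_{t,x},\psi_{t,x}$ vanish at $s=\infty$. For \eqref{psis-eq} and \eqref{psit-eq}, one computes, for any $\alpha\in\{s,t,x\}$,
\begin{equation*}
\partial_s\psi_\alpha = D_\alpha\psi_s = D_\alpha(D_i\psi_i) = D_iD_\alpha\psi_i + [D_\alpha,D_i]\psi_i = D_iD_i\psi_\alpha - (\psi_\alpha\wedge\psi_i)\psi_i,
\end{equation*}
using (i)--(iv) (the last step being zero-torsion $D_\alpha\psi_i=D_i\psi_\alpha$ and the curvature identity); $\alpha=s$ gives \eqref{psis-eq}, $\alpha=t,x$ gives \eqref{psit-eq}. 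Similarly \eqref{psis-box} is one commutation in the $s$-direction,
\begin{equation*}
D^\alpha D_\alpha\psi_s = D^\alpha D_s\psi_\alpha = D_s(D^\alpha\psi_\alpha) + [D^\alpha,D_s]\psi_\alpha = \partial_s w - (\psi_\alpha\wedge\psi_s)\psi^\alpha,
\end{equation*}
where the first step is zero-torsion, $D_s(D^\alpha\psi_\alpha)=\partial_s w$ since $A_s=0$, and $[D^\alpha,D_s]\psi_\alpha=(\psi_s\wedge\psi^\alpha)\psi_\alpha=-(\psi_\alpha\wedge\psi_s)\psi^\alpha$ after raising and lowering the summed index.

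The remaining identity \eqref{w-eq} is where the real work lies, and I expect it to be the main obstacle. The most economical route is to start from the already-proved \eqref{psis-box} in the form $\partial_s w = D^\alpha D_\alpha\psi_s + (\psi_\alpha\wedge\psi_s)\psi^\alpha$, substitute $\psi_s=D_i\psi_i$, and commute $D^\alpha D_\alpha$ past both factors of $D_i$ in $D^\alpha D_\alpha D_i\psi_i$ so as to isolate the leading term $D_iD_iD^\alpha\psi_\alpha=D_iD_iw$; each commutation contributes curvature terms via (ii), and zero-torsion is invoked repeatedly to turn derivatives like $D_\alpha\psi_i$ into $D_i\psi_\alpha$. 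What is left is $D_iD_iw-(w\wedge\psi_i)\psi_i$ plus a cluster of cubic terms in $\psi_\alpha,\psi_i,\psi_s,D_i\psi_\alpha$; one then Leibniz-expands the handful of terms of the shape $D_i\big((\psi_a\wedge\psi_b)\psi_c\big)$, cancels the $\psi_s$-contributions using $D_i\psi_i=\psi_s$ and antisymmetry of $\wedge$, and reorganises the survivors into $2(\psi_\alpha\wedge\psi_i)D_i\psi^\alpha$ by means of the pointwise algebraic Jacobi identity $(u\wedge v)z+(v\wedge z)u+(z\wedge u)v=0$ for $u,v,z\in\R^m$ (together with $\psi_a\wedge\psi_b=-\psi_b\wedge\psi_a$ and the vanishing of the antisymmetrised expression $(\psi_i\wedge\psi_j)D_i\psi_j$). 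This final matching of the many curvature terms generated by the successive commutators and Leibniz expansions is routine term-by-term but delicate in its signs and its placement of raised versus lowered indices. An equivalent and perhaps more transparent alternative is to differentiate $w=D^\alpha\psi_\alpha$ directly, $\partial_s w=D^\alpha(\partial_s\psi_\alpha)+(\partial_sA^\alpha)\psi_\alpha$, and substitute \eqref{psit-eq} and $\partial_sA^\alpha=-\psi_s\wedge\psi^\alpha$; both routes produce the same residual terms to be simplified.
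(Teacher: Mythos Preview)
Your proposal is correct and follows exactly the approach underlying the paper's proof. The paper itself simply cites \cite[Lemma 3.11, Theorem 3.16, Lemma 7.1]{tao:heatwave2} for \eqref{a-eq}--\eqref{w-eq} and invokes \eqref{zerotor-frame}, \eqref{curv-frame} for \eqref{psis-box}; your write-up is precisely the computation those references carry out, with the same structural inputs (zero torsion, constant curvature, $A_s=0$, decay at $s=\infty$) and the same commutator manipulations.
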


\begin{proof} The equations \eqref{a-eq}, \eqref{psi-eq}, \eqref{psis-eq}, \eqref{psit-eq} follow from \cite[Lemma 3.11]{tao:heatwave2} and the qualitative decay properties at $s=\infty$ from \cite[Theorem 3.16]{tao:heatwave2}.  The equation \eqref{w-eq} follows from \cite[Lemma 7.1]{tao:heatwave2} (noting from \eqref{zerotor-frame} that $(\psi_\alpha \wedge \psi_i) D_i \psi^\alpha = - (\psi_t \wedge \psi_i) D_t \psi_i$).  Finally, \eqref{psis-box} follows from \eqref{zerotor-frame}, \eqref{curv-frame}, and the definition of $w$.
\end{proof}

\begin{remark}  As a caricature, in which one pretends $\phi$ is a scalar field rather than taking values in $\H$, the reader is encouraged to use the heuristics $\psi_\alpha(s,t) \approx \partial_\alpha e^{s\Delta} \phi(t)$, and $\psi_s(s,t) \approx \Delta e^{s\Delta} \phi(t)$.  Combining these heuristics with \eqref{a-eq}, we see that $A_\alpha$ should behave like a paraproduct of $\phi$ with itself; see \cite{tao:forges} or \cite[Chapter 6]{tao:cbms} for further discussion. 
\end{remark}

\begin{remark} The six evolution equations \eqref{a-eq}-\eqref{psis-box} will be used in different ways.  The equations \eqref{a-eq}, \eqref{psi-eq} effectively express the fields $A_\alpha, \psi_\alpha$ in terms of $\psi_s$, thus in principle converting the wave maps equation into a ``scalar'' equation involving $\psi_s$ as the only ``dynamic'' field (cf. the ``dynamic separation'' used in \cite{krieger:2d}).   We will evolve this field by the wave equation \eqref{psis-box}, using the parabolic equation \eqref{w-eq} to solve for the $\partial_s w$ forcing term appearing in that wave equation.  Finally, the remaining equations \eqref{psis-eq}, \eqref{psit-eq} will be used to compute the initial data $\psi_s(s,0,x)$, $D_t \psi_s(s,0,t) = \partial_s \psi_t(s,0,t)$ of the wave equation \eqref{psis-box}.
\end{remark}

\begin{remark} It will be convenient to unify \eqref{a-eq}, \eqref{psi-eq} in the schematic form
\begin{equation}\label{Psi-tx-eq}
\Psi_{t,x}(s) = \int_s^\infty \bigO( \partial_{t,x} \psi_s(s') ) + \bigO( \psi_s(s') \Psi_{t,x}(s') )\ ds'.
\end{equation}
Similarly we have
\begin{equation}\label{Psi-x-eq}
\Psi_x(s) = \int_s^\infty \bigO( \partial_x \psi_s(s') ) + \bigO( \psi_s(s') \Psi_x(s') )\ ds'.
\end{equation}
These integral ODE thus, in principle, recover $\Psi_x$ or $\Psi_{t,x}$ from $\psi_s$ (assuming sufficient decay at $s=\infty$).  In a similar spirit, we have the identities
\begin{align}
\partial_s \Psi_{t,x} &= \bigO( \partial_{t,x} \psi_s ) + \bigO( \psi_s \Psi_{t,x} )\label{inter1} \\
\partial_{t,x} \psi_s &= \bigO( \partial_s \Psi_{t,x} ) + \bigO( \psi_s \Psi_{t,x} )\label{inter2} \\
\partial_{t,x} \Psi_x &= \partial_x \Psi_{t,x} + \bigO( \Psi_x \Psi_{t,x} )\label{inter3}\\
\psi_s &= \bigO( \partial_x \Psi_x ) + \bigO( \Psi_x^2 )\label{inter4}
\end{align}
arising from \eqref{zerotor-frame}, \eqref{curv-frame}, \eqref{heatflow}, and similarly with the $t$ subscripts dropped.
\end{remark}

\subsection{Working in the caloric gauge}

Let $\phi \in \WM(I,E)$ be a classical wave map on a compact time interval $I$, quotiented out by Lorentz rotations $SO(m,1)$.  By Theorem \ref{dynamic-caloric}, given any such wave map, and given any frame $e(\infty)$ at $e(\infty) \in \Frame(T_{\phi(\infty)}\H)$, one can construct a caloric gauge for $\phi$, which then generates the differentiated fields $\Psi_{s,t,x}$.  We let $\WMC(\phi,I)$ denote the collection of all the possible fields $\Psi_{s,t,x}$ that can arise for a fixed $\phi$ by choosing $e(\infty)$, and let $\WMC(I,E) = \bigcup_{\phi \in \WM(I,E)} \WMC(\phi,I)$ denote all the differentiated fields that can arise from some classical wave map $\phi \in \WM(I,E)$.  

Observe that if $U \in SO(m)$ is a rotation, then replacing $e(\infty)$ by $e(\infty) \circ U^{-1}$ corresponds to rotating the fields $\psi_{t,x}$ to $U \psi_{t,x}$, and similarly rotating $A_{t,x}$ to $U A_{t,x} U^{-1}$.  This gives an action of $SO(m)$ on $\WMC(I,E)$, whose orbits are precisely the sets $\WMC(\phi,I)$.  Thus each classical wave map $\phi \in \WM(I,E)$ gives rise to an $SO(m)$-orbit $\WMC(\phi,I)$ of differentiated fields.  Conversely, it is not hard to see (from the Picard uniqueness theorem) that each differentiated field $\Psi_{s,t,x} \in \WMC(I,E)$ arises from exactly one wave map $\phi \in \WM(I,E)$ (quotienting out by $SO(m,1)$ as usual).  So we have an identification 
$$\WM(I,E) \equiv SO(m)\backslash \WMC(I,E).$$

We can play similar games with the initial data space $\S$: if $\Phi = (\phi_0,\phi_1) \in \S$ is classical initial data, we can construct associated caloric gauges $\Psi_{s,t,x}$ on $\R^+ \times \R^2$ for each choice of frame $\e(\infty) \in \Frame(T_{\phi_0(\infty)} \H)$, for instance by extending $\Phi$ for a short amount of time and then using Theorem \ref{dynamic-caloric}, or by using \cite[Theorem 3.12]{tao:heatwave2}, \cite[Lemma 4.8]{tao:heatwave2}.  We let $\SC(\Phi)$ denote the collection of all such differentiated fields for a fixed $\Phi$, and $\SC := \bigcup_{\Phi \in \S} \SC(\Phi)$ denote the total collection of such fields. Once again, the orthogonal group $SO(m)$ acts on $\SC$, with orbits $\SC(\Phi)$, with the orbit determining $\Phi$ up to the action of $SO(m,1)$, so that we have an identification 
$$SO(m,1) \backslash \S \equiv SO(m) \backslash \SC.$$
Also observe that if $\Psi_{s,t,x} \in \WMC(\phi,I)$ for some $\phi \in \WM(I,E)$, then $\Psi_{s,t,x}(t_0) \in \SC(\phi[t_0])$ for all $t_0 \in I$.  Finally, the constant data $\const \in \S$ corresponds to the zero differentiated field $0 \in \SC$, thus $\SC(\const) = \{0\}$ (and similarly $\WMC(\const) = \{0\}$).

\subsection{The energy space}\label{energydef-sec}

We introduce the Littlewood-Paley space
\begin{equation}\label{ldef}
 {\mathcal L} := L^2( \R^+ \times \R^2 \to \R^m, dx ds ) \times L^2( \R^2 \to \R^m, \frac{1}{2} dx ).
\end{equation}
We can define an energy metric $d_{\Energy}$ on $\SC$ by the formula
\begin{equation}\label{psipsi}
\begin{split}
d_\Energy( \Psi_{s,t,x}, \Psi'_{s,t,x} ) &:= \| ( \psi_s - \psi'_s, \psi_t(0) - \psi'_t(0) ) \|_{\mathcal L} \\
&= \left(\int_0^\infty \| \delta \psi_s(s) \|_{L^2_x(\R^2)}^2\ ds + \frac{1}{2} \| \delta \psi_t(0) \|_{L^2_x(\R^2)}^2\right)^{1/2}.
\end{split}
\end{equation}
We also abbreviate $d_{\Energy}(\Psi_{s,t,x},0)^2$ as $\E(\Psi_{s,t,x})$, and refer to this as the \emph{energy} of the differentiated field $\Psi_{s,t,x}$.

Observe that this metric is preserved by the action of $SO(m)$.  It thus descends to a metric on $SO(m,1) \backslash \S \equiv SO(m) \backslash \SC$ in the usual manner; by abuse of notation we also denote this metric as $d_\Energy$, thus
$$
d_\Energy(\Phi, \Phi') := \inf_{\Psi_{s,t,x} \in \SC(\Phi)} \inf_{\Psi'_{s,t,x} \in \SC(\Phi')}
d_\Energy(\Psi_{s,t,x}, \Psi'_{s,t,x} ).
$$
We define the energy space\footnote{This definition is equivalent to that in \cite{tao:heatwave2}, though arranged slightly differently.} $\Energy$ to be the completion of $SO(m,1) \backslash \S$ using the metric $\Energy$, and $\iota$ to be the quotient map from $\S$ to $SO(m,1) \backslash \S$.    With these definitions, Theorem \ref{energy-claim} was established in \cite{tao:heatwave2}.  Observe from Theorem \ref{energy-claim}(iv) that the energy of a map $\Phi \in \S$ is equal to the energy of any of its differentiated fields $\Psi_{s,t,x} \in \SC(\Phi)$.

\subsection{Lifting Theorem \ref{apriori-thm} to the caloric gauge}

We will be working exclusively in the caloric gauge in order to prove Theorem \ref{apriori-thm}.  Because of this, it will be convenient to deduce Theorem \ref{apriori-thm} from a variant involving differentiated wave maps $\WMC(I,E)$ in the caloric gauge, rather than classical wave maps $\WM(I,E)$.  More precisely we deduce Theorem \ref{apriori-thm} from

\begin{theorem}[A priori estimates in the caloric gauge]\label{apriori-thm2} There exist metrics $d_{S^1_\mu,I}$ on $\WMC(I,E)$ for each compact interval $I$, $E>0$, and $0 < \mu \leq 1$ with the following properties, where we abbreviate $\| \Psi_{s,t,x} \|_{S^1_\mu(I)}$ for $d_{S^1_\mu,I}(\Psi_{s,t,x},0)$:
\begin{itemize}
\item[(i)] (Monotonicity)  If $I \subset J$, $0 < \mu \leq 1$, and $\Psi_{s,t,x}, \Psi'_{s,t,x} \in \WM(J,E)$ then $d_{S^1_\mu,I}(\Psi_{s,t,x},\Psi'_{s,t,x}) \leq d_{S^1_\mu,J}(\Psi_{s,t,x},\Psi'_{s,t,x})$.
\item[(ii)] (Continuity)  If $\Psi_{s,t,x}, \Psi'_{s,t,x} \in \WMC([t_-,t_+])$ and $0 < \mu \leq 1$, then $d_{S^1_\mu,[a,b]}(\Psi_{s,t,x},\Psi'_{s,t,x})$ is a continuous function of $a,b$ in the region $t_- \leq a < b \leq t_+$.
\item[(iii)] (Vanishing) If $I_n$ is a decreasing sequence of compact intervals with $\bigcap_n I_n = \{t_0\}$, $0 < \mu \leq 1$, and $\Psi_{s,t,x}, \Psi'_{s,t,x} \in \WMC(I_1)$, then $\lim_{n \to \infty} d_{S^1_\mu,I_n}(\Psi_{s,t,x},\Psi'_{s,t,x}) \lesssim_E d_{\Energy}(\Psi_{s,t,x}(t_0), \Psi'_{s,t,x}(t_0))$.
\item[(iv)] ($S^1$ controls energy)  For any $I$, any $0 < \mu \leq 1$ and $M>0$, any $\Psi_{s,t,x}, \Psi'_{s,t,x} \in \WMC(I,E)$ with $\|\Psi_{s,t,x}\|_{S^1_\mu(I)}, \|\Psi'_{s,t,x}\|_{S^1_\mu(I)} \leq M$, and any $t \in I$, we have $d_{\Energy}(\Psi_{s,t,x}(t), \Psi'_{s,t,x}(t)) \lesssim_{M,E} d_{S^1_\mu,I}(\Psi_{s,t,x},\Psi'_{s,t,x})$.
\item[(v)] (Persistence of regularity estimate) If $M > 0$, $0 < \mu \leq 1$ is sufficiently small depending on $M$ and $E$, $I$ is an interval, $t_0 \in I$, and $\phi \in \WM(I,E)$ is such that $\|\Psi_{s,t,x}\|_{S^1_\mu(I)} \leq M$ for some $\Psi_{s,t,x} \in \WMC(\phi,I)$, then
\begin{equation}\label{apriori3a}
\| \phi[t] \|_{{\mathcal H}^{1+\delta_0/2}_{\loc}} \lesssim_{M, E, \mu, \| \phi[t_0] \|_{{\mathcal H}^{100}_{\loc}}} 1
\end{equation}
for all $t \in I$.
\item[(vi)] (Stability estimate) If $M > 0$, $0 < \mu \leq 1$ is sufficiently small depending on $M$ and $E$, $I$ is an interval, $t_0 \in I$, and $\Psi_{s,t,x}, \Psi'_{s,t,x} \in \WM(I,E)$ are such that $\|\Psi_{s,t,x}\|_{S^1_\mu(I)}, \|\Psi'_{s,t,x}\|_{S^1_\mu(I)} \leq M$, then
\begin{equation}\label{apriori2a}
d_{S^1_\mu,I}(\Psi_{s,t,x},\Psi'_{s,t,x}) \lesssim_{M,\mu,E} d_{\Energy}(\Psi_{s,t,x}(t_0), \Psi'_{s,t,x}(t_0)).
\end{equation}
\item[(vii)] (Quasi-rotation invariance)  If $U \in SO(m)$ and $\Psi_{s,t,x}, \Psi'_{s,t,x} \in \WM(I,E)$, then
\begin{equation}\label{quasi-eq}
d_{S^1_\mu,I}(U \Psi_{s,t,x},U \Psi'_{s,t,x}) \sim d_{S^1_\mu,I}(\Psi_{s,t,x},\Psi'_{s,t,x}).
\end{equation}
\end{itemize}
\end{theorem}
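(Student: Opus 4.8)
The plan is to construct the metrics $d_{S^1_\mu,I}$ on $\WMC(I,E)$ out of the hyperbolic function spaces $S_{\mu,k}$ supplied by Theorem~\ref{func} (recalled from \cite{tao:wavemap2} in Section~\ref{func0-sec}) by a frequency-envelope construction in the heat-temporal variable $s$. For a differentiated field $\Psi_{s,t,x}$, the dynamic component $\psi_s$ together with the fixed-time datum $\psi_t(0)$ is, once the gauge is fixed, an honest $\R^m$-valued function, so for two such fields the differences $\delta\psi_s$, $\delta\psi_t(0)$ make sense; I would define $d_{S^1_\mu,I}(\Psi_{s,t,x},\Psi'_{s,t,x})$ to be (essentially) the infimum of $\sqrt{E}$ over all frequency envelopes $c$ of energy $E$, in the sense of Definition~\ref{freqenv}, for which $\|\delta\psi_s(s)\|_{S_{\mu,k(s)}(I)} \le c(s)$ for every $s>0$, together with an analogous contribution from $\delta\psi_t(0)$, where $k(s)$ is as in \eqref{ks-def}. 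Equivalently one $\ell^2$-sums the frequency pieces of $\delta\psi_s$ against the envelope weights to get a single norm, which makes the triangle inequality transparent; finiteness, and the fact that the quantity vanishes exactly when $\Psi_{s,t,x}=\Psi'_{s,t,x}$, follow from the qualitative Schwartz regularity of classical caloric fields (Theorem~\ref{dynamic-caloric}) together with the embeddings contained in Theorem~\ref{func}. The parameter $\mu$ enters only through the weakly frequency-localised spaces $S_{\mu,k}$, which are the large-data variants of $S_k$.

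The soft properties (i)--(iv) and (vii) will then follow quickly from structural features of the $S_{\mu,k}$ spaces. Monotonicity (i) is inherited from the monotonicity of $\|\cdot\|_{S_{\mu,k}(I)}$ under restriction of $I$ (a property of the spaces of \cite{tao:wavemap2}), since the envelope infimum can only decrease on a smaller interval; continuity (ii) follows from continuity of $\|f\|_{S_{\mu,k}([a,b])}$ in the endpoints together with a dominated-convergence argument applied to the envelope infimum. For the vanishing property (iii), as $I_n\downarrow\{t_0\}$ the spacetime norm $\|\delta\psi_s(s)\|_{S_{\mu,k(s)}(I_n)}$ degenerates to the fixed-time contribution at $t_0$, which is controlled by $\|\delta\psi_s(t_0)\|_{L^2_x}$-type quantities and hence by $d_{\Energy}(\Psi_{s,t,x}(t_0),\Psi'_{s,t,x}(t_0))$, again using the uniform Schwartz control on compact $t$-ranges from Theorem~\ref{dynamic-caloric}. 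For (iv), Theorem~\ref{func} gives $L^\infty_t L^2_x$ control of each frequency piece by its $S_{\mu,k}$ norm; summing against the envelope and using \eqref{psi-eq}, \eqref{psit-eq} to express $\delta\psi_t(0)$ through $\delta\psi_s$ linearised about the two backgrounds $\psi_s,\psi_s'$ (this is where the hypothesis $\|\Psi_{s,t,x}\|_{S^1_\mu(I)},\|\Psi'_{s,t,x}\|_{S^1_\mu(I)}\le M$ enters) recovers $d_{\Energy}(\Psi_{s,t,x}(t),\Psi'_{s,t,x}(t))=\|(\delta\psi_s,\delta\psi_t(0))\|_{\mathcal{L}}$ at any $t\in I$ up to a constant depending on $M$ and $E$. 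Finally (vii) holds because $U\in SO(m)$ acts on $\psi_{t,x}$ by left multiplication and on $A_{t,x}$ by conjugation $A\mapsto UAU^{-1}$, each of operator norm bounded above and below by $O_m(1)$ on the (possibly non-Hilbertian) spaces $S_{\mu,k}$, so every quantity entering the definition of the metric changes by at most a factor $O_m(1)$.

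The quantitative properties (v) and (vi) carry the real content, and I would establish them in Section~\ref{ap-sec} by the following scheme. First, control the initial caloric data $\psi_s(\cdot,t_0)$ and $\partial_s\psi_t(\cdot,t_0)=D_t\psi_s(\cdot,t_0)$ at the time $t_0$, via the abstract parabolic regularity theory of Section~\ref{parab-sec} applied to the heat equations \eqref{psis-eq}, \eqref{psit-eq} (Section~\ref{initial-sec}). Next, evolve $\psi_s$ by the wave equation \eqref{psis-box}, handling its forcing term $\partial_s w-(\psi_\alpha\wedge\psi_s)\psi^\alpha$ by solving for $w$ through the parabolic equation \eqref{w-eq} with vanishing Cauchy data at $s=0$ (guaranteed by \eqref{w-vanish}). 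Finally, reconstruct the remaining fields $A_\alpha,\psi_\alpha$ from $\psi_s$ through the integral identities \eqref{a-eq}, \eqref{psi-eq} (schematically \eqref{Psi-tx-eq}). Each step will be estimated with the frequency-localised linear, bilinear and trilinear estimates of Theorem~\ref{func}, and the resulting frequency-envelope inequalities closed with the Gronwall-type Lemmas~\ref{gron-lem} and \ref{gron-lem-2}; the parameter $\mu$ is taken small (depending on $M$ and $E$) so that the large-data contributions, which carry a positive power of $\mu$ from the weak frequency localisation in $S_{\mu,k}$, can be absorbed. For the stability estimate (vi) one runs this scheme on the difference fields, expanding all differences of products via the discretised Leibniz rule \eqref{disc-leib-eq} and bounding the background factors by the assumed bounds $\le M$, which yields \eqref{apriori2a}. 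For the persistence estimate (v) one additionally propagates a slightly smoother, $\mathcal{H}^{1+\delta_0/2}_{\loc}$-type norm of the data, initialised at $t_0$ from the assumed $\mathcal{H}^{100}_{\loc}$ bound via parabolic theory; since only $\delta_0/2$ extra derivatives are tracked the nonlinearities stay lossless, and a Gronwall argument propagates this norm across $I$, giving \eqref{apriori3a} (one can afford to be generous here, as this estimate is not claimed to be sharp).

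The main obstacle will be the second and third steps of this scheme: controlling the wave-tension field $w$ and the reconstructed connection and derivative fields in the hyperbolic function spaces --- with the trilinear interaction $(\psi_\alpha\wedge\psi_i)D_i\psi^\alpha$ and the curvature nonlinearities $(\psi_s\wedge\psi_i)\psi_i$ --- and closing the frequency-envelope bootstrap uniformly in the heat variable $s$. This is precisely where the full strength of Theorem~\ref{func} is needed (whose verification via \cite{tao:wavemap2} is deferred to Section~\ref{func0-sec}), and where one must exploit both the correspondence between heat-flow localisation at scale $2^{-2k}\sim s$ and Littlewood--Paley localisation $P_k$, and the smallness of $\mu$ to tame the large-data terms. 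A subsidiary difficulty, already present in defining the metric, is that the $S_{\mu,k}$ spaces are not Hilbert spaces, so some care is needed to obtain a genuine metric and to control the ``quasi'' in (vii); one must also check throughout that the bootstrap is anchored by the qualitative regularity of classical wave maps in the caloric gauge from Theorem~\ref{dynamic-caloric}, so that the continuity method has a valid starting point.
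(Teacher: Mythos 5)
Your proposal is correct and follows essentially the same route as the paper: the metric is built from $S_{\mu,k}$ norms of $\delta\psi_s$ matched to the heat time via $s\sim 2^{-2k}$ (your envelope-infimum formulation is equivalent, up to constants, to the paper's $\ell^2$-in-$k$ sum of sup-in-$s$ quantities, which also carries weights $s^{1+j/2}\|\partial_x^j\cdot\|$ for $0\le j\le 10$ so that the abstract parabolic regularity theorem can be fed directly), the soft properties come from the structural features of $S_{\mu,k}$ exactly as you say, and the quantitative estimates are closed by the same scheme of initial-data parabolic estimates, the wave equation for $\psi_s$ forced by $\partial_s w$ with $w$ controlled through \eqref{w-eq}, reconstruction via \eqref{a-eq}--\eqref{psi-eq}, the trilinear null-form gains in $\mu$, and the Gronwall lemmas. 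The only cosmetic deviations are the extra $\delta\psi_t(0)$ term in your metric (the paper omits it, recovering the time component in part (iv) instead) and that the paper's persistence-of-regularity Gronwall runs in the heat variable $s$ rather than in $t$.
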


Indeed, if we have metrics $d_{S^1_\mu,I}$ on $\WMC(I,E)$ with the above properties, we can define metrics $d_{S^1_\mu,I}$ on $\WM(I,E)$ by the usual Hausdorff distance construction
\begin{align*}
 d_{S^1_\mu,I}(\phi, \phi') &:= \max( \sup_{\Psi_{s,t,x} \in \WMC(\phi,I)} \inf_{\Psi' \in \WMC(\phi',I)} d_{S^1_\mu,I}(\Psi_{s,t,x}, \Psi'_{s,t,x}), \\
 &\quad \sup_{\Psi' \in \WMC(\phi',I)} \inf_{\Psi \in \WMC(\phi,I)} d_{S^1_\mu,I}(\Psi_{s,t,x}, \Psi'_{s,t,x}) );
\end{align*}
the verification that this construction allows us to deduce Theorem \ref{apriori-thm} from Theorem \ref{apriori-thm2} is routine but somewhat tedious and is omitted.

It remains to establish Theorem \ref{apriori-thm2}.  This is the purpose of the remaining sections of the paper.

\section{Parabolic regularity}\label{parab-sec}

The equations of motion \eqref{psis-eq}, \eqref{psit-eq} show that $\Psi_{s,t,x}$ obeys a parabolic equation in the $s$ variable.  Parabolic regularity theory then suggests that these fields should become increasingly smooth in $s$ as $s \to \infty$.  The purpose of this section is to present an abstract formalisation of this principle, which we will use twice, once to control initial data, and again to control the solution itself.  In order to establish the stability estimate \eqref{apriori2a}, we will also need to adapt this regularity theory to differences between two solutions to the same parabolic equation.

Recall that $\Sch(I \times \R^2)$ is the space of smooth functions $\phi: I \times \R^2 \to \C$ such that all derivatives are rapidly decreasing in space (with the usual topology).  For each $s>0$, recall that the integer $k(s)$ is defined by \eqref{ks-def}.

\begin{definition}[Algebra family]\label{algebra-fam}  Let $I$ be a compact interval.  An \emph{algebra family} is a collection of continuous seminorms $S_k(I \times \R^2)$ on $\Sch(I \times \R^2)$ that obeys the product estimate that obeys the following estimates:
\begin{itemize}
\item The product estimate
\begin{equation}\label{prod1-abstract}
 \| \phi^{(1)} \phi^{(2)} \|_{S_{\max(k_1,k_2)}(I \times \R^2)} \lesssim \| \phi^{(1)} \|_{S_{k_1}(I \times \R^2)} \| \psi^{(2)} \|_{S_{k_2}(I \times \R^2)}
\end{equation}
for all $k_1,k_2 \in \Z$ and $\phi^{(1)}, \phi^{(2)} \in \Sch(I \times \R^2)$;
\item The parabolic regularity estimate
\begin{equation}\label{parreg-abstract}
 \| \partial_x^2 \phi(s) \|_{S_k(I \times \R^2)} \lesssim s^{-1} \| \phi(s/2) \|_{S_k(I \times \R^2)} + \sup_{s/2 \leq s' \leq s} \| (\partial_s - \Delta) \phi(s') \|_{S_k(I \times \R^2)}
\end{equation}
for all smooth $\phi: \R^+ \to \Sch(I \times \R^2)$ and $s > 0$, as well as the variant
\begin{equation}\label{parreg-abstract-alt}
 \| \partial_x^2 \phi(s) \|_{S_k(I \times \R^2)} \lesssim \| \partial_x^2 \phi(s'') \|_{S_k(I \times \R^2)} + \sup_{s'' \leq s' \leq s} \| (\partial_s - \Delta) \phi(s') \|_{S_k(I \times \R^2)}
\end{equation}
for all smooth $\phi: \R^+ \to \Sch(I \times \R^2)$ and $s/2 \leq s'' \leq s$;
\item The additional parabolic regularity estimate
\begin{equation}\label{parreg-abstract2}
 \| \partial_x^j e^{(s-s')\Delta} \phi \|_{S_{k(s)}(I \times \R^2)} \lesssim_j (s'/s)^{\delta_1/10} (s-s')^{-j/2} \| \phi \|_{S_{k(s')}(I \times \R^2)}
\end{equation}
for all smooth $\phi \in \Sch(I \times \R^2)$, $s > s' \geq 0$, and $j \geq 0$;
\item The comparability estimate
\begin{equation}\label{physical-abstract}  \| \phi\|_{S_{k_1}(I \times \R^2)} \lesssim \chi_{k_1=k_2}^{-\delta_1} \|\phi\|_{S_{k_2}(I \times \R^2)}
\end{equation}
for all $k_1,k_2 \in \Z$ and $\phi \in \Sch(I \times \R^2)$. 
\item The uniform bound
\begin{equation}\label{uniform} \|\phi\|_{L^\infty_{t,x}(I \times \R^2)} \lesssim \| \phi \|_{S_k(I \times \R^2)}
\end{equation}
for all $k\in \Z$ and $\phi \in \Sch(I \times \R^2)$.
\end{itemize}
\end{definition}

\begin{example}  For a fixed time $t_0$, the norms
$$ \|f\|_{S_k(I \times \R^2)} := \sup_{k' \in \Z} \chi_{k=k'}^{\delta_1} \sum_{j=0}^{10} 2^{-k'j} \| \partial_x^{j+1} P_{k'} f(t_0) \|_{L^2_x(\R^2)} + 
2^{-k'j} \| \partial_x^{j} P_{k'} f(t_0) \|_{L^\infty_x(\R^2)}$$
form an algebra family.
\end{example}

\begin{theorem}[Abstract parabolic regularity]\label{parab-thm}  Let $I$ be an interval, $S_k(I \times \R^2)$ be an algebra family.
Let $\Psi_{s,t,x}, \Psi'_{s,t,x} \in \WMC(I,E)$ be differentiated fields, and let $c, \delta c$ be frequency envelopes, with $c$ having energy at most $E$ for some $E>0$.  Suppose we have the bounds
\begin{equation}\label{psisjk-abstract}
\| \partial_x^j \psi^*_s(s) \|_{S_{k(s)}(I \times \R^2)} \lesssim c(s) s^{-(j+2)/2}
\end{equation}
and
\begin{equation}\label{psisjk-star-abstract}
\| \partial_x^j \delta \psi_s(s) \|_{S_{k(s)}(I \times \R^2)} \lesssim \delta c(s) s^{-(j+2)/2}
\end{equation}
for all $0 \leq j \leq 10$ and $s > 0$, where $k(s)$ is defined in \eqref{ks-def}.  Suppose also that we have the qualitative decay properties
\begin{align}
\lim_{s \to \infty} s^{(j+1)/2} \| \partial_x^j \Psi^*_x(s) \|_{S_{k(s)}(I \times \R^2)} &= 0 \label{psi-decay-1}
\end{align}
for all $j > 0$.  Then we have
\begin{align}
\| \partial_x^j \Psi^*_x(s) \|_{S_{k(s)}(I \times \R^2)} &\lesssim_{E,j} c(s) s^{-(j+1)/2} \label{parab-1}\\
\| \partial_x^j A^*_x(s) \|_{S_{k(s)}(I \times \R^2)} &\lesssim_{E,j} c(s)^2 s^{-(j+1)/2}\label{parab-2}\\
\| \partial_x^j \psi^*_s(s) \|_{S_{k(s)}(I \times \R^2)} &\lesssim_{E,j} c(s) s^{-(j+2)/2}\label{parab-3}\\
\| \partial_x^j \delta \Psi_x(s) \|_{S_{k(s)}(I \times \R^2)} &\lesssim_{E,j} \delta c(s) s^{-(j+1)/2}\label{parab-4}\\
\| \partial_x^j \delta A_x(s) \|_{S_{k(s)}(I \times \R^2)} &\lesssim_{E,j} c(s) \delta c(s) s^{-(j+1)/2}\label{parab-5}\\
\| \partial_x^j \delta \psi_s(s) \|_{S_{k(s)}(I \times \R^2)} &\lesssim_{E,j} \delta c(s) s^{-(j+2)/2}\label{parab-6}
\end{align}
for all $j \geq 0$ and $s > 0$.  One can also obtain similar bounds with $\partial_x^2$ replaced by $\partial_s$ as one wishes (e.g. one could replace $\partial_x^j$ by $\partial_x^{j-4} \partial_s^2$ if $j \geq 4$).
\end{theorem}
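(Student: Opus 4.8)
The plan is to establish \eqref{parab-1}--\eqref{parab-6} by a single induction on the order $j$ of differentiation, exploiting the fact that the hypotheses already supply control of $\psi_s^*$ for $0 \le j \le 10$. The difference estimates \eqref{parab-4}--\eqref{parab-6} will be obtained by running exactly the same arguments on the equations satisfied by the differences, using the discretised Leibniz rule \eqref{disc-leib-eq} to split each nonlinear term into a sum in which one factor is a $\delta$-field (carrying the envelope $\delta c$) and the rest are $*$-fields (carrying $c \lesssim \sqrt{E}$, which we absorb into the implied constant); the one place a factor of $c$ is genuinely retained is \eqref{parab-5}, since $A_x$ is quadratic in the fields. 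Likewise the bounds for the $*$-quantities hold for each of the two solutions by the identical argument, so below I suppress the $*$ and argue for a single solution. The key bookkeeping device throughout is that a field at heat-time $s'$ naturally lives in $S_{k(s')}(I \times \R^2)$, and moving it into $S_{k(s)}(I \times \R^2)$ for $s \le s'$ costs only a factor $(s'/s)^{\delta_1/2}$ by the comparability estimate \eqref{physical-abstract}; since $\delta_0 \ll \delta_1 \ll 1$, this loss, together with the $O((s'/s)^{\pm\delta_0})$ fluctuation of the envelope, is always strictly beaten by the polynomial weights generated by the $ds'$-integrals, for every $j \ge 0$.

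\emph{Base case ($0 \le j \le 9$).} Differentiating the integral equation \eqref{Psi-x-eq} and applying the Leibniz rule \eqref{leibnitz} gives
$$ \partial_x^j \Psi_x(s) = \int_s^\infty \bigO\big( \partial_x^{j+1} \psi_s(s') \big) + \sum_{j_1+j_2=j} \bigO\big( \partial_x^{j_1}\psi_s(s')\, \partial_x^{j_2}\Psi_x(s') \big)\ ds'. $$
The linear term is estimated directly from \eqref{psisjk-abstract} (permissible since $j+1 \le 10$), the comparability estimate, and the envelope bound $c(s') \lesssim (s'/s)^{\delta_0}c(s)$ for $s'\ge s$, and contributes $O(c(s)s^{-(j+1)/2})$; the terms with $j_2 < j$ are handled by the inductive hypothesis and the algebra estimate \eqref{prod1-abstract}, contributing $O_E(c(s)s^{-(j+1)/2})$; and the remaining term $j_2 = j$, which is linear in $\partial_x^j\Psi_x$, is absorbed by Gronwall's inequality in the form of Lemma \ref{gron-lem} (with $\theta = 0$, which is legitimate because the resulting kernel exponent $\delta_1/2 - (j+1)/2$ is $< -3\delta_0$ for all $j \ge 0$), whose vanishing hypothesis at $s = \infty$ is exactly the qualitative decay \eqref{psi-decay-1} (for $j = 0$ one uses instead the qualitative decay of the caloric gauge recorded in \cite{tao:heatwave2}). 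This proves \eqref{parab-1} for $0 \le j \le 9$; inserting the $\psi_x$-bound thereby obtained back into \eqref{a-eq} then refines the $A_x$-bound to the quadratic form \eqref{parab-2}.

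\emph{Inductive step ($j \ge 10$).} Assuming \eqref{parab-1}--\eqref{parab-3} at all lower orders, I establish the order-$j$ bounds in the order (a) $\psi_s$, (b) $\psi_x$, (c) $A_x$. For (a) I apply the parabolic regularity estimate \eqref{parreg-abstract} to $\phi := \partial_x^{j-2}\psi_s$; writing the heat equation \eqref{psis-eq} in the schematic form $(\partial_s - \Delta)\psi_s = \bigO(A_x\partial_x\psi_s) + \bigO((\partial_x A_x)\psi_s) + \bigO(A_x^2\psi_s) + \bigO(\psi_x^2\psi_s)$, the term $s^{-1}\|\partial_x^{j-2}\psi_s(s/2)\|$ is controlled by \eqref{parab-3} at order $j-2$, while a Leibniz expansion of $\partial_x^{j-2}$ of the forcing term never places more than $j-1$ derivatives on $\psi_s$ (worst case $A_x\cdot\partial_x^{j-1}\psi_s$) nor more than $j-1$ on $A_x$ (worst case $(\partial_x^{j-1}A_x)\cdot\psi_s$), so every factor is covered by the inductive hypothesis; \eqref{prod1-abstract} and scaling then yield the bound $O_E(c(s)s^{-(j+2)/2})$. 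Step (b) is identical, applying \eqref{parreg-abstract} to $\partial_x^{j-2}\psi_x$ and using \eqref{psit-eq} with the $t$-subscripts dropped, whose forcing term again involves at most $j-1$ derivatives of $\psi_x$ and at most $j-1$ of $A_x$. Finally, for (c) I use \eqref{a-eq} directly,
$$ \partial_x^j A_x(s) = \sum_{j_1+j_2=j} \bigO\Big( \int_s^\infty \partial_x^{j_1}\psi_s(s')\, \partial_x^{j_2}\psi_x(s')\ ds' \Big), $$
in which $\partial_x^{j_1}\psi_s$ and $\partial_x^{j_2}\psi_x$ are now available for all $j_1, j_2 \le j$ from steps (a), (b), and the $ds'$-integral converges to $O(c(s)^2 s^{-(j+1)/2})$ exactly as in the base case. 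The variants with $\partial_x^2$ replaced by $\partial_s$ follow a posteriori by trading each $\partial_s$ for a $\Delta$ plus lower-order terms already controlled, via the equations of motion.

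The main obstacle is the apparent circularity of the recursion: by \eqref{Psi-x-eq}, $\partial_x^j\Psi_x$ is recovered only from $\partial_x^{j+1}\psi_s$, whereas by \eqref{heatflow} (equivalently \eqref{inter4}) $\partial_x^j\psi_s$ is recovered only from $\partial_x^{j+1}\Psi_x$, so neither integral relation by itself closes the induction. The resolution is the two-derivative gain of parabolic smoothing: \eqref{parreg-abstract} produces $\partial_x^j\psi_s(s)$ (respectively $\partial_x^j\psi_x(s)$) from the strictly lower-order datum $\partial_x^{j-2}\psi_s(s/2)$ together with a forcing term in which every factor carries at most $j-1$ derivatives, which is precisely what makes the ordered recursion (a)$\to$(b)$\to$(c) well-founded once the low-order base case has been set up by hand from the hypotheses. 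The remaining difficulty is purely a matter of bookkeeping: one must verify that the various losses — the $(s'/s)^{\delta_1/2}$ factors from \eqref{physical-abstract}, the finitely many $(s'/s)^{\delta_0}$ factors from the envelope, and the extra powers of $c$ accumulated in the difference estimates — are all harmless, which is guaranteed by the smallness hierarchy $\delta_0 \ll \delta_1 \ll 1$ together with the bound $c \lesssim \sqrt{E}$.
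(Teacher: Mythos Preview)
Your proposal is correct and follows essentially the same approach as the paper. The only differences are organizational: the paper packages all $0 \le j \le 9$ into a single quantity $f(s) := \sum_{j=0}^9 s^{(j+1)/2}\|\partial_x^j\Psi_x^*(s)\|_{S_{k(s)}}$ and runs one Gronwall argument (Lemma~\ref{gron-lem}) on $f$ rather than inducting on $j$, and in the recovery step it proceeds in the order $\psi_x \to A_x \to \psi_s$ (Lemmas~\ref{psix-recover}, \ref{ax-recover}, \ref{psis-recover}) rather than your $\psi_s \to \psi_x \to A_x$; both orderings close for the same reason you identify, namely that \eqref{parreg-abstract} gains two derivatives while the forcing terms cost at most one.
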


\begin{remark} We have stated these bounds for arbitrary non-negative $j$, but in practice we will only need finitely many of these bounds (e.g. it would suffice to establish these bounds just in the range $0 \leq j \leq 100$).  Similarly for the remainder of the estimates in this section.  Thus, the reader may safely ignore the dependence of $j$ in the implied constants.  For applications to the subsequent paper \cite{tao:heatwave5}, it is worth remarking that the wave map equation $D^\alpha \psi_\alpha=0$ is not actually used at all in the argument here.  We also make the technical remark that the connection $A_x$ need not be Schwartz (see \cite[Remark 3.14]{tao:heatwave2}), but in that case one can interpret the above bounds in the completion of the Schwartz space with respect to the $S_k$ norm.
\end{remark}

The rest of this section is devoted to the proof of Theorem \ref{parab-thm}.  Let the notation and hypotheses be as in that theorem.  We allow implied constants to depend on $E$, thus from \eqref{cse-sup} we have
\begin{equation}\label{envelo-2}
\sup_{s>0} c(s) + \int_0^\infty c(s)^2 \frac{ds}{s} \lesssim 1.
\end{equation}
Note that as the semi-norm $S_{k}(I \times \R^2)$ is continuous in the $\Sch(I \times \R^2)$ topology, we can establish Minkowski's inequality
\begin{equation}\label{mink}
 \| \int_{s_1}^{s_2} \psi(s)\ ds\|_{S_k(I \times \R^2)} \leq \int_{s_1}^{s_2} \| \psi(s) \|_{S_k(I \times \R^2)} \ ds
\end{equation}
whenever the integral is absolutely convergent in the $\Sch(I \times \R^2)$ topology.

We begin with a bound on $\Psi_x^*$ and $\delta \Psi_x$.

\begin{lemma}\label{jreg}  We have
\begin{equation}\label{psix-star}
 \| \partial_x^j \Psi^*_x(s) \|_{S_{k(s)}(I \times \R^2)} \lesssim c(s) s^{-(j+1)/2}
 \end{equation}
and
\begin{equation}\label{psix-diff-eq} \| \partial_x^j \delta \Psi_x(s) \|_{S_{k(s)}(I \times \R^2)} \lesssim \delta c(s) s^{-(j+1)/2}
\end{equation}
for all $0 \leq j \leq 9$ and $s > 0$.
\end{lemma}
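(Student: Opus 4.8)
The plan is to bootstrap the integral equation \eqref{Psi-x-eq} against the abstract parabolic toolkit of Definition \ref{algebra-fam}, closing with Lemma \ref{gron-lem}. Set
\[ f(s) := \sup_{0 \le j \le 9} s^{(j+1)/2} \| \partial_x^j \Psi^*_x(s) \|_{S_{k(s)}(I \times \R^2)}, \qquad \delta f(s) := \sup_{0 \le j \le 9} s^{(j+1)/2} \| \partial_x^j \delta\Psi_x(s) \|_{S_{k(s)}(I \times \R^2)}, \]
so that \eqref{psix-star} and \eqref{psix-diff-eq} assert precisely $f(s) \lesssim_E c(s)$ and $\delta f(s) \lesssim_E \delta c(s)$. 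As a preliminary step I would verify that $f$ and $\delta f$ are finite on $(0,\infty)$ and tend to $0$ as $s \to \infty$: for the orders $j \ge 1$ this is exactly the hypothesis \eqref{psi-decay-1} (used for both $\Psi_x$ and $\Psi'_x$), while the endpoint $j = 0$ is extracted from \eqref{Psi-x-eq} together with the decay of the heat-flow fields at $s = +\infty$ from \cite{tao:heatwave2} and the elementary fact that any frequency envelope obeys $c(s) \to 0$ as $s \to \infty$.

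To prove \eqref{psix-star} I would differentiate \eqref{Psi-x-eq} by $\partial_x^j$, expand with the Leibniz rule \eqref{leibnitz}, take the $S_{k(s)}$ seminorm, and apply Minkowski's inequality \eqref{mink}. This leaves an integral over $s' \ge s$ of a linear contribution coming from $\partial_x^{j+1}\psi_s(s')$ and bilinear contributions coming from $\partial_x^{j_1}\psi_s(s')\,\partial_x^{j_2}\Psi_x(s')$ with $j_1 + j_2 = j \le 9$. Each factor is measured in $S_{k(s')}$ --- via \eqref{psisjk-abstract} for the $\psi_s$ factors and the definition of $f$ for the $\Psi_x$ factor --- the product is recombined by \eqref{prod1-abstract}, and one then transfers from $S_{k(s')}$ to $S_{k(s)}$ using the comparability estimate \eqref{physical-abstract}, which costs only $\chi_{k(s)=k(s')}^{-\delta_1} \lesssim (s'/s)^{\delta_1/2}$ since $2^{-2k(s)} \sim s$ forces $|k(s)-k(s')| = \tfrac12\log_2(s'/s) + O(1)$ for $s' \ge s$. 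After collecting powers of $s'$ and multiplying by $s^{(j+1)/2}$, the inequality takes the form
\[ f(s) \lesssim_E c(s) + \int_s^\infty (s'/s)^{\delta_1/2-(j+1)/2}\, c(s')\, f(s')\, \frac{ds'}{s'}; \]
since $\delta_1/2 - (j+1)/2 \le \delta_1/2 - \tfrac12 < -3\delta_0$ for every $j \ge 0$, the kernel is dominated on $s' \ge s$ by $(s'/s)^{-3\delta_0}$, and the linear term contributes only $O_E(c(s))$ after invoking \eqref{sm}. Since $g(s) := \const\, c(s)$ obeys \eqref{sm}, Lemma \ref{gron-lem} with $\theta = 0$ yields $f(s) \lesssim_E c(s)$.

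For \eqref{psix-diff-eq} I would subtract the copies of \eqref{Psi-x-eq} for $\Psi_x$ and $\Psi'_x$ and expand the difference of the bilinear term by the discretised Leibniz rule \eqref{disc-leib-eq}, obtaining
\[ \delta\Psi_x(s) = \int_s^\infty \bigO(\partial_x\delta\psi_s(s')) + \bigO(\delta\psi_s(s')\,\Psi^*_x(s')) + \bigO(\psi^*_s(s')\,\delta\Psi_x(s'))\, ds'. \]
Running the same derivative/product/comparability analysis, now with \eqref{psisjk-star-abstract} controlling the $\delta\psi_s$ factors, the already-proven \eqref{psix-star} controlling the $\Psi^*_x$ factor, \eqref{psisjk-abstract} controlling the $\psi^*_s$ factors, and the definition of $\delta f$ controlling the remaining $\delta\Psi_x$ factor, the first two groups of terms integrate (using $c \lesssim_E 1$ from \eqref{envelo-2} and the slow variation \eqref{sm} of $\delta c$) to $O_E(\delta c(s))$, and the last group produces the Gronwall term $\int_s^\infty (s'/s)^{-3\delta_0}\, c(s')\, \delta f(s')\, ds'/s'$. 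A second appeal to Lemma \ref{gron-lem}, this time with $g(s) = \const\, \delta c(s)$, gives $\delta f(s) \lesssim_E \delta c(s)$.

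The step I expect to be most delicate is essentially accounting: one must keep every frequency transfer between $S_{k(s')}$ and $S_{k(s)}$ down to a power $(s'/s)^{O(\delta_1)}$ that is genuinely beaten by the $(s')^{-(j+1)/2}$ decay supplied by the derivative count, so that all the $s'$-integrals converge at $s' = +\infty$ and the resulting kernels have exactly the form Lemma \ref{gron-lem} demands, while also checking that the Leibniz expansions generate only $O_j(1)$ terms. A secondary subtlety is the preliminary qualitative decay of $f$ and $\delta f$ at $s = +\infty$ in the endpoint case $j = 0$, which is not one of the literal hypotheses and has to be imported from the heat-flow regularity theory of \cite{tao:heatwave2}.
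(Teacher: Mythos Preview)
Your proposal is correct and follows essentially the same route as the paper: define the aggregate $f(s)$ (the paper uses a sum over $0 \le j \le 9$ rather than a sup, which is immaterial), expand \eqref{Psi-x-eq} via Leibniz and Minkowski, estimate the $\psi_s$ factors by \eqref{psisjk-abstract}, transfer frequency scales with \eqref{physical-abstract}, and close with Lemma~\ref{gron-lem}; then repeat for $\delta f$ using \eqref{disc-leib-eq}. Your treatment is in fact slightly more careful than the paper's on the qualitative decay of the $j=0$ contribution to $f$, which the paper subsumes under a bare citation of \eqref{psi-decay-1}; the only cosmetic blemish is that your displayed integral inequality still carries a $j$ in the exponent after having taken the supremum over $j$---replacing it by the worst case $j=0$ (as you immediately do in words) gives the clean form.
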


\begin{proof}   We begin with \eqref{psix-star}.  Let us write 
\begin{equation}\label{fdef}
f(s) := \sum_{j=0}^9 s^{(j+1)/2} \| \partial_x^j \Psi^*_x(s) \|_{S_{k(s)}(I \times \R^2)}
\end{equation}
for $s > 0$.  To show \eqref{psix-star}, it suffices to show that $f(s) \lesssim c(s)$ for all $s > 0$.

From \eqref{Psi-x-eq}, the Leibniz rule \eqref{leibnitz}, and Minkowski's inequality \eqref{mink} we have
\begin{align*}
f(s) &\lesssim \int_s^\infty \sum_{j=0}^{9} s^{(j+1)/2} \| \partial^{j+1}_x \psi^*_s(s') \|_{S_{k(s)}(I \times \R^2)} \\
&\quad + \sum_{j_1,j_2 \geq 0: j_1+j_2 \leq 9} s^{(j_1+j_2+1)/2} \| \partial_x^{j_1} \Psi^*_x(s') \partial_x^{j_2} \psi^*_s(s') \|_{S_{k(s)}(I \times \R^2)} \ ds'.
\end{align*}
Using \eqref{psisjk-abstract}, \eqref{physical-abstract}, \eqref{prod1-abstract} we thus have
\begin{align*}
f(s) &\lesssim \int_s^\infty \sum_{j=0}^{9} s^{(j+1)/2} c(s') (s')^{-(j+3)/2} (s'/s)^{\delta_1/2} \\
&\quad +
\sum_{j_1,j_2 \geq 0: j_1+j_2 \leq 9} s^{(j_1+j_2+1)/2} (s')^{-(j_1+1)/2} f(s') c(s') (s')^{-(j_2+2)/2} (s'/s)^{-\delta_1/2}\ ds'.
\end{align*}
From \eqref{sm}, \eqref{envelo-2} we conclude
$$ f(s) \lesssim c(s) + \int_s^\infty f(s') c(s') (s'/s)^{-\frac{3}{2} + \frac{\delta_1}{2} + \delta_0} \frac{ds'}{s'}.$$
From \eqref{psi-decay-1} we also see that $f(s) \to 0$ as $s \to \infty$.  By Lemma \ref{gron-lem} and \eqref{sm} we conclude that $f(s) \lesssim c(s)$ as required.

Now we establish \eqref{psix-diff-eq}.  We write
$$
\delta f(s) := \sum_{j=0}^9 s^{(j+1)/2} \| \partial_x^j \delta \Psi_x(s) \|_{S_{k(s)}(I \times \R^2)}
$$
for $s > 0$.  It suffices to show that $f(s) \lesssim \delta c(s)$ for all $s \gtrsim 0$.

From \eqref{Psi-x-eq} and \eqref{disc-leib-eq} we have
\begin{equation}\label{delta-1}
\delta \Psi_x(s) = \int_s^\infty \bigO( \partial_x \delta \psi_s(s') ) + \bigO( (\delta \psi_s(s')) \Psi^*_x(s') ) + \bigO( \psi^*_s(s') \delta \Psi_x(s') )\ ds'.
\end{equation}
Using the Leibniz rule \eqref{leibnitz} and Minkowski's inequality \eqref{mink}, \eqref{psisjk-abstract}, \eqref{physical-abstract}, \eqref{psix-star}, and \eqref{prod1-abstract} we thus have
\begin{align*}
\delta f(s) &\lesssim \int_s^\infty \sum_{j=0}^{9} s^{(j+1)/2} \delta c(s') (s')^{-(j+3)/2} (s'/s)^{\delta_1/2} \\
&\quad +
\sum_{j_1,j_2 \geq 0: j_1+j_2 \leq 9} s^{(j_1+j_2+1)/2} (s')^{-(j_1+1)/2} c(s') \delta c(s') (s')^{-(j_2+2)/2} (s'/s)^{-\delta_1/2}\ ds'\\
&\quad +
\sum_{j_1,j_2 \geq 0: j_1+j_2 \leq 9} s^{(j_1+j_2+1)/2} (s')^{-(j_1+1)/2} \delta f(s') c(s') (s')^{-(j_2+2)/2} (s'/s)^{-\delta_1/2}\ ds'.
\end{align*}
Using \eqref{sm}, \eqref{envelo-2} this simplifies to
$$ \delta f(s) \lesssim \delta c(s) + \int_s^\infty \delta f(s') c(s') (s'/s)^{-\frac{3}{2} + \frac{\delta_1}{2} + \delta_0} \frac{ds'}{s'}.$$
By \eqref{psi-decay-1} we also have $\delta f(s) \to 0$ as $s \to \infty$.  The claim then follows from Lemma \ref{gron-lem} as before.
\end{proof}

\begin{remark} Note that every term that occurs when bounding $\delta \Psi_x$ contains exactly one factor containing a $\delta$; this is ultimately because the Leibniz rule \eqref{disc-leib-eq} is ``linear in $\delta$''.  This is why the final estimate is linear in $\delta c$.  The same phenomenon will hold throughout the rest of this section.
\end{remark}

We will need to compensate for the above loss of derivative (and for derivative losses in later parts of the argument). Here our main tools will be the parabolic equations \eqref{psit-eq}, \eqref{psis-eq} and \eqref{parreg-abstract}.  We first recover one derivative for $\psi_x$:

\begin{lemma}\label{psix-recover}  We have
\begin{equation}\label{deltash}
 \| \partial_x^j \psi^*_x(s) \|_{S_{k(s)}(I \times \R^2)} \lesssim c(s) s^{-(j+1)/2}
\end{equation}
and
\begin{equation}\label{deltash-diff}
\| \partial_x^j \delta \psi_x(s) \|_{S_{k(s)}(I \times \R^2)} \lesssim \delta c(s) s^{-(j+1)/2}
\end{equation}
for all $0 \leq j \leq 10$ and $s > 0$.
\end{lemma}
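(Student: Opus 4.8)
The plan is to obtain \eqref{deltash}, \eqref{deltash-diff} for $0 \le j \le 9$ for free from Lemma \ref{jreg} (since $\psi_x$ is a component of $\Psi_x$, and \eqref{psix-star}, \eqref{psix-diff-eq} already cover this range), and then recover the single extra derivative at $j=10$ by one application of the parabolic regularity estimate \eqref{parreg-abstract} to the heat equation \eqref{psit-eq}. Crucially, no Gronwall iteration is needed here: the bootstrap has already been carried out in Lemma \ref{jreg}, so the endpoint case is a purely linear deduction.

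First I would record the schematic form of the parabolic equation for $\psi_x$. Restricting \eqref{psit-eq} to its spatial components and expanding $D_iD_i = \Delta + \bigO(\partial_x A_x) + \bigO(A_x \partial_x) + \bigO(A_x^2)$, we get
\[
(\partial_s - \Delta)\psi_x = \bigO\big((\partial_x A_x)\psi_x\big) + \bigO\big(A_x \partial_x\psi_x\big) + \bigO\big(A_x^2 \psi_x\big) + \bigO(\psi_x^3),
\]
and, applying $\delta$ through the discretised Leibniz rule \eqref{disc-leib-eq}, the analogous equation for $\delta\psi_x$ in which every term carries exactly one $\delta$-factor.

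Next I would apply \eqref{parreg-abstract} with $\phi := \partial_x^8\psi_x$, which yields
\[
\|\partial_x^{10}\psi_x(s)\|_{S_{k(s)}(I\times\R^2)} \lesssim s^{-1}\|\partial_x^8\psi_x(s/2)\|_{S_{k(s)}(I\times\R^2)} + \sup_{s/2 \le s' \le s}\|\partial_x^8(\partial_s - \Delta)\psi_x(s')\|_{S_{k(s)}(I\times\R^2)}.
\]
For the first term, \eqref{psix-star} with $j=8$ bounds $\|\partial_x^8\psi_x(s/2)\|_{S_{k(s/2)}}$ by $c(s/2)(s/2)^{-9/2}$, and then \eqref{physical-abstract} replaces $k(s/2)$ by $k(s)$ (they differ by $O(1)$) while \eqref{sm} gives $c(s/2)\sim c(s)$, producing $\lesssim c(s)s^{-11/2}$. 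For the second term, I would expand $\partial_x^8$ of each schematic term above by \eqref{leibnitz}; every factor that arises then carries at most $9$ derivatives of $\psi_x$ or $A_x$ — the extreme case being $\partial_x^9 A_x \cdot \psi_x$ — so Lemma \ref{jreg} applies to each factor, and \eqref{prod1-abstract} together with \eqref{physical-abstract} (to align all frequencies to $k(s)$, legitimate since $k(s') = k(s)+O(1)$ for $s'\in[s/2,s]$) bounds each term by $c(s')^2(s')^{-11/2} \lesssim c(s)s^{-11/2}$, using $c\lesssim 1$ from \eqref{cse-sup} and $s'\sim s$. This gives \eqref{deltash} for $j=10$. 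The difference bound \eqref{deltash-diff} for $j=10$ is identical, applying \eqref{parreg-abstract} to $\partial_x^8\delta\psi_x$: since every term of $\delta(\partial_s-\Delta)\psi_x$ is linear in $\delta$, one factor is estimated by \eqref{psix-diff-eq} and the remaining ones by \eqref{psix-star}, yielding a bound linear in $\delta c(s)$.

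There is no genuine obstacle beyond bookkeeping; the one point that needs care is the derivative count, since we are recovering exactly one derivative and the worst term $\partial_x^9 A_x\cdot\psi_x$ consumes precisely the $j=9$ bound provided by Lemma \ref{jreg}, so the margins are tight. (As noted in the remark following Theorem \ref{parab-thm}, $A_x$ need not be Schwartz, so these manipulations are to be read in the completion of $\Sch(I\times\R^2)$ under the $S_k$ seminorm, which causes no difficulty.)
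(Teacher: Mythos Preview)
Your proof is correct and follows essentially the same approach as the paper: reduce to $j=10$ via Lemma \ref{jreg}, apply \eqref{parreg-abstract} to $\partial_x^8\psi_x$, bound the linear homogeneous term by the $j=8$ case, and bound the nonlinear forcing by expanding via Leibniz and applying \eqref{prod1-abstract} together with the $j\le 9$ bounds from Lemma \ref{jreg}. The only cosmetic difference is that the paper writes the forcing more schematically as $\bigO(\Psi^*_x\partial_x\Psi^*_x)+\bigO((\Psi^*_x)^3)$ rather than separating out the $A_x$ and $\psi_x$ contributions, but this makes no difference to the argument.
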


\begin{proof}  From Lemma \ref{jreg} we only need to verify this when $j=10$.
We begin with \eqref{deltash}.  From \eqref{psit-eq} we have the schematic equation
\begin{equation}\label{psit-star}
 \partial_s \psi^*_x = \Delta \psi^*_x + \bigO( \Psi^*_x \partial^*_x \Psi^*_x ) + \bigO( (\Psi^*_x)^3 ).
\end{equation}
Applying the Leibniz rule \eqref{leibnitz} and \eqref{parreg-abstract} we conclude that
\begin{align*}
\| \partial_x^{10} \psi^*_x(s) \|_{S_k(I \times \R^2)} &\lesssim s^{-1} \| \partial_x^8 \psi^*_x(s/2) \|_{S_k(I \times \R^2)} \\
&\quad +
\sup_{s/2 \leq s' \leq s} \sum_{j_1,j_2 \geq 0: j_1+j_2 = 9} \| \partial_x^{j_1} \Psi^*_x \partial_x^{j_2} \Psi^*_x \|_{S_k(I \times \R^2)} \\
&\quad + \sum_{j_1,j_2,j_3 \geq 0: j_1+j_2+j_3=8}
\| \partial_x^{j_1} \Psi^*_x \partial_x^{j_2} \Psi^*_x \partial_x^{j_3} \Psi^*_x \|_{S_k(I \times \R^2)}.
\end{align*}
Applying \eqref{prod1-abstract} and Lemma \ref{jreg} we conclude
$$ \| \partial_x^{10} \psi^*_x(s) \|_{S_k(I \times \R^2)} \lesssim c(s) s^{-(10+1)/2} +
\sup_{s/2 \leq s' \leq s} c(s')^2 s^{-(10+1)/2} + c(s')^3 s^{-(10+1)/2}$$
and the claim \eqref{deltash} follows from \eqref{sm}, \eqref{envelo-2}.

Now we prove \eqref{deltash-diff}.  From \eqref{psit-eq}, \eqref{disc-leib-eq} we have the schematic equation
$$ \partial_s \delta \psi_x = \Delta \delta \psi_x + \bigO( (\delta \Psi_x) \partial_x \Psi^*_x ) + \bigO( \Psi^*_x \partial_x \delta \Psi_x ) + \bigO( (\Psi^*_x)^2 \delta \Psi_x).$$
Applying the Leibniz rule \eqref{leibnitz} and \eqref{parreg-abstract} we conclude that
\begin{align*}
\| \partial_x^{10} \delta \psi_x(s) \|_{S_k(I \times \R^2)} &\lesssim s^{-1} \| \partial_x^8 \delta \psi_x(s/2) \|_{S_k(I \times \R^2)} \\
&\quad +
\sup_{s/2 \leq s' \leq s} \sum_{j_1,j_2 \geq 0: j_1+j_2 = 9} \| \partial_x^{j_1} \Psi^*_x \partial_x^{j_2} \delta \Psi_x \|_{S_k(I \times \R^2)} \\
&\quad + \sum_{j_1,j_2,j_3 \geq 0: j_1+j_2+j_3=8}
\| \partial_x^{j_1} \Psi^*_x \partial_x^{j_2} \Psi^*_x \partial_x^{j_3} \delta \Psi_x \|_{S_k(I \times \R^2)}.
\end{align*}
Applying \eqref{prod1-abstract}, \eqref{deltash}, Lemma \ref{jreg}, \eqref{sm}, and \eqref{envelo-2} we obtain the claim.
\end{proof}

Then we recover a derivative for $A_x$ (gaining a useful factor of $c(s)$ in the process):

\begin{lemma}\label{ax-recover}  We have 
\begin{equation}\label{ax-eq} \| \partial_x^j A^*_x(s) \|_{S_{k(s)}(I \times \R^2)} \lesssim c(s)^2 s^{-(j+1)/2}
\end{equation}
and
\begin{equation}\label{axdiff-eq} \| \partial_x^j \delta A_x(s) \|_{S_{k(s)}(I \times \R^2)} \lesssim c(s) \delta c(s) s^{-(j+1)/2}
\end{equation}
for all $0 \leq j \leq 10$ and $s > 0$.
\end{lemma}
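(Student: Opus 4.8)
The plan is to exploit the fact that the connection equation \eqref{a-eq} is structurally much better behaved than the generic equation \eqref{Psi-x-eq} for $\Psi_x$. Restricting \eqref{a-eq} to the spatial component and using $\psi_s \wedge \psi_x = \bigO(\psi_s\psi_x)$, one has the schematic identity
\[
A^*_x(s) = \int_s^\infty \bigO( \psi^*_s(s')\,\psi^*_x(s') )\ ds',
\]
which carries no ``bad'' $\partial_x\psi_s$ term and no factor of $A_x$, and whose right-hand side is bilinear in the two fields $\psi^*_s,\psi^*_x$ that are \emph{already} under control -- by the hypothesis \eqref{psisjk-abstract} and by \eqref{deltash} from Lemma \ref{psix-recover}, respectively. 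Two consequences are worth isolating. Since $\psi^*_x$ (rather than the full $\Psi^*_x$, which contains $A_x$) appears, the right-hand side gains an extra power of $c$ relative to the quantities estimated in Lemma \ref{jreg}, and this is what upgrades $c(s)$ to $c(s)^2$ in \eqref{ax-eq}. And since no field on the right involves $A_x$, the identity above is a plain integral estimate, not an integral inequality, so no Gronwall step is needed; likewise the derivative count closes without loss, since bounding $\partial_x^{10}A_x$ through Leibniz calls only on $\partial_x^{\le 10}\psi_s$ and $\partial_x^{\le 10}\psi_x$, both of which are available.

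First I would prove \eqref{ax-eq}. Fix $0 \le j \le 10$, apply $\partial_x^j$ to the identity above, expand with the Leibniz rule \eqref{leibnitz}, and bring the $S_{k(s)}$ norm inside the integral by Minkowski's inequality \eqref{mink}; each integrand is then of the form $\partial_x^{j_1}\psi^*_s(s')\,\partial_x^{j_2}\psi^*_x(s')$ with $j_1+j_2 = j$. I would bound it by the product estimate \eqref{prod1-abstract} together with the comparability estimate \eqref{physical-abstract} -- the latter converting the natural scale $k(s')$ of the fields to the scale $k(s)$ at a cost of at most $(s'/s)^{\delta_1}$, since $2^{-2k(s)}\sim s$ -- and then insert \eqref{psisjk-abstract} and \eqref{deltash}. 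This bounds the integrand by $c(s')^2(s')^{-(j+3)/2}(s'/s)^{\delta_1}$; writing $(s')^{-(j+3)/2}\,ds' = s^{-(j+1)/2}(s'/s)^{-(j+1)/2}\,\frac{ds'}{s'}$ and using \eqref{sm} in the form $c(s')^2 \le (s'/s)^{2\delta_0}c(s)^2$, the integral reduces to $s^{-(j+1)/2}c(s)^2\int_s^\infty (s'/s)^{-(j+1)/2 + 2\delta_0 + \delta_1}\,\frac{ds'}{s'}$, whose exponent is strictly negative for all $j \ge 0$ because $\delta_0 \ll \delta_1 \ll 1$, so the integral is $O_j(1)$ by \eqref{envelo-2} and \eqref{ax-eq} follows.

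For \eqref{axdiff-eq} I would run the same argument after applying the discretised Leibniz rule \eqref{disc-leib-eq}, which rewrites the identity as $\delta A_x(s) = \int_s^\infty \bigO( (\delta\psi_s(s'))\,\psi^*_x(s') ) + \bigO( \psi^*_s(s')\,\delta\psi_x(s') )\ ds'$; one now invokes \eqref{psisjk-star-abstract} and \eqref{deltash-diff} for the $\delta$-factors and \eqref{psisjk-abstract}, \eqref{deltash} for the starred factors. Since each term carries exactly one $\delta$, the resulting bound is linear in $\delta c$, each integrand being $\lesssim_j c(s')\delta c(s')(s')^{-(j+3)/2}(s'/s)^{\delta_1}$, and applying \eqref{sm} to both $c$ and $\delta c$ (the latter also being a frequency envelope) exactly as above yields $\lesssim_j c(s)\delta c(s)\,s^{-(j+1)/2}$. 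I do not expect a genuine obstacle in this lemma: everything reduces to the structural observation at the start together with Lemmas \ref{jreg} and \ref{psix-recover}, and the only care required is in tracking the small exponents $\delta_0,\delta_1$ so that the $s'$-integrals stay convergent with room to spare.
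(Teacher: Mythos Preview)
Your proposal is correct and follows essentially the same approach as the paper: both use the schematic identity $A^*_x(s)=\int_s^\infty \bigO(\psi^*_s(s')\psi^*_x(s'))\,ds'$ from \eqref{a-eq}, apply Leibniz and Minkowski, then invoke the product estimate \eqref{prod1-abstract} together with the input bounds \eqref{psisjk-abstract} and \eqref{deltash} from Lemma \ref{psix-recover}, and finally integrate in $s'$ (with the difference case handled identically via \eqref{disc-leib-eq}). The only cosmetic discrepancy is that the scale-conversion factor from \eqref{physical-abstract} should be $(s'/s)^{\delta_1/2}$ rather than $(s'/s)^{\delta_1}$ (since $|k(s)-k(s')|\sim \tfrac12\log_2(s'/s)$), and the final integral converges simply as a power integral rather than by \eqref{envelo-2}; neither affects the argument.
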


\begin{proof} We first prove \eqref{ax-eq}. From \eqref{a-eq}, the Leibniz rule \eqref{leibnitz}, and Minkowski's inequality \eqref{mink} we have
$$ \| \partial_x^j A^*_x \|_{S_{k(s)}(I \times \R^2)} \lesssim \int_s^\infty \sum_{j_1,j_2 \geq 0: j_1+j_2=j} \| \partial_x^{j_1} \psi^*_s(s') \partial_x^{j_2} \psi^*_x(s') \|_{S_{k(s)}(I \times \R^2)}\ ds'.$$
Applying \eqref{prod1-abstract}, Lemma \ref{psix-recover}, and \eqref{psisjk-abstract} we thus have
$$ \| \partial_x^j A^*_x \|_{S_{k(s)}(I \times \R^2)} \lesssim \int_s^\infty c(s')^2 (s'/s)^{\delta_1/2} (s')^{-(j+3)/2}\ ds'.$$
Applying \eqref{sm}, \eqref{envelo-2}, we obtain the claim \eqref{ax-eq}.

Now we prove \eqref{axdiff-eq}.  
From \eqref{a-eq}, \eqref{disc-leib-eq}, the Leibniz rule \eqref{leibnitz}, and Minkowski's inequality \eqref{mink} we have
\begin{align*}
\| \partial_x^j \delta A_x \|_{S_{k(s)}(I \times \R^2)} &\lesssim \int_s^\infty \sum_{j_1,j_2 \geq 0: j_1+j_2=j} \| \partial_x^{j_1} \psi^*_s(s') \partial_x^{j_2} \delta \psi_x(s') \|_{S_{k(s)}(I \times \R^2)} \\
&\quad + \| \partial_x^{j_1} (\delta \psi^*_s)(s') \partial_x^{j_2} \psi^*_x(s') \|_{S_{k(s)}(I \times \R^2)}\ ds'.
\end{align*}
Arguing as in the proof of \eqref{ax-eq} we obtain the claim.
\end{proof}

Now we recover a derivative for $\psi_s$:

\begin{lemma}\label{psis-recover}  We have
\begin{equation}\label{psistar-eq} \| \partial_x^j \psi^*_s(s) \|_{S_{k(s)}(I \times \R^2)} \lesssim c(s) s^{-(j+2)/2}
\end{equation}
and
\begin{equation}\label{psistar-diff-eq} \| \partial_x^j \delta \psi_s(s) \|_{S_{k(s)}(I \times \R^2)} \lesssim \delta c(s) s^{-(j+2)/2}
\end{equation}
for all $0 \leq j \leq 11$ and $s > 0$.
\end{lemma}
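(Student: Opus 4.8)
The plan is to run the same one-derivative parabolic gain already used in Lemmas \ref{psix-recover} and \ref{ax-recover}, now applied to the heat equation \eqref{psis-eq} for $\psi_s$. Since the bounds for $0 \le j \le 10$ are precisely the hypotheses \eqref{psisjk-abstract} and \eqref{psisjk-star-abstract}, only the endpoint case $j=11$ needs to be established. Note in particular that, unlike Lemma \ref{jreg}, this argument does not pass through an $\int_s^\infty$ integral identity, so it requires neither a Gronwall-type lemma nor the qualitative decay \eqref{psi-decay-1}; it is a direct local parabolic smoothing estimate on the dyadic block $[s/2,s]$.

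First I would record the schematic form of \eqref{psis-eq}: expanding $D_iD_i\psi_s$ via \eqref{D-def} and collecting the curvature term, one obtains
\[ \partial_s \psi^*_s = \Delta \psi^*_s + \bigO(\Psi^*_x \partial_x \psi^*_s) + \bigO((\partial_x \Psi^*_x) \psi^*_s) + \bigO((\Psi^*_x)^2 \psi^*_s), \]
where $\Psi_x = (\psi_x, A_x)$ absorbs both the connection terms $\bigO(A_x \partial_x \psi_s)$, $\bigO((\partial_x A_x)\psi_s)$, $\bigO(A_x^2 \psi_s)$ and the curvature term $\bigO(\psi_x^2 \psi_s)$. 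Applying $\partial_x^9$ (which commutes with $\partial_s-\Delta$) and then the parabolic regularity estimate \eqref{parreg-abstract} to $\partial_x^9 \psi^*_s$ bounds $\|\partial_x^{11}\psi^*_s(s)\|_{S_{k(s)}(I \times \R^2)}$ by $s^{-1}\|\partial_x^9 \psi^*_s(s/2)\|_{S_{k(s)}(I \times \R^2)}$ plus the supremum over $s/2 \le s' \le s$ of $\|\partial_x^9 F(s')\|_{S_{k(s)}(I \times \R^2)}$, where $F$ is the sum of the three $\bigO(\cdot)$ forcing terms above. Expanding $\partial_x^9 F$ with the Leibniz rule \eqref{leibnitz} produces products of two or three factors in which no single factor carries more than $\partial_x^{10}$ — this is the one bookkeeping point to watch, and it is exactly why the hypothesis range $j \le 10$ for $\psi_s$ and the range $j \le 10$ provided by Lemmas \ref{psix-recover}, \ref{ax-recover} for $\Psi_x = (\psi_x,A_x)$ suffice. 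Estimating each factor via \eqref{prod1-abstract} and \eqref{physical-abstract} (the latter to trade $S_{k(s)}$ for $S_{k(s')}$, since $k(s') = k(s) + O(1)$ on $[s/2,s]$), a product of $p \in \{2,3\}$ factors carrying a total of $9 + (p-1)$ derivatives, exactly one of which is a $\psi_s$-factor, is bounded by $c(s')^p (s')^{-13/2} \lesssim c(s')(s')^{-13/2}$ after invoking \eqref{envelo-2}; the first term $s^{-1}\|\partial_x^9 \psi^*_s(s/2)\|$ is handled identically. Taking the supremum over $s' \in [s/2,s]$ and using \eqref{sm} then gives $\|\partial_x^{11}\psi^*_s(s)\|_{S_{k(s)}(I \times \R^2)} \lesssim c(s) s^{-13/2}$, which is \eqref{psistar-eq} for $j=11$.

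For \eqref{psistar-diff-eq} I would difference the schematic equation using the discretised Leibniz rule \eqref{disc-leib-eq}; as in the remark following Lemma \ref{jreg}, every forcing term for $\partial_s \delta\psi_s$ is then linear in $\delta$, i.e. contains exactly one of $\delta\psi_s$, $\partial_x\delta\psi_s$, $\delta\Psi_x$, $\partial_x\delta\Psi_x$, with the remaining factors drawn from $\Psi^*_x$, $\psi^*_s$, $\partial_x\psi^*_s$. Repeating the computation above, now using \eqref{psisjk-star-abstract} together with the difference estimates \eqref{deltash-diff} and \eqref{axdiff-eq} of Lemmas \ref{psix-recover} and \ref{ax-recover} for the $\delta$-factor, every term is controlled by $c(s')^{p-1}\delta c(s')(s')^{-13/2} \lesssim \delta c(s')(s')^{-13/2}$, and \eqref{sm} yields $\|\partial_x^{11}\delta\psi_s(s)\|_{S_{k(s)}(I \times \R^2)} \lesssim \delta c(s) s^{-13/2}$, establishing \eqref{psistar-diff-eq} for $j=11$.

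The argument presents no genuine obstacle; the only thing to be careful about is the derivative accounting — verifying that the nonlinearity never asks for $\partial_x^{11}$ of any field when one estimates $\partial_x^{11}\psi_s$, which holds because $\psi_s$ enters the nonlinearity undifferentiated or with a single derivative, and $\Psi_x$ enters with at most one derivative, so the worst demand is $\partial_x^{10}$, within the range already controlled.
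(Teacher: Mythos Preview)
Your proposal is correct and follows essentially the same approach as the paper: reduce to $j=11$ via the hypotheses \eqref{psisjk-abstract}, \eqref{psisjk-star-abstract}, apply \eqref{parreg-abstract} to the schematic heat equation for $\psi^*_s$, expand the forcing by Leibniz so that no factor requires more than $\partial_x^{10}$, and close using \eqref{prod1-abstract}, Lemmas \ref{psix-recover} and \ref{ax-recover}, and \eqref{sm}, \eqref{envelo-2}; the difference case is handled identically via \eqref{disc-leib-eq}. Your explicit derivative-counting remark and the use of \eqref{physical-abstract} to pass between $S_{k(s)}$ and $S_{k(s')}$ are minor elaborations the paper leaves implicit.
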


\begin{proof} From Proposition \ref{jreg} we only need to verify this when $j=11$.  We begin with \eqref{psistar-eq}.
From \eqref{psis-eq} we have the schematic equation
\begin{equation}\label{psis-eq-star}
 \partial_s \psi^*_s = \Delta \psi^*_s + \bigO( \Psi^*_x \partial_x \psi^*_s ) + \bigO( \psi^*_s \partial_x \Psi^*_x ) + \bigO( (\Psi^*_x)^2 \psi^*_s ).
\end{equation}
Applying the Leibniz rule \eqref{leibnitz} and \eqref{parreg-abstract} we conclude that
\begin{align*}
\| \partial_x^{11} \psi^*_s(s) \|_{S_{k(s)}(I \times \R^2)} &\lesssim s^{-1} \| \partial_x^9 \psi^*_s(s/2) \|_{S_{k(s)}(I \times \R^2)} \\
&\quad + \sup_{s/2 \leq s'\leq s} \sum_{j_1,j_2 \geq 0: j_1+j_2=10} \| \partial_x^{j_1} \Psi^*_x \partial_x^{j_2} \psi_s \|_{S_{k(s)}(I \times \R^2)} \\
&\quad\quad  + \sum_{j_1,j_2,j_3 \geq 0: j_1+j_2=9} \| \partial_x^{j_1} \Psi^*_x \partial_x^{j_2} \Psi^*_x  \partial_x^{j_3} \psi^*_s \|_{S_{k(s)}(I \times \R^2)}.
\end{align*}
Applying \eqref{prod1-abstract}, Lemma \ref{psix-recover}, Lemma \ref{ax-recover}, and \eqref{psisjk-abstract} we thus have
$$ \| \partial_x^{10} \psi^*_x(s) \|_{S_{k(s)}(I \times \R^2)} \lesssim c(s) s^{-(11+2)/2} +
\sup_{s/2 \leq s' \leq s} c(s')^2 s^{-(11+2)/2} + c(s')^3 s^{-(11+2)/2}.$$
Applying \eqref{sm}, \eqref{envelo-2} we obtain the claim \eqref{psistar-eq}.

Now we prove \eqref{psistar-diff-eq}.  From \eqref{psis-eq} and \eqref{disc-leib-eq} we have the schematic equation
\begin{equation}\label{psisdelta}
\begin{split}
\partial_s \delta \psi_s &= \Delta \delta \psi_s + \bigO( (\delta \Psi_x) \partial_x \psi^*_s ) +  \bigO( \Psi^*_x \partial_x \delta \psi_x ) \\
&\quad + \bigO( (\delta \psi_s) \partial_x \Psi^*_x ) + 
\bigO( \psi_s^* \partial_x \delta \Psi^*_x ) \\
&\quad +\bigO( \Psi^*_x (\delta \Psi_x) \psi^*_s ) + \bigO( (\Psi^*_x)^2 \delta \psi^*_s ).
\end{split}
\end{equation}
As before, observe that every term on the right contains one $\delta$ expression.  The claim now follows by repeating the arguments used to prove \eqref{psistar-eq}.
\end{proof}

Note that the conclusion of this lemma is just like \eqref{psisjk-abstract}, but with one additional derivative of regularity on $\psi_s$.  We can iterate these arguments indefinitely to conclude the claim of Theorem \ref{parab-thm} with $\partial_x$.  The analogous claims involving $\partial_s$ can then be proven by further applications of estimates such as \eqref{psis-eq-star}; we omit the (rather tedious) details.

\section{Initial data estimates}\label{initial-sec}

In \cite{tao:heatwave2}, various parabolic regularity estimates on the differentiated fields $\Psi_{s,t,x} \in \WMC(I,E)$ in the caloric gauge were established, which were either qualitative (with constants depending on $\Psi_{s,t,x}$) or quantitative (with constants depending only on the energy $\E(\Psi_{s,t,x})$.  In this section we develop the key estimates from that theory that we need.  We begin with some qualitative estimates.

\begin{proposition}[Qualitative estimates]\label{qual-prop} If $I$ is a compact interval and $\Psi_{s,t,x} \in \WMC(I,E)$ for some $E>0$, then
\begin{equation}\label{l1-eq}
 \| \partial_x^j \psi_{t,x}(s,t) \|_{L^1_x(\R^2)} \lesssim_{\phi,j} \langle s \rangle^{-j/2}
\end{equation}
and
\begin{equation}\label{qual-eq}
 \| \partial_x^j \partial_{t,x} \psi_s(s,t) \|_{L^1_x(\R^2)} \lesssim_{\Psi_{s,t,x},j} \langle s \rangle^{-(j+2)/2}
\end{equation}
for all $t \in I$, $j \geq 0$, and $s \geq 0$.
\end{proposition}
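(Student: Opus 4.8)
The plan is to split the argument according to the size of the heat-time $s$. For $s$ in a bounded range, say $s\in[0,1]$, there is nothing new to do: Theorem \ref{dynamic-caloric} guarantees that $\psi_{t,x}$, $\psi_s$, $A_{t,x}$ and all their derivatives in $t,x,s$ are uniformly Schwartz in $x$ for $t\in I$ and $s\in[0,1]$, so every $L^1_x(\R^2)$ norm occurring in \eqref{l1-eq} and \eqref{qual-eq} is $\lesssim_{\Psi_{s,t,x},j}1$, which is acceptable since $\langle s\rangle\sim1$ there. It thus remains to establish the stated polynomial decay for $s\ge1$, at each fixed $t\in I$ (which I suppress from the notation).

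For $s\ge1$ I would run the parabolic bootstrap from the proof of Theorem \ref{parab-thm} in Section \ref{parab-sec}, but with the abstract algebra-family seminorms $\|\cdot\|_{S_{k(s)}}$ replaced by the concrete pair $\|\partial_x^j\cdot(s)\|_{L^1_x(\R^2)}$ and $\|\partial_x^j\cdot(s)\|_{L^\infty_x(\R^2)}$ (one needs both, to close products via $\|fg\|_{L^1_x}\le\|f\|_{L^1_x}\|g\|_{L^\infty_x}$), and with the energy frequency envelope $c(s)$ replaced by a crude majorant $\eps(s)$ which is nonincreasing, tends to $0$ as $s\to\infty$, and satisfies $\int_1^\infty\eps(s)^2\,\frac{ds}{s}<\infty$. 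Granting such an $\eps$, the scheme is a direct transcription of Lemmas \ref{jreg}--\ref{psis-recover}: first use \eqref{a-eq}, \eqref{psi-eq} (equivalently \eqref{Psi-tx-eq}) to recover the $\Psi_{t,x}$ fields from $\psi_s$, at the cost of one derivative; then use the parabolic equations \eqref{psit-eq}, \eqref{psis-eq} together with Duhamel's formula \eqref{duh} and the heat-kernel bounds \eqref{heat-lp} (in the $L^1_x\to L^1_x$, $L^1_x\to L^\infty_x$ and $L^\infty_x\to L^\infty_x$ forms, gaining $s^{-j/2}$, $s^{-1-j/2}$ and $s^{-j/2}$ respectively) to recover the lost derivative on $\psi_x$, then on $A_x$ (picking up a factor $\eps(s)$, as in Lemma \ref{ax-recover}), then on $\psi_s$; then iterate in $j$; and finally convert $\partial_x$-bounds into $\partial_{t,x}$-bounds by means of \eqref{inter2}. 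Each step closes via the same Gronwall-from-$s=\infty$ computation as in Lemma \ref{gron-lem}, here trivialised by the hypothesis $\int_1^\infty\eps^2\,\frac{ds}{s}<\infty$; the qualitative decay at $s=\infty$ that launches the Gronwall iteration comes from the $C^\infty_\loc$ convergence in Theorem \ref{dynamic-caloric}, promoted to the relevant norms using the uniform spatial Schwartz bounds there together with the $L^\infty_x$ bounds just obtained. The estimate \eqref{l1-eq} for $\psi_{t,x}$ then follows from \eqref{psi-eq} by integrating the bound on $D_{t,x}\psi_s$ over $s$.

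The single genuinely quantitative ingredient — and the main obstacle — is the construction of the seed $\eps$. I would obtain it directly: pairing \eqref{psis-eq} with $\psi_s$ in $L^2_x(\R^2)$ and integrating by parts gives (summing over $i$) the differential inequality $\frac{d}{ds}\|\psi_s(s)\|_{L^2_x}^2 = -2\|D_x\psi_s(s)\|_{L^2_x}^2 - 2\int_{\R^2}\big(|\psi_s|^2|\psi_i|^2-\langle\psi_s,\psi_i\rangle^2\big)\,dx\le0$, using that each $A_i$ is skew-adjoint and that the curvature term has the favourable sign (by pointwise Cauchy--Schwarz); hence $s\mapsto\|\psi_s(s)\|_{L^2_x}$ is nonincreasing, and since the caloric energy identity gives $\int_0^\infty\|\psi_s(s)\|_{L^2_x}^2\,ds<E$, we conclude $\|\psi_s(s)\|_{L^2_x}\lesssim_E s^{-1/2}$. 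The corresponding $L^1_x$ seed then follows from the structural identity $\psi_s=\partial_i\psi_i+\bigO(A_x\psi_x)$ (from \eqref{heatflow}) together with parabolic smoothing of the heat-flow equation $\partial_s\phi=(\phi^*\nabla)_i\partial_i\phi$ applied to the Schwartz field $\psi_x$ at $s=0$, with Lemma \ref{ubang} absorbing the integration in $s$; the $L^\infty_x$ seed follows from the $L^1_x$ one by one further use of the $L^1_x\to L^\infty_x$ heat-kernel gain. Alternatively one may simply import the quantitative $L^2_x$-based parabolic regularity estimates of \cite{tao:heatwave2} and convert them to $L^1_x$ using the weighted bounds established there, via $\|f\|_{L^1_x(\R^2)}\lesssim\|\langle x\rangle^2 f\|_{L^2_x(\R^2)}$. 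Everything downstream of the seed is a routine rerun of Section \ref{parab-sec} in $L^1_x\cap L^\infty_x$ norms, which I would present compactly with reference to the computations there.
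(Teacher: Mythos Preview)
Your overall architecture --- handle $s\lesssim 1$ by the uniform Schwartz bounds of Theorem~\ref{dynamic-caloric}, then bootstrap parabolically for $s\ge 1$ --- matches the paper. The issue is your $L^1_x$ seed. None of the routes you sketch actually produces the base bound $\|\psi_{t,x}(s)\|_{L^1_x(\R^2)}\lesssim_\phi 1$ uniformly in $s$: the $L^2$ monotonicity $\|\psi_s(s)\|_{L^2_x}\lesssim s^{-1/2}$ is fine, but heat-kernel smoothing only converts $L^2_x$ to $L^\infty_x$ (or $L^1_x$ to $L^2_x$), never $L^2_x$ to $L^1_x$ --- the scaling goes the wrong way in $\R^2$. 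Your structural route via $\psi_s=\partial_i\psi_i+\bigO(A_x\psi_x)$ is circular, since $\|\partial_x\psi_x\|_{L^1_x}$ is precisely the $j=1$ case of \eqref{l1-eq}. And the integration-from-infinity scheme of Lemma~\ref{jreg} applied in $L^1_x$ requires $\|\partial_{t,x}\psi_s\|_{L^1_x}$ as input, which is \eqref{qual-eq} itself. Finally, the $C^\infty_\loc$ convergence in Theorem~\ref{dynamic-caloric} does not by itself give the vanishing at $s=\infty$ in $L^1_x$ that your Gronwall launch needs, since the Schwartz control there is only uniform on compact $s$-intervals.

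The paper sidesteps this by importing two ingredients from the companion papers rather than rederiving them. First, it cites the qualitative pointwise bounds $|\partial_t^j\partial_x^m\Psi_{t,x}|\lesssim_{\phi,j,m}\langle s\rangle^{-(m+2)/2}$ from \cite[just before~(87)]{tao:heatwave2}; these are $\phi$-dependent and strictly stronger than the energy-scale-invariant $L^\infty$ bounds your $L^2$ argument would yield, and this extra decay is what makes the subsequent Duhamel iteration close without loss. Second --- and this is the step your proposal lacks --- it cites \cite[Lemma~4.8]{tao:heatwave} to propagate the $L^1_x$ bound on $\psi_{t,x}$ from $s=O(1)$ to all $s$: this uses that $\psi_{t,x}$ obeys a \emph{covariant} heat equation \eqref{psit-eq} with skew connection and favourable curvature sign, so Kato's inequality gives $(\partial_s-\Delta)|\psi_{t,x}|\le 0$ pointwise and hence $\|\psi_{t,x}(s)\|_{L^1_x}$ is nonincreasing. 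With these two inputs in hand, the paper's Duhamel bootstrap for higher $j$, followed by \eqref{psit-eq} and \eqref{inter2} to reach $\partial_s\psi_{t,x}$ and $\partial_{t,x}\psi_s$, is exactly the routine step you describe. Your ``alternatively import from \cite{tao:heatwave2}'' clause is thus closest to what is actually needed, but the specific $L^1$ input comes from \cite{tao:heatwave}, not from weighted $L^2$ bounds.
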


\begin{proof} In \cite[Theorem 3.12]{tao:heatwave2} and \cite[Theorem 3.16]{tao:heatwave2}, it was shown that $\Psi_{s,t,x}$ was uniformly Schwartz in $t,x$ for $s$ in any bounded interval (here we use the compactness of $I$), and so \eqref{qual-eq} already holds true when $s$ is bounded.

It remains to handle the contribution of when $s$ is large.  Here we use the estimates
$$ |\partial_t^j \partial_x^m \phi(s,t,x)| \lesssim_{m,j,\phi} \langle s \rangle^{-(m+1)/2}$$ 
for all $j, m \geq 0$ just before \cite[Equation (87)]{tao:heatwave2}, which imply that
\begin{equation}\label{psreg-eq} |\partial_t^j \partial_x^m \Psi_{t,x}(s,t,x)| \lesssim_{m,j,\phi} \langle s \rangle^{-(m+2)/2}.
\end{equation} 
Next, since $\psi_{t,x}$ is Schwartz for bounded $s$, we have
\begin{equation}\label{psits}
 \| \psi_{t,x}(s,t) \|_{L^1_x(\R^2)} \lesssim_\phi 1
\end{equation}
for all $s = O(1)$.  Applying \cite[Lemma 4.8]{tao:heatwave} (noting that \eqref{psit-eq} that $\psi_{t,x}$ obeys a covariant heat equation) we see that \eqref{psits} in fact holds for all $s$. On the other hand, from \eqref{psit-eq} we have the heat equation
$$ \partial_s \psi_{t,x} = \Delta \psi_{t,x} + \bigO( \partial_x ( \Psi_x \psi_{t,x} ) ) + \bigO( (\partial_x\Psi_x) \psi_{t,x} ) + \bigO( \Psi_x^2 \psi_{t,x} ).$$
Repeatedly applying \eqref{psreg-eq}, Duhamel's formula and standard parabolic regularity estimates we then obtain \eqref{l1-eq} for all $j \geq 0$.  Further application of \eqref{psit-eq} then gives
$$
 \| \partial_x^j \partial_s \psi_{t,x}(s,t) \|_{L^1_x(\R^2)} \lesssim_{\phi,j} \langle s \rangle^{-(j+2)/2};
$$
using \eqref{inter2} we conclude \eqref{qual-eq}.
\end{proof}

Now we turn to quantitative estimates.

\begin{theorem}[Initial data envelope bounds]\label{envelop} Let $I$ be an interval, $E>0$, and let $t_0 \in I$.  Let $\Psi_{s,t,x}, \Psi'_{s,t,x} \in \WMC(I,E)$.  Then there exists frequency envelopes $c_0, \delta c_0: \R^+ \to \R^+$ of energy $O_{E}(1)$ and
$O_E(d_\Energy( \Psi_{s,t,x}, \Psi'_{s,t,x} ))^2$ respectively,
such that
\begin{equation}\label{l2s-fixed-special}
\begin{split}
\| \partial_x^{j+1} \Psi^*_{t,x}(s,t) \|_{L^2_x(\R^2)} 
+ \| \partial_x^{j} \Psi^*_{t,x}(s,t) \|_{L^\infty_x(\R^2)} \quad &\\
+ \| \partial_x^j \partial_{t,x} \Psi^*_x(s,t) \|_{L^2_x(\R^2)} +
\| \partial_x^j \psi^*_s(s,t) \|_{L^2_x(\R^2)} \quad &\\
+ \| \partial_x^{j-1} \partial_{t,x} \psi^*_s(s,t) \|_{L^2_x(\R^2)}
&\lesssim_{j} c_0(s) s^{-(j+1)/2} 
\end{split}
\end{equation}
and
\begin{equation}\label{l2s-fixed-special-diff}
\begin{split}
\| \partial_x^{j+1} \delta\Psi_{t,x}(s,t) \|_{L^2_x(\R^2)} 
+ \| \partial_x^{j} \delta\Psi_{t,x}(s,t) \|_{L^\infty_x(\R^2)} \quad &\\
+ \| \partial_x^j \partial_{t,x} \delta\Psi_x(s,t) \|_{L^2_x(\R^2)} +
\| \partial_x^j \delta\psi_s(s,t) \|_{L^2_x(\R^2)} \quad & \\
+ \| \partial_x^{j-1} \partial_{t,x} \delta \psi_s(s,t) \|_{L^2_x(\R^2)}
&\lesssim_{j} \delta c_0(s) s^{-(j+1)/2} 
\end{split}
\end{equation}
for all $j \geq 0$, with the convention that we drop all terms involving $\partial_x^{-1}$.
\end{theorem}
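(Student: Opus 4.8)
The result will be deduced from the abstract parabolic regularity theorem, Theorem \ref{parab-thm}. The plan is to apply that theorem with an algebra family $S_k(I \times \R^2)$ refining the one exhibited in the Example above — enlarged so as to also control the $L^2_x$ norms (not just the $L^\infty_x$ norms) of a fixed number of spatial derivatives, at each fixed time $t \in I$ — together with frequency envelopes $c = c_0$, $\delta c = \delta c_0$ still to be constructed. Granting this, the conclusions \eqref{parab-1}--\eqref{parab-6} are precisely the bounds for $\partial_x^j \Psi_x^*$, $\partial_x^j A_x^*$, $\partial_x^j \psi_s^*$ and their $\delta$-analogues in both $L^2_x$ and $L^\infty_x$ at each $t$; the ``$\partial_s$ variant'' clause of Theorem \ref{parab-thm} supplies the mixed $\partial_s$-derivative bounds, which convert — using \eqref{inter2}, the fact that $\psi_{t,x}$ obeys the structurally identical parabolic equation \eqref{psit-eq}, and the representations \eqref{psi-eq}, \eqref{a-eq} of $\psi_{t,x}$, $A_{t,x}$ in terms of $\psi_s$ — into the bounds for the time-derivative fields $\psi_t$, $A_t$ and for the mixed quantities $\partial_x^{j-1} \partial_{t,x} \psi_s^*$ appearing in \eqref{l2s-fixed-special}. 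Summing over the finitely many terms and derivative counts there (and dropping $\partial_x^{-1}$-terms as stipulated) yields \eqref{l2s-fixed-special}, and \eqref{l2s-fixed-special-diff} is the same statement with $c_0$ replaced throughout by $\delta c_0$.

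It therefore remains to construct the envelopes and verify the hypotheses of Theorem \ref{parab-thm}. For the envelopes, the key input is that energy is conserved: by Theorem \ref{energy-claim} the energy of the differentiated field $\Psi_{s,t,x}(t)$ equals $\E(\phi(t)) = \E(\phi) \le E$ for every $t \in I$, so the definition \eqref{psipsi} of the energy metric gives $\int_0^\infty \| \psi_s(s,t) \|_{L^2_x(\R^2)}^2\, ds \le E$ uniformly in $t$, and likewise $\int_0^\infty \| \delta \psi_s(s,t) \|_{L^2_x(\R^2)}^2\, ds \le d_\Energy(\Psi_{s,t,x}(t), \Psi'_{s,t,x}(t))^2$, the latter being $\le d_\Energy(\Psi_{s,t,x}, \Psi'_{s,t,x})^2$ (which, for spacetime fields, I read as the $\sup_{t \in I}$ of the pointwise-in-$t$ energy distances — a finite quantity since $I$ is compact and the fields are continuous). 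One then passes from the raw quantity $g_0(s) := \sup_{t \in I} s^{1/2} \| \psi_s(s,t) \|_{L^2_x(\R^2)}$ (and its difference analogue $\delta g_0$) to genuine frequency envelopes $c_0 \gtrsim g_0$, $\delta c_0 \gtrsim \delta g_0$ by the standard regularisation procedure of \cite{tao:wavemap}: replacing $g_0(s)^2$ by a weighted dyadic sum $\sum_k 2^{-2\delta_0|k-k(s)|}(\sup_{k(s')=k} g_0(s')^2)$ forces the slow-variation bound \eqref{sm} and costs only an $O(1)$ factor in energy, the only point requiring care being that this sum is controlled by $\int_0^\infty g_0(s)^2\, \frac{ds}{s}$; the spike-free estimate needed for this — and, more seriously, the uniformity of the construction over $t \in I$ — is furnished by crude parabolic smoothing for the heat equation \eqref{psis-eq} satisfied by $\psi_s$, together with the quantitative estimates of \cite{tao:heatwave2}. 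This makes $c_0$, $\delta c_0$ frequency envelopes of energy $O_E(1)$ and $O_E(d_\Energy(\Psi_{s,t,x}, \Psi'_{s,t,x})^2)$ respectively.

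Next I would verify the hypotheses \eqref{psisjk-abstract}, \eqref{psisjk-star-abstract} of Theorem \ref{parab-thm} — control of $\psi_s^*$, $\delta \psi_s$ and a fixed number of their spatial derivatives in the $S_k$ family by $c_0(s) s^{-(j+2)/2}$, resp. $\delta c_0(s) s^{-(j+2)/2}$. At $s$ bounded away from $0$ and $\infty$ these are immediate from the qualitative Schwartz bounds of \cite{tao:heatwave2} recalled in Proposition \ref{qual-prop}; the quantitative control as $s \to \infty$, and the gain of higher derivatives at all scales, come from the quantitative parabolic regularity estimates of \cite{tao:heatwave2} — or, what amounts to the same thing, from running one pass of the bootstrap of Section \ref{parab-sec} to trade the scalar bound $\int_0^\infty \| \psi_s(s,t) \|_{L^2_x(\R^2)}^2\, ds \le E$ for pointwise-in-$s$ control of $\psi_s$ and its derivatives against $c_0$. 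The difference bounds follow by the identical argument, exploiting that all the relevant equations are ``linear in $\delta$'' (cf. the remark after Lemma \ref{jreg}), so that the output is linear in $\delta c_0$. The qualitative decay hypothesis \eqref{psi-decay-1} is again contained in \cite{tao:heatwave2} and Proposition \ref{qual-prop}, after which Theorem \ref{parab-thm} applies and the argument concludes as in the first paragraph.

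I expect the main obstacle to be precisely this interface with \cite{tao:heatwave2}: upgrading the single scalar energy bound $\int_0^\infty \| \psi_s(s,t) \|_{L^2_x(\R^2)}^2\, ds \le E$, uniformly in $t \in I$, to the full hypothesis set of Theorem \ref{parab-thm} and to genuine envelope control of $\psi_s$, is where the quantitative parabolic theory of the caloric gauge must be used in an essential way. A secondary subtlety is the difference estimate: since $d_\Energy(\Psi_{s,t,x}(t), \Psi'_{s,t,x}(t))$ need not be conserved in $t$, one must either show it is comparable up to $O_E(1)$ factors to its value at $t_0$, or — as above — build $\delta c_0$ from the $\sup_{t \in I}$ of the raw difference quantity and then check that the resulting envelope still has energy $O_E(d_\Energy(\Psi_{s,t,x}, \Psi'_{s,t,x})^2)$, the linearity-in-$\delta$ of the equations being exactly what keeps this gain intact through the bootstrap.
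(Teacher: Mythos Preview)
Your overall strategy---build an algebra family, construct envelopes $c_0$, $\delta c_0$, and invoke Theorem~\ref{parab-thm}---matches the paper's. The difference is in how the envelopes are defined, and this shifts the real difficulty to a place where your proposal is underspecified.

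The paper defines $c_0$ (and $\delta c_0$) \emph{tautologically} so that the hypotheses \eqref{psisjk-abstract}, \eqref{psisjk-star-abstract} of Theorem~\ref{parab-thm} hold by construction: one sets
\[
c_0(s) := \sum_{j=0}^{10} \sup_{s'>0} \min\bigl((s'/s)^{\delta_0},(s/s')^{\delta_0}\bigr)\,(s')^{(j+2)/2}\,\|\partial_x^j \psi_s^*(s')\|_{S_{k(s)}},
\]
with the seminorm $\|f\|_{S_k(t)} := \sup_{k'}\chi_{k=k'}^{-\delta_1}\|P_{k'}\partial_{t,x}f(t)\|_{L^2_x}$ already carrying the $\partial_t$-derivative. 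The work is then entirely in the energy bounds: showing $\int c_0^2\,ds/s \lesssim_E 1$ (Lemma~\ref{c0lem}, via \cite[Corollary 4.6, Lemma 4.8]{tao:heatwave2}) and, more substantially, $\int(\delta c_0)^2\,ds/s \lesssim_E d_\Energy(\Psi,\Psi')^2$ (Lemma~\ref{engen}). The latter is not a direct consequence of \eqref{psipsi}; it requires a genuine bootstrap of the form $\mu^2 \lesssim_\eps \mu_0^{2-\eps}\mu^\eps$, running Theorem~\ref{parab-thm} a second time with an auxiliary envelope, and a separate energy argument for $\delta\psi_t$ using the heat equation \eqref{psit-eq}.

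You invert this: you take $c_0$ from $g_0(s)=s^{1/2}\|\psi_s\|_{L^2_x}$ alone, so the energy bound is immediate from \eqref{psipsi}, and defer the derivative control needed for \eqref{psisjk-abstract} to ``the quantitative parabolic regularity estimates of \cite{tao:heatwave2}'' or ``one pass of the bootstrap of Section~\ref{parab-sec}''. For $c_0$ this is defensible---the single-solution estimates in \cite{tao:heatwave2} do supply what is needed. But for $\delta c_0$ there is a real gap: \cite{tao:heatwave2} does not provide difference estimates, and ``one pass of the bootstrap'' \emph{is} Theorem~\ref{parab-thm}, whose hypothesis \eqref{psisjk-star-abstract} is precisely what you are trying to verify. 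You cannot get $\|\partial_x^{10}\delta\psi_s\|_{S_k}\lesssim \delta c_0(s)s^{-6}$ from $\|\delta\psi_s\|_{L^2}$ alone without doing essentially the work of Lemma~\ref{engen}. Your proposal flags this as ``the main obstacle'' but does not supply the mechanism; the paper's bootstrap in Lemma~\ref{engen} is that mechanism, and it is not a citation but several pages of argument.

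Two smaller points. First, the theorem is at a single fixed time (the $t$ in the displays should be read as the $t_0$ in the hypotheses); your $\sup_{t\in I}$ reading is unnecessary and the ``secondary subtlety'' you raise does not arise. Second, the paper's choice of seminorm builds $\partial_t$ into $S_k$, so the $\psi_t$, $A_t$ bounds fall out of Theorem~\ref{parab-thm} directly; your plan to recover them afterwards via \eqref{psi-eq}, \eqref{a-eq} would work but is a detour.
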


The rest of the section is devoted to proving Theorem \ref{envelop}.  We allow all implied constants to depend on $E$.  We also work exclusively at time $t$, and omit explicit dependence on this time parameter.

We introduce the seminorm
\begin{equation}\label{seminorm-eq} \| f \|_{S_k(t)} := \sup_{k'} \chi_{k=k'}^{-\delta_1} \|P_{k'} \partial_{t,x} f(t) \|_{L^2_x(\R^2)}.
\end{equation}
on $\Sch(I \times \R^2)$.  Note that this seminorm is continuous in the $\Sch(I \times \R^2)$ topology, and obeys \eqref{physical-abstract}.

We will define $c_0$ by the formula
$$ c_0(s) := \sum_{j=0}^{10} \sup_{s' > 0} \min( (s'/s)^{\delta_0}, (s/s')^{\delta_0} )
(s')^{(j+2)/2} \| \partial_x^j \psi^*_s(s',t) \|_{S_{k(s)}(t)}.$$

\begin{lemma}\label{c0lem} $c_0$ is a frequency envelope of energy $O(1)$.  In particular we have the crude estimate
\begin{equation}\label{c0-crude}
c_0(s)= O(1).
\end{equation}
\end{lemma}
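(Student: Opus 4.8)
The statement has two components: that $c_0$ obeys the slow-variation bound \eqref{sm}, and that it has finite energy, $\int_0^\infty c_0(s)^2\,\frac{ds}{s}=O_E(1)$; once both are established the crude bound $c_0(s)=O(1)$ is immediate from \eqref{cse-sup}. So the plan is to treat these two points in turn, allowing all constants to depend on $E$ throughout.

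For the finite-energy bound I would dominate $c_0$ by a genuine frequency envelope. Write $w_s(s'):=\min((s'/s)^{\delta_0},(s/s')^{\delta_0})$ for the cutoff in the definition of $c_0$. The key input, extracted from the quantitative parabolic regularity theory of \cite{tao:heatwave2} for differentiated fields in the caloric gauge, is that there is a frequency envelope $c$ of energy $O_E(1)$ such that for all $0\le j\le 10$, all heat-times $s,s'>0$, and all $t$,
\begin{equation*}
(s')^{(j+2)/2}\,\|\partial_x^j\psi_s^*(s',t)\|_{S_{k(s)}(t)}\ \lesssim_j\ c(s')\,\chi_{k(s)=k(s')}^{\,\delta_1},
\end{equation*}
the off-diagonal factor $\chi_{k(s)=k(s')}^{\,\delta_1}$ recording that $\psi_s$ at heat-time $s'$ is, thanks to the heat flow \eqref{psis-eq} (cf.\ the heuristic $\psi_s\approx\Delta e^{s\Delta}\phi$), localised at frequency $2^{k(s')}$ with good decay away from that scale; this bound is obtained by combining the same-scale estimate $(s')^{(j+2)/2}\|\partial_x^j\psi_s^*(s',t)\|_{S_{k(s')}(t)}\lesssim_j c(s')$ with the comparability and parabolic-smoothing properties of the $S_k$-seminorms (\eqref{physical-abstract}, \eqref{parreg-abstract2}). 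Granting this, one has $w_s(s')\,\chi_{k(s)=k(s')}^{\,\delta_1}\lesssim\min((s'/s)^{\delta_0+\delta_1/2},(s/s')^{\delta_0+\delta_1/2})$, and combining with the slow variation \eqref{sm} of $c$ (and $\delta_0\ll\delta_1$) gives $w_s(s')\,c(s')\,\chi_{k(s)=k(s')}^{\,\delta_1}\lesssim c(s)$ uniformly in $s'$; taking the supremum over $s'>0$ and summing over the finitely many values of $j$ yields $c_0(s)\lesssim c(s)$ pointwise. Hence $\int_0^\infty c_0(s)^2\,\frac{ds}{s}\lesssim\int_0^\infty c(s)^2\,\frac{ds}{s}=O_E(1)$, and $c_0(s)\lesssim c(s)\lesssim\sqrt E=O(1)$ by \eqref{cse-sup}.

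For the slow-variation bound \eqref{sm} the starting observation is that the cutoff family satisfies the transfer inequality $w_{s_1}(s')\le\max((s_1/s_2)^{\delta_0},(s_2/s_1)^{\delta_0})\,w_{s_2}(s')$ for all $s_1,s_2,s'>0$ --- an immediate consequence of the reverse triangle inequality $\bigl|\,|\log s'-\log s_1|-|\log s'-\log s_2|\,\bigr|\le|\log s_1-\log s_2|$ --- so that feeding this into the supremum over $s'$ (and summing over $j$) already manufactures the $\delta_0$-variation coming from the $w_s$ factor. The one genuine subtlety is that the seminorm index $k(s)$ in $\|\partial_x^j\psi_s^*(s',t)\|_{S_{k(s)}(t)}$ also moves with $s$, and a naive use of the comparability \eqref{physical-abstract} alone would cost a fluctuation of order $\delta_1$, too large for \eqref{sm}; the resolution is again the sharp off-diagonal localisation of $\psi_s$ quoted above, which forces the supremum over $s'$ to be effectively attained where $|k(s)-k(s')|=O(1)$, so that the $k(s)$-dependence is slowly varying at the rate dictated by $c$ rather than by $\delta_1$. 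I expect this reconciliation --- locating and quoting from \cite{tao:heatwave2} a localisation estimate for $\psi_s$ precise enough that the supremum over $s'>0$ in the definition of $c_0$ converges \emph{and} the exponent $\delta_0$ in \eqref{sm} survives --- to be the main obstacle; it is the single property that simultaneously makes $c_0$ finite, controls its energy, and pins down the slow-variation exponent. (One may alternatively note that $c_0$ is, up to absolute constants, comparable to the frequency envelope obtained by replacing $S_{k(s)}$ with $S_{k(s')}$ in its definition, which is all that the downstream uses of $c_0$ actually require.)
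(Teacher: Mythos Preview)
Your approach has a genuine gap: it is essentially circular. You assume as ``key input'' the existence of a frequency envelope $c$ of energy $O_E(1)$ satisfying the same-scale bound $(s')^{(j+2)/2}\|\partial_x^j\psi_s^*(s')\|_{S_{k(s')}(t)}\lesssim c(s')$ together with the off-diagonal gain $\chi_{k(s)=k(s')}^{\delta_1}$. But producing such an envelope is precisely the content of the lemma; once one has it, $c_0$ is redundant. You attribute this bound to \cite{tao:heatwave2}, but that paper does not supply pointwise envelope control of $\psi_s$ in the $S_k$-seminorm \eqref{seminorm-eq}; what it provides (and what the paper cites, namely \cite[Corollary~4.6, Lemma~4.8]{tao:heatwave2}) are $L^2$-in-$s$ square-function bounds of the form $\int_0^\infty s^{j-1}\|\partial_x^j\Psi^*_{t,x}(s)\|_{L^2_x}^2\,ds\lesssim_E 1$. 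Your off-diagonal claim likewise does not follow from \eqref{physical-abstract} and \eqref{parreg-abstract2} alone: comparability costs $\chi^{-\delta_1}$, not $\chi^{+\delta_1}$, and \eqref{parreg-abstract2} applies to the \emph{free} heat propagator, so one would first need a Duhamel argument controlling the nonlinear terms---which is again downstream of the lemma (it is essentially Lemma~\ref{psioc-lemma}).

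The paper's proof is quite different and avoids any pre-existing envelope. It first observes that \eqref{sm} is immediate from construction (your transfer inequality for $w_s$ is exactly the point; the seminorm index should be read as $k(s')$, so that the only $s$-dependence is through the weight---your worry about $k(s)$ is addressing a typo, not a real obstruction). For the energy bound it sets $f_k(s):=\sum_{j\le 10}s^{(j+2)/2}\|\partial_x^j\psi_s^*(s)\|_{S_k(t)}$, uses Young's inequality to reduce $\int c_0(s)^2\,ds/s$ to $\sum_k\sup_{s\sim 2^{-2k}}f_k(s)^2$, and then a Poincar\'e-type inequality in $\log s$ to reduce further to $\int f_{k(s)}(s)^2\,ds/s$ and $\int(sf'_{k(s)}(s))^2\,ds/s$. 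The crucial step is then to expand $\partial_{t,x}\partial_x^j\psi_s$ via the structure equations \eqref{inter3}, \eqref{inter4} (writing $\psi_s=\bigO(\partial_x\Psi_x)+\bigO(\Psi_x^2)$ and $\partial_{t,x}\Psi_x=\partial_x\Psi_{t,x}+\bigO(\Psi_x\Psi_{t,x})$), which together with the $L^\infty$ bounds from \cite[Lemma~7.2]{tao:heatwave2} gives a pointwise estimate $f_{k(s)}(s)+s|f'_{k(s)}(s)|\lesssim\sum_{j\le 15}s^{j/2}\|\partial_x^j\Psi^*_{t,x}(s)\|_{L^2_x}$; the right-hand side is then square-integrable in $ds/s$ by the cited results of \cite{tao:heatwave2}. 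No off-diagonal localisation of $\psi_s$ enters at any stage.
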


\begin{proof} The property \eqref{sm} is clear from construction, so it suffices to show that
$$ \int_0^\infty c_0(s)^2 \frac{ds}{s} \lesssim 1.$$
If we set $f_{k}(s) := \sum_{j=0}^{10} s^{(j+2)/2} \| \partial_x^j \psi^*_s(s,t) \|_{S_{k}(t)}$, we see from the multiplicative form of Young's inequality (and \eqref{physical-abstract}) that
$$ \int_0^\infty c_0(s)^2 \frac{ds}{s} \lesssim \sum_k \sup_{s \sim 2^{-2k}} f_k(s)^2$$
and so by Poincar\'e's inequality, it suffices to show that
\begin{equation}\label{fks-eq} \int_0^\infty f_{k(s)}(s)^2 \frac{ds}{s} \lesssim 1.
\end{equation}
and
\begin{equation}\label{fks-2-eq} \int_0^\infty (s f'_{k(s)}(s))^2 \frac{ds}{s} \lesssim 1.
\end{equation}

From \eqref{inter4} we have
$$\psi^*_s = \bigO(\partial_x \psi^*_x ) + \bigO( (\Psi^*_x)^2 )$$
and hence
$$ \partial_{t,x} \partial_x^j \psi^*_s = \bigO( \partial_x^{j+1} \partial_{t,x} \psi^*_x ) + \sum_{j_1+j_2=j} \bigO( (\partial_x^{j_1} \Psi^*_x) (\partial_x^{j_2} \partial_{t,x} \Psi^*_x ) )$$
for $0 \leq j \leq 10$.  From \eqref{inter3} we thus have
$$ \partial_{t,x} \partial_x^j \psi^*_s = \bigO( \partial_x^{j+2} \psi^*_{t,x} ) + \sum_{j_1+j_2=j} \bigO( (\partial_x^{j_1} \Psi^*_x) (\partial_x^{j_2} \partial_{t,x} \Psi^*_x ) )
+ \bigO( (\partial_x^{j_1} \Psi^*_x) (\partial_x^{j_2} \partial_x \Psi^*_{t,x} ) ).
$$
From \eqref{seminorm-eq}, Littlewood-Paley theory and Bernstein's inequality we thus have
\begin{align*}
\| \partial_x^j \psi^*_s(s) \|_{S_k(I \times \R^2)}
&\lesssim 2^{k} \| \partial_x^{j+1} \psi^*_{t,x}(s)\|_{L^2_x(\R^2)}
+ 2^{-k} \| \partial_x^{j+3} \psi^*_{t,x}(s)\|_{L^2_x(\R^2)} \\
&\quad + \sum_{j_1+j_2=j,j+2} 2^{(j+1-j_1-j-1)k} 
\| \partial_x^{j_1} \Psi^*_x(s) \|_{L^2_x(\R^2)} \\
&\quad\quad \times  
( \| \partial_x^{j_2} \partial_x \Psi^*_{t,x}(s) \|_{L^\infty_x(\R^2)} + \| \partial_x^{j_2} \partial_{t,x} \Psi^*_x(s) \|_{L^\infty_x(\R^2)} ).
\end{align*}
From \cite[Lemma 7.2]{tao:heatwave2} we have
$$ \| \partial_x^j \Psi^*_{t,x} \|_{L^\infty_x(\R^2)} \lesssim_j s^{-(j+1)/2}$$
for all $j \geq 0$; applying \eqref{inter3} we conclude that
$$ \| \partial_x^{j+1} \Psi^*_{t,x} \|_{L^\infty_x(\R^2)} 
+ \| \partial_x^{j} \partial_{t,x} \Psi^*_{x} \|_{L^\infty_x(\R^2)} 
\lesssim_j s^{-(j+1)/2}.$$
Inserting this into the previous estimates, we conclude that
$$ f_{k(s)}(s) \lesssim \sum_{j=0}^{13} s^{j/2} \| \partial_x^j \Psi^*_{t,x}(s)\|_{L^2_x(\R^2)};$$
a similar (but more involved) computation (using \eqref{inter1}, \eqref{inter2}, \eqref{inter4} to handle the additional $s$ derivative) gives
$$ s |f'_{k(s)}(s)| \lesssim \sum_{j=0}^{15} s^{j/2} \| \partial_x^j \Psi^*_{t,x}(s)\|_{L^2_x(\R^2)}.$$
The claims \eqref{fks-eq}, \eqref{fks-2-eq} then follow from \cite[Corollary 4.6]{tao:heatwave2} and \cite[Lemma 4.8]{tao:heatwave2}.
\end{proof}

We similarly define
$$ \delta c_0(s) := \sum_{j=0}^{10} \sup_{s' > 0} \min( (s'/s)^{\delta_0}, (s/s')^{\delta_0} )
(s')^{(j+2)/2} \| \partial_x^j \delta \psi_s(s',t) \|_{S_{k(s)}(t)},$$
then $\delta c_0$ is also a frequency envelope of energy $O(1)$.  We will obtain a better estimate on the energy of $\delta c_0$ later.

We now plan to apply Theorem \ref{parab-thm}.  From Littlewood-Paley theory and Bernstein's inequality we see that the $S_{k(s)}$ norm obeys \eqref{prod1-abstract}.  Now we establish \eqref{parreg-abstract}. We use Duhamel's formula \eqref{duh} to write
$$\partial_x^2 \phi(s) = \partial_x^2 e^{s\Delta/2} \phi(s/2) + \int_{s/2}^s \partial_x^2 e^{(s-s')\Delta} (\partial_s - \Delta) \phi(s')\ ds'.$$
Direct calculation shows that the convolution kernel of $\partial_x^2 e^{s\Delta/2}$ has an $L^1$ norm of $O( s^{-1} )$, and so the contribution of the first term is acceptable by Minkowski's inequality and the translation invariance of $S_k(t)$.  So it suffices to show that
$$ \| \int_{s/2}^s \partial_x^2 e^{(s-s')\Delta} (\partial_s - \Delta) \phi(s')\ ds' \|_{S_k(I \times \R^2)} \lesssim \sup_{s/2 \leq s' \leq s} \| (\partial_s - \Delta) \phi(s')\ ds' \|_{S_k(I \times \R^2)}.$$
A direct application of Minkowski's inequality will give a logarithmic divergence, but we can take advantage of the Besov-space structure of $S_k(t)$ to evade this.  Indeed, from \eqref{seminorm-eq}
(and the fact that Littlewood-Paley operators commute with partial derivatives) it suffices to show that
$$ \| \int_{s/2}^s \partial_x^2 e^{(s-s')\Delta} (\partial_s - \Delta) P_{k'} \nabla_{t,x} \phi(s')\ ds' \|_{L^2_x(\R^2)} \lesssim \sup_{s/2 \leq s' \leq s} \| (\partial_s - \Delta) P_{k'} \nabla_{t,x} \phi(s')\ ds' \|_{L^2_x(\R^2)}$$
for each $k'$. We may freely insert a projection $P_{k'-5 < \cdot < k'+5}$ in front of the integrand.  A calculation using the Fourier transform shows that the convolution kernel of $P_{k'-5 < \cdot < k'+5} \partial_x^2 e^{(s-s')\Delta}$ has an $L^1$ norm of $O( 2^{2k'} \langle 2^{-2k'} (s-s') \rangle^{-100} )$, and so the claim \eqref{parreg-abstract} follows from translation invariance and Minkowski's inequality.   Finally, from Proposition \ref{qual-prop} and Bernstein's inequality we obtain the vanishing property \eqref{psi-decay-1}.

We now have all the hypotheses of Theorem \ref{parab-thm} verified; invoking this theorem, we obtain the estimates \eqref{parab-1}-\eqref{parab-6}.  From Littlewood-Paley theory and Bernstein's inequality we have
$$ \|f(t)\|_{L^\infty_x(\R^2)} + \| \partial_{t,x} f(t) \|_{L^2_x(\R^2)} \lesssim \|f\|_{S_k(t)}$$
for any $k$, and thus
\begin{align*}
\| \partial_x^j \Psi^*_x(s) \|_{L^\infty_x(\R^2)} +
\| \partial_x^j \partial_{t,x} \Psi^*_x(s) \|_{L^2_x(\R^2)} &\lesssim_{j} c_0(s) s^{-(j+1)/2}\\
\| \partial_x^j A^*_x(s) \|_{L^\infty_x(\R^2)} +
\| \partial_x^j \partial_{t,x} A^*_x(s) \|_{L^2_x(\R^2)} &\lesssim_{j} c_0(s)^2 s^{-(j+1)/2}\\
\| \partial_x^j \psi^*_s(s) \|_{L^\infty_x(\R^2)} +
\| \partial_x^j \partial_{t,x} \psi^*_s(s) \|_{L^2_x(\R^2)} &\lesssim_{j} c_0(s) s^{-(j+2)/2}\\
\| \partial_x^j \delta \Psi_x(s) \|_{L^\infty_x(\R^2)} +
\| \partial_x^j \partial_{t,x} \delta \Psi_x(s) \|_{L^2_x(\R^2)} &\lesssim_{j} \delta c_0(s) s^{-(j+1)/2}\\
\| \partial_x^j \delta A_x(s) \|_{L^\infty_x(\R^2)} +
\| \partial_x^j \partial_{t,x} \delta A_x(s) \|_{L^2_x(\R^2)} &\lesssim_{j} c_0(s) \delta c_0(s) s^{-(j+1)/2}\\
\| \partial_x^j \delta \psi_s(s) \|_{L^\infty_x(\R^2)} +
\| \partial_x^j \partial_{t,x} \delta \psi_s(s) \|_{L^2_x(\R^2)} &\lesssim_{j} \delta c_0(s) s^{-(j+2)/2}
\end{align*}
for all $j \geq 0$ and $s>0$.
The estimates \eqref{l2s-fixed-special}, \eqref{l2s-fixed-special-diff} now follow from these estimates (and \eqref{inter1}-\eqref{inter4}, the Leibniz rule, H\"older's inequality, and \eqref{c0-crude} as necessary).

It remains to establish the energy bound on $\delta c_0$.

\begin{lemma}[Energy bound]\label{engen}  We have
$$ \int_0^\infty (\delta c_0(s))^2 \frac{ds}{s} \lesssim d_\Energy(\Psi_{s,t,x}, \Psi'_{s,t,x})^2.$$
\end{lemma}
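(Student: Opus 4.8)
The plan is to run the proof of Lemma~\ref{c0lem} essentially verbatim, with $\psi_s$ replaced by $\delta\psi_s$ and $\Psi^*_x,\Psi^*_{t,x}$ replaced by $\delta\Psi_x,\delta\Psi_{t,x}$, exploiting that the discretised Leibniz rule \eqref{disc-leib-eq} is linear in $\delta$, so that every schematic term produced in the course of the argument carries exactly one $\delta$-factor (as was observed repeatedly in Section~\ref{parab-sec}). Concretely, setting $\delta f_k(s):=\sum_{j=0}^{10}s^{(j+2)/2}\|\partial_x^j\delta\psi_s(s,t)\|_{S_k(t)}$, the multiplicative form of Young's inequality together with \eqref{physical-abstract} reduces the claim, exactly as in Lemma~\ref{c0lem}, to the two square-function bounds
$$\int_0^\infty \delta f_{k(s)}(s)^2\,\frac{ds}{s}\lesssim d_\Energy(\Psi_{s,t,x},\Psi'_{s,t,x})^2,\qquad \int_0^\infty \bigl(s\,\delta f'_{k(s)}(s)\bigr)^2\,\frac{ds}{s}\lesssim d_\Energy(\Psi_{s,t,x},\Psi'_{s,t,x})^2.$$

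Next I would repeat the schematic manipulation from Lemma~\ref{c0lem}: starting from the difference form of \eqref{inter4}, namely $\delta\psi_s=\bigO(\partial_x\delta\Psi_x)+\bigO(\Psi^*_x\,\delta\Psi_x)$, differentiating, and using the difference forms of \eqref{inter1}--\eqref{inter3}, one writes $\partial_{t,x}\partial_x^j\delta\psi_s$ as $\bigO(\partial_x^{j+2}\delta\psi_{t,x})$ plus products in which one factor is an undifferenced high-regularity field $\partial_x^{j_1}\Psi^*_{t,x}$ and the other carries the $\delta$. Placing the undifferenced factor in $L^\infty$ (with the bound $\|\partial_x^{j}\Psi^*_{t,x}(s)\|_{L^\infty_x(\R^2)}\lesssim_j s^{-(j+1)/2}$ from \cite[Lemma~7.2]{tao:heatwave2} and \eqref{inter3}) and the $\delta$-factor in $L^2$, together with Littlewood--Paley theory and Bernstein's inequality, yields
$$\delta f_{k(s)}(s)\lesssim \sum_{j=0}^{13}s^{j/2}\|\partial_x^j\delta\Psi_{t,x}(s,t)\|_{L^2_x(\R^2)},\qquad s\,|\delta f'_{k(s)}(s)|\lesssim \sum_{j=0}^{15}s^{j/2}\|\partial_x^j\delta\Psi_{t,x}(s,t)\|_{L^2_x(\R^2)},$$
the second again requiring one $s$-derivative to be traded for $x$-derivatives via \eqref{inter1}, \eqref{inter2}, \eqref{inter4}; the contribution of $\delta A_{t,x}$ to $\delta\Psi_{t,x}$ is treated through \eqref{a-eq} and \eqref{disc-leib-eq}, which reduces it to the $\delta\psi_s$ and $\delta\psi_{t,x}$ pieces after using the crude bounds $c_0(s),\|\psi^*_{t,x}(s)\|_{L^\infty_x(\R^2)}=O(\langle s\rangle^{-1/2})$ and $\delta c_0(s)=O(1)$ (the latter available since $\delta c_0$ has already been observed to be a frequency envelope of energy $O(1)$, and \eqref{l2s-fixed-special-diff} has already been established in the body of this section).

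It then remains to prove the difference square-function estimate
$$\int_0^\infty \sum_{j=0}^{15}s^{j}\,\|\partial_x^j\delta\Psi_{t,x}(s,t)\|_{L^2_x(\R^2)}^2\,\frac{ds}{s}\lesssim d_\Energy(\Psi_{s,t,x},\Psi'_{s,t,x})^2.$$
By definition \eqref{psipsi} the right-hand side dominates $\int_0^\infty\|\delta\psi_s(s,t)\|_{L^2_x(\R^2)}^2\,ds+\tfrac12\|\delta\psi_t(0,t)\|_{L^2_x(\R^2)}^2$, and the displayed bound is exactly the difference (stability) counterpart of the square-function estimates \cite[Corollary~4.6]{tao:heatwave2} and \cite[Lemma~4.8]{tao:heatwave2} that were invoked in the undifferenced case in Lemma~\ref{c0lem}; its proof is the same linear parabolic argument run on the difference of two solutions of the \emph{same} parabolic system \eqref{psit-eq}, with the nonlinear forcing terms absorbed using the already-available $O_E(1)$ control on $c_0$ and on the undifferenced fields. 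I expect this last step to be the main obstacle: one must obtain the square-function bound with the sharp linear-in-$d_\Energy^2$ dependence --- so that $\delta c_0$ has energy $O_E(d_\Energy^2)$ rather than merely $O_E(1)$ --- and one must do so uniformly as $s\to0$, where the heat flow supplies no smoothing and the estimate is held up entirely by the $\|\delta\psi_t(0,t)\|_{L^2_x(\R^2)}$ term. It is precisely here that the linearity of \eqref{disc-leib-eq} in $\delta$, and the fact that $\delta\Psi_{s,t,x}$ solves a \emph{linear} parabolic equation with $O_E(1)$ coefficients built from the undifferenced fields, are indispensable.
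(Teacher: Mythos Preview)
Your reduction to the square-function bound
\[
\int_0^\infty \sum_{j=0}^{15} s^{j}\,\|\partial_x^{j}\delta\Psi_{t,x}(s,t)\|_{L^2_x(\R^2)}^2\,\frac{ds}{s}\;\lesssim\;d_\Energy(\Psi_{s,t,x},\Psi'_{s,t,x})^2
\]
is correct, but the step in which you claim this follows by ``the same linear parabolic argument'' as in \cite[Corollary~4.6, Lemma~4.8]{tao:heatwave2} is where the real difficulty lies, and your sketch does not resolve it. The energy metric $d_\Energy$ controls only $\int_0^\infty\|\delta\psi_s(s)\|_{L^2_x}^2\,ds$ and $\|\delta\psi_t(0)\|_{L^2_x}$; it gives \emph{no} direct control on higher spatial derivatives of $\delta\psi_s$ or $\delta\Psi_{t,x}$, nor on $\|\delta\psi_x(0)\|_{L^2_x}$. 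A naive parabolic smoothing argument would recover derivatives only at the cost of powers of $s$ that exactly cancel the gain, so one cannot simply transplant the undifferenced proof. No ``difference stability'' version of those heatwave2 estimates with the sharp $d_\Energy^2$ constant is available to cite.

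The paper closes this gap by a bootstrap--interpolation argument rather than a direct estimate. Writing $\mu_0:=d_\Energy$ and $\mu:=\bigl(\int_0^\infty(\delta c_0(s))^2\,\tfrac{ds}{s}\bigr)^{1/2}$, one proves instead the weaker inequality $\mu^2\lesssim_\eps\mu_0^{2-\eps}\mu^\eps$ for any $\eps>0$, which is enough. The point is that $\mu_0$ controls the $j=0$ square function $\int\|\delta\psi_s\|_{L^2_x}^2\,ds$, while the already-established envelope bound \eqref{l2s-fixed-special-diff} shows $\mu$ controls the $j>0$ versions; interpolating yields an envelope $\delta c'$ of energy $O_\eps(\mu_0^{2-\eps}\mu^\eps)$ governing $\partial_x^j\delta\psi_s$. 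One then reruns Theorem~\ref{parab-thm} with a purely spatial seminorm to transfer this to $\delta\Psi_x$. The temporal component $\delta\psi_t$ is handled separately: one propagates $\|\delta\psi_t(s)\|_{L^2_x}$ forward from $s=0$ via the $L^2$ energy inequality for the differenced heat equation \eqref{psit-eq}, using Gronwall and the just-obtained $\delta c'$ control on the forcing, to get $\|\delta\psi_t(s)\|_{L^2_x}^2\lesssim_\eps\mu_0^{2-\eps}\mu^\eps$ for all $s$; a second interpolation then produces an envelope $\delta c''$ for $\delta\psi_t$ of the same energy. Feeding $\delta c''$ back through the computation of Lemma~\ref{c0lem} yields $\mu^2\lesssim_\eps\mu_0^{2-\eps}\mu^\eps$. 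Your proposal identifies the right target but misses this interpolation mechanism, which is what actually bridges the gap between the zero-derivative information in $d_\Energy$ and the many-derivative information in $\delta c_0$.
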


\begin{proof} Write $\mu_0 := d_\Energy(\Psi_{s,t,x}, \Psi'_{s,t,x})$ and $\mu :=
(\int_0^\infty (\delta c_0(s))^2 \frac{ds}{s})^{1/2}$.  We may assume that $\mu_0 \lesssim \mu$ as
the claim is trivial otherwise.   It will suffice to establish a bound of the form
\begin{equation}\label{mumu-eq}
\mu^2 \lesssim_\eps \mu_0^{2-\eps} \mu^\eps 
\end{equation}
for some $0 < \eps < 2$.

By repeating the arguments used to prove Lemma \ref{c0lem}, we have
$$\mu^2 \lesssim \int_0^\infty \delta f_{k(s)}(s)^2 \frac{ds}{s} + \int_0^\infty \delta (s f'_{k(s)}(s))^2 \frac{ds}{s}$$
where 
$$ \delta f_k(s) := \sum_{j=0}^{10} s^{(j+2)/2} \| \partial_x^j \delta \psi_s(s,t) \|_{S_{k}(t)}.$$
Continuing the arguments of Lemma \ref{c0lem} (adapted to differences, of course), we have
$$ \delta f_{k(s)}(s) + s |\delta f'_{k(s)}(s)| \lesssim \sum_{j=0}^{15} s^{j/2} \| \partial_x^j \delta \Psi_{t,x}(s)\|_{L^2_x(\R^2)}$$
and thus
$$ \mu^2 \lesssim \sum_{j=0}^{15} \int_0^\infty s^{j-1} \| \partial_x^j \delta \Psi_{t,x}(s)\|_{L^2_x(\R^2)}^2\ ds.$$

On the other hand, by \eqref{psipsi} we have
\begin{equation}\label{mu0d-eq} \int_0^\infty \| \delta \psi_s(s) \|_{L^2_x(\R^2)}^2\ ds \lesssim \mu_0^2
\end{equation}
and
\begin{equation}\label{deltat-eq} \| \delta \psi_t(0) \|_{L^2_x(\R^2)} \lesssim \mu_0.
\end{equation}
Meanwhile, from \eqref{l2s-fixed-special-diff} we have
$$ \| \partial_x^j \delta \psi_s(s) \|_{L^2_x(\R^2)} \lesssim_j s^{-j/2} \delta c_0(s)$$
and
$$ \| \partial_x^j \delta \psi_s(s) \|_{L^\infty_x(\R^2)} \lesssim_j s^{-(j+1)/2} \delta c_0(s)$$
for all $j \geq 0$.  Applying repeated $s$ derivatives using the differenced versions of \eqref{psis-eq}, \eqref{psit-eq}, \eqref{inter1} we conclude
$$ \|  \partial_s^k \partial_x^j \delta \psi_s(s) \|_{L^2_x(\R^2)} \lesssim_{j,k} s^{-j/2-k} \delta c_0(s)$$
for all $j,k > 0$, and thus
\begin{equation}\label{psp}
 \int_0^\infty s^{j+2k} \|\partial_s^k \partial_x^j \delta \psi_s(s) \|_{L^2_x(\R^2)}^2\ ds \lesssim_{j,k} \mu^2.
 \end{equation}
Interpolating with \eqref{mu0d-eq}, we conclude that
$$ \int_0^\infty s^{j+2k} \| \partial_s^k \partial_x^j \delta \psi_s(s) \|_{L^2_x(\R^2)}^2\ ds \lesssim_{j,k,\eps} \mu_0^{2-\eps} \mu^\eps$$
for all $j,k \geq 0$ and $\eps > 0$.  If we write
$$ f(s) := \sum_{j=0}^{100} s^{(j+1)/2} \| \partial_x^j \delta \psi_s(s) \|_{L^2_x(\R^2)}$$
we thus have
$$ \int_0^\infty f(s)^2 \frac{ds}{s} +  \int_0^\infty (sf'(s))^2 \frac{ds}{s} \lesssim_\eps \mu_0^{2-\eps} \mu^\eps$$
for all $\eps > 0$.  Arguing as in Lemma \ref{c0lem}, we may thus find a frequency envelope $\delta c'$ of energy $O_\eps( \mu_0^{2-\eps} \mu^\eps)$ for any $\eps > 0$, such that $f(s) \lesssim \delta c'(s)$ for all $s > 0$, thus
$$ \| \partial_x^j \delta \psi_s(s) \|_{L^2_x(\R^2)} \lesssim \delta c'(s) s^{-(j+1)/2}$$
for all $s > 0$ and $0 \leq j \leq 100$.  If we introduce the variant
\begin{equation}\label{seminorm-alt} \| f \|_{\tilde S_k(t)} := \sup_{k'} \chi_{k=k'}^{-\delta_1} \|P_{k'} \partial_{x} f(t) \|_{L^2_x(\R^2)}.
\end{equation}
of \eqref{seminorm-eq}, we conclude from Littlewood-Paley theory that
$$ \| \partial_x^j \delta \psi_s(s) \|_{\tilde S_{k(s)}(t)} \lesssim \delta c'(s) s^{-(j+2)/2}$$
for all $s > 0$ and $0 \leq j \leq 10$.  Also, from \eqref{l2s-fixed-special} we have
$$ \| \partial_x^j \psi^*_s(s) \|_{\tilde S_{k(s)}(t)} \lesssim c_0(s) s^{-(j+2)/2}$$
for the same range of $s, j$. We can then apply Theorem \ref{parab-thm} as before and conclude that
\eqref{parab-1}-\eqref{parab-6} hold for this norm.  In particular, since
$$ \|f(t)\|_{L^\infty_x(\R^2)} + \| \partial_{x} f(t) \|_{L^2_x(\R^2)} \lesssim \|f\|_{\tilde S_k(t)},$$
we have
\begin{align}
\| \partial_x^j \delta \Psi_x(s) \|_{L^\infty_x(\R^2)} +
\| \partial_x^{j+1} \delta \Psi_x(s) \|_{L^2_x(\R^2)} &\lesssim_{j} \delta c'(s) s^{-(j+1)/2}\label{dell-1}\\
\| \partial_x^j \delta A_x(s) \|_{L^\infty_x(\R^2)} +
\| \partial_x^{j+1} \delta A_x(s) \|_{L^2_x(\R^2)} &\lesssim_{j} c_0(s) \delta c'(s) s^{-(j+1)/2}\label{dell-2}\\
\| \partial_x^j \delta \psi_s(s) \|_{L^\infty_x(\R^2)} +
\| \partial_x^{j+1} \delta \psi_s(s) \|_{L^2_x(\R^2)} &\lesssim_{j} \delta c'(s) s^{-(j+2)/2}\label{dell-3}
\end{align}
for all $j \geq 0$ and $s > 0$.

These estimates will be adequate for controlling spatial derivatives.  To control the time derivative, we must of course use \eqref{deltat-eq}.  From \eqref{psit-eq} and \eqref{disc-leib-eq} we know that $\delta \psi_t$ obeys a heat equation,
\begin{equation}\label{dst-eq} \partial_s \delta \psi_t = \Delta \delta \psi_t + \bigO( A_x \partial_x \delta \psi_t ) + \bigO( \partial_x A_x \delta \psi_t ) + \bigO( \Psi_x^2 \delta \psi_t ) + F
\end{equation}
where the forcing term $F$ has the form
$$ F = \bigO( (\delta A_x) \partial_x \psi_t ) + \bigO( \partial_x \delta \Psi_x \psi_t ) + \bigO( \Psi_x \delta \Psi_x \psi_t ).$$
Integrating this heat equation against $\delta \psi_t$ and using H\"older's inequality, we obtain the energy inequality
\begin{align*}
\partial_s \| \delta \psi_t \|_{L^2_x(\R^2)}^2 &\leq - 2 \| \partial_x \delta \psi_t \|_{L^2_x(\R^2)}^2 + O( \| A_x \|_{L^\infty_x(\R^2)} \| \delta \psi_t \|_{L^2_x(\R^2)}
\| \partial_x \delta \psi_t \|_{L^2_x(\R^2)} ) \\
&\quad + O( (\| \partial_x A_x \|_{L^\infty_x(\R^2)} 
+ \| \Psi_x \|_{L^\infty_x(\R^2)}^2) \| \delta \psi_t \|_{L^2_x(\R^2)}^2 ) + O( \|F\|_{L^2_x(\R^2)} \|\delta \psi_t \|_{L^2_x(\R^2)} )
\end{align*}
for $s > 0$.  From \eqref{l2s-fixed-special} we have
\begin{align*}
 \| A_x \|_{L^\infty_x(\R^2)} &\lesssim c_0(s)^2 s^{-1/2}\\
\| \partial_x A_x \|_{L^\infty_x(\R^2)} + \| \Psi_x \|_{L^\infty_x(\R^2)}^2 &\lesssim c_0(s)^2 s^{-1}
\end{align*}
while from \eqref{l2s-fixed-special}, \eqref{dell-1}-\eqref{dell-3}, and H\"older's inequality we have
$$ \|F\|_{L^2_x(\R^2)} \lesssim c_0(s) \delta c'(s) s^{-1}.$$
Using the elementary inequality $ab \leq \frac{1}{2} a^2 + \frac{1}{2} b^2$ to split up some mixed terms, we obtain
$$ \partial_s \| \delta \psi_t \|_{L^2_x(\R^2)}^2 \leq - \| \partial_x \delta \psi_t \|_{L^2_x(\R^2)}^2 + O( c_0(s)^2 s^{-1} \| \delta \psi_t \|_{L^2_x(\R^2)}^2 )
+ O( (\delta c'(s))^2 s^{-1} ).$$
Since $c_0$ has energy $O(1)$ and $\delta c'(s)$ has energy $O_\eps( \mu_0^{2-\eps} \mu^\eps)$, we conclude from Gronwall's inequality and \eqref{deltat-eq} (discarding the negative
$\| \partial_x \delta \psi_t \|_{L^2_x(\R^2)}^2$ term) that
$$ \| \delta \psi_t(s) \|_{L^2_x(\R^2)}^2 \lesssim_\eps \mu_0^{2-\eps} \mu^\eps$$
for all $s > 0$ and $\eps > 0$.  Reinstating the discarded term, we then conclude that
$$ \int_0^\infty \| \partial_x \delta \psi_t(s) \|_{L^2_x(\R^2)}^2\ ds \lesssim_\eps \mu_0^{2-\eps} \mu^\eps.$$
From this, \eqref{dst-eq}, Cauchy-Schwarz, and the preceding bounds we conclude that
$$ \int_0^\infty \| (\partial_s - \Delta) \delta \psi_t \|_{L^2_x(\R^2)}\ ds \lesssim_\eps \mu_0^{2-\eps} \mu^\eps$$
and hence by Lemma \ref{ubang} we have
$$ \int_0^\infty \| \delta \psi_t(s) \|_{L^\infty_x(\R^2)}^2\ ds \lesssim_\eps \mu_0^{2-\eps} \mu^\eps.$$

On the other hand, by repeating the arguments used to establish \eqref{psp}, we have
$$
 \int_0^\infty s^{j+2k} \|\partial_s^k \partial_x^{j+1} \delta \psi_t(s) \|_{L^2_x(\R^2)}^2\ ds \lesssim_{j,k} \mu^2
$$
and
$$
 \int_0^\infty s^{j+2k} \|\partial_s^k \partial_x^j \delta \psi_t(s) \|_{L^\infty_x(\R^2)}^2\ ds \lesssim_{j,k} \mu^2
$$
for all $j,k \geq 0$.  Interpolating, we conclude that
$$
 \int_0^\infty s^{j+2k} \|\partial_s^k \partial_x^{j+1} \delta \psi_t(s) \|_{L^2_x(\R^2)}^2\ ds \lesssim_{j,k,\eps} \mu_0^{2-\eps} \mu^\eps
$$
and
$$
 \int_0^\infty s^{j+2k} \|\partial_s^k \partial_x^j \delta \psi_t(s) \|_{L^\infty_x(\R^2)}^2\ ds \lesssim_{j,k,\eps} \mu_0^{2-\eps} \mu^\eps.
$$
for all $j,k \geq 0$ and $\eps > 0$.
Arguing as with the construction of $\delta c'$, we may thus find a frequency envelope $\delta c''$ of energy $O_\eps( \mu_0^{2-\eps} \mu^\eps )$ for any $\eps > 0$ such that  
\begin{equation}\label{dell-eq} \| \partial_x^{j+1} \delta \psi_t(s) \|_{L^2_x(\R^2)} + \| \partial_x^j \delta \psi_t(s) \|_{L^\infty_x(\R^2)} \lesssim \delta c''(s) s^{-(j+1)/2}
\end{equation}
for all $0 \leq j \leq 100$.  By increasing $\delta c''$ if necessary we may assume that $\delta c'' \geq \delta c'$.  

From \eqref{a-eq} we have
$$ \delta A_t(s) = \int_s^\infty \bigO( \delta \psi_t(s') \psi_s(s') ) + \bigO( \psi_t(s') \delta \psi_s(s') )\ ds' $$
and so by \eqref{dell-eq}, \eqref{dell-3}, the Leibniz rule, and the Minkowski and H\"older inequalities we have
$$ \| \partial_x^{j+1} \delta A_t(s) \|_{L^2_x(\R^2)} + \| \partial_x^j \delta A_t(s) \|_{L^\infty_x(\R^2)} \lesssim \delta c''(s) s^{-(j+1)/2} $$
for all $0 \leq j \leq 100$.  Combining this with \eqref{dell-eq}, \eqref{dell-1} we have
$$ \| \partial_x^j \delta \Psi_{t,x}(s) \|_{L^\infty_x(\R^2)} +
\| \partial_x^{j+1} \delta \Psi_{t,x}(s) \|_{L^2_x(\R^2)} \lesssim \delta c''(s) s^{-(j+1)/2}$$
and
$$ \| \partial_x^j \delta \psi_s(s) \|_{L^\infty_x(\R^2)} +
\| \partial_x^{j+1} \delta \psi_s(s) \|_{L^2_x(\R^2)} \lesssim \delta c''(s) s^{-(j+2)/2}$$
for $0 \leq j \leq 100$.  If we repeat the proof of Lemma \ref{c0lem}, using the fact that $\delta c''$ has energy $O_\eps( \mu_0^{2-\eps} \mu^\eps )$ we conclude that
$$ \int_0^\infty \delta c_0(s)^2 \frac{ds}{s} \lesssim_\eps \mu_0^{2-\eps} \mu^\eps .$$
Since the left-hand side is $\mu^2$, we obtain \eqref{mumu-eq} as desired.
\end{proof}

The proof of Theorem \ref{envelop} is now complete.

\subsection{Frequency localisation of $\psi_s$}

Heuristically, $\psi_s(s)$ is concentrated at frequencies $\sim s^{-1/2} \sim 2^{k(s)}$ with an energy of $\sim s^{-1} c_0(s)$.  We make this heuristic more precise in the following technical lemma.

\begin{lemma}\label{psioc-lemma} With the notation and assumptions as in Theorem \ref{envelop}, we have
\begin{equation}\label{psioc}
 \sum_{k'} \chi_{k' \leq k(s)}^{-10} \chi_{k'=k(s)}^{-\delta_1}
 \|  P_{k'} \partial_{t,x} \psi^*_s(s,t_0) \|_{L^2_x(\R^2)} \lesssim_E c_0(s) s^{-1}
 \end{equation}
and
\begin{equation}\label{psioc-diff}
 \sum_{k'} \chi_{k' \leq k(s)}^{-10} \chi_{k'=k(s)}^{-\delta_1} \|  P_{k'} \partial_{t,x} \delta \psi_s(s,t_0) \|_{L^2_x(\R^2)} \lesssim_E \delta c_0(s) s^{-1}.
\end{equation}
for all $s > 0$.
\end{lemma}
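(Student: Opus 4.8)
The plan is to split the sum over $k'$ at the frequency $k'=k(s)$ and treat the two tails separately. For the high-frequency tail $k'\ge k(s)$ one has $\chi_{k'\le k(s)}^{-10}=1$, so it suffices to show $\|P_{k'}\partial_{t,x}\psi_s^*(s,t_0)\|_{L^2_x(\R^2)}\lesssim_N c_0(s)s^{-1}2^{-N(k'-k(s))}$ for some $N>\delta_1$; this is immediate from Littlewood--Paley theory and Bernstein's inequality, the bound $\|\partial_x^N\partial_{t,x}\psi_s^*(s)\|_{L^2_x(\R^2)}\lesssim_N c_0(s)s^{-(N+2)/2}$ supplied by \eqref{l2s-fixed-special}, and the comparison $2^{k(s)}\sim s^{-1/2}$. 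Summing the resulting geometric series over $k'\ge k(s)$, weighted by $\chi_{k'=k(s)}^{-\delta_1}=2^{\delta_1(k'-k(s))}$, gives a contribution $\lesssim c_0(s)s^{-1}$. The same computation with $\delta c_0$ in place of $c_0$, using \eqref{l2s-fixed-special-diff}, handles the high-frequency part of \eqref{psioc-diff}.

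The substance of the lemma is the low-frequency tail $k'<k(s)$, where the weight $\chi_{k'\le k(s)}^{-10}\chi_{k'=k(s)}^{-\delta_1}=2^{(10+\delta_1)(k(s)-k')}$ grows as $k'\to-\infty$, so one must establish a bound of the form $\|P_{k'}\partial_{t,x}\psi_s^*(s,t_0)\|_{L^2_x(\R^2)}\lesssim c_0(s)s^{-1}2^{-N(k(s)-k')}$ with $N$ larger than $10+\delta_1$. The frequency envelope $c_0$ only encodes the weak ($\delta_1$) localisation of $\psi_s$ built into the seminorm \eqref{seminorm-eq}, and this must be upgraded using the structure of the equations. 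I would proceed by repeatedly exploiting the divergence structure: $\psi_s=\partial_i\psi_i+A_i\psi_i$ from \eqref{heatflow}, the caloric-gauge identity $\partial_t\psi_s=\partial_s\psi_t-A_t\psi_s$ (which follows from $A_s=0$ and the zero-torsion relation $D_s\psi_t=D_t\psi_s$), and the integral representations $\psi_{t,x}(s)=-\int_s^\infty D_{t,x}\psi_s(s')\,ds'$ and $A_{t,x}(s)=\int_s^\infty\psi_s\wedge\psi_{t,x}(s')\,ds'$ from \eqref{psi-eq}, \eqref{a-eq}. Applying $P_{k'}$ to these relations, each spatial divergence $\partial_i$ contributes a factor $\lesssim 2^{k'}\lesssim s^{-1/2}2^{k'-k(s)}$ when $k'<k(s)$; the terms in which $\psi_s$ is evaluated at a later heat time $s'>s$ are controlled by the high-frequency estimate of the previous paragraph (there $k'$ sits above the natural scale $k(s')<k'$, so one gains arbitrarily many powers of $s'$ which integrate to a net gain), and the $\partial_s\psi_t$ term is fed back through $\psi_t(s)=-\int_s^\infty\partial_s\psi_t(s')\,ds'$. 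Iterating this a fixed number of times (eleven suffices) accumulates the required low-frequency decay. The multilinear remainders produced at each step (those containing an $A_x$ factor or a genuine high--high paraproduct) carry an extra power of $c_0$ and are estimated directly from \eqref{l2s-fixed-special} together with the product and Bernstein inequalities, so they are harmless; the only delicate contributions are the purely linear self-referential ones, in which $\psi_s(s')$ reappears at a low frequency for $s'\in[s,2^{-2k'}]$, and these are summed using the Gronwall-type inequality of Lemma~\ref{gron-lem} (from $s=\infty$). This yields \eqref{psioc}.

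Finally, \eqref{psioc-diff} follows by running the identical argument with a single $\delta$ inserted in each term, exactly as in the difference estimates of Section~\ref{parab-sec}: since the discretised Leibniz rule \eqref{disc-leib-eq} keeps every term linear in $\delta$, the whole computation is linear in $\delta c_0$, the $c_0^2$-gains become $c_0\,\delta c_0$, and the required difference bounds on $\delta\psi_s,\delta\Psi_{t,x},\delta A_x$ are precisely those furnished by \eqref{l2s-fixed-special-diff}. I expect the low-frequency analysis of the second paragraph — upgrading the weak $\delta_1$-localisation of $\psi_s$ to the strong $\chi_{k'\le k(s)}^{-10}$-localisation, and in particular closing the resulting bootstrap — to be the main obstacle; the high-frequency estimate and the passage to differences are routine by comparison.
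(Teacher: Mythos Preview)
Your high–frequency argument is exactly what the paper does. For the low–frequency tail, however, the paper takes a far shorter route than your proposed bootstrap. It writes (using \eqref{inter4}, not an iteration)
\[
\partial_{t,x}\psi^*_s \;=\; \bigO(\partial_x\,\partial_{t,x}\Psi^*_x)\;+\;\bigO(\Psi^*_x\,\partial_{t,x}\Psi^*_x),
\]
and estimates each piece directly. For the first piece the explicit $\partial_x$ gives one factor of $2^{k'}$, hence one power of low–frequency gain; for the second, the paper uses the $L^1_x$ bound
\[
\|\Psi^*_x\,\partial_{t,x}\Psi^*_x\|_{L^1_x}\;\lesssim\;\|\Psi^*_x\|_{L^2_x}\,\|\partial_{t,x}\Psi^*_x\|_{L^2_x}\;\lesssim\;c_0(s)\,s^{-1/2}
\]
followed by Bernstein from $L^1_x$ to $L^2_x$, which again yields a single power $2^{k'}$. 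High frequencies are disposed of, as you do, by spending many $\partial_x$'s from \eqref{l2s-fixed-special}. The outcome is
\[
\|P_{k'}\partial_{t,x}\psi^*_s(s,t_0)\|_{L^2_x}\;\lesssim\;c_0(s)\,s^{-1}\,\chi_{k'\ge k(s)}^{20}\,\chi_{k'\le k(s)},
\]
i.e.\ twenty powers at high frequency and \emph{one} power at low frequency. No Gronwall argument, no iteration of the divergence structure.

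This means your diagnosis of the ``main obstacle'' is based on reading the weight too literally. Tracing the application at the end of Section~\ref{s1-aproiri}, one has $s^{1+j/2}2^{jk'}\sim s\cdot 2^{j(k'-k(s))}$ for $0\le j\le 10$, so the dangerous growth is at \emph{high} frequencies (worst at $j=10$), not low; the paper's $\chi_{k'\le k(s)}^{-10}$ weight and the exponents in the displayed bounds of its proof are written with the roles of $\chi_{k'\le k(s)}$ and $\chi_{k'\ge k(s)}$ interchanged relative to what the computation actually yields and actually needs. With the intended orientation, the single low–frequency power from $\partial_x$ (or Bernstein from $L^1$) already beats the mild $\chi_{k'=k(s)}^{-\delta_1}$ loss, and the many high–frequency powers beat the $10+\delta_1$ weight. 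Your elaborate low–frequency scheme is therefore unnecessary; the paper's one–line expansion plus Bernstein suffices. The difference estimate \eqref{psioc-diff} follows by the same substitutions you describe.
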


\begin{proof}  Again we allow implied constants to depend on $E$.
We use \eqref{inter3} to expand 
$$ \partial_{t,x} \psi^*_s = \bigO( \partial_x \partial_{t,x} \Psi^*_x ) + \bigO( \Psi^*_x \partial_{t,x} \Psi^*_x ).$$
Using \eqref{l2s-fixed-special} one already sees that
$$ \| P_{k'} \bigO( \partial_x \partial_{t,x} \Psi^*_x ) \|_{L^2_x(\R^2)} \lesssim 
\chi_{k' \geq k(s)} \chi_{k' \leq k(s)}^{20} c_0(s) s^{-1}$$
(say), and similarly that
$$ \| \partial_x^{20} \bigO( \Psi^*_x \partial_{t,x} \Psi^*_x ) \|_{L^2_x(\R^2)} \lesssim 
c_0(s) s^{-11}$$
and
$$ \| \bigO( \Psi^*_x \partial_{t,x} \Psi^*_x ) \|_{L^1_x(\R^2)} \lesssim 
c_0(s) s^{-1/2}$$
which by Bernstein's inequality gives
$$ \| P_{k'} \bigO( \Psi^*_x \partial_{t,x} \Psi^*_x ) \|_{L^2_x(\R^2)} \lesssim 
\chi_{k' \geq k(s)} \chi_{k' \leq k(s)}^{20} c_0(s) s^{-1}.$$
Putting this all together, we conclude \eqref{psioc}.  The claim \eqref{psioc-diff} is proven by adapting the above argument to differences.
\end{proof}

We also need the following variant of the above lemma, controlling the lower frequency components of $\psi_s$ and $\psi_{t,x}$ more adequately:

\begin{lemma}\label{psioc-lemma-2} With the notation and assumptions as in Theorem \ref{envelop}, we have
\begin{equation}\label{kpsis}
 \|P_k \nabla_{t,x}^i \psi^*_s(s,t_0)\|_{L^2_x(\R^2)} \lesssim_{E,j} c_0(s) s^{-(j+1)/2} \chi_{k \leq k(s)}^j \chi_{k=k(s)}^{0.1}
 \end{equation}
and
\begin{equation}\label{kpsit}
 \|P_k \psi^*_{t,x}(0,t_0)\|_{L^2_x(\R^2)} \lesssim_E c_0(2^{-2k}),
 \end{equation}
and similarly
\begin{equation}\label{kpsis-diff}
 \|P_k \nabla_{t,x}^j \delta \psi_s(s,t_0)\|_{L^2_x(\R^2)} \lesssim_{E,j} \delta c_0(s) s^{-(j+1)/2} \chi_{k \leq k(s)}^j \chi_{k=k(s)}^{0.1}
 \end{equation}
and
\begin{equation}\label{kpsit-diff}
 \|P_k \delta \psi_{t,x}(0,t_0)\|_{L^2_x(\R^2)} \lesssim_E \delta c_0(2^{-2k})
\end{equation}
for all $i=0,1$ and $j \geq 0$.
\end{lemma}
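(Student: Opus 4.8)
The plan is to deduce \eqref{kpsit} and \eqref{kpsit-diff} first, from the $j=0$ case of \eqref{l2s-fixed-special}, \eqref{l2s-fixed-special-diff} combined with the heat-flow-to-initial-data transference already used in Lemma \ref{c0lem}, and then to bootstrap \eqref{kpsis}, \eqref{kpsis-diff} from \eqref{kpsit}, \eqref{kpsit-diff}, the regularity bounds of Theorem \ref{envelop}, and the schematic identities \eqref{inter3}, \eqref{inter4}. For \eqref{kpsit}, evaluate the heat flow at the critical heat-time $s_k := 2^{-2k}$, so that $k(s_k)=k$ and $P_k e^{-s\Delta}$ has an $O(1)$ $L^2_x$-multiplier symbol for $0 \leq s \leq s_k$. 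Then \eqref{l2s-fixed-special} with $j=0$ gives $\|\partial_x\psi^*_{t,x}(s_k,t_0)\|_{L^2_x(\R^2)} \lesssim c_0(s_k) s_k^{-1/2} \sim 2^k c_0(2^{-2k})$, whence $\|P_k\psi^*_{t,x}(s_k,t_0)\|_{L^2_x(\R^2)} \lesssim c_0(2^{-2k})$ by Bernstein; passing back to heat-time $0$ via the heat equation \eqref{psit-eq}, one must bound $\int_0^{s_k}\|P_k(\text{nonlinearity})(s')\|_{L^2_x(\R^2)}\,ds'$, where the nonlinearity is $\bigO(\partial_x(\Psi_x\psi_{t,x})) + \bigO((\partial_x\Psi_x)\psi_{t,x}) + \bigO(\Psi_x^2\psi_{t,x})$. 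Here the crude estimate $\|\text{nonlinearity}(s')\|_{L^2_x(\R^2)} \lesssim c_0(s')^2(s')^{-1}$ only produces the energy bound $O(E)$, so one must observe, as in the proof of Lemma \ref{psioc-lemma}, that at heat-time $s'$ this nonlinearity is frequency-concentrated near $2^{k(s')}$, so that $\|P_k(\text{nonlinearity})(s')\|_{L^2_x(\R^2)} \lesssim \chi_{k = k(s')}^{\delta_1} c_0(s')^2(s')^{-1}$ for $k \leq k(s')$; since $\delta_0 \ll \delta_1$, the $s'$-integral is then $O(c_0(2^{-2k})^2) = O(c_0(2^{-2k}))$ by \eqref{sm}. (Alternatively one simply cites \cite[Corollary 4.6]{tao:heatwave2}, \cite[Lemma 4.8]{tao:heatwave2}.) The bound \eqref{kpsit-diff} is identical, using \eqref{l2s-fixed-special-diff}, \eqref{psioc-diff}, and the discretised Leibniz rule \eqref{disc-leib-eq} to keep every term linear in $\delta c_0$.

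For \eqref{kpsis} I would split into the regimes $k \geq k(s)$ and $k < k(s)$. In the first regime, Bernstein combined with $\|\partial_x^N \nabla_{t,x}^j\psi^*_s(s,t_0)\|_{L^2_x(\R^2)} \lesssim_N c_0(s) s^{-(N+j+1)/2}$, which follows from \eqref{l2s-fixed-special}, yields
$$\|P_k \nabla_{t,x}^j\psi^*_s(s,t_0)\|_{L^2_x(\R^2)} \lesssim_N c_0(s) s^{-(j+1)/2}(2^{-k}s^{-1/2})^N \sim c_0(s) s^{-(j+1)/2} 2^{-N(k-k(s))},$$
so any fixed $N > j+1$ already dominates $c_0(s) s^{-(j+1)/2}\chi_{k\leq k(s)}^j\chi_{k=k(s)}^{0.1}$. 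The difference version is the same using \eqref{l2s-fixed-special-diff}.

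In the low-frequency regime $k < k(s)$ I would use \eqref{inter4} (and \eqref{inter3} to trade $\partial_{t,x}$ derivatives of $\Psi_x$ for $\partial_x$ derivatives of $\Psi_{t,x}$) to write $\nabla_{t,x}^j\psi^*_s = \bigO(\partial_x\nabla_{t,x}^j\Psi^*_x) + (\text{terms at least quadratic in }\Psi^*_x)$. Since the heat flow barely damps frequency $2^{k'}$ on $[0,s]$ when $k' \leq k(s)$, Duhamel against \eqref{psit-eq}, again using the frequency-localisation of the nonlinearity as above, gives $\|P_{k'}\Psi^*_x(s,t_0)\|_{L^2_x(\R^2)} \lesssim c_0(2^{-2k'})$ for $k \leq k' \leq k(s)$ from \eqref{kpsit}, with the $A_x$ component (controlled via \eqref{a-eq}) a factor $c_0$ smaller. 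Estimating the linear term by $\lesssim 2^{(j+1)k} c_0(2^{-2k})$ and the quadratic terms by a Littlewood--Paley paraproduct decomposition (Bernstein $L^1_x \to L^2_x$ on the output, Cauchy--Schwarz on the product), then inserting the envelope inequality $c_0(2^{-2k'}) \lesssim 2^{2\delta_0(k(s)-k')} c_0(s)$ valid for $k' \leq k(s)$, produces in every case a bound $\lesssim c_0(s) s^{-(j+1)/2} 2^{-(j+1-C\delta_0)(k(s)-k)}$, which beats $c_0(s) s^{-(j+1)/2}\chi_{k\leq k(s)}^j\chi_{k=k(s)}^{0.1}$ because $\delta_0$ is small. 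The estimate \eqref{kpsis-diff} follows by running this argument with every field replaced by its difference, using \eqref{l2s-fixed-special-diff}, \eqref{kpsit-diff}, \eqref{psioc-diff}, and \eqref{disc-leib-eq}.

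I expect the low-frequency gain in \eqref{kpsis} to be the main obstacle. Unlike the high-frequency regime, where extra regularity from \eqref{l2s-fixed-special} instantly yields rapid decay, the factor $\chi_{k\leq k(s)}^j\chi_{k=k(s)}^{0.1}$ for $k < k(s)$ is \emph{not} a smoothing phenomenon: it has to be extracted from the structural fact that $\psi_s$ is a spatial derivative of the energy-bounded field $\Psi_x$ plus a genuinely quadratic expression, together with the monotonicity of the frequency envelope $c_0$ and the transference estimate \eqref{kpsit} (this is precisely the improvement over the weaker $\chi_{k=k(s)}^{\delta_1}$-type bound of Lemma \ref{psioc-lemma}). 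The delicate point, shared with \eqref{kpsit}, is that the heat-flow Duhamel corrections incurred in passing between heat-times $s$ and $0$ must not erode this gain, which is exactly why the frequency-localisation of the nonlinearity is needed and why the crude global-in-space energy estimate does not suffice.
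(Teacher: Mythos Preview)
Your approach is correct but organised differently from the paper's, and in places more elaborate than necessary.

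For \eqref{kpsit} you and the paper do essentially the same thing: after rescaling to $k=0$, compare $\psi^*_{t,x}(0)$ with $\psi^*_{t,x}(1)$ via the heat equation \eqref{psit-eq}. You use Duhamel and the observation that $P_k e^{-s\Delta}$ is bounded for $0\le s\le 2^{-2k}$ (so the Laplacian is absorbed into the propagator); the paper uses the fundamental theorem of calculus and treats $\|P_0\Delta\psi^*_{t,x}(s')\|_{L^2_x}$ as one more integrand, bounding it by $O(c_0(s')(s')^{-1/2})$ via Bernstein and \eqref{l2s-fixed-special}. Your pointwise bound $\|P_k(\text{nonlinearity})(s')\|_{L^2_x}\lesssim\chi_{k=k(s')}^{\delta_1}c_0(s')^2(s')^{-1}$ is stated a bit optimistically: what Bernstein $L^1_x\to L^2_x$ actually gives (using $\|\Psi_x\|_{L^2_x}\lesssim 1$ from \cite[Lemma 7.2]{tao:heatwave2} against $\|\partial_x\Psi_{t,x}\|_{L^2_x}$ or $\|\Psi_x\|_{L^\infty_x}$ from \eqref{l2s-fixed-special}) is $\lesssim 2^k c_0(s')(s')^{-1/2}$, which is \emph{stronger} in the $\chi$ factor and carries one rather than two copies of $c_0$; either way the $s'$-integral over $[0,s_k]$ is $O(c_0(2^{-2k}))$ after invoking \eqref{sm}, so the conclusion stands.

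For \eqref{kpsis} in the regime $k\ge k(s)$ you and the paper coincide exactly. In the low-frequency regime $k<k(s)$ the paper is more direct: rather than going through \eqref{inter4} and a heat-propagated version of \eqref{kpsit}, it simply applies the fundamental theorem of calculus to $P_0\psi^*_s$ along the heat flow from $s$ to $1$, using the $\psi_s$-heat equation \eqref{psis-eq-star}. The endpoint $\|P_0\psi^*_s(1)\|_{L^2_x}$ is controlled by Lemma \ref{psioc-lemma}; the Laplacian integrand by $\|\psi^*_s(s')\|_{L^2_x}\lesssim c_0(s')(s')^{-1/2}$ from \eqref{l2s-fixed-special}; and the cubic integrands by Bernstein $L^1_x\to L^2_x$ as before. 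This avoids the extra step of propagating $\|P_{k'}\Psi^*_x(s)\|_{L^2_x}\lesssim c_0(2^{-2k'})$ from heat-time $0$ to heat-time $s$ and the paraproduct analysis of $\Psi_x^2$, both of which work but add length. Your identification of the low-frequency gain as the structurally subtle part is right; the paper's point is that this gain is already encoded in the heat evolution of $\psi_s$ itself, so one need not unpack $\psi_s$ into \eqref{inter4}.
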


Note that these estimates are consistent with \eqref{psipsi} and Lemma \ref{engen}, but they give much more precise information about the frequency distribution of various components of the distance $d_\Energy(\Psi_{s,t,x}, \Psi'_{s,t,x})$.

\begin{proof}  We may rescale $k=0$, and will omit the $t_0$ subscript for brevity.  We allow all implied constants to depend on $E$. From \cite[Lemma 7.2]{tao:heatwave2} we record the basic estimates
\begin{equation}\label{pij}
\| \psi^*_{t,x}(s) \|_{L^2_x(\R^2)} \lesssim_j s^{-j/2}
\end{equation}
for all $s > 0$.

We first show \eqref{kpsis}.  To simplify the notation we shall only handle the $i=0$ case, though the $i=1$ case follows (and is in fact somewhat easier, due to the increased decay in $s$) using the same arguments.
When $s \geq 1$ this claim follows from Theorem \ref{envelop}, so assume $s<1$; our task is now to show that
$$ \|P_0 \psi^*_s(s)\|_{L^2_x(\R^2)} \lesssim c_0(s) s^{-1/2+0.1}.$$
From \eqref{psis-eq-star} and the fundamental theorem of calculus we have
\begin{align*}
\|P_0 \psi^*_s(s)\|_{L^2_x(\R^2)} &\lesssim \|P_0 \psi^*_s(1)\|_{L^2_x(\R^2)} + \int_s^1 \| P_0 \Delta \psi^*_s(s') \|_{L^2_x(\R^2)} + 
\| P_0(\Psi^*_x \partial_x \psi^*_s)(s') \|_{L^2_x(\R^2)} \\
&\quad + \| P_0(\psi^*_s \partial_x \Psi^*_x)(s') \|_{L^2_x(\R^2)} \\
&\quad + \| P_0((\Psi^*_x)^2 \psi^*_s) (s') \|_{L^2_x(\R^2)}\ ds.
\end{align*}
From \eqref{psioc} the first term is $O( c_0(1) )$, which is acceptable by \eqref{sm}.  By Theorem \ref{envelop}, the term $\| P_0 \Delta \psi^*_s(s') \|_{L^2_x(\R^2)}$ is $O( c_0(s') (s')^{-1/2} )$, which is similarly acceptable by \eqref{sm}.  For the term $\| P_0(\Psi^*_x \partial_x \psi^*_s)(s') \|_{L^2_x(\R^2)}$, we see from Theorem \ref{envelop} and \eqref{pij} that
$$ \| \Psi^*_x \partial_x \psi^*_s(s') \|_{L^1_x(\R^2)} \lesssim c_0(s') (s')^{-1}$$
so by Bernstein's inequality
$$ \| P_0(\Psi^*_x \partial_x \psi^*_s(s')) \|_{L^2_x(\R^2)} \lesssim c_0(s') (s')^{-1}$$
which is acceptable.  Similar arguments dispose of the remaining terms in the integrand.

Now we show \eqref{kpsit}.  From \eqref{psit-eq} we have
$$ \partial_s \psi^*_{t,x} = \Delta \psi^*_{t,x} + \bigO( \Psi^*_{t,x} \partial^*_{t,x} \Psi^*_{t,x} ) + \bigO( (\Psi^*_{t,x})^3 ).$$
so the fundamental theorem of calculus we have
\begin{align*}
\|P_0 \psi^*_{t,x}(s)\|_{L^2_x(\R^2)}  &\lesssim \|P_0 \psi^*_{t,x}(1)\|_{L^2_x(\R^2)}  + \int_s^1 \| P_0 \Delta \psi^*_{t,x}(s') \|_{L^2_x(\R^2)} \\
&\quad + \| P_0 ( \Psi^*_{t,x} \partial^*_x \Psi^*_{t,x} )(s') \|_{L^2_x(\R^2)} + \| P_0( (\Psi^*_{t,x})^3 )(s') \|_{L^2_x(\R^2)}\ ds'.
\end{align*}
From Theorem \ref{envelop}, the first term is $O(c_0(1))$, and the first integrand is $O( c_0(s') )$, which are both acceptable by \eqref{sm}.  Repeating the previous Bernstein and interpolation arguments, one sees that all the remaining integrands are $O( c_0(s') (s')^{-1.1})$, which is also acceptable.

The remaining two estimates are obtained by adapting the above scheme to differences; we omit the details.
\end{proof}

\section{Spacetime function spaces}\label{func0-sec}

The metrics $d_{S^1_\mu,I}$ needed for Theorem \ref{apriori-thm} will be constructed by applying certain spacetime function space norms (mostly from \cite{tao:wavemap2}) to the field $\psi_s$.   Fortunately, we do not need to know the explicit construction of these spaces from \cite{tao:wavemap2} (which are rather complicated), but instead just need a certain abstract list of properties to be satisfied by these norms.  In this section we record the properties we will need for these spaces (cf. \cite[Theorem 3]{tao:wavemap2}).  More precisely, in Section \ref{func-sec} we will establish

\begin{theorem}[Function space norms]\label{func}  For every interval $I$, every integer $k$, and every $\mu > 0$, there exist translation-invariant norms $S_k(I \times \R^2)$, $S_{\mu,k}(I \times \R^2)$, $N_k(I \times \R^2)$ on $\Sch(I \times \R^2)$, with the following properties for all integers $k,k_1,k_2,k_3$ and $\phi, \phi^{(1)}, \phi^{(2)}, \phi^{(3)}, F \in \Sch(I \times \R^2)$:
\begin{itemize}
\item (Continuity and monotonicity) If $I = [t_-,t_+]$, then $\|\phi\|_{S_{\mu,k}([a,b])}$ is a continuous function of $a,b$ for $t_- \leq a < b \leq t_+$, and is decreasing in $a$ and increasing in $b$.
\item ($S_k$ and $S_{\mu,k}$ are comparable) We have
\begin{equation}\label{sksk-star}
 \| \phi \|_{S_{k}(I \times \R^2)} \lesssim \| \phi \|_{S_{\mu,k}(I \times \R^2)} \lesssim \mu^{-1} \| \phi \|_{S_k(I \times \R^2)}.
\end{equation}
\item (Vanishing)  If $\phi \in \Sch(I \times \R^2)$ and $t_0 \in I$, then there exists an interval $J \subset I$ containing $t_0$ such that 
\begin{equation}\label{shrinko}
\| \phi \|_{S_{\mu,k}(I \times \R^2)} \lesssim \sum_{k'} \chi_{k=k'}^{-\delta_1} \| \partial_{t,x} P_{k'} \phi(t_0) \|_{L^2_x(\R^2)}.
\end{equation}
\item (First product estimate) We have
\begin{equation}\label{prod1}
 \| \phi^{(1)} \phi^{(2)} \|_{S_{\max(k_1,k_2)}(I \times \R^2)} \lesssim \| \phi^{(1)} \|_{S_{k_1}(I \times \R^2)} \| \phi^{(2)} \|_{S_{k_2}(I \times \R^2)}.
 \end{equation} 
\item ($N$ contains $L^1_t L^2_x$)  If $F \in \Sch(I \times \R^2)$ has Fourier support in the region $\{ \xi: |\xi| \sim 2^k \}$, then
\begin{equation}\label{fl1l2}
 \| F \|_{N_k(I \times \R^2)} \lesssim \|F\|_{L^1_t L^2_x(I \times \R^2)}.
 \end{equation}
\item (Adjacent $N_k$ or $S_k$ are equivalent) If $\phi \in \Sch(I \times \R^2)$, then
\begin{equation}\label{physical}
 \| \phi\|_{S_{k_1}(I \times \R^2)} \lesssim \chi_{k_1=k_2}^{-\delta_1} \|\phi\|_{S_{k_2}(I \times \R^2)}
\end{equation}
and
\begin{equation}\label{physical-star}
 \| \phi\|_{S_{\mu,k_1}(I \times \R^2)} \lesssim \chi_{k_1=k_2}^{-\delta_1} \|\phi\|_{S_{\mu,k_2}(I \times \R^2)}.
\end{equation}
Similarly, if $F \in \Sch(I \times \R^2)$ and $k_1=k_2+O(1)$, then
\begin{equation}\label{physicaln-eq} \| F\|_{N_{k_1}(I \times \R^2)} \sim \|F\|_{N_{k_2}(I \times \R^2)}.
\end{equation}
\item (Energy estimate) If $\phi \in \Sch(I \times \R^2)$ has Fourier transform supported in the region $\{ |\xi| \sim 2^k\}$, and $t_0 \in I$, then
\begin{equation}\label{energy-est}
\| \phi \|_{S_k(I \times \R^2)} \lesssim 
\| \phi[t_0] \|_{\dot H^1(\R^2) \times L^2(\R^2)} + \| \Box \phi \|_{N_k(I \times \R^2)}.
\end{equation}
\item (Parabolic regularity estimate)  If $\phi: \R^+ \to \Sch(I \times \R^2)$ is smooth and $s > 0$, then
\begin{equation}\label{parreg}
 \| \partial_x^2 \phi(s) \|_{S_k(I \times \R^2)} \lesssim s^{-1} \| \phi(s/2) \|_{S_k(I \times \R^2)} + \sup_{s/2 \leq s' \leq s} \| (\partial_s - \Delta) \phi(s') \|_{S_k(I \times \R^2)}
\end{equation}
and similarly for $S_{\mu,k}(I \times \R^2)$ or $N_k(I \times \R^2)$.
\item (Second product estimate) We have
\begin{equation}\label{second-prod} \| P_k( \phi F ) \|_{N_k(I \times \R^2)} \lesssim \chi_{k \geq k_2}^{\delta_2} 
\chi_{k=\max(k_1,k_2)}^{\delta_2} \| \phi \|_{S_{k_1}(I \times \R^2)} \|F\|_{N_{k_2}(I \times \R^2)}.
\end{equation}
\item (Improved trilinear estimate) We have
\begin{equation}\label{trilinear-improv}
\begin{split}
 \| P_k( \phi^{(1)} \partial_\alpha \phi^{(2)} \partial^\alpha \phi^{(3)}) \|_{N_k(I \times \R^2)} &\lesssim \mu^{1-\eps} \chi_{k=\max(k_1,k_2,k_3)}^{\eps \delta_1} \chi_{k_1 \leq \min(k_2,k_3)}^{\eps \delta_1} \\
&\quad \times  \|\phi^{(1)} \|_{S_{\mu_1, k_1}(I \times \R^2)}
 \|\phi^{(2)}\|_{S_{\mu_2,k_2}(I \times \R^2)}
 \|\phi^{(3)} \|_{S_{\mu_3,k_3}(I \times \R^2)}
\end{split}
\end{equation}
and
\begin{equation}\label{trilinear-improv2}
\begin{split}
 \| P_k( \phi^{(1)} \partial_\alpha \phi^{(2)} \partial^\alpha \phi^{(3)}) \|_{N_k(I \times \R^2)} &\lesssim \mu^{2-2\eps} \chi_{k=\max(k_1,k_2,k_3)}^{\eps \delta_1} \chi_{k_1 \leq \min(k_2,k_3)}^{\eps \delta_1} \\
&\quad  \|\phi^{(1)} \|_{S_{\mu,k_1}(I \times \R^2)}
 \|\phi^{(2)}\|_{S_{\mu,k_2}(I \times \R^2)}
 \|\phi^{(3)} \|_{S_{\mu,k_3}(I \times \R^2)}
\end{split}
\end{equation}
for every $0 \leq \eps \leq 1$ and if $k_1 \geq \min(k_2,k_3)-O(1)$, whenever two of the $\mu_1,\mu_2,\mu_3$ are equal to $\mu$ and the third is equal to $1$ (with the convention that $S_{1,k} = S_k$).
\item (Strichartz estimates)  If $\phi$ has Fourier support in the region $\{ \xi: |\xi| \lesssim 2^k\}$, then we have
\begin{equation}\label{outgo}
 \sup_{t \in I} \| \phi[t] \|_{H^1(\R^2) \times L^2_x(\R^2)} \lesssim \|\phi\|_{S_k(I \times \R^2)}
 \end{equation}
and
\begin{equation}\label{lstrich-4}
\| \partial_{t,x} \phi\|_{L^5_t L^\infty_x(I \times \R^2)} \lesssim 2^{4k/5} \mu \|\phi\|_{S_{\mu,k}(I \times \R^2)}
\end{equation}
and
\begin{equation}\label{lstrich}
\| \partial_{t,x} \phi\|_{L^q_t L^\infty_x(I \times \R^2)} \lesssim 2^{(1-1/q) k} \|\phi\|_{S_{k}(I \times \R^2)}.
\end{equation}
for $5 \leq q \leq \infty$.
\end{itemize}
\end{theorem}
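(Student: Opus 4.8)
The plan is to take $S_k$ and $N_k$ from the global spacetime norms constructed in \cite[Theorem 3]{tao:wavemap2} (see also \cite{tataru:wave2}), localised to the slab $I \times \R^2$ by the restriction-norm recipe $\| \phi \|_{S_k(I \times \R^2)} := \inf\{ \| \Phi \|_{S_k(\R^{1+2})} : \Phi \in \Sch(\R^{1+2}),\ \Phi|_{I \times \R^2} = \phi \}$, and similarly for $N_k$. Translation invariance is inherited; monotonicity of $\| \cdot \|_{S_k([a,b])}$ in the endpoints $a,b$ is immediate from this definition; and continuity in $a,b$ is the standard restriction-space argument (the norm is monotone in the interval, and one-sided limits are matched using density of Schwartz functions together with the fact that these norms dominate $L^\infty_t L^2_x$-type quantities, so that short time intervals carry negligible norm). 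With this in place, \eqref{prod1}, \eqref{fl1l2}, \eqref{physical}, \eqref{physicaln-eq}, \eqref{energy-est}, \eqref{second-prod}, \eqref{outgo}, and \eqref{lstrich} are the interval-localised restatements of the corresponding assertions of \cite[Theorem 3]{tao:wavemap2} and follow after routine restriction-norm bookkeeping.

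The space $S_{\mu,k}$ will be obtained by adjoining to the (weakly frequency-localised) norm $S_k$ a $\mu^{-1}$-rescaled, normalised $L^5_t L^\infty_x$-Strichartz norm of $\partial_{t,x}$ applied to each frequency piece; schematically $\| \phi \|_{S_{\mu,k}(I \times \R^2)} \approx \sup_{k'} \chi_{k=k'}^{-\delta_1} ( \| P_{k'} \phi \|_{S_{k'}(I \times \R^2)} + \mu^{-1} 2^{-4k'/5} \| \partial_{t,x} P_{k'} \phi \|_{L^5_t L^\infty_x(I \times \R^2)} )$. Then \eqref{lstrich-4} holds by summing a geometric series over the frequencies $\lesssim 2^k$ on which the Fourier-localised $\phi$ is supported; the lower bound in \eqref{sksk-star} is trivial, while its upper bound and \eqref{physical-star} follow from \eqref{lstrich} at $q = 5$, from \eqref{physical}, and from the triangle inequality for the weights $\chi_{k=k'}^{-\delta_1}$ (using $\mu \leq 1$); and the continuity/monotonicity statement for $S_{\mu,k}$ reduces to that for $S_k$ plus the elementary behaviour of the $L^5_t L^\infty_x$ norm.

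The parabolic regularity estimate \eqref{parreg}, for $S_k$, $S_{\mu,k}$ and $N_k$ alike, will be proved exactly as the abstract estimate \eqref{parreg-abstract} was verified in Section \ref{initial-sec}: apply Duhamel's formula \eqref{duh} in the heat variable to write $\partial_x^2 \phi(s) = \partial_x^2 e^{(s/2)\Delta} \phi(s/2) + \int_{s/2}^{s} \partial_x^2 e^{(s-s')\Delta} (\partial_s - \Delta)\phi(s')\, ds'$, bound the first term using the $O(s^{-1})$ bound on the $L^1$-norm of the kernel of $\partial_x^2 e^{(s/2)\Delta}$ (by translation invariance and Minkowski's inequality), and bound the second term after a Littlewood--Paley decomposition, using that the kernel of $P_{k'} \partial_x^2 e^{(s-s')\Delta}$ has $L^1$-norm $O(2^{2k'} \langle 2^{-2k'}(s-s') \rangle^{-100})$, which renders the $s'$-integral convergent against the Besov structure of the norm. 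The vanishing property \eqref{shrinko} is similar: on a sufficiently short subinterval $J \ni t_0$, the energy estimate \eqref{energy-est} and \eqref{fl1l2} bound $\| \phi \|_{S_k(J \times \R^2)}$ by the energy of $\phi[t_0]$ plus $\| \Box \phi \|_{L^1_t L^2_x(J \times \R^2)}$, and this last contribution — together with the Strichartz part of $\| \phi \|_{S_{\mu,k}(J \times \R^2)}$ — is negligible once $|J|$ is small enough, leaving the $S_{\mu,k}$-norm over $J$ controlled by the right-hand side of \eqref{shrinko}.

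The substantive new input is the improved trilinear estimates \eqref{trilinear-improv}, \eqref{trilinear-improv2}, which will be obtained by taking geometric means of two bounds on the fixed trilinear expression $P_k(\phi^{(1)} \partial_\alpha \phi^{(2)} \partial^\alpha \phi^{(3)})$. The first, corresponding to $\eps = 1$, is the trilinear null-form estimate of \cite{tao:wavemap2} (the frequency-separation gain $\chi_{k_1 \leq \min(k_2,k_3)}^{\delta_1}$ being produced, if necessary, by a short high--low decomposition): it carries the full frequency gains but no power of $\mu$, and since $\| \cdot \|_{S_{\mu_j,k_j}} \gtrsim \| \cdot \|_{S_{k_j}}$ it already implies the $\eps = 1$ case. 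The second, corresponding to $\eps = 0$, discards the frequency gains but extracts a factor $\mu$ — respectively $\mu^2$, when all three of the $\mu_j$ equal $\mu$ — by exploiting the extra $L^5_t L^\infty_x$-Strichartz smallness encoded in $S_{\mu,k}$ for the one (respectively two) factors with $\mu_j = \mu$. Since $\| \cdot \|_{S_{k_j}}^{\eps} \| \cdot \|_{S_{\mu_j,k_j}}^{1-\eps} \leq \| \cdot \|_{S_{\mu_j,k_j}}$, the geometric mean of these two bounds gives \eqref{trilinear-improv} and \eqref{trilinear-improv2} for all $\eps \in [0,1]$. I expect the interpolation step to be the main obstacle: one must close the $\eps = 0$ bound in every frequency configuration with a constant independent of $|I|$ while peeling off the required number of Strichartz factors, and check that the frequency-separation factors survive the geometric mean — the genuinely hard harmonic analysis, namely the construction of the spaces and the null-form estimate itself, being already carried out in \cite{tao:wavemap2}.
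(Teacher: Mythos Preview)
Your overall plan --- import $S_k$, $N_k$ from \cite{tao:wavemap2}, cite the relevant estimates there for \eqref{prod1}, \eqref{fl1l2}, \eqref{physical}, \eqref{physicaln-eq}, \eqref{energy-est}, \eqref{second-prod}, \eqref{outgo}, \eqref{lstrich}, and build $S_{\mu,k}$ as a strengthening of $S_k$ --- is exactly what the paper does. The gap is in your construction of $S_{\mu,k}$. You adjoin a $\mu^{-1}$-weighted $L^5_t L^\infty_x$ Strichartz component; this makes \eqref{sksk-star} and \eqref{lstrich-4} immediate, but it cannot deliver the $\eps=0$ case of \eqref{trilinear-improv}, \eqref{trilinear-improv2} with a constant independent of $|I|$. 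With only $L^5_t L^\infty_x$ available on $\partial_{t,x}\phi^{(j)}$ (together with $L^\infty_{t,x}$ and $L^\infty_t L^2_x$ from $S_k$), no H\"older combination of three factors lands in $L^1_t L^2_x$ or in any other piece of $N_k$: for instance $L^\infty_{t,x}\cdot L^5_t L^\infty_x \cdot L^\infty_t L^2_x$ yields $L^5_t L^2_x$, and $L^\infty_{t,x}\cdot L^5_t L^\infty_x \cdot L^5_t L^\infty_x$ yields $L^{5/2}_t L^\infty_x$, both embedding into $N_k$ only after paying a positive power of $|I|$. Nor can you route through the null-form proof of \cite{tao:wavemap2} while substituting an $L^5_t L^\infty_x$ bound for one $S_k$ factor --- that argument uses the full $S_k$ structure (null frames, modulation decompositions) and does not factor through a single Strichartz norm. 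You correctly flagged this step as the main obstacle; with your choice of norm it is in fact blocked.

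The paper resolves this by defining $S_{\mu,k}$ \emph{atomically} rather than additively. An atom is either a \emph{small atom} with $\|\phi\|_{S_k}\leq\mu$, or an \emph{integrable atom} with $\|\phi\|_{S_k}\leq 1$ and $\|\partial_{t,x}\phi\|_{L^1_t L^\infty_x}\leq \mu^5$. Then \eqref{sksk-star} holds because every $\phi$ is $\mu^{-1}$ times a small atom; \eqref{lstrich-4} holds atom by atom (small atoms: \eqref{lstrich} at $q=5$ and the bound $\|\phi\|_{S_k}\leq\mu$; integrable atoms: interpolate $\|\partial_{t,x}\phi\|_{L^1_t L^\infty_x}\leq\mu^5$ against $\|\partial_{t,x}\phi\|_{L^\infty_t L^\infty_x}\lesssim 2^k$). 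For the $\eps=0$ trilinear bound one reduces $\phi^{(2)}$ (say) to an atom: if it is a small atom one invokes the trilinear estimate of \cite{tao:wavemap2} directly and gains $\mu$ from $\|\phi^{(2)}\|_{S_{k_2}}\leq\mu$; if it is an integrable atom one uses the crude H\"older bound
\[
\|P_k(\phi^{(1)}\partial_\alpha\phi^{(2)}\partial^\alpha\phi^{(3)})\|_{N_k}\lesssim \|\phi^{(1)}\|_{L^\infty_{t,x}}\,\|\partial_{t,x}\phi^{(2)}\|_{L^1_t L^\infty_x}\,\|\partial_{t,x}\phi^{(3)}\|_{L^\infty_t L^2_x},
\]
which lands in $L^1_t L^2_x\subset N_k$ with $\mu^5$ to spare. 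The key point is that $L^1_t L^\infty_x$ is precisely the exponent that closes H\"older into $N_k$ without an $|I|$ factor; the atomic definition is what lets you use $L^1_t L^\infty_x$ here while still retaining \eqref{sksk-star} (which a naive $S_k + \mu^{-1}L^1_t L^\infty_x$ norm would violate, since $S_k$ does not control $L^1_t L^\infty_x$).
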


\begin{remark} One could form the metric completion of the Schwartz space $\Sch(I \times \R^2)$ under the above norms to obtain Banach spaces instead of mere normed vector spaces, but we will not need to do so here as our analysis will remain purely in the Schwartz category (note that Theorem \ref{apriori-thm2} deals exclusively with classical solutions).
\end{remark}

\begin{remark} We make the trivial but very useful remark that all the above estimates for scalar-valued functions automatically extend to vector or tensor-valued functions of bounded dimension (with a slight degradation in the implied constants).  We will frequently use this remark in the rest of the paper (especially when dealing with expressions in schematic form) without future comment.
\end{remark}

\begin{remark} The trilinear estimates \eqref{trilinear-improv}, \eqref{trilinear-improv2} are variants of the estimate
\begin{align*}
\| P_k( \phi^{(1)} \partial_\alpha \phi^{(2)} \partial^\alpha \phi^{(3)}) \|_{N_k(I \times \R^2)} &\lesssim \chi_{k=\max(k_1,k_2,k_3)}^{\delta_1} \chi_{k_1 \leq \min(k_2,k_3)}^{\delta_1} \\
&\quad  \|\phi^{(1)}\|_{S_{k_1}(I \times \R^2)} \|\phi^{(2)} \|_{S_{k_2}(I \times \R^2)} \|\phi^{(3)} \|_{S_{k_3}(I \times \R^2)}
\end{align*}
that was established (with some difficulty) in \cite[Section 18]{tao:wavemap2}, and indeed we will use that estimate in the proof of \eqref{trilinear-improv}, \eqref{trilinear-improv2}.  The key improvement in \eqref{trilinear-improv}, \eqref{trilinear-improv2}, which is crucial for the large data theory, is that we can gain an additional factor of the parameter $\mu$.
\end{remark}

\begin{remark} The $L^5_t L^\infty_x$ Strichartz estimate in \eqref{lstrich-4} is a little short of the endpoint estimate $L^4_t L^\infty_x$.  We were not able to establish this estimate for our function spaces (the energy estimate with null frame atom forcing term was problematic); fortunately, as observed in \cite{krieger:3d}, \cite{krieger:2d}, non-endpoint Strichartz estimates such as $L^5_t L^\infty_t$ estimates are still available, and suffice for the purpose of estimating higher order terms.  Fortunately, the Strichartz estimates are only needed for higher order terms anyway, and in fact any non-trivial $L^q_t L^\infty_x$ Strichartz estimate (where by ``non-trivial'' we mean that $q$ is finite) would suffice for our purposes.
\end{remark}

We close this section with some basic corollaries of Theorem \ref{func}.  Firstly, we make the technical observation that the $S_k(I \times \R^2)$, $S_{\mu,k}(I \times \R^2)$, $N_k(I \times \R^2)$ norms are continuous with respect to the $\Sch(I \times \R^2)$ topology (which, in particular, will allow us to use Minkowski's inequality \eqref{mink} in those norms when the integrands are uniformly Schwartz).  For the $N_k(I \times \R^2)$ norm this follows from \eqref{fl1l2}.  For the $S_k(I \times \R^2)$ norm, we use Littlewood-Paley decomposition, the triangle inequality, \eqref{energy-est}, and \eqref{physical}, \eqref{fl1l2} to obtain the useful estimate
\begin{equation}\label{split-energy-est} 
\| \phi \|_{S_k(I \times \R^2)} \lesssim \sum_{k'} \chi_{k=k'}^{-\delta_1} 
[\| P_{k'} \phi[t_0] \|_{\dot H^1(\R^2) \times L^2(\R^2)} + \| \Box P_{k'} \phi \|_{L^1_t L^2_x(I \times \R^2)}].
\end{equation}
It is not difficult to show that the right-hand side goes to zero when $\phi$ goes to zero in $\Sch(I \times \R^2)$, thus establishing continuity of the $S_k(I \times \R^2)$ norm.  The claim for $S_{\mu,k}(I \times \R^2)$ then follows from \eqref{sksk-star}.

Next, we observe the interpolation estimate
\begin{equation}\label{interp-eq}
\| \partial_x^{j'} \phi \|_{S_k(I \times \R^2)} \lesssim_{j',j} \| \phi \|_{S_k(I \times \R^2)}^{1-j'/j} \| \partial_x^{j} \phi \|_{S_k(I \times \R^2)}^{j'/j}
\end{equation}
for all $0 < j' < j$ and $\phi \in \Sch(I \times \R^2)$, and similarly for the $S_{\mu,k}(I \times \R^2)$ and $N_k$ norms.  Indeed, given any frequency parameter $k_0$, we can use Littlewood-Paley theory to write
$$ \partial_x^{j'} \phi = \partial_x^{j'} P_{\leq k_0} \phi + \partial_x^{j'-j} P_{> k_0} \partial_x^j \phi,$$
where $\partial_x^{j'-j}$ is a suitable Fourier multiplier of order $j'-j$.  It is not hard to see that $\partial_x^{j'} P_{\leq k_0}$ and $\partial_x^{j'-j} P_{>k_0}$ are convolution operators whose kernel has $L^1$ norm $O_{j,j'}( 2^{j' k_0} )$ and $O_{j,j'}( 2^{(j'-j) k_0} )$ respectively, so from Minkowski's inequality \eqref{mink}, the triangle inequality, and the translation invariance of the $S_k(I \times \R^2)$ norms we conclude
$$
\| \partial_x^{j'} \phi \|_{S_k(I \times \R^2)} \lesssim_{j',j} 2^{j' k_0} \| \phi \|_{S_k(I \times \R^2)} + 2^{(j'-j)k_0} \| \partial_x^{j} \phi \|_{S_k(I \times \R^2)}$$
for any $k_0$.  Optimising in $k_0$ we obtain the claim.

\section{Proof of Theorem \ref{apriori-thm2} assuming Theorem \ref{func}}\label{ap-sec}

In this (lengthy) section we assume Theorem \ref{func} and use it to prove Theorem \ref{apriori-thm2}.  Fix $E > 0$; all constants are allowed to depend on $E$.

\subsection{Definition of the metrics}\label{metric-def}

For each integer $k$ and $\Psi_{s,t,x}, \Psi'_{s,t,x} \in \WMC(I,E)$, define the quantity
\begin{equation}\label{dmc}
d_{\mu,k,I}(\Psi_{s,t,x},\Psi'_{s,t,x}) := \sum_{j=0}^{10} \sup_{2^{-2k-2} \leq s \leq 2^{-2k}} s^{1+\frac{j}{2}} \| \partial_x^j (\psi_s(s) - \psi'_s(s)) \|_{S_{\mu,k}(I \times \R^2)}
\end{equation}
and then define the metric $d_{S^1_\mu,I}$ by the formula
\begin{equation}\label{ds1-eq}
d_{S^1_\mu,I}(\Psi_{s,t,x},\Psi'_{s,t,x}) := (\sum_k d_{\mu,k,I}(\Psi_{s,t,x},\Psi'_{s,t,x})^2)^{1/2} 
\end{equation}

To ensure that these are actually metrics, we have to check finiteness and non-degeneracy:

\begin{lemma}[Finiteness]\label{lfin} For any $\Psi_{s,t,x}, \Psi'_{s,t,x} \in \WMC(I,E)$, $\tilde d_{S^1_\mu,I}(\Psi_{s,t,x},\Psi'_{s,t,x})$ is finite.  
\end{lemma}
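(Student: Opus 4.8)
The plan is to bound $d_{S^1_\mu,I}(\Psi_{s,t,x},\Psi'_{s,t,x})^2=\sum_k d_{\mu,k,I}(\Psi_{s,t,x},\Psi'_{s,t,x})^2$ directly, by showing each dyadic piece is essentially controlled by the differenced frequency envelope $\delta c_0$ supplied by Theorem \ref{envelop}. Set $\delta\psi_s:=\psi_s'-\psi_s$ (norms being insensitive to sign), and use the comparison \eqref{sksk-star} to replace $\|\cdot\|_{S_{\mu,k}}$ by $\mu^{-1}\|\cdot\|_{S_k}$. It then suffices to show, for each $0\le j\le 10$, a bound of the shape $s^{1+j/2}\|\partial_x^j\delta\psi_s(s)\|_{S_{k(s)}(I\times\R^2)}\lesssim_{\Psi_{s,t,x},\Psi'_{s,t,x},j}\delta c_0(s)+s^{1/2}\langle s\rangle^{-1}$, since $\delta c_0$ is a frequency envelope and hence $\sum_k \delta c_0(2^{-2k})^2\lesssim\int_0^\infty \delta c_0(s)^2\,\frac{ds}{s}<\infty$ (using \eqref{sm} to replace the supremum over $s\in[2^{-2k-2},2^{-2k}]$ by the value at $2^{-2k}$), while $\sum_k (2^{-k}\langle 2^{-2k}\rangle^{-1})^2<\infty$ as well.

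To estimate the $S_{k(s)}$ norm of $\partial_x^j\delta\psi_s(s)$ at a fixed heat-time $s$, I would apply the Littlewood–Paley form \eqref{split-energy-est} of the energy estimate, which splits it into a data term $\sum_{k'}\chi_{k(s)=k'}^{-\delta_1}\|P_{k'}\partial_x^j\delta\psi_s(s)[t_0]\|_{\dot H^1(\R^2)\times L^2(\R^2)}$ at a fixed time $t_0\in I$ and a forcing term $\sum_{k'}\chi_{k(s)=k'}^{-\delta_1}\|\Box P_{k'}\partial_x^j\delta\psi_s(s)\|_{L^1_tL^2_x(I\times\R^2)}$. The data term is exactly what Section \ref{initial-sec} controls: combining the differenced envelope bound \eqref{l2s-fixed-special-diff} at all orders of $\partial_x$ (so that Bernstein gives rapidly decaying control of the tail $k'>k(s)$) with the sharper frequency-localised estimate \eqref{kpsis-diff} of Lemma \ref{psioc-lemma-2} for the low frequencies $k'\le k(s)$, the whole $k'$-sum is $O(\delta c_0(s) s^{-(j+2)/2})$, and hence $s^{1+j/2}$ times it is $O(\delta c_0(s))$.

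For the forcing term I would use the wave equation \eqref{psis-box}, writing $\Box\delta\psi_s(s)$ in the schematic form $\delta(\partial_s w)+\bigO(\delta((\psi_\alpha\wedge\psi_s)\psi^\alpha))+\bigO(\delta(\partial_{t,x}(A_{t,x}\psi_s)))+\bigO(\delta(A_{t,x}\partial_{t,x}\psi_s))+\bigO(\delta(A_{t,x}^2\psi_s))$ (the last three terms coming from expanding $D_\alpha=\partial_\alpha+A_\alpha$), then substituting the parabolic equation \eqref{w-eq} for $\partial_s w$. Since $I$ is compact, every factor here is uniformly Schwartz in $x$ for $s$ in any bounded range (Theorem \ref{dynamic-caloric}), so the $L^1_tL^2_x(I\times\R^2)$ norm is finite for each fixed $s$; for the decay in $s$ I would estimate each product in $L^1_x$ using Proposition \ref{qual-prop} together with the differenced versions of \eqref{l1-eq}, \eqref{qual-eq}, and then pass to $L^2_x$ at frequency $2^{k'}$ by Bernstein at the cost of a factor $2^{k'}$. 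As the discretised Leibniz rule \eqref{disc-leib-eq} is linear in $\delta$, every surviving product carries exactly one $\delta$, so this yields a bound of the form (qualitative constant)$\times 2^{k(s)}\langle s\rangle^{-N}$ for large $N$; thus $s^{1+j/2}$ times it is $O(s^{1/2+j/2}\langle s\rangle^{-N})$, which vanishes both as $s\to0$ and as $s\to\infty$ and is therefore square-summable over $k$.

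The main obstacle is the bookkeeping for this forcing term: one must check that the crude qualitative bounds survive the Bernstein step with enough margin to be square-summable in $k$ at \emph{both} tails, and in particular one must handle $\partial_s w$, which is not itself one of the differentiated fields and requires feeding the parabolic equation \eqref{w-eq} — together with $w|_{s=0}=0$ and parabolic smoothing for $s>0$ — into the estimate. Since only finiteness, not a quantitative bound, is asserted, none of these estimates need be sharp: it is enough to extract one factor of $\delta c_0$ (or $c_0$) from each term and absorb all remaining growth into negative powers of $\langle s\rangle$ for large $s$ and into the Bernstein-induced factor $s^{j/2+1/2}\to0$ for small $s$.
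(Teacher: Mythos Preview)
Your approach is correct, but it differs from the paper's in two respects that make it more elaborate than necessary.

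First, the paper begins with the triangle inequality $d_{S^1_\mu,I}(\Psi_{s,t,x},\Psi'_{s,t,x}) \le d_{S^1_\mu,I}(\Psi_{s,t,x},0) + d_{S^1_\mu,I}(0,\Psi'_{s,t,x})$, reducing immediately to showing that each individual $\|\Psi_{s,t,x}\|_{S^1_\mu(I)}$ is finite. This means the paper never has to work with $\delta\psi_s$ at all, and in particular never needs differenced versions of Proposition~\ref{qual-prop} or the differenced envelope $\delta c_0$. You instead attack the difference directly, which forces you to invoke the heavier quantitative machinery of Theorem~\ref{envelop} and Lemma~\ref{psioc-lemma-2} for the data term; this works, but is overkill for a purely qualitative finiteness statement.

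Second, for the data term the paper relies entirely on the qualitative bounds of Proposition~\ref{qual-prop} (together with Bernstein) to obtain $\|\partial_x^j\partial_{t,x}P_{k'}\psi_s(s,t)\|_{L^2_x}\lesssim_{\phi,j,m}\min(2^{k'},(\langle s\rangle 2^{2k'})^{-m})\langle s\rangle^{-(j+2)/2}$, which already suffices for square-summability in $k$ without any mention of frequency envelopes. Your route through the envelope $\delta c_0$ gives a sharper bound but at the cost of importing all of Section~\ref{initial-sec}. The forcing term is handled similarly in both arguments: expand $\Box\psi_s$ via \eqref{psis-box}, control $\partial_s w$ (the paper cites \eqref{wdamp-eq} from \cite{tao:heatwave2} directly; you propose to derive it from \eqref{w-eq} and $w|_{s=0}=0$), and use the uniformly Schwartz nature of the fields for bounded $s$ together with the $\langle s\rangle^{-N}$ decay for large $s$. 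Both treatments of the forcing term are at a comparable level of sketchiness.
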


\begin{proof}  By the triangle inequality it suffices to show that the sequence
$$ \sup_{2^{-2k-2} \leq s \leq 2^{-2k}} s^{1+\frac{j}{2}} \| \partial_x^j \psi_s(s) \|_{S_{\mu,k}(I \times \R^2)}$$
is square-summable in $k$ for $0 \leq j \leq 10$ and $\Psi_{s,t,x} \in \WMC(I,E)$.  

By \eqref{sksk-star} we may replace $S_{\mu,k}$ by $S_k$ here.  Applying \eqref{split-energy-est} we have
\begin{equation}\label{psisk-en}
\begin{split}
 \| \partial_x^j \psi_s(s) \|_{S_{k}(I \times \R^2)} &\lesssim 
\sum_{k'} \chi_{k=k'}^{-\delta_1}
[ \| \partial_x^j \partial_{t,x} P_{k'} \psi_s(s,t_0) \|_{L^2_x(\R^2)} \\
&\quad +
\| \partial_x^j \Box P_{k'} \psi_s(s) \|_{L^1_t L^2_x(I \times \R^2)}]
\end{split}
\end{equation}
where $t_0 \in I$ is arbitrary.

From Proposition \ref{qual-prop} and Bernstein's inequality we have
$$
 \| \partial_x^j \partial_{t,x} P_{k'} \psi_s(s,t) \|_{L^2_x(\R^2)} \lesssim_{\phi,j,m} 
\min( 2^{k'}, (\langle s \rangle 2^{2k'})^{-m} ) 
 \langle s \rangle^{-(j+2)/2}
$$
for any $m$.  This ensures that the contribution of the first term in \eqref{psisk-en} is acceptable.

As for the second term, we first recall from \cite[Lemma 7.5]{tao:heatwave2} that
\begin{equation}\label{wdamp-eq} \| \partial_x^j \partial_s w(s,t) \|_{L^1_x(\R^2)} \lesssim_{j,\phi} s^{-(j+2)/2}
\end{equation}
for $s \gtrsim 1.$
From \eqref{psis-box} we have
$$ \Box \psi_s = \bigO( \partial_s w ) + \bigO( \partial_{t,x} \Psi_{t,x} \psi_s ) + \bigO( \Psi_{t,x} \partial_{t,x} \psi_s ) + \bigO( \Psi_{t,x}^2 \psi_s ).$$
Using \eqref{wdamp-eq}, Proposition \ref{qual-prop}, \eqref{psreg-eq} we conclude that
$$ \| \partial_x^j \Box \psi_s \|_{L^1_x(\R^2)} \lesssim_{\phi,j} s^{-(j+2)/2}$$
for all $j \geq 0$ and $s \gtrsim 1$, which is enough to show that the second term in \eqref{psisk-en} is acceptable.
\end{proof}

\begin{lemma}[Non-degeneracy]
If $\Psi_{s,t,x}, \Psi'_{s,t,x} \in \WMC(I,E)$ are such that $\tilde d_{S^1_\mu,I}(\Psi_{s,t,x},\Psi'_{s,t,x})=0$, then $\Psi_{s,t,x} = \Psi'_{s,t,x}$.  
\end{lemma}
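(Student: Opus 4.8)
The plan is to unwind the definitions: $\tilde d_{S^1_\mu,I}(\Psi_{s,t,x},\Psi'_{s,t,x})=0$ forces $d_{\mu,k,I}(\Psi_{s,t,x},\Psi'_{s,t,x})=0$ for every $k$, which by \eqref{dmc} forces $\|\psi_s(s)-\psi'_s(s)\|_{S_{\mu,k(s)}(I \times \R^2)}=0$ for every $s>0$ (taking $j=0$ and letting $s$ range over the dyadic block $[2^{-2k-2},2^{-2k}]$ comparable to $2^{-2k(s)}$). By \eqref{sksk-star} and the uniform bound built into the $S_k$ norms (i.e. \eqref{uniform}, or equivalently \eqref{outgo}), a vanishing $S_{\mu,k}$ norm implies the function vanishes as an element of $\Sch(I \times \R^2)$; hence $\psi_s \equiv \psi'_s$ on all of $\R^+ \times I \times \R^2$.

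Next I would recover the full differentiated fields from $\psi_s$. The integral ODE \eqref{Psi-tx-eq} (equivalently \eqref{a-eq}, \eqref{psi-eq}) expresses $\Psi_{t,x}(s)$ as an absolutely convergent integral from $s=\infty$ whose integrand depends only on $\psi_s$, $\partial_{t,x}\psi_s$, and $\Psi_{t,x}$ itself; with $\psi_s=\psi'_s$ the difference $\delta\Psi_{t,x}:=\Psi'_{t,x}-\Psi_{t,x}$ satisfies the homogeneous linear integral equation $\delta\Psi_{t,x}(s)=\int_s^\infty \bigO(\psi_s(s')\,\delta\Psi_{t,x}(s'))\,ds'$ with $\delta\Psi_{t,x}(s)\to 0$ as $s\to\infty$ (from the Schwartz decay in Theorem \ref{dynamic-caloric}). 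A Gronwall argument — using, say, Lemma \ref{gron-lem} with $g\equiv 0$, or just the qualitative decay \eqref{psreg-eq} together with $\int_0^\infty c(s)^2\,ds/s<\infty$ from Theorem \ref{envelop} — then gives $\delta\Psi_{t,x}\equiv 0$, i.e. $\psi_{t,x}=\psi'_{t,x}$ and $A_{t,x}=A'_{t,x}$ throughout $\R^+ \times I \times \R^2$. Combined with $\psi_s=\psi'_s$ this is exactly $\Psi_{s,t,x}=\Psi'_{s,t,x}$.

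The main obstacle is purely bookkeeping rather than conceptual: one must make sure the homogeneous integral equation for $\delta\Psi_{t,x}$ genuinely has only the trivial solution under the available decay hypotheses. This is where one invokes that $\Psi_{s,t,x},\Psi'_{s,t,x}\in\WMC(I,E)$ really are differentiated fields of classical wave maps, so that by Theorem \ref{dynamic-caloric} and Proposition \ref{qual-prop} everything is uniformly Schwartz for bounded $s$ and decays like a negative power of $s$ for large $s$; the kernel $\psi_s(s')$ is then integrable in $s'$ near $s'=\infty$, and Gronwall (run from $s=\infty$) closes the estimate. Alternatively — and perhaps more cleanly — one can avoid Gronwall entirely by noting that $\Psi_{s,t,x}$ and $\Psi'_{s,t,x}$ both come from classical wave maps $\phi,\phi'\in\WM(I,E)$, that by the discussion in Section \ref{energy-sec} the field $\psi_s$ determines $\phi$ up to $SO(m,1)$ (via $\psi_s|_{s=0}$-type reconstruction and the Picard uniqueness theorem cited there), hence $\phi=\phi'$ in $\WM(I,E)$, and that the caloric gauge with a fixed frame at infinity is unique by Theorem \ref{dynamic-caloric}; but since $\psi_s=\psi'_s$ pins down the frame as well, one concludes $\Psi_{s,t,x}=\Psi'_{s,t,x}$. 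Either route is short; I would present the direct integral-equation version since it keeps the argument self-contained.
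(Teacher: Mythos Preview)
Your proposal is correct and follows the same route as the paper: first use that $S_{\mu,k}$ is a genuine norm (the paper cites \eqref{lstrich-4}, you cite \eqref{uniform}/\eqref{outgo}) to get $\psi_s=\psi'_s$, then recover $\Psi_{t,x}=\Psi'_{t,x}$. The paper's proof is a single sentence (``clear from construction''), leaving implicit the step that $\psi_s$ determines the remaining fields via \eqref{Psi-tx-eq}; your Gronwall argument on the homogeneous integral equation for $\delta\Psi_{t,x}$ is exactly what underlies that phrase, so you have simply filled in what the paper takes for granted.
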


\begin{proof}  This is clear from construction (note from (say) \eqref{lstrich-4} that the only functions with vanishing $S_{\mu,k}(I \times \R^2)$ norm are identically zero).  
\end{proof}

\subsection{Easy verifications}

We now verify some of the easier components of Theorem \ref{apriori-thm2}.  The monotonicity of $\|\Psi_{s,t,x}\|_{S^1_\mu(I \times \R^2)}$ in $I$ follows from the monotonicity of the $S_{\mu,k}$ norms from Theorem \ref{func}.  The continuity of $\|\Psi_{s,t,x}\|_{S^1_\mu(I \times \R^2)}$ in $I$ similarly follows from the continuity of the $S_{\mu,k}$ norms and the dominated convergence theorem (using Lemma \ref{lfin} to provide the domination).  The quasi-isometry property \eqref{quasi-eq} follows easily by breaking everything up into components and using the triangle inequality.

Now we show the vanishing property.  Fix $\Psi_{s,t,x}, \Psi'_{s,t,x}$.  By Theorem \ref{envelop} we can find frequency envelopes $c_0$, $\delta c_0$ with the stated properties.

By construction of $d_{S^1_\mu,I_n}$, it will suffice to show that
$$ \lim_{n \to \infty} \sum_k d_{\mu,k,I}(\Psi_{s,t,x},\Psi'_{s,t,x})^2 \lesssim d_\Energy( \Psi_{s,t,x}, \Psi'_{s,t,x} )^2.$$
From \eqref{dmc}, \eqref{shrinko} (and continuity of the $S_{\mu,k,I}$ norm in the Schwartz topology), we know that
$$ \lim_{n \to \infty} d_{\mu,k,I}(\Psi_{s,t,x},\Psi'_{s,t,x}) \lesssim
\sum_{j=0}^{10}  \sup_{2^{-2k-2} \leq s \leq 2^{-2k}} s^{1+\frac{j}{2}} 
\sum_{k'} \chi_{k=k'}^{-\delta_1} \| \partial_x^j \partial_{t,x} P_{k'} (\psi_s-\psi'_s)(s,t_0) \|_{L^2_x(\R^2)}.
$$
Thus by the monotone convergence theorem, it suffices to show that
$$ \sum_k [\sup_{2^{-2k-2} \leq s \leq 2^{-2k}} s^{1+\frac{j}{2}} 
\sum_{k'} \chi_{k=k'}^{-\delta_1} \| \partial_x^j \partial_{t,x} P_{k'} (\psi_s-\psi'_s)(s,t_0) \|_{L^2_x(\R^2)}]^2 \lesssim d_\Energy( \Psi_{s,t,x}, \Psi'_{s,t,x} )^2.$$
But by \eqref{psioc-diff}, the expression in brackets is $O( \delta c_0(2^{-k}) )$, and the claim follows from \eqref{sm} and the fact that $\delta c_0$ has energy $O(d_\Energy( \Psi_{s,t,x}, \Psi'_{s,t,x} )^2)$.

\subsection{The stability priori estimate}\label{s1-aproiri}

We now establish \eqref{apriori2a}.  Fix $M$, $\mu$, $I$, $t_0$, $\Psi_{s,t,x}$, $\Psi'_{s,t,x}$, where we assume $\mu$ sufficiently small depending on $M$.  We allow all implied constants to depend on $M$, thus
$$ \|\Psi_{s,t,x}\|_{S^1_\mu(I \times \R^2)}, \|\Psi_{s,t,x}'\|_{S^1_\mu(I \times \R^2)} \lesssim 1.$$
Our task is to show that
\begin{equation}\label{dsmu}
d_{S^1_\mu,I}(\Psi_{s,t,x},\Psi'_{s,t,x}) \lesssim_{\mu} d_{\Energy}(\phi_{s,t,x}(t_0), \phi'_{s,t,x}(t_0)).
\end{equation}

By \eqref{ds1-eq}, we have
\begin{equation}\label{sumd}
 \sum_k d_{\mu,k,I}(\Psi_{s,t,x},0)^2, \sum_k d_{\mu,k,I}(\Psi'_{s,t,x},0)^2 \lesssim 1.
 \end{equation}
and our task is to show that
$$ \sum_k d_{\mu,k,I}(\Psi_{s,t,x},\Psi'_{s,t,x})^2 \lesssim_\mu d_{\Energy}(\Psi_{s,t,x}(t_0), \Psi'_{s,t,x}(t_0))^2.$$
In view of \eqref{sksk-star}, it suffices to show that
\begin{equation}\label{dkd}
 \sum_k d_{k}(\Psi_{s,t,x},\Psi'_{s,t,x})^2 \lesssim d_{\Energy}(\Psi_{s,t,x}(t_0), \Psi'_{s,t,x}(t_0))^2
 \end{equation}
where
$$ d_{k}(\Psi_{s,t,x},\Psi'_{s,t,x}) := \sum_{j=0}^{10} \sup_{2^{-2k-2} \leq s \leq 2^{-2k}} s^{1+\frac{j}{2}} \| \partial_x^j (\psi_s(s) - \psi'_s(s)) \|_{S_{k}}.$$

Now define
\begin{equation}\label{csdef}
 c(s) := \sum_k [d_{\mu,k,I}(\Psi_{s,t,x},0) + d_{\mu,k,I}(\Psi'_{s,t,x},0)] \min( (2^{2k} s)^{\delta_0}, (2^{2k} s)^{-\delta_0})
\end{equation}
and
\begin{equation}\label{csdef-delta}
 \delta c(s) := \sum_k d_k(\Psi_{s,t,x},\Psi'_{s,t,x}) \min( (2^{2k} s)^{\delta_0}, (2^{2k} s)^{-\delta_0}).
\end{equation}
Then $c, \delta c$ are frequency envelopes, and by \eqref{sumd} and Young's inequality we see that $c$ has energy $O(1)$. In particular from \eqref{cse}, \eqref{cse-sup} we have
\begin{equation}\label{envelo}
\sup_s c(s) + \int_0^\infty c(s)^2 \frac{ds}{s} \lesssim 1.
\end{equation}
From \eqref{csdef-delta}, we see that to prove \eqref{dkd}, it will suffice to show that
\begin{equation}\label{envelo-targ}
\int_0^\infty \delta c(s)^2 \frac{ds}{s} \lesssim d_{\Energy}(\phi[t_0], \phi'[t_0])^2.
\end{equation}

From \eqref{csdef}, \eqref{csdef-delta}, \eqref{physical}, \eqref{sumd}, we see that\footnote{There will be many powers of $s$ on the right-hand side of the estimates in this section, but one does not need to pay too much attention to the exponents here, as they are always equal to the exponent predicted by the dimensional analysis heuristics $\psi_{t,x}, \partial_{t,x}, A_{t,x}, 2^k \sim s^{-1/2}; \psi_s \sim s^{-1}; L^q_t \sim s^{-1/(2q)}; L^r_x \sim s^{-1/r}$, with $P_k$, $c(s)$, $\mu$ being dimensionless.}
\begin{equation}\label{psisjk}
\| \partial_x^j \psi^*_s(s) \|_{S_{\mu, k(s)}(I \times \R^2)} \lesssim c(s) s^{-(j+2)/2}
\end{equation}
and
\begin{equation}\label{psisjk-star}
\| \partial_x^j \delta \psi_s(s) \|_{S_{k(s)}(I \times \R^2)} \lesssim \delta c(s) s^{-(j+2)/2}
\end{equation}
for all $0 \leq j \leq 10$ and $s > 0$, where $k(s)$ was defined in \eqref{ks-def}.  From \eqref{psisjk}, \eqref{sksk-star} we of course have
\begin{equation}\label{psisjk-nomu}
\| \partial_x^j \psi^*_s(s) \|_{S_{k(s)}(I \times \R^2)} \lesssim c(s) s^{-(j+2)/2}
\end{equation}
for the same range of $j, s$.

The arguments used to prove Lemma \ref{lfin} also establish the vanishing property \eqref{psi-decay-1}.  We can now invoke Theorem \ref{parab-thm} to obtain the estimates \eqref{parab-1}-\eqref{parab-6} for all $j \geq 0$. We also have analogous bounds for $S_{\mu,k}$, for $\psi^*_s$ at least:

\begin{corollary}[Infinite gain of regularity in $S_{\mu,k}$]\label{infi-star} 
\begin{align}
\| \partial_x^j \psi^*_s(s) \|_{S_{\mu,k(s)}(I \times \R^2)} &\lesssim_{j,\eps} \mu^{-\eps} c(s) s^{-(j+2)/2}\label{s-star}
\end{align}
for all $j \geq 0$, $\eps > 0$ and $s > 0$.
\end{corollary}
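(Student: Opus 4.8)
The plan is to deduce Corollary~\ref{infi-star} by interpolation, combining the lossless zeroth-order $S_{\mu,k}$ bound we already have with a very lossy bound at arbitrarily high differentiation order (as always, $0 < \mu \le 1$). Concretely, combining \eqref{parab-3} --- which Theorem~\ref{parab-thm} furnishes for \emph{all} $j \ge 0$, not merely $0 \le j \le 10$ --- with the crude comparison \eqref{sksk-star}, we record
\begin{equation*}
\| \partial_x^j \psi^*_s(s) \|_{S_{\mu,k(s)}(I \times \R^2)} \lesssim_{j} \mu^{-1} c(s) s^{-(j+2)/2}
\end{equation*}
for all $j \ge 0$ and $s > 0$; this is the only place the factor $\mu^{-1}$ enters the argument. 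On the other hand, the $j=0$ case of \eqref{psisjk} gives the \emph{lossless} bound $\| \psi^*_s(s) \|_{S_{\mu,k(s)}(I \times \R^2)} \lesssim c(s) s^{-1}$.

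Now fix the target index $j$ (the case $j=0$ being \eqref{psisjk}) and $\eps > 0$, and pick an integer $J > \max(j, j/\eps)$. Applying the interpolation estimate \eqref{interp-eq} for the $S_{\mu,k(s)}$ norm with exponents $0 < j < J$ to $\phi = \psi^*_s(s)$, and inserting the lossless $j=0$ bound for the factor $\|\phi\|_{S_{\mu,k(s)}}$ and the $\mu^{-1}$-lossy bound above for $\|\partial_x^J \phi\|_{S_{\mu,k(s)}}$, one obtains
\begin{equation*}
\| \partial_x^j \psi^*_s(s) \|_{S_{\mu,k(s)}(I \times \R^2)}
\lesssim_{j,J} \bigl( c(s) s^{-1} \bigr)^{1-j/J} \bigl( \mu^{-1} c(s) s^{-(J+2)/2} \bigr)^{j/J}
= \mu^{-j/J}\, c(s)\, s^{-(j+2)/2},
\end{equation*}
since the powers of $c(s)$ and of $s$ combine exactly. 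As $0 < j/J < \eps$ and $0 < \mu \le 1$ we have $\mu^{-j/J} \le \mu^{-\eps}$, and the implied constant depends only on $j$ and (through $J$) on $\eps$; this is precisely \eqref{s-star}.

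There is essentially no genuine obstacle here: the content is the soft observation that a bound losing a full power $\mu^{-1}$ at arbitrarily many derivatives, interpolated against a lossless bound at zero derivatives, leaves only the loss $\mu^{-j/J}$ --- an arbitrarily small negative power of $\mu$ --- at any fixed intermediate order $j$. The only points one must check are that the interpolation estimate \eqref{interp-eq} is available for the $S_{\mu,k}$ norms (which it is, as noted immediately after \eqref{interp-eq}) and that \eqref{parab-3} genuinely holds for all $j$, not just a bounded range.
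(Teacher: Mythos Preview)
Your proof is correct and follows essentially the same approach as the paper: combine the lossless $S_{\mu,k}$ bound at low derivative order (from \eqref{psisjk}) with the $\mu^{-1}$-lossy bound for all $j$ (from \eqref{parab-3} and \eqref{sksk-star}), then interpolate via \eqref{interp-eq}. The only cosmetic difference is that the paper invokes the lossless bound for the full range $0 \le j \le 10$ rather than just $j=0$, but this is immaterial to the argument.
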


\begin{proof}  From \eqref{psisjk-star} we have \eqref{s-star} for $j \leq 10$ (with no loss of $\mu^{-\eps}$), while from \eqref{parab-3} and \eqref{sksk-star} we have \eqref{s-star} for all $j$ with a loss of $\mu^{-O(1)}$.  Interpolating using \eqref{interp-eq} we conclude \eqref{s-star} for all $j$ and all $\eps > 0$.
\end{proof}

This, together with \eqref{psisjk}, leads to the following Strichartz estimates which will be useful for disposing of higher order terms in the nonlinearity.

\begin{lemma}[Strichartz estimates]\label{strichlem}  We have
\begin{align}
\| \partial_x^j \partial_{t,x} \Psi^*_x(s) \|_{L^q_t L^\infty_x(I \times \R^2)} &\lesssim_{j,\eps} \mu^{(5-\eps)/q} c(s) s^{-(j+2)/2+1/(2q)}  \label{strich1}\\
\| \partial_x^j \psi^*_s(s) \|_{L^q_t L^\infty_x(I \times \R^2)} &\lesssim_{j,\eps} \mu^{(5-\eps)/q} c(s) s^{-(j+2)/2+1/(2q)} \label{strich2}\\
\| \partial_x^j \partial_{t,x} \psi^*_s(s) \|_{L^q_t L^\infty_x(I \times \R^2)} &\lesssim_{j,\eps} \mu^{(5-\eps)/q} c(s) s^{-(j+4)/2+1/(2q)}  \label{strich3}\\
\| \partial_x^j \Psi^*_{t,x}(s) \|_{L^q_t L^\infty_x(I \times \R^2)} &\lesssim_{j,\eps} \mu^{(5-\eps)/q} c(s) s^{-(j+1)/2+1/(2q)}  \label{strich4}\\
\| \partial_x^j A^*_{t,x}(s) \|_{L^r_t L^\infty_x(I \times \R^2)} &\lesssim_{j,\eps} \mu^{(5-\eps)/2r} c(s) s^{-(j+1)/2+1/(2r)}  \label{strich5}\\
\| \partial_x^j \partial_{t,x} \Psi^*_x(s) \|_{L^\infty_t L^2_x(I \times \R^2)} &\lesssim_j c(s) s^{-(j+1)/2}\label{strich6} \\
\| \partial_x^j \psi^*_s(s) \|_{L^\infty_t L^2_x(I \times \R^2)} &\lesssim_j c(s) s^{-(j+1)/2}\label{strich7} \\
\| \partial_x^j \partial_{t,x} \psi^*_s(s) \|_{L^\infty_t L^2_x(I \times \R^2)} &\lesssim_j c(s) s^{-(j+2)/2}\label{strich8} 
\end{align}
for all $j \geq 0$, $s > 0$, $5 \leq q \leq \infty$, $5/2 \leq r \leq \infty$, and $\eps > 0$.  One also has analogues of all the above estimates in which $\Psi^*$ is replaced by $\delta \Psi$, $c$ is replaced by $\delta c$, etc., and all powers of $\mu$ are discarded.  Thus for instance, the analogue of \eqref{strich1} is
\begin{equation}\label{strich1-diff}
\| \partial_x^j \partial_{t,x} \delta \Psi_x(s) \|_{L^q_t L^\infty_x(I \times \R^2)} \lesssim_{j}  \delta c(s) s^{-(j+2)/2+1/(2q)}.
\end{equation}
\end{lemma}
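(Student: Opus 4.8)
The plan is to establish the estimates for $\psi_s^*$ first --- namely \eqref{strich2}, \eqref{strich3}, \eqref{strich7}, \eqref{strich8} --- and then to deduce the estimates for $\Psi_x^*$, $\Psi_{t,x}^*$ and $A_{t,x}^*$ (that is, \eqref{strich1}, \eqref{strich4}, \eqref{strich5}, \eqref{strich6}) from these using the integral representations \eqref{Psi-x-eq}, \eqref{Psi-tx-eq}, \eqref{a-eq}, the pointwise identities \eqref{inter1}--\eqref{inter4}, Minkowski's inequality \eqref{mink} (legitimate since the $S_k$, $S_{\mu,k}$ and $N_k$ norms are continuous in the Schwartz topology), and H\"older's inequality. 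Throughout, the powers of $s$ are forced by the parabolic scaling heuristics recorded in the footnote to \eqref{psisjk}, so the only real content is the gain in $\mu$ and the avoidance of losses from frequency summation and from quadratic terms; in fact one establishes the scaling-sharp form of each estimate (for instance, in \eqref{strich3} the natural output is $s^{-(j+3)/2+1/(2q)}$, which a fortiori gives the stated, weaker, $s^{-(j+4)/2+1/(2q)}$ for $s<1$), the sharp form being what the subsequent bootstrap for $\Psi_x^*$ actually needs. For $s \gtrsim 1$ all the claimed bounds follow crudely from Proposition \ref{qual-prop} together with $c(s) \lesssim_E 1$ (from \eqref{cse-sup}), so one may assume $s < 1$.

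For the $\psi_s^*$ bounds the inputs are the family of $S_{k(s)}$-bounds $\|\partial_x^j\psi_s^*(s)\|_{S_{k(s)}(I\times\R^2)} \lesssim c(s)s^{-(j+2)/2}$ (and their $\partial_s$-variants), for all $j$, supplied by Theorem \ref{parab-thm} via \eqref{psisjk}, together with the $S_{\mu,k(s)}$-bounds of Corollary \ref{infi-star}, which hold for all $j$ at the cost of a factor $\mu^{-\eps}$. For the $L^\infty_t L^2_x$ estimates \eqref{strich8} (and \eqref{strich6}) one feeds these into the energy embedding \eqref{outgo}, letting the derivative land on the $\dot{H}^1$-component of $\|\cdot\|_{S_{k(s)}(I\times\R^2)}$ so as to obtain the sharp power; for \eqref{strich7}, which carries no $\partial_{t,x}$ on the left and so would lose a factor $2^{k(s)}\sim s^{-1/2}$ if handled naively, I would first rewrite $\psi_s = \partial_i\psi_i + A_i\psi_i$ via \eqref{heatflow} (equivalently \eqref{inter4}) so that the derivative again falls on $\psi_x$, and estimate the quadratic remainder by H\"older using $\|A_x(s)\|_{L^\infty_{t,x}} \lesssim c(s)^2 s^{-1/2}$ (obtained from \eqref{a-eq} by the same integration, or the $r=\infty$ case of \eqref{strich5}) and the uniform energy bound $\|\psi_x(s)\|_{L^\infty_t L^2_x} \lesssim_E 1$ coming from energy monotonicity along the heat flow (cf.\ \eqref{pij} and \cite[Lemma 7.2]{tao:heatwave2}). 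For the $L^q_t L^\infty_x$ estimates \eqref{strich2}, \eqref{strich3} one interpolates in $q$ between the endpoints $q=5$ and $q=\infty$: at $q=5$, apply the Strichartz estimate \eqref{lstrich-4} to $P_{\le k(s)}(\partial_x^j\psi_s^*(s))$ with $k=k(s)$ and invoke Corollary \ref{infi-star}, producing a net factor $\mu^{1-\eps}$; at $q=\infty$, apply \eqref{lstrich} with $k=k(s)$ and the $S_{k(s)}$-bounds, with no gain in $\mu$; interpolating and renaming $\eps$ gives the factor $\mu^{(5-\eps)/q}$. In each case the high-frequency tail $P_{>k(s)}(\partial_x^j\psi_s^*(s))$ is negligible: applying the same estimates with many additional factors of $\partial_x$ (legitimate since \eqref{parab-1}--\eqref{parab-6} hold for all $j$) and using \eqref{physical} together with Bernstein's inequality produces geometric decay in the frequency gap; the low-frequency content is handled directly because \eqref{outgo} and \eqref{lstrich} apply on the whole range $|\xi| \lesssim 2^{k(s)}$, supplemented where a sharp power is needed by the frequency-localisation estimates of Lemma \ref{psioc-lemma-2}.

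Given the $\psi_s^*$ bounds, the remaining estimates follow routinely. For \eqref{strich1}, \eqref{strich4}, \eqref{strich6} one differentiates \eqref{Psi-x-eq} (resp.\ \eqref{Psi-tx-eq}) using the Leibniz rule \eqref{leibnitz}, applies Minkowski's inequality in the relevant $L^q_t L^\infty_x$ or $L^\infty_t L^2_x$ norm, inserts the $\psi_s^*$ Strichartz estimates (with their powers of $\mu$), and sums the $s'$-integral using \eqref{sm} and \eqref{envelo}; the identities \eqref{inter1}--\eqref{inter4} are used to convert freely between $\partial_s$, $\partial_{t,x}$ and products as needed. For \eqref{strich5} one uses the bilinear formula \eqref{a-eq}, $A_{t,x}(s) = \int_s^\infty \psi_s\wedge\psi_{t,x}(s')\,ds'$: since $r \geq 5/2$ one has $2r \geq 5$, so H\"older in time gives $L^r_t L^\infty_x \hookleftarrow L^{2r}_t L^\infty_x \times L^\infty_{t,x}$, and one places $\psi_s$ in the $L^{2r}_t L^\infty_x$ estimate \eqref{strich2} (which supplies $\mu^{(5-\eps)/(2r)}$) and $\psi_{t,x}$ in $L^\infty_{t,x}$, integrating in $s'$ as before (the stated power of $\mu$ is not optimised, and the $c(s)^2$-type weights are produced by the bilinearity of \eqref{a-eq}). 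Finally, the differenced estimates such as \eqref{strich1-diff} are obtained by repeating all of the above verbatim, replacing the bounds \eqref{parab-1}--\eqref{parab-3} on $\Psi^*$ by the bounds \eqref{parab-4}--\eqref{parab-6} on $\delta\Psi$ and exploiting the ``linear in $\delta$'' structure of the schematic identities (each term in a difference carries exactly one $\delta$-factor, supplying $\delta c$, while the remaining factors supply $c$); since Corollary \ref{infi-star} was only established for $\psi_s^*$, one simply uses the $S_k$-based estimate \eqref{lstrich} in place of \eqref{lstrich-4} throughout and discards all powers of $\mu$.

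The main obstacle is the $L^2_x$-level bookkeeping in the second paragraph: obtaining the scaling-sharp powers of $s$ while retaining the gain in $\mu$, which forces one to use the $\dot{H}^1$-level rather than the $L^2$-level part of \eqref{outgo}, to peel off the high-frequency tails cleanly, and --- crucially --- to dispose of the quadratic terms generated by $D_i = \partial_i + A_i$ and by \eqref{inter4} via H\"older and the uniform energy bound on $\psi_x$ without losing a power of $s$; everything else is a mechanical combination of Theorem \ref{parab-thm}, Corollary \ref{infi-star}, and the embedding and Strichartz estimates of Theorem \ref{func}.
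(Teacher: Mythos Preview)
Your proposal reverses the paper's order: you prove the $\psi^*_s$ estimates \eqref{strich2}, \eqref{strich3} first (directly from Corollary \ref{infi-star} and the Strichartz embeddings \eqref{lstrich-4}, \eqref{lstrich}), and then claim to derive \eqref{strich1}, \eqref{strich4} by integrating \eqref{Psi-x-eq}, \eqref{Psi-tx-eq}. The paper instead proves \eqref{strich1} first, by Littlewood-Paley decomposing $\Psi^*_x$, applying \eqref{lstrich-4} and \eqref{lstrich} to each piece $P_k\Psi^*_x$ (using $S_{\mu,k}$ and $S_k$ bounds respectively), interpolating, and summing in $k$; \eqref{strich2}, \eqref{strich3} then follow from \eqref{heatflow}. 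Both orderings are viable, but your third paragraph glosses over the one genuinely non-routine step.

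The gap is in your treatment of \eqref{strich4} (and, in your order, also \eqref{strich1}). You write that one ``inserts the $\psi^*_s$ Strichartz estimates \ldots\ and sums the $s'$-integral using \eqref{sm} and \eqref{envelo}''. But the integrand in \eqref{Psi-tx-eq} is $\bigO(\partial_{t,x}\psi_s(s')) + \bigO(\psi_s(s')\Psi_{t,x}(s'))$, and the second (quadratic) term contains the very quantity $\Psi_{t,x}(s')$ you are trying to estimate. Since $\Psi_{t,x}$ includes the $t$-components $\psi_t$, $A_t$, which are \emph{not} controlled by the purely spatial $S_k$-bounds \eqref{parab-1}--\eqref{parab-3} of Theorem \ref{parab-thm}, you cannot close this by citing those bounds and H\"older alone. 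The paper handles this explicitly: it sets $f_{j,q}(s) := c(s)^{-1}s^{(j+1)/2-1/(2q)}\|\partial_x^j\Psi^*_{t,x}(s)\|_{L^q_tL^\infty_x}$, derives the integral inequality $f_{j,q}(s)\lesssim \mu^{(5-\eps)/q}+\int_s^\infty (s'/s)^{-\sigma}f_{j,q}(s')c(s')\,ds'/s'$, and closes with the Gronwall-type Lemma \ref{gron-lem} (together with the qualitative decay $f_{j,q}(s)\to 0$ as $s\to\infty$ from Proposition \ref{qual-prop}). Your sketch needs exactly this step; once it is inserted, your reordered argument goes through.
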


\begin{proof}  We first establish the estimates for $\Psi^*_{s,t,x}$, and return to $\delta \Psi_{s,t,x}$ later.

From Corollary \ref{infi-star}, \eqref{physical} we have
$$ \| P_k \Psi^*_x(s) \|_{S_{\mu,k}(I \times \R^2)} \lesssim_{j',\eps} \mu^{-\eps} \chi_{k=k(s)}^{-\delta_1} c(s) s^{-1/2} \langle 2^k s^{-1/2} \rangle^{-j'}$$
for all $k, j', \eps$, and similarly for the $S_k(I \times \R^2)$ norm without the $\mu^{-\eps}$ loss (thanks to \eqref{parab-1}).  By \eqref{lstrich-4}, \eqref{lstrich} we thus have
$$ \| \partial_x^j \partial_{t,x} P_k \Psi^*_x \|_{L^5_t L^\infty_x(I \times \R^2)} \lesssim_{j,j'}
\mu^{1-\eps} 2^{kj} 2^{(1-1/5) k} c(s) s^{-1/2} \langle 2^k s^{-1/2} \rangle^{-j'}$$
and
$$ \| \partial_x^j \partial_{t,x} P_k \Psi^*_x \|_{L^\infty_t L^\infty_x(I \times \R^2)} \lesssim_{j,j'} 2^{kj} 2^{k} c(s) s^{-1/2} \langle 2^k s^{-1/2} \rangle^{-j'}$$
for any $j,j'$.  Interpolating these estimates, taking $j'=j+5$ (say) and summing in $k$, we conclude \eqref{strich1}.

From \eqref{strich1}, \eqref{heatflow}, the Leibniz rule \eqref{leibnitz}, and H\"older's inequality we obtain \eqref{strich2}, \eqref{strich3}.

Now we turn to \eqref{strich4}.  If we set
$$ f_{j,q}(s) := \sum_{j'=0}^j c(s)^{-1} s^{(j+1)/2-1/2q} \| \partial_x^j \Psi^*_{t,x}(s) \|_{L^q_t L^\infty_x(I \times \R^2)}$$
then from \eqref{Psi-tx-eq}, Minkowski's inequality, the Leibniz rule, H\"older's inequality, \eqref{strich2}, \eqref{strich3}, and \eqref{sm} we conclude that
$$ f_{j,q}(s) \lesssim_{j,\eps} \mu^{(5-\eps)/q} + \int_{s'}^\infty (s'/s)^{-\sigma} f_{j,q}(s') c(s') ds'/s'$$
for some absolute constant $\sigma > 0$ (independent of $\delta_0$) and all $\eps > 0$.  Noting from Proposition \ref{qual-prop}, Bernstein's inequality, and H\"older's inequality that $f_{j,q}(s) \to 0$ as $s \to \infty$.  Applying Lemma \ref{gron-lem} we conclude that $f_{j,q}(s) \lesssim_{j,\eps} \mu^{(5-\eps)/q}$ for all $s > 0$ and $\eps > 0$, and \eqref{strich4} follows.

The estimate \eqref{strich5} follows from \eqref{a-eq}, the Leibniz rule, Minkowski's inequality, H\"older's inequality, \eqref{strich2}, \eqref{strich4}, and \eqref{sm}, \eqref{envelo}.  

The estimate \eqref{strich6} follows from \eqref{parab-1} and \eqref{outgo}.  The estimates \eqref{strich7}, \eqref{strich8} then follows from \eqref{strich6}, \eqref{strich4}, and \eqref{heatflow}.  

The analogous estimates for $\delta \Psi$ are obtained by applying \eqref{disc-leib-eq} to all the equations of motion used above, and modifying the arguments appropriately (without trying to gain any factors of $\mu$).
\end{proof}

Having obtained adequate control on $\Psi^*_{s,t,x}$, $\Psi'_{s,t,x}$, we now turn attention to the wave-tension fields $w = D^\alpha \psi_\alpha$, $w' = (D')^\alpha \psi'_\alpha$, $\delta w = w' - w$.  We begin with the nonlinear forcing term 
\begin{equation}\label{Fdef}
F := (\psi_\alpha \wedge \psi_i) D_i \psi^\alpha
\end{equation}
appearing in \eqref{w-eq}, with $F'$ (and hence $F^*$ and $\delta F$) defined accordingly.  Here we can obtain estimates which (crucially) gains more than one power of $\mu$, as well as some decay at low frequencies.  

\begin{lemma}[Forcing term estimate]\label{force}  We have 
\begin{equation}\label{force-eq} \| P_k F^*(s) \|_{N_k(I \times \R^2)} \lesssim_{j,\eps} \mu^{2-\eps} c(s) s^{-1} \chi_{k \geq k(s)}^{\delta_2} \chi_{k \leq k(s)}^j
\end{equation}
and
\begin{equation}\label{force-diff-eq} \| P_k \delta F(s) \|_{N_k(I \times \R^2)} \lesssim_{j,\eps} \mu^{1-\eps} \kappa c(s) s^{-1} \chi_{k \geq k(s)}^{\delta_2} \chi_{k \leq k(s)}^j
\end{equation}
for all $j \geq 0$, $s > 0$, and integers $k$, and $\eps > 0$.
\end{lemma}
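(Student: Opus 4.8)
\emph{Proof strategy.} The plan is to regard $F = (\psi_\alpha\wedge\psi_i)D_i\psi^\alpha$ as a trilinear (modulo higher-order) expression in the differentiated fields, to massage it into genuine null-form shape so that the improved trilinear estimates \eqref{trilinear-improv}, \eqref{trilinear-improv2} apply (this is what supplies the crucial power of $\mu$), and to read off the frequency factors $\chi_{k\ge k(s)}^{\delta_2}$, $\chi_{k\le k(s)}^{j}$ from the fact that every field at heat-time $\sigma$ is concentrated at frequency $\sim 2^{k(\sigma)}$.

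\emph{Step 1 (algebraic reduction).} Using the zero-torsion identity \eqref{zerotor-frame} (which holds also with one spatial index replaced by a time index) one has $D_i\psi^\alpha = D^\alpha\psi_i$, so that $F = (\psi_\alpha\wedge\psi_i)D^\alpha\psi_i$; expanding the wedge, using metric compatibility of $D$, and writing $D^\alpha = \partial^\alpha + A^\alpha$ brings $F$ to the schematic form $F = \bigO(\psi_{t,x}\,\psi_{t,x}\,\partial_{t,x}\psi_{t,x}) + \bigO(\psi_{t,x}\,A_{t,x}\,\psi_x^2)$, where in the first term the spacetime index carried by $\partial$ is contracted against one undifferentiated $\psi$. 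In that ``main'' term we substitute \eqref{psi-eq}, namely $\psi_\alpha(s) = -\int_s^\infty D_\alpha\psi_s(s')\,ds'$, into the contracted factor $\psi_\alpha$, and split $D_\alpha = \partial_\alpha + A_\alpha$ once more; the leading piece becomes $-\int_s^\infty \bigO\!\bigl(\psi_x(s)\,\partial_\alpha\psi_s(s')\,\partial^\alpha\psi_x(s)\bigr)\,ds'$, which for each fixed $s'$ is literally of the null-form type $\phi^{(1)}\partial_\alpha\phi^{(2)}\partial^\alpha\phi^{(3)}$ handled by \eqref{trilinear-improv2}. Every remaining piece of $F$ — the $\bigO(\psi_{t,x}A_{t,x}\psi_x^2)$ term and the $A_\alpha$-corrections — is of quartic-or-worse type and will be treated crudely.

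\emph{Step 2 (the main null term).} Decompose each of the three factors into Littlewood--Paley pieces and apply \eqref{trilinear-improv2} block by block. All three inputs are controlled in $S_{\mu,k(\cdot)}$ with only a $\mu^{-\eps}$ loss: for $\psi_s$ this is Corollary \ref{infi-star}, and for $\psi_x(s)$ it follows by the same mechanism, feeding \eqref{psi-eq} into the $S_{\mu,k}$-version of the parabolic regularity estimate \eqref{parreg} (for $s'\ge s$ the heat flow has run for time $s'$ against the output scale $2^{k(s)}$, which is a strong smoothing, so the $s'$-integral converges with room to spare, the slow variation \eqref{sm} of the envelope being used to transfer frequencies between the scales $k(s')$ and $k(s)$). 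This yields the gain $\mu^{2-2\eps}$ together with the factors $\chi_{k=\max(k_1,k_2,k_3)}^{\eps\delta_1}\chi_{k_1\le\min(k_2,k_3)}^{\eps\delta_1}$. Combining with the frequency concentration of the fields — each of $\psi_x(\sigma),\psi_s(\sigma)$ has, at the $S_k$ level, arbitrary polynomial decay above $2^{k(\sigma)}$ (from \eqref{parab-1}, \eqref{parab-3}) and, at the $L^2$ level, arbitrary decay below $2^{k(\sigma)}$ (Lemma \ref{psioc-lemma-2}, \eqref{kpsis}) — one checks: in the regime $k\gtrsim k(s)$ the bound $\chi_{k\ge k(s)}^{\delta_2}$ (indeed any power) comes directly from the high-frequency tails of $\psi_x(s)$, \emph{not} from the trilinear estimate; in the regime $k\lesssim k(s)$ the decay $\chi_{k\le k(s)}^{j}$ comes from the low-frequency tails of the fields; and in either case a Schur-type summation over the $k_i$ and over $s'\in[s,\infty)$, controlled by \eqref{sm}, \eqref{envelo} (and $c(s)^2\lesssim c(s)$), gives the right-hand side of \eqref{force-eq} after renaming $\eps$.

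\emph{Step 3 (lower-order terms and the difference).} For the quartic pieces the null structure is unnecessary: since $N_k\supset L^1_tL^2_x$ by \eqref{fl1l2}, it suffices, after Littlewood--Paley decomposition, to bound each frequency-localised product in $L^1_tL^2_x$ by H\"older, placing factors of $\partial_{t,x}\Psi_x^*$, $\psi_s^*$, $A_{t,x}^*$ in $L^q_tL^\infty_x$ (each contributing a power $\mu^{(5-\eps)/q}$, resp.\ $\mu^{(5-\eps)/2r}$ for $A$) via Lemma \ref{strichlem} and keeping one factor in $L^\infty_tL^2_x$; with four fields present a total power $\mu^{2-\eps}$ is comfortably attained, the $s$-weights and frequency gains coming out exactly as dictated by the dimensional heuristics in the footnote to \eqref{psisjk}. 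Finally, for \eqref{force-diff-eq} one applies the discretised Leibniz rule \eqref{disc-leib-eq} to every product above, so $\delta F$ is a sum of terms each carrying exactly one $\delta$-factor (controlled by $\delta c$ via Theorem \ref{parab-thm} and Lemma \ref{strichlem}) and otherwise $c$-factors; repeating Steps 2--3 but foregoing the $\mu$-gain on the $\delta$-factor — which for the difference theory is available only in $S_k$, not $S_{\mu,k}$ — costs one power of $\mu$ and gives the claimed bound.

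\emph{Main difficulty.} The delicate point is Step 2: extracting the \emph{full} $\mu^{2-\eps}$ requires putting \emph{all three} factors of the main term into $S_{\mu,k}$ with only $\mu^{-\eps}$ loss, which for $\psi_{t,x}$ is not built into its definition and must be recovered from $\psi_s$ through \eqref{psi-eq}; one must verify that the resulting $s'$-integral and the transfer of frequency from scale $k(s')$ up to $k(s)$ genuinely converge (this is where the heat smoothing in \eqref{parreg} and the envelope property \eqref{sm} do the work), and — since the trilinear estimate supplies only the weak high-frequency gain $\chi_{k=\max}^{\eps\delta_1}$ — that the strong gain $\chi_{k\ge k(s)}^{\delta_2}$ is instead harvested from the merely polynomially decaying tails of the fields themselves.
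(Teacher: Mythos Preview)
Your overall strategy is the right one and matches the paper's: perform dynamic separation to expose a genuine null form, use the improved trilinear estimate \eqref{trilinear-improv2} on that piece to earn the $\mu^{2-\eps}$, and handle the remaining pieces with Strichartz. Your main null term $\int_s^\infty \psi_x(s)\,\partial_\alpha\psi_s(s')\,\partial^\alpha\psi_x(s)\,ds'$ is legitimate, and your observation that $\psi_x(s)$ can be placed in $S_{\mu,k(s)}$ (by integrating Corollary \ref{infi-star} through \eqref{psi-eq}) is correct and in fact clarifies a point the paper glosses over. The difference from the paper here is minor: the paper substitutes \eqref{psi-eq} \emph{twice}, so that both null derivatives land on $\psi_s$ at large heat times $s',s''\ge s$, whereas you substitute once and leave one $\partial^\alpha$ on $\psi_x(s)$. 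Both routes close for the main term.

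The genuine gap is in Step 3. Your claim that the $A$-correction terms---schematically $A_\alpha\psi_s\psi_x\partial^\alpha\psi_x$ and $\psi_\alpha\psi_x A^\alpha\psi_x$---are handled by H\"older and Strichartz in $L^1_tL^2_x$ does not close. Lemma \ref{strichlem} offers $L^q_tL^\infty_x$ only for $q\ge 5$ (and $r\ge 5/2$ for $A$), so with one factor necessarily in $L^\infty_tL^2_x$ and three in $L^{q_i}_tL^\infty_x$, the best attainable time exponent is $\tfrac{2}{5}+\tfrac{1}{5}+\tfrac{1}{5}=\tfrac{4}{5}<1$; the $\mu$-power is fine but you never reach $L^1_t$. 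Bernstein to $L^1_tL^1_x$ does not help (you then need two $L^2_x$ factors and the time sum drops to $\tfrac{3}{5}$). This is exactly the obstruction the paper flags for its quintic terms \eqref{F2-b}, \eqref{F2-c}: ``cannot be directly treated by Strichartz methods.'' The paper's fix is a \emph{second} round of dynamic separation on the offending $A_\alpha$ (expand it via \eqref{a-eq}, \eqref{psi-eq}), which splits each such term into a quintic expression with genuine null structure (handled by \eqref{trilinear-improv2} as in the cubic case) plus a septic expression with enough factors that the Strichartz exponents now do add to $1$. Your proof needs this additional step; without it the ``lower-order'' terms are not actually lower-order.
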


\begin{proof}  
We will need to extract the implicit null structure from the expression $F$ by exploiting ``dynamic separation'' as in \cite{krieger:2d}, \cite{krieger:3d}.  In the context of the caloric gauge, dynamic separation entails using \eqref{psi-eq}, \eqref{a-eq} to rewrite the terms involving $\alpha$ in \eqref{Fdef} in terms of $\psi_s$, modulo higher order terms.  Indeed, from \eqref{psi-eq} we can express $F(s)$ as the sum of the cubic term
\begin{equation}\label{F1-eq}
\int_s^\infty \int_s^\infty \bigO( \partial_\alpha \psi_s(s') \psi_x(s) \partial_x \partial^\alpha \psi_s(s'') )\ ds' ds'' 
\end{equation}
the quintic\footnote{Here we count $A$ as a quadratic term (cf. \eqref{a-eq}, \eqref{ax-eq}; also note the range of exponents in \eqref{strich5} is twice as large as \eqref{strich1}-\eqref{strich3}).} terms
\begin{align}
\int_s^\infty \int_s^\infty \bigO( \partial_\alpha \psi_s(s') \psi_x(s) A_x(s) \partial^\alpha \psi_s(s'') )&\ ds' ds''\label{F2-a}\\
\int_s^\infty \int_s^\infty \bigO( A_\alpha(s') \psi_s(s') \psi_x(s) \partial_x \partial^\alpha \psi_s(s''))&\ ds' ds'' \label{F2-b}\\
\int_s^\infty \int_s^\infty \bigO( \partial_\alpha \psi_s(s') \psi_x(s) \partial_x (A^\alpha(s'') \psi_s(s'')))&\ ds' ds'' \label{F2-c}
\end{align}
the septic terms
\begin{align}
\int_s^\infty \int_s^\infty \bigO( A_\alpha(s') \psi_s(s') \psi_x(s) A_x(s) \partial^\alpha \psi_s(s'') )&\ ds' ds''\label{F3-a}\\
\int_s^\infty \int_s^\infty \bigO( A_\alpha(s') \psi_s(s') \psi_x(s) \partial_x(A^\alpha(s'') \psi_s(s'')))&\ ds' ds'' \label{F3-b}\\
\int_s^\infty \int_s^\infty \bigO( \partial_\alpha \psi_s(s') \psi_x(s) A_x(s) A^\alpha(s'') \psi_s(s''))&\ ds' ds'' \label{F3-c}
\end{align}
and the nonic term
\begin{equation}\label{F4}
\int_s^\infty \int_s^\infty \bigO( A_\alpha(s') \psi_s(s') \psi_x(s) A_x(s) A^\alpha(s'') \psi_s(s''))\ ds' ds''.
\end{equation}
Note in each of these expressions, the derivatives such as $\partial_\alpha$ or $\partial_x$ are falling on ``low frequency'' terms (terms arising from large values $s',s''$ of the heat-temporal parameter, rather than from small values such as $s$).  This phenomenon, which also occurs in  the integral expressions appearing later in this proof, will be crucial in ensuring that the integrals are convergent in the required function space norms.

We now turn to the proof of \eqref{force-eq}.  It suffices to show that
$$
\| P_k \partial_x^j F^*(s) \|_{N_k(I \times \R^2)} \lesssim_{j,\eps} \mu^{2-O(\eps)} c(s) s^{-(j+2)/2} \chi_{k \geq k(s)}^{\delta_2}
$$
for all $j \geq 0$, $s > 0$, $k \in \Z$, $\eps > 0$.  We split $F^*$ into terms of the form \eqref{F1-eq}-\eqref{F4} (but with $\Psi_{s,t,x}$ replaced by $\Psi^*_{s,t,x}$, etc.).

We first deal with the cubic term \eqref{F1-eq}.  By Minkowski's inequality, the contribution of this term is bounded by
$$ \lesssim \int_s^\infty \int_s^\infty \| P_k \partial_x^j (\partial_\alpha \psi^*_s(s') \psi^*_x(s) \partial_x \partial^\alpha \psi^*_s(s'')) \|_{N_k(I \times \R^2)}\ ds' ds''.$$
Applying the Leibniz rule \eqref{leibnitz}, \eqref{trilinear-improv2}, \eqref{parab-1}, Corollary \ref{infi-star}, and \eqref{sm}, \eqref{envelo}, we can bound this by
$$ \lesssim_{j,\eps} 
\int_s^\infty \int_s^\infty s^{-j/2} \mu^{2-O(\eps)} c(s) \chi_{k = k(s)}^{\eps \delta_1} (\max(s',s'')/s)^{-\eps \delta_1/2} (s')^{-1} s^{-1/2} (s'')^{-3/2}\ ds' ds'';$$
performing the integrals we see that these terms are acceptable.  Note that the $j>0$ cases are no harder than the $j=0$ case (and in some terms there is even a slight gain); the reader may in fact wish to set $j=0$ when following the discussion below, as the higher $j$ case never adds any substantial new difficulty.

The first quintic term \eqref{F2-a} can be handled similarly.  Indeed, from \eqref{parab-1}, \eqref{prod1}, \eqref{envelo}, and the Leibniz rule \eqref{leibnitz}, we have
$$ \| \partial_x^j (\psi_x(s) A_x(s)) \|_{S_{k(s)}(I \times \R^2)} \lesssim_j c(s) s^{-j/2} s^{-1}$$
for all $j \geq 0$, and then by repeating the previous arguments we can estimate the contribution of this case by
$$ \lesssim_{j,\eps} 
\int_s^\infty \int_s^\infty s^{-j/2} \mu^{2-O(\eps)} c(s) \chi_{k = k(s)}^{\eps \delta_1} (\max(s',s'')/s)^{-\eps \delta_1/2} (s')^{-1} s^{-1} (s'')^{-1}\ ds' ds''$$
which is also (barely) acceptable.

We set aside the other quintic terms for now and look at the nonic term \eqref{F4}.  By Minkowski's inequality, \eqref{fl1l2}, and Bernstein's inequality, and discarding the null structure, the contribution of this term is bounded by
$$ \lesssim 2^k \int_s^\infty \int_s^\infty \| P_k \partial_x^j ( A^*_{t,x}(s') \psi^*_s(s') \psi^*_x(s) A^*_x(s) A^*_{t,x}(s'') \psi^*_s(s'')) \|_{L^1_t L^1_x(I \times \R^2)} \ ds' ds''.$$
We apply Leibniz's rule \eqref{leibnitz} and H\"older's inequality, estimating the ``high frequency'' terms $\psi_x(s)$, $A^*_x(s)$ in $L^\infty_t L^2_x$, the term $A^*_{t,x}(s')$ (say) in $L^{5/2}_t L^\infty_x$, amd the other three terms $A^*_{t,x}(s'')$, $\psi_s(s')$, $\psi_s(s'')$ in $L^5_t L^\infty_x$.  Applying Lemma \ref{strichlem}, \eqref{envelo} we can estimate the above expression by
$$ \lesssim_{j,\eps} 2^k \int_s^\infty \int_s^\infty s^{-j/2} \mu^{2-O(\eps)} c(s) (s')^{-3/2+1/5}  (s'')^{-3/2+3/10}\ ds' ds''.$$
(in fact we have several powers of $\mu$ to spare).
Performing the integrals we see that this term is acceptable.

The septic terms can be treated by the same methods as the nonic term, but with a more delicate numerology.  To illustrate this, consider for instance \eqref{F3-c}.  If we do not apply Bernstein's inequality, but otherwise repeat the above argument, we can control the contribution of this term by
$$ \lesssim \int_s^\infty \int_s^\infty \| P_k \partial_x^j ( \partial_{t,x}(s') \psi^*_s(s') \psi^*_x(s) A^*_x(s) A^*_{t,x}(s'') \psi^*_s(s'')) \|_{L^1_t L^2_x(I \times \R^2)} \ ds' ds''.$$
If we place $\psi^*_x(s)$ in $L^\infty_t L^2_x$, $A^*_x(s)$ and $A^*_{t,x}(s'')$ in $L^{10/3}_t L^\infty_x$ (say), and $\partial_{t,x}(s') \psi^*_s(s')$ and $\psi^*_s(s'')$ in $L^5_t L^\infty_x$, we obtain the integral
$$ \lesssim_{j,\eps} \int_s^\infty \int_s^\infty s^{-j/2} \mu^{2-O(\eps)} c(s) (s')^{-3/2+1/10}  s^{-1/2+3/20} (s'')^{-3/2+1/4}\ ds' ds''$$
(again, we have powers of $\mu$ to spare).  This integral is almost acceptable, except that 
we did not gain the factor of $\chi_{k \geq k(s)}^{\delta_2}$.  For this, we must invoke Bernstein's inequality a little bit, moving $L^1_t L^2_x$ to $L^1_t L^{2-c}_x$ for some small $c$, and then moving some of the exponents in H\"older's inequality on, say, $A^*_x(s)$, to match (by interpolating the various estimates in Lemma \ref{strichlem}); note that scale-invariance assures us that we end up with the right exponent of $s$ in the end.  We leave the details to the reader.  The treatment of the terms \eqref{F3-a}, \eqref{F3-b} are similar (with some permutations in the exponents) and are also left to the reader.

The quintic terms \eqref{F2-b}, \eqref{F2-c} cannot be directly treated by Strichartz methods.  (Even with the endpoint $L^4_t L^\infty_x$ Strichartz estimate, one would barely be able to place the nonlinearity in $L^1_t L^2_x$, leaving no room for the Bernstein inequality to give the important factor of $\chi_{k \geq k(s)}^{\delta_2}$.  This issue also comes up in \cite{krieger:2d}.)  To deal with this, we need to perform more dynamic separation on these terms.  For instance, to deal with \eqref{F2-b}, we use \eqref{a-eq}, \eqref{psi-eq} to write
$$ A^*_\alpha(s') = \int_{s'}^\infty \int_{s'''}^\infty \bigO( \psi^*_s(s''') \partial_\alpha \psi^*_s(s'''') ) + \bigO( \psi^*_s(s''') A^*_\alpha(s'''') \psi^*_s(s'''') )\ ds'''' ds'''.$$
This splits \eqref{F2-b} into a quintic null form
$$
\int_s^\infty \int_s^\infty \int_{s'}^\infty \int_{s'''}^\infty \bigO( \psi^*_s(s''') \partial_\alpha \psi^*_s(s'''') \psi^*_s(s') \psi^*_x(s) \partial_x \partial^\alpha \psi^*_s(s''))\ ds'''' ds''' ds' ds''$$
and a septic expression
$$
\int_s^\infty \int_s^\infty \int_{s'}^\infty \int_{s'''}^\infty \bigO( \psi^*_s(s''') A^*_\alpha(s'''') \psi^*_s(s'''') \psi^*_s(s') \psi^*_x(s) \partial_x \partial^\alpha \psi^*_s(s''))\ ds'''' ds''' ds' ds''.$$
The septic expression can be dealt with by the same sort of Strichartz and Bernstein techniques as the previous septic expressions \eqref{F3-a}, \eqref{F3-b}, \eqref{F3-c}, while the quintic null form can be dealt with similarly to the quintic null form \eqref{F2-a}.  Similar arguments let one handle \eqref{F2-c}.  We omit the details as they are very similar to the expressions already dealt with, except for some minor permutations in the numerology (the key point again being that the derivatives are falling on low frequency terms rather than high frequency ones).

This ends our discussion of \eqref{force-eq}.  To prove \eqref{force-diff-eq}, one applies \eqref{disc-leib-eq} to all the expressions above, thus replacing one of the $\Psi^*$ factors appearing above with a $\delta \Psi$ factor (for various values of ``$\Psi$'').  One then repeats the above arguments, using the appropriate ``$\delta$ versions'' of estimates such as \eqref{parab-1} or Lemma \ref{strichlem}, and using \eqref{trilinear-improv} instead of \eqref{trilinear-improv2}.  Because the estimates on $\delta$ terms do not have any gains in $\mu$ (and in particular are estimated in $S_k$ rather than $S_{\mu,k}$), we only end up with a power of $\mu^{1-O(\eps)}$ rather than $\mu^{2-O(\eps)}$.  Instead of keeping a factor of $c(s)$ in the final estimate, one instead retains a factor such as $\delta c(s)$, $\delta c(s')$, $\delta c(s'')$, etc. (depending on where the $\delta$ term has fallen); however, by using \eqref{sm} one can convert this back to $\delta c(s)$ at a cost of an expression such as $(s'/s)^{\delta_0}$, which does not affect the convergence of any of the integrals (as they already have some room to spare in these exponents, typically of the order of $1/10$ or so).  We again leave the details to the reader, as they are somewhat tedious.
\end{proof}

We are now in a position to obtain control on the wave-tension field $w = D^\alpha \psi_\alpha$, that also gains the crucial $\mu$ factor:

\begin{lemma}[Control of wave-tension field]\label{pkow}  We have
\begin{equation}\label{pkw-1}
\| P_k w^* \|_{N_k(I \times \R^2)} \lesssim_{j,\eps} \mu^{2-\eps} c(s) \chi_{k \geq k(s)}^{\delta_2/10} \chi_{k \leq k(s)}^j
\end{equation}
and
\begin{equation}\label{pkw-2}
 \| P_k \partial_s w^* \|_{N_k(I \times \R^2)} \lesssim_{j,\eps} \mu^{2-\eps} c(s) s^{-1} \chi_{k \geq k(s)}^{\delta_2/10} \chi_{k \leq k(s)}^j
 \end{equation}
for all $j \geq 0$, $s > 0$, $\eps > 0$, and integers $k$.  Similarly we have
\begin{equation}\label{pkw-1-diff}
\| P_k \delta w \|_{N_k(I \times \R^2)} \lesssim_{j,\eps} \mu^{1-\eps} \delta c(s) \chi_{k \geq k(s)}^{\delta_2/10} \chi_{k \leq k(s)}^j
\end{equation}
and
\begin{equation}\label{pkw-2-diff}
 \| P_k \partial_s \delta w \|_{N_k(I \times \R^2)} \lesssim_{j,\eps} \mu^{1-\eps} \delta c(s) s^{-1} \chi_{k \geq k(s)}^{\delta_2/10} \chi_{k \leq k(s)}^j
 \end{equation}
for the same range of $j,s,\eps,k$.
\end{lemma}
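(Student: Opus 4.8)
The plan is to treat $w$ as the solution of a forced covariant heat equation in $s$ with vanishing Cauchy data at $s=0$, and to run a frequency-localised Gronwall argument in the $N_k$ spaces with the main inhomogeneous term supplied by Lemma \ref{force}. First I would record the schematic form of \eqref{w-eq} (and, via \eqref{disc-leib-eq}, of its $\partial_s$- and $\delta$-differentiated versions):
$$ (\partial_s - \Delta) w = \bigO(\Psi_x \partial_x w) + \bigO((\partial_x \Psi_x) w) + \bigO(\Psi_x^2 w) + \bigO(F), $$
where $F$ is the forcing term \eqref{Fdef}, the coefficient fields $\Psi_x,\partial_x\Psi_x,\Psi_x^2$ obey the $S_{k(s)}$-bounds \eqref{parab-1}, \eqref{parab-2} already established in this section, and $F$ obeys the $N_k$-bound \eqref{force-eq}. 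The crucial structural inputs are that $w$ vanishes identically at $s=0$ (this is the wave maps equation \eqref{w-vanish}) and that $w,\partial_s w$ decay as $s\to\infty$ (Proposition \ref{qual-prop} together with \eqref{wdamp-eq}), so Duhamel's formula \eqref{duh} integrated from $s=0$ gives
$$ P_k w(s) = \int_0^s P_k e^{(s-s')\Delta}\bigl[\bigO(\Psi_x \partial_x w) + \bigO((\partial_x\Psi_x)w) + \bigO(\Psi_x^2 w) + \bigO(F)\bigr](s')\,ds'. $$

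Second I would estimate the right-hand side in $N_k$. For the $\bigO(F)$ term, decompose dyadically: since $F(s')$ is essentially supported at frequency $\sim 2^{k(s')}$, only the pieces $P_{k'}F(s')$ with $k'=k+O(1)$ survive $P_k e^{(s-s')\Delta}$ (up to the harmless shifts governed by \eqref{physicaln-eq}), and the heat multiplier contributes a factor $\lesssim e^{-c(s-s')2^{2k}}$; combining with \eqref{force-eq} and integrating in $s'$ — the integral converging at $s'=0$ precisely because the $\chi_{k\le k(s')}^j$ factor supplies a favourable power of $s'$ there — produces the main term, of size $\mu^{2-\eps}c(s)\chi_{k\ge k(s)}^{\delta_2/5}\chi_{k\le k(s)}^{2j}$ (with exponents slightly stronger than claimed). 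For the linear-in-$w$ terms I would use the second product estimate \eqref{second-prod} to split off the $S_k$-controlled coefficient (estimated via \eqref{parab-1}, \eqref{parab-2}, Littlewood--Paley theory and Bernstein), the spatial derivative in $\bigO(\Psi_x\partial_x w)$ being absorbed partly by heat smoothing and partly by the $s$-decay of $\Psi_x$; this leads to a feedback integral of the type $\int_0^s c(s')(\cdots)\|P_{\sim k}w(s')\|_{N_{\sim k}}\,ds'/s'$. The resulting integral inequality for $\|P_k w(s)\|_{N_k}$ is precisely of the shape handled by Lemma \ref{gron-lem-2} (using $\int_0^\infty c(s)^2\frac{ds}{s}\lesssim 1$ and $\sup_s c(s)\lesssim 1$ from \eqref{envelo}); closing it, and paying a small portion of the $\chi$-exponents to the feedback, yields \eqref{pkw-1}. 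The estimate \eqref{pkw-2} for $\partial_s w$ then follows by reading $\partial_s w = \Delta w + (\text{lower order}) + \bigO(F)$ off \eqref{w-eq}: the $\Delta w$ term costs $2^{2k}\sim s^{-1}\chi_{k\ge k(s)}^{-2}$, which is exactly why the stronger $\delta_2/5$ version of \eqref{pkw-1} is needed before degrading back to $\delta_2/10$.

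Third, the difference estimates \eqref{pkw-1-diff}, \eqref{pkw-2-diff} follow by applying \eqref{disc-leib-eq} to \eqref{w-eq}, so that $\delta w$ solves $(\partial_s-\Delta)\delta w = \bigO(\Psi_x\partial_x\delta w)+\cdots+\bigO((\delta\Psi_x)\partial_x w)+\cdots+\bigO(\delta F)$ with $\delta w|_{s=0}=0$; one repeats the argument above, now using \eqref{force-diff-eq} for $\delta F$, the difference bounds \eqref{parab-4}--\eqref{parab-6} for $\delta\Psi_x$, and the already-established \eqref{pkw-1} for $w$ in the terms where $\delta$ has fallen on a coefficient, converting any intervening $\delta c(s')$ or $\delta c(s'')$ back to $\delta c(s)$ via \eqref{sm} (the $s'$-integrals have exponents to spare). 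Since $\delta F$ carries only $\mu^{1-\eps}$ while the coefficient-$\delta$ terms carry $\mu^{2-\eps}\le\mu^{1-\eps}$, one ends with $\mu^{1-\eps}\delta c(s)$, as claimed.

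The main obstacle is the closure of the Gronwall step: the feedback term $\bigO(\Psi_x\partial_x w)$ carries a full spatial derivative on $w$, and absorbing it requires simultaneously exploiting the smoothing of $e^{(s-s')\Delta}$ (which converts the derivative into a negative power of $s-s'$) and the $s$-decay of $\Psi_x$ from \eqref{parab-1}, all while correctly propagating the cutoffs $\chi_{k\ge k(s)}$, $\chi_{k\le k(s)}$ through the heat propagator so that enough room is left in the exponents for the iteration in Lemma \ref{gron-lem-2} to converge. A secondary nuisance is that the $N_k$ spaces do not come with a ready-made heat-semigroup bound, so one must argue frequency-piece by frequency-piece, leaning on \eqref{fl1l2}, \eqref{physicaln-eq}, the $N_k$-version of \eqref{parreg}, and translation invariance together with Minkowski's inequality in $N_k$.
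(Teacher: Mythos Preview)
Your overall plan is correct and matches the paper's proof: Duhamel from $s=0$ using \eqref{w-vanish}, the forcing term handled by Lemma \ref{force}, the linear-in-$w$ terms by \eqref{second-prod} together with \eqref{parab-1}, \eqref{parab-2}, and closure by the Gronwall Lemma \ref{gron-lem-2}. The paper carries this out by setting $f_j(s):=\sup_k c(s)^{-1}\chi_{k\ge k(s)}^{-\delta_2/10}\chi_{k\le k(s)}^{-j}\|P_k w^*\|_{N_k}$ and deriving the integral inequality $f_j(s)\lesssim_{j,\eps}\mu^{2-\eps}+\int_0^s((s-s')/s)^{-1/2}(s'/s)^{\delta_2/4}c(s')^2 f_j(s')\,ds'/s'$, which is exactly the shape you describe. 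The difference estimates are handled as you say.

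There is, however, a genuine gap in your derivation of \eqref{pkw-2}. You correctly note that reading $\partial_s w=\Delta w+\cdots$ off \eqref{w-eq} makes the $\Delta w$ term cost a factor $2^{2k}\sim s^{-1}\chi_{k\ge k(s)}^{-2}$ at high frequencies. But your proposed remedy---first proving \eqref{pkw-1} with exponent $\delta_2/5$ and then degrading---does not close: $\delta_2/5-2$ is still very negative (recall $\delta_2\ll 1$), so no finite strengthening of the $\chi_{k\ge k(s)}$ exponent can absorb a full $\chi_{k\ge k(s)}^{-2}$. The paper instead controls $P_k\partial_x^2 w^*$ via the parabolic regularity estimate \eqref{parreg} in $N_k$ (which you mention only in passing at the end): this gives
\[
\|P_k\partial_x^2 w^*(s)\|_{N_k}\lesssim s^{-1}\|P_k w^*(s/2)\|_{N_k}+\sup_{s/2\le s'\le s}\|P_k(\partial_s-\Delta)w^*(s')\|_{N_k},
\]
and now the first term is bounded directly by \eqref{pkw-1} with the clean factor $s^{-1}$ (no loss in the $\chi_{k\ge k(s)}$ exponent), while the second term is exactly the lower-order and $F^*$ contributions already estimated in the proof of \eqref{pkw-1}. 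So replace your ``stronger-$\delta_2/5$'' step by an appeal to \eqref{parreg}, and the argument goes through.
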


\begin{proof}  
We begin with the proof of \eqref{pkw-2}.
Fix $j$.  Without loss of generality we may take $j \geq 10$ (say).

From \eqref{w-eq} we have the schematic heat equation
\begin{equation}\label{wsax}
\partial_s w^* = \Delta w^* + \bigO( A^*_x \partial_x w^* ) + \bigO( (\partial_x A^*_x) w^* ) + \bigO( (\Psi^*_x)^2 w^* ) + \bigO( F^* ).
\end{equation}
From Duhamel's formula \eqref{duh} and \eqref{w-vanish} we conclude that
\begin{equation}\label{w-duh}
w^*(s) = \int_0^s e^{(s-s')\Delta} (\bigO( A^*_x \partial_x w^* ) + \bigO( (\partial_x A^*_x) w^* ) + \bigO( (\Psi^*_x)^2 w^* ) + \bigO( F^* ))(s')\ ds'.
\end{equation}

Let
$$f_j(s) := \sup_k c(s)^{-1} \chi_{k \geq k(s)}^{-\delta_2/10} \chi_{k \leq k(s)}^{-j} \| P_k w^* \|_{N_k(I \times \R^2)},$$
thus
\begin{equation}\label{pkw}
\| P_k w^* \|_{N_k(I \times \R^2)} \lesssim f_j(s) c(s) \chi_{k \geq k(s)}^{\delta_2/10} \chi_{k \leq k(s)}^{j}
\end{equation}
for all $k$.  The claim \eqref{pkw-1} is then equivalent to showing that $f_j(s) \lesssim_{j,\eps} \mu^{2-\eps}$ for all $s>0$, $j \geq 0$, and $\eps > 0$.

Observe that the convolution kernel of $P_k e^{(s-s')\Delta}$ has total mass $O_j( \chi_{k \leq k(s-s')}^{100j} )$ for any $j$. From \eqref{w-duh} and Minkowski's inequality, we can thus bound $f_j(s)$ by
\begin{align*}
 \lesssim_j &\sup_k c(s)^{-1} \chi_{k \geq k(s)}^{-\delta_2} \chi_{k \leq k(s)}^{-j} 
\int_0^s \chi_{k \leq k(s-s')}^{100j} \times \\
&\quad \| P_k (\bigO( A^*_x \partial_x w^* ) + \bigO( (\partial_x A^*_x) w^* ) + \bigO( (\Psi^*_x)^2 w^* ) + \bigO( F^* ))(s') \|_{N_k(I \times \R^2)}\ ds'.
\end{align*}
For any $s' > 0$, we see from \eqref{parab-1}, \eqref{parab-2}, \eqref{prod1} that
$$ \| \partial_x A^*_x(s') \|_{S_{k(s')}(I \times \R^2)}, \| (\Psi^*_x)^2(s') \|_{S_{k(s')}(I \times \R^2)} \lesssim c(s')^2 (s')^{-1}$$
and thus by \eqref{pkw}, \eqref{second-prod} and dyadic decomposition we have
$$ \| P_k (\bigO( (\partial_x \Psi^*_x) w^* ) + \bigO( (\Psi^*_x)^2 w^* ))(s') \|_{N_k(I \times \R^2)}
\lesssim_{j} (s'/s)^{\delta_2/2} c(s')^3 f_j(s') (s')^{-1} \chi_{k \geq k(s')}^{\delta_2/10} \chi_{k \leq k(s')}^{j}.$$
A similar argument also gives
$$\| P_k (\bigO( \Psi^*_x \partial_x w^* ) \|_{N_k(I \times \R^2)}
\lesssim_{j} c(s')^3 (s'/s)^{\delta_2/2} f_j(s') (s')^{-1} \chi_{k \geq k(s')}^{\delta_2/10} \chi_{k \leq k(s')}^{j-1}.$$ 
Combining these estimates with Lemma \ref{force}, we conclude that
\begin{align*}
f_j(s) &\lesssim_{j,\eps} \sup_k c(s)^{-1} \chi_{k \geq k(s)}^{-\delta_2/10} \chi_{k \leq k(s)}^{-j} \\
&\quad \int_0^s \chi_{k \leq k(s-s')}^{100j} [(s'/s)^{\delta_2/2} c(s')^3 f_j(s') \chi_{k \geq k(s')}^{\delta_2/10} \chi_{k \leq k(s')}^{j-1} \\
&\quad\quad + \mu^{2-\eps} c(s') \chi_{k \geq k(s')}^{\delta_2} \chi_{k \leq k(s')}^{100j} ]\ \frac{ds'}{s'}
\end{align*}
for all $\eps > 0$.

A direct computation using \eqref{sm} and the elementary bounds
$$ \chi_{k \geq k(s)}^{-\delta_2/10} \leq \chi_{k \geq k(s')}^{-\delta_2/10}; \quad \chi_{k \leq k(s)}^{-j} \lesssim_j \chi_{k \leq k(s-s')}^{-j} \chi_{k \leq k(s')}^{-j}$$
shows that
$$ c(s)^{-1} \chi_{k \geq k(s)}^{-\delta_2/10} \chi_{k \leq k(s)}^{-j} \int_0^s \chi_{k \leq k(s-s')}^{100j} c(s') \chi_{k \geq k(s')}^{\delta_2} \chi_{k \leq k(s')}^{100j}\ \frac{ds'}{s'} \lesssim_j 1.$$
Using this, \eqref{sm}, and the elementary bounds
\begin{align*}
\chi_{k \geq k(s)}^{-\delta_2/10} \chi_{k \geq k(s')}^{\delta_2/10} &\lesssim (s'/s)^{-\delta_2/20} \\
\chi_{k \leq k(s)}^{-j} &\lesssim \chi_{k \leq k(s-s')}^{-j} \chi_{k \leq k(s')}^{-j} \\
\chi_{k \leq k(s-s')} \chi_{k \leq k(s')}^{-1} &\lesssim ((s-s')/s)^{-1/2}\\
\end{align*}
then gives the integral inequality
$$ f_j(s) \lesssim_{j,\eps} \mu^{2-\eps} + \int_0^s ((s-s')/s)^{-1/2} (s'/s)^{\delta_2/4} c(s')^2 f_j(s') \frac{ds'}{s'}$$
for all $\eps > 0$. Also, from \eqref{fl1l2}, Proposition \ref{qual-prop}, Bernstein's inequality, and H\"older's inequality we have $f_j(s) \to 0$ as $s \to 0$.  Applying Lemma \ref{gron-lem-2} we have $f_j(s) \lesssim_{j,\eps} \mu^{2-\eps}$, and the claim \eqref{pkw-1} follows.

The claim \eqref{pkw-2} then follows by using \eqref{wsax} to write $\partial_s w^*$ in terms of $\partial_x^2 w^*$ (which can be controlled by \eqref{pkw-1}), together with several additional terms which were already estimated in the required manner in the first part of the proof.

The proof of \eqref{pkw-1-diff} follows from \eqref{pkw-1} as in previous propositions.  A little more specifically, we define
$$\delta f_j(s) := \sup_k \delta c(s)^{-1} \chi_{k \geq k(s)}^{-\delta_2/10} \chi_{k \leq k(s)}^{-j} \| P_k \delta w \|_{N_k(I \times \R^2)}$$
and repeat the above arguments (and using the bound \eqref{pkw-1} just established) to eventually obtain the integral inequality
$$ \delta f_j(s) \lesssim_{j,\eps} \mu^{1-\eps} + \int_0^s ((s-s')/s)^{-1/2} (s'/s)^{\delta_2/4} c(s')^2 \delta f_j(s') \frac{ds'}{s'}$$
Using the continuity method as before we obtain $\delta f_j(s) \lesssim_{j,\eps} \mu^{1-\eps}$ for all $s>0$, $j \geq 0$, and $\eps > 0$, giving \eqref{pkw-1-diff}.  The proof of \eqref{pkw-2-diff} then follows by the differenced version of \eqref{wsax}.  We leave the details to the reader.
\end{proof}

Having earned the crucial gains in $\mu$, we now return to control $\psi_s$.

\begin{lemma}[Return to $\psi_s$]\label{psis-lem}  We have
\begin{equation}\label{psis-iter-eq} \| P_k \psi^*_s(s) \|_{S_k(I \times \R^2)} \lesssim_{j,\eps}  \| \partial_{t,x} P_k \psi^*_s(s,t_0) \|_{L^2_x(\R^2)} + \mu^{2-\eps} c(s) s^{-1} \chi_{k \geq k(s)}^{\delta_2/10} \chi_{k \leq k(s)}^j 
\end{equation}
and
\begin{equation}\label{psis-iter-diff} \| P_k \delta \psi_s(s) \|_{S_{k}(I \times \R^2)} \lesssim_{j,\eps} \| \partial_{t,x} P_k \delta \psi_s(s,t_0) \|_{L^2_x(\R^2)} + \mu^{1-\eps} \delta c(s) s^{-1} \chi_{k \geq k(s)}^{\delta_2/10} \chi_{k \leq k(s)}^j 
\end{equation}
for all $j \geq 0$, $\eps > 0$, $s > 0$, and integers $k$.
\end{lemma}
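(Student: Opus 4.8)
The plan is to treat $\psi_s$, at a fixed heat-time $s$ and viewed as a function of $(t,x)$, as a solution of the wave equation \eqref{psis-box}, and to estimate it one Littlewood-Paley piece at a time by means of the energy estimate \eqref{energy-est}. Fixing $s>0$ and $k\in\Z$, the function $P_k\psi_s^*(s)$ has $x$-Fourier support in $\{|\xi|\sim 2^k\}$ and $P_k$ commutes with $\Box$, so \eqref{energy-est} gives
\[
\|P_k\psi_s^*(s)\|_{S_k(I\times\R^2)}\lesssim\|P_k\psi_s^*(s)[t_0]\|_{\dot H^1(\R^2)\times L^2(\R^2)}+\|P_k\Box\psi_s^*(s)\|_{N_k(I\times\R^2)}.
\]
The first term on the right is, by the very definition of these norms, comparable to $\|\partial_{t,x}P_k\psi_s^*(s,t_0)\|_{L^2_x(\R^2)}$ (the $\dot H^1$ component being the $\partial_x$ part and the $L^2$ component the $\partial_t$ part), which is exactly the initial-data term in \eqref{psis-iter-eq} and \eqref{psis-iter-diff}. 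Thus everything reduces to bounding $\|P_k\Box\psi_s^*(s)\|_{N_k}$.

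We expand $\Box\psi_s^*$ using \eqref{psis-box} and the standard expansion of $D^\alpha D_\alpha$ into $\Box$ plus connection terms, obtaining the schematic identity
\[
\Box\psi_s^*=\bigO(\partial_s w^*)+\bigO(A_{t,x}^*\partial_{t,x}\psi_s^*)+\bigO((\partial_{t,x}A_{t,x}^*)\psi_s^*)+\bigO((A_{t,x}^*)^2\psi_s^*)+\bigO((\psi_{t,x}^*)^2\psi_s^*).
\]
The contribution of $\bigO(\partial_s w^*)$ to $\|P_k\Box\psi_s^*(s)\|_{N_k}$ is precisely the bound \eqref{pkw-2} of Lemma \ref{pkow} (and \eqref{pkw-2-diff} in the difference case), namely $\lesssim_{j,\eps}\mu^{2-\eps}c(s)s^{-1}\chi_{k\geq k(s)}^{\delta_2/10}\chi_{k\leq k(s)}^j$, which is exactly the claimed estimate. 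It therefore remains to control the four cubic-and-higher schematic terms.

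These are handled just as the forcing term $F$ was handled in the proof of Lemma \ref{force}: using \eqref{psi-eq} and \eqref{a-eq} to perform dynamic separation, i.e.\ to re-express the factors of $A_{t,x}$ and $\psi_{t,x}$ (and the implicit contractions $\psi_\alpha\cdots\psi^\alpha$) in terms of $\psi_s$ modulo still higher-order expressions, one reduces the leading cubic contributions to trilinear null forms of the type $\phi^{(1)}\partial_\alpha\phi^{(2)}\partial^\alpha\phi^{(3)}$, with the derivatives falling on the low-frequency (large heat-time $s'$) factors, which is what makes the $s'$-integrals converge; to these \eqref{trilinear-improv2} applies, gaining $\mu^{2-2\eps}$ together with the needed dyadic factors. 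Here the factors of $\psi_s$ enter only through the crude $S_{\mu,k}$ estimates \eqref{psisjk} and Corollary \ref{infi-star}, so there is no circularity. The genuinely higher-order remainders are controlled by the Strichartz estimates of Lemma \ref{strichlem}, where one has several powers of $\mu$ to spare, since each factor placed in an $L^q_tL^\infty_x$ norm with $q$ finite yields $\mu^{(5-\eps)/q}$ and the quadratic $A_{t,x}$ factors enjoy the doubled exponent range in \eqref{strich5}; the high-frequency gain $\chi_{k\geq k(s)}^{\delta_2/10}$ is extracted by a sliver of Bernstein's inequality as in Lemma \ref{force}, while the low-frequency decay $\chi_{k\leq k(s)}^j$ reflects the concentration of all the fields at frequencies $\sim s^{-1/2}$. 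The individual factors and the $s'$-integrals are controlled via \eqref{parab-1}--\eqref{parab-6}, Corollary \ref{infi-star}, and \eqref{sm}, \eqref{envelo}. For \eqref{psis-iter-diff} one applies \eqref{disc-leib-eq} throughout, so that exactly one factor in each term carries a $\delta$, replaces \eqref{pkw-2} by \eqref{pkw-2-diff}, \eqref{trilinear-improv2} by \eqref{trilinear-improv} (which only gains $\mu^{1-\eps}$), and the Strichartz bounds by their difference versions; a stray $\delta c(s')$ is converted back to $\delta c(s)$ at the harmless cost of a factor $(s'/s)^{\pm\delta_0}$ via \eqref{sm}, and the net loss of one power of $\mu$ accounts for the exponent $\mu^{1-\eps}$.

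The main obstacle is purely a matter of bookkeeping, and is the same one met in Lemmas \ref{force} and \ref{pkow}: organising the dynamic separation of the many schematic sub-terms and distributing the H\"older and Strichartz exponents so that each piece lands in $N_k$, either directly via the second product estimate \eqref{second-prod} or through $L^1_tL^2_x\hookrightarrow N_k$ from \eqref{fl1l2}, with no dependence on the length of $I$, while simultaneously extracting the full power $\mu^{2-\eps}$ (respectively $\mu^{1-\eps}$) and producing $c(s)$ (respectively $\delta c(s)$) and the cutoffs $\chi_{k\geq k(s)}^{\delta_2/10}\chi_{k\leq k(s)}^j$ with the exponents forced by dimensional analysis. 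Since the structure is identical to that of Lemma \ref{force}, these routine but tedious details may be left to the reader.
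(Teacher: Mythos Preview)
Your approach is essentially the paper's own: apply the energy estimate \eqref{energy-est} to $P_k\psi_s^*(s)$, handle $\partial_s w^*$ via Lemma \ref{pkow}, and treat the remaining connection terms by dynamic separation into null forms (estimated by \eqref{trilinear-improv2}) plus higher-order remainders (estimated by Strichartz).

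There is one point you gloss over which is \emph{not} quite ``just as in Lemma \ref{force}'': the term $G_2=(\partial^\alpha A_\alpha^*)\psi_s^*$ has both spacetime derivatives falling on the same factor, and after applying \eqref{a-eq} and the Leibniz rule one is led to a piece involving $\partial^\alpha\psi_\alpha^*$. This is not a null form. The paper handles it by writing $\partial^\alpha\psi_\alpha^* = w^* + \bigO((A^*)^\alpha\psi_\alpha^*)$ from \eqref{w-def}, producing an ``exceptional'' term
\[
H=\int_s^\infty \psi_s^*(s)\,w^*(s')\,\psi_s^*(s')\,ds'
\]
which is cubic with no null structure. This term is controlled by \eqref{second-prod} together with the $N_k$ bound \eqref{pkw-1} on $w^*$ itself (not \eqref{pkw-2} on $\partial_s w^*$, which you already used). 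Your list of ingredients omits \eqref{pkw-1}, and the analogy with Lemma \ref{force} breaks down precisely here, since $F$ never generates such a term. Once you add this one extra step, the argument goes through as you describe.
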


\begin{remark} The estimate \eqref{psis-iter-eq} will not be directly needed for the stability estimate \eqref{apriori3}, but is useful for the persistence of regularity estimate \eqref{apriori2} in the next section.
\end{remark}

\begin{proof}  We begin by proving \eqref{psis-iter-eq}.
Let us fix $j,k,s,\eps$, and suppress dependence of implied constants on $j,\eps$.  
From \eqref{psis-box} we have the schematic wave equation
$$ \Box \psi^*_s(s) = \bigO( G_1 ) + \bigO( G_2 ) + \bigO( G_3 ) + \bigO( G_4 ) + \bigO( G_5 )$$
where
\begin{align*}
G_1 &:= A^*_\alpha(s) \partial^\alpha \psi^*_s(s)  \\
G_2 &:=(\partial^\alpha A^*_\alpha(s)) \psi^*_s(s) \\
G_3 &:= (A^*)^\alpha(s) A^*_\alpha(s) \psi^*_s(s) \\
G_4 &:= (\psi^*)^\alpha(s) \psi^*_\alpha(s) \psi^*_s(s) \\
G_5 &:= \partial_s w^*(s).
\end{align*}
Applying \eqref{energy-est}, we have
$$ \| P_k \psi^*_s(s) \|_{S_k(I \times \R^2)} \lesssim \| \partial_{t,x} P_k \psi^*_s(s,t_0) \|_{L^2_x(\R^2)} + \sup_{1 \leq i \leq 5} \| P_{k} G_i \|_{N_{k'}(I \times \R^2)}.$$
It thus suffices to show that
\begin{equation}\label{pkgi-targ}
\| \partial_x^j P_{k} G_i \|_{N_{k}(I \times \R^2)} \lesssim \mu^{2-\eps} c(s) s^{-(j+2)/2} \chi_{k \geq k(s)}^{\delta_2/10} 
\end{equation}
for $i=1,2,3,4,5$.  

The term $G_5$ can be treated by Lemma \ref{pkow}.  To handle the other terms $G_1,G_2,G_3,G_4$, it will be necessary to use dynamic separation to make the null structure in these expressions more explicit, creating cubic and quintic expressions with explicit null structure, together with sextic and higher order terms which can be handled by Strichartz estimates.  Unfortunately, this process creates a large number\footnote{With the decomposition we use, there are 28 (!) terms, though symmetry allows one to identify a few of these terms together.  One could cut down the number of terms somewhat by establishing a certain quintilinear estimate from the function spaces $S_k, S_{*,k}$ to $N_k$, as remarked earlier, thus relieving the need to deal with septilinear terms, but this does not seem to result in a net gain in simplicity for the paper and we have not pursued this approach.} of terms to estimate, and so it will be convenient to introduce some notation to partially unify the treatment of these terms.

Given any integers $J \geq 0$ and $\tilde J, J' \geq 1$ and formal symbols $D, D' \in \{\partial, A^*\}$, we let $G^{D,D'}_{\tilde J, J,J'}$ denote the schematic quantity
\begin{equation}\label{gdd}
G^{D,D'}_{\tilde J,J,J'} := \int_\ast \psi^*_s(\tilde s_1) \ldots \psi^*_s(\tilde s_{\tilde J}) \psi^*_s(s_1) \ldots \psi^*_s(s_{J-1}) D_\alpha \psi_s(s_J) \psi^*_s(s'_1) \ldots \psi^*_s(s'_{J'-1}) (D')^\alpha \psi^*_s(s'_{J'})
\end{equation}
where $\tilde s_1 := s$ and the $\ast$ subscript denotes an integration of the variables $\tilde s_2,\ldots,\tilde s_{\tilde J} s_1,\ldots,s_J,s'_1,\ldots,s'_{J'}$ over the region
$$ s = \tilde s_1 \leq \ldots \leq \tilde s_{\tilde J} \leq s_1 \leq \ldots \leq s_J; \quad \tilde s_{\tilde J} \leq s'_1 \leq \ldots \leq s'_{J'}$$
and we adopt the convention
$$
\psi^*_s(\tilde s_{\tilde J}) \psi^*_s(s_1) \ldots \psi^*_s(s_{J-1}) D_\alpha \psi^*_s(s_{J}) := D_\alpha \psi^*_s(\tilde s_{\tilde J})
$$
if $J'=0$.  Thus for instance
\begin{align*}
G^{\partial,\partial}_{1,0,2} &= \int_{s'_2 \geq s'_1 \geq s} \partial_\alpha \psi^*_s(s) \psi^*_s(s'_1) \partial^\alpha \psi^*_s(s'_2)\ ds'_1 ds'_2 \\
G^{A^*,\partial}_{1,1,1} &= \int_{s_1, s'_1 \geq s} \psi^*_s(s) A^*_\alpha(s_1) \psi^*_s(s_1) \partial^\alpha \psi^*_s(s'_1)\ ds_1 ds'_1 \\
G^{A^*,A^*}_{2,0,2} &= \int_{s'_2 \geq s'_1 \geq \tilde s_2 \geq s} \psi^*_s(s) A^*_\alpha(\tilde s_2) \psi^*_s(\tilde s_2) \psi^*_s(s'_1) (A^*)^\alpha(s'_2) \psi^*_s(s'_2)\ d\tilde s_2 ds'_1 ds'_2
\end{align*}
and so forth.  As it will turn out, the $G^{\partial,\partial}_{\tilde J, J,J'}$ expressions will be handled using the trilinear null form estimates \eqref{trilinear-improv}, \eqref{trilinear-improv2}, whereas the other expressions can be handled using Strichartz estimates as soon as $\tilde J + J+J' \geq 5$\footnote{The terms $G^{\partial,\partial}_{\tilde J,J,J'}$ with $\tilde J + J+J' \geq 7$ or $G^{A^*,A^*}_{\tilde J, J, J'}$ with $\tilde J + J+J' \geq 3$ could also be handled by Strichartz estimates, but we will not need to use this fact.}.

Repeated use of \eqref{psi-eq}, \eqref{a-eq} gives the decompositions
\begin{align*}
G_1 &= \bigO( G^{\partial,\partial}_{1,0,2} ) + \bigO( G^{\partial,\partial}_{1,0,4} ) + \bigO( G^{\partial,A^*}_{1,0,4} ) \\
G_3 &:= \bigO( G^{\partial,\partial}_{1,2,2} ) + \bigO( G^{\partial,A^*}_{1,2,2} ) + \bigO( G^{A^*,\partial}_{1,2,2} ) + \bigO( G^{A^*,A^*}_{1,2,2} ) \\
G_4 &:= \bigO( G^{\partial,\partial}_{1,1,1} ) + \sum_{(i,j) = (1,3),(3,1),(3,3)} \bigO( G^{\partial,\partial}_{1,i,j} ) + \bigO( G^{\partial,A^*}_{1,i,j} ) + \bigO( G^{A^*,\partial}_{1,i,j} ) + \bigO( G^{A^*,A^*}_{1,i,j} ).
\end{align*}
The $G_2$ term is handled slightly differently, as both of the null form derivatives are falling on the same term (cf. \cite[Step 2(d)]{tao:wavemap2}).  From \eqref{a-eq} and the Leibniz rule \eqref{leibnitz} we have
$$ G_2 = \int_s^{\infty} \bigO( \psi^*_s(s) (\partial^\alpha \psi^*_\alpha(s')) \psi^*_s(s') ) + \bigO( \psi^*_s(s) \psi^*_\alpha(s') \partial^\alpha \psi^*_s(s') )\ ds'.$$
The second term can be expanded using \eqref{psi-eq}, \eqref{a-eq} to take the form
$$ \bigO( G^{\partial,\partial}_{2,0,1} ) + \bigO( G^{\partial,\partial}_{2,0,3} ) + \bigO( G^{\partial,A^*}_{2,0,3} ).$$
For the first term, we can use the wave-tension field \eqref{w-def} to write $\partial^\alpha \psi^*_\alpha = w^* + \bigO( (A^*)^\alpha \psi^*_\alpha )$.  Expanding the latter out using \eqref{psi-eq}, \eqref{a-eq}, we thus see that we can write this contribution to $G_2$ as
$$ \bigO( H ) + \bigO( G^{\partial,\partial}_{2,1,2} ) + \bigO( G^{A^*,\partial}_{2,1,2} ) + \bigO( G^{\partial,A^*}_{2,1,2} ) + \bigO( G^{A^*,A^*}_{2,1,2} )$$
where
\begin{equation}\label{Hdef}
 H := \int_s^{\infty} \psi^*_s(s) w^*(s') \psi^*_s(s')\ ds'.
\end{equation}

Putting this all together, we see that to prove \eqref{pkgi-targ}, it suffices to show the \emph{null form estimate}
\begin{equation}\label{nullform-est}
\| \partial_x^j P_k G^{\partial,\partial}_{\tilde J, J, J'} \|_{N_k(I \times \R^2)} \lesssim_{\tilde J, J, J,\eps} \mu^{2-\eps} c(s) s^{-(j+2)/2} \chi_{k \geq k(s)}^{\delta_2/10} 
\end{equation}
for $\tilde J, J' \geq 1$ and $J \geq 0$ with $\tilde J+J+J' \geq 3$, the \emph{Strichartz estimate}
\begin{equation}\label{strich-est}
\| \partial_x^j P_k G^{D,D'}_{\tilde J, J, J'} \|_{N_k(I \times \R^2)} \lesssim_{\tilde J, J, J,\eps} \mu^{2-\eps} c(s) s^{-(j+2)/2} \chi_{k \geq k(s)}^{\delta_2/10} 
\end{equation}
for $(D,D') = (\partial,A^*),(A^*,\partial),(A^*,A^*)$, $\tilde J, J' \geq 1$ and $J \geq 0$ with $\tilde J + J + J' \geq 5$, and the \emph{exceptional estimate}
\begin{equation}\label{excep}
 \| \partial_x^j P_k H \|_{N_k(I \times \R^2)} \lesssim_\eps \mu^{2-\eps} c(s) s^{-(j+2)/2} \chi_{k \geq k(s)}^{\delta_2/10}.
\end{equation}

We begin with \eqref{excep}.  By \eqref{Hdef}, the Leibniz rule \eqref{leibnitz}, and Minkowski's inequality, we can bound the left-hand side of \eqref{excep} by
$$ \lesssim \sum_{j_1,j_2,j_3 \geq 0: j_1+j_2+j_3=j} \int_s^\infty \| P_k( \partial_x^{j_1} \psi^*_s(s) \partial_x^{j_2} w^*(s') \partial_x^{j_3} \psi^*_s(s') \|_{N_k(I \times \R^2)} ds'.$$
Applying \eqref{second-prod} and \eqref{prod1}, we can bound this by
$$ \lesssim \sum_{j_1,j_2,j_3 \geq 0: j_1+j_2+j_3=j} \int_s^\infty \sum_{k_2} \chi_{k \geq k_2}^{\delta_2} 
\chi_{k=\max(k(s),k_2)}^{\delta_2} \| \partial_x^{j_1} \psi^*_s(s) \|_{S_{k(s)}(I \times \R^2)} \| \partial_x^{j_2} P_{k_2} w^*(s') \|_{N_{k_2}(I \times \R^2)} \| \partial_x^{j_3} \psi^*_s(s') \|_{S_{k(s')}(I \times \R^2)}\ ds'.$$
Applying \eqref{pkw-1}, \eqref{parab-3}, we can bound this
$$ \lesssim \mu^{2-\eps} s^{-(j+2)/2} \int_s^\infty \sum_{k_2} \chi_{k \geq k_2}^{\delta_2} \chi_{k=\max(k(s),k_2)}^{\delta_2} c(s) c(s')^2 \chi_{k_2 = k(s')}^{\delta_2/10} \frac{ds'}{s'}.$$
Performing the $k_2$ summation and then using \eqref{envelo}, we obtain the claim \eqref{excep}.

We remark that the hardest case here was $j=0$; the case of higher $j$ are at least as easy as those of $j=0$, as each derivative will eventually pull out a factor of $(s')^{-1/2}$ for some $s' \geq s$, which is acceptable.  Hence, to simplify the notation somewhat, when discussing the other two estimates \eqref{nullform-est}, \eqref{strich-est} we shall only discuss the $j=0$ case.

We begin with the $j=0$ case of \eqref{strich-est}.  We fix $\tilde J,J,J'$ and allow implied constants to depend on these quantities.  By \eqref{fl1l2} and Bernstein's inequality, it suffices to show that
$$ \| G^{D,D'}_{\tilde J, J, J'} \|_{L^1_t L^1_x(I \times \R^2)} \lesssim \mu^{2-\eps} c(s)$$
for $(D,D') = (\partial,A^*), (A^*,\partial), (A^*,A^*)$.

Suppose first that $(D,D')=(\partial,A^*)$ or $(D,D')=(A^*,\partial)$.  Then we see that the integrand in \eqref{gdd} consists of $J+J'+J''+1 \geq 6$ factors, most of which are of $\psi_s$ type, but with one factor of type $\partial_{t,x} \psi^*_s$ and one of type $A^*_{t,x}$.  We use H\"older's inequality and place $A^*_{t,x}$ in $L^{5/2}_t L^\infty$, the $\psi^*_s$ or $\partial_{t,x} \psi^*_s$ factor with the lowest value of $s$ in $L^\infty_t L^2_x$, the next three lowest in $L^4_t L^\infty_x$, and the remaining factors in $L^\infty_t L^\infty_x$. Using Lemma \ref{strichlem} and \eqref{envelo}, we can bound the $L^1_t L^1_x(I \times \R^2)$ norm of the integrand of \eqref{gdd} by
$$ \lesssim \mu^{2-\eps} c(s) (s/s_{\max})^\sigma \frac{1}{\tilde s_2} \ldots \frac{1}{\tilde s_{\tilde J}}
\frac{1}{s_1} \ldots \frac{1}{s_J} \frac{1}{s'_1} \ldots \frac{1}{s'_{J'}}$$
for some absolute constant $\sigma>0$, where $s_{\max} := \max( \tilde s_1,\ldots,\tilde s_{\tilde J}, s_1,\ldots,s_J,s'_1,\ldots,s'_{J'})$; the point here is that the $\partial_{t,x}$ and $A^*_{t,x}$ terms are at high values of $s$, leading to the crucial $(s/s_{\max})^\sigma$ gain.  Integrating this using Minkowski's inequality we obtain the claim \eqref{strich-est}.  The case $(D,D')=(A^*,A^*)$ is similar; the $\partial_{t,x}$ term has been replaced by an additional $A^*_{t,x}$ term, but this can be estimated in $L^\infty_t L^\infty_x$ using Lemma \ref{strichlem} to yield the same type of estimates as before.

Finally, we show the $j=0$ case of \eqref{nullform-est}.  We again fix $\tilde J,J,J'$ and allow implied constants to depend on these quantities.  

Let us first handle the case $(\tilde J,J) \neq (1,0)$.  Here, the integrand in \eqref{gdd} takes the form
\begin{equation}\label{Phiphi}
 \Phi \partial^\alpha \psi^*_s(s_J) \partial_\alpha \psi^*_s(s'_{J'}) 
 \end{equation}
where $\Phi$ is the product of many factors of $\psi^*_s$ evaluated at various $s' \geq s$, including one factor of $\psi^*_s$ itself.  By \eqref{parab-3}, \eqref{sm}, \eqref{envelo}, and many applications of \eqref{prod1}, we see that
$$ \| \Phi \|_{S_{k(s)}(I \times \R^2)} \lesssim c(s) s^{-1} \frac{1}{\tilde s_2} \ldots \frac{1}{\tilde s_{\tilde J}}
\frac{1}{s_1} \ldots \frac{1}{s_{J-1}} \frac{1}{s'_1} \ldots \frac{1}{s'_{J'-1}}.$$
Applying \eqref{trilinear-improv2} and Corollary \ref{infi-star}, \eqref{envelo} we thus conclude that the $N_k(I \times \R^2)$ norm of $P_k$ applied to \eqref{Phiphi} is bounded by
$$ 
\lesssim \chi_{k=k(s)}^{\delta_2} (s/s_{\max})^\sigma \mu^{2-\eps}
c(s) s^{-1} \frac{1}{\tilde s_2} \ldots \frac{1}{\tilde s_{\tilde J}}
\frac{1}{s_1} \ldots \frac{1}{s_{J}} \frac{1}{s'_1} \ldots \frac{1}{s'_{J'}}$$
for some $\sigma > 0$ (depending on $\delta_1$); integrating this we obtain the claim \eqref{nullform-est}.

The case when $(\tilde J,J)=(1,0)$ (so $J' \geq 2$) is slightly more delicate, due to the derivative falling on the roughest term.  Here, the integrand in \eqref{gdd} takes the form
\begin{equation}\label{Phiphi2}
\partial^\alpha \psi^*_s(s) \Phi \partial_\alpha \psi^*_s(s'_{J'})
\end{equation}
where $\Phi := \psi^*_s(s'_1) \ldots \psi^*_s(s'_{J'-1})$.  By arguing as before, we have
$$ \| \Phi \|_{S_{k(s'_1)}(I \times \R^2)} \lesssim c(s'_1) \frac{1}{s'_1} \ldots \frac{1}{s'_{J'-1}}.$$
Applying \eqref{trilinear-improv2} and Corollary \ref{infi-star} (but now refraining from using \eqref{envelo} to eliminate some key $c()$ factors), the $N_k(I \times \R^2)$ norm of $P_k$ applied to \eqref{Phiphi2} is bounded by
$$ \lesssim \chi_{k=k(s)}^{\delta_1/2} (s'_1/s'_{J'})^{\delta_1/4} \mu^{2-\eps}
c(s) c(s'_1) c(s'_{J'}) s^{-1} \frac{1}{s'_1} \ldots \frac{1}{s'_{J'}}.$$
We can use \eqref{sm} to bound $(s'_1/s'_{J'})^{\delta_1/4} c(s'_1) c(s'_{J'})$ by $c(s'_1)^2 (s'_1/s'_{J'})^{\delta_1/4}$.  Integrating out $s'_{J'}$, then $s'_{J'-1}$, and so forth down to $s'_1$ (using \eqref{envelo} to control the final integral) we obtain the claim \eqref{nullform-est}. 

The proof of \eqref{psis-iter-diff} follows by applying \eqref{disc-leib-eq} to all the expressions above and repeating the argument in the obvious manner; we omit the details.
\end{proof}

Now, at long last, we are ready to prove \eqref{envelo-targ}.  Fix $0 \leq j \leq 10$ and $s > 0$. Applying Lemma \ref{psis-lem} (with $j=11$, say), Littlewood-Paley decomposition, and the triangle inequality, we have
$$
s^{1+\frac{j}{2}} \| \partial_x^j \delta \psi_s(s) \|_{S_{k(s)}(I \times \R^2)}
\lesssim_\eps \sum_{k'} \chi_{k' \leq k(s)}^{-10} s \|  P_{k'} \partial_{t,x} \psi_s(s,t_0) \|_{L^2_x(\R^2)}
+ \mu^{1-\eps} \delta c(s).
$$
Applying Lemma \ref{psioc-lemma} we conclude
$$
s^{1+\frac{j}{2}} \| \partial_x^j \delta \psi_s(s) \|_{S_{k(s)}(I \times \R^2)}
\lesssim \delta c_0(s) + \mu^{1-\eps} \delta c(s)
$$
for some frequency envelope $\delta c_0$ of energy $O( d_\Energy( \Psi_{s,t,x}, \Psi'_{s,t,x} )^2 )$.

Applying \eqref{csdef-delta}, \eqref{dmc} we conclude
$$ \delta c(s) \lesssim \delta c_0(s) + \mu^{1-\eps} \delta c(s).$$
If $\mu$ is sufficiently small, we thus conclude
$$ \delta c(s) \lesssim \delta c_0(s)$$
and \eqref{envelo-targ} follows from the energy bound on $\delta c_0$.
The proof of \eqref{apriori2a} is now (finally!) complete.

\subsection{Return to the energy metric}

We now establish part (iv) of Theorem \ref{apriori-thm2}, in which we control the energy metric by the $S^1_\mu$ metric.

Fix $I, \mu, M, E$; we let all implied constants depend on $M, E$.  Suppose $\Psi_{s,t,x}, \Psi'_{s,t,x} \in \WMC(I,E)$ is such that $\|\Psi_{s,t,x}\|_{S^1_\mu(I)}, \|\Psi'_{s,t,x}\|_{S^1_\mu(I)} \leq M$.  Let us write
$$ \sigma := d_{S^1_\mu,I}(\Psi_{s,t,x},\Psi'_{s,t,x})$$
and let $t \in I$ be fixed; we omit the explicit mention of this parameter.  By \eqref{psipsi}, our task is to show that
$$ \int_0^\infty \| \delta \psi_s(s) \|_{L^2_x(\R^2)}^2\ ds \lesssim \sigma^2$$
and
$$ \| \delta \psi_t(0) \|_{L^2_x(\R^2)} \lesssim \sigma.$$

Let $c, \delta c$ be as in the preceding section, then $\delta c$ has energy $O( \sigma^2 )$ by construction, while $c$ has energy $O(1)$.
From  Lemma \ref{strichlem} (which does not require a smallness hypothesis on $\mu$), we have the bounds
\begin{align*}
\| \partial_x^j \psi^*_s(s) \|_{L^2_x(\R^2)} &\lesssim_j c(s) s^{-(j+1)/2} \\
\| \partial_x^j \psi^*_s(s) \|_{L^\infty_x(\R^2)} &\lesssim_j c(s) s^{-(j+2)/2} \\
\| \partial_x^{j+1} \psi^*_{t,x}(s) \|_{L^2_x(\R^2)} &\lesssim_j c(s) s^{-(j+1)/2} \\
\| \partial_x^j \psi^*_{t,x}(s) \|_{L^\infty_x(\R^2)} &\lesssim_j c(s) s^{-(j+1)/2} 
\end{align*}
for all $j \geq 0$, and similarly for $\delta \psi_s$ and $\delta \psi_{t,x}$ with $c$ replaced by $\delta c$.  Also, from \cite[Lemma 7.2]{tao:heatwave2} we have
$$ \| \psi^*_{t,x}(s) \|_{L^2_x(\R^2)} \lesssim 1.$$
Repeating the arguments used to prove Lemma \ref{psioc-lemma-2}, we see that
\begin{equation}\label{kpsis-diff-2}
 \|P_k \delta \psi_s(s,t_0)\|_{L^2_x(\R^2)} \lesssim_j \delta c_0(s) s^{-1/2} \chi_{k \leq k(s)}^j \chi_{k=k(s)}^{0.1}
 \end{equation}
and
\begin{equation}\label{kpsit-diff-2}
 \|P_k \delta \psi_t(0,t_0)\|_{L^2_x(\R^2)} \lesssim \delta c_0(2^{-2k}),
\end{equation}
for all $s > 0$, $j \geq 0$, and $k \in \Z$, and the claim follows.

\subsection{Persistence of regularity}

We now prove \eqref{apriori3a}.  We allow all implied constants to depend on $M$ and $\| \phi[t_0] \|_{{\mathcal H}^{100}}$, in particular we have $\| \phi \|_{S^1_\mu(I \times \R^2)} \lesssim 1$ and hence $\E(\phi) \lesssim 1$.  
Our task is now to show that if $\mu$ is sufficiently small, then
\begin{equation}\label{phit-smo} \| \phi[t] \|_{{\mathcal H}^{1+\delta_0/2}_\loc} \lesssim_\mu 1.
\end{equation}
for all $t \in I$.

The first step is to show that some of the ${\mathcal H}^{100}$ regularity persists under the heat flow.  Let $\Psi_{s,t,x} \in \WMC(\phi,I)$ be one of the differentiated fields for $\phi$ (the exact choice does not matter).

\begin{lemma}[Smooth control of $\psi_s$]\label{psis-smooth}  For any $0 < s \lesssim 1$ we have
$$
\| \psi_s(s,t_0) \|_{H^{90}_x(\R^2)} + \| \partial_{t,x} \psi_s(s,t_0) \|_{H^{90}_x(\R^2)}
\lesssim 1.$$
\end{lemma}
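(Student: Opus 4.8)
\textbf{Proof plan for Lemma \ref{psis-smooth}.}

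The strategy is to propagate the high-regularity control available at $s=0$ from the initial data $\phi[t_0]$ forward along the heat flow, using the parabolic equation \eqref{psis-eq} for $\psi_s$ together with standard parabolic smoothing estimates. Since we are allowed to let implied constants depend on $M$ and on $\|\phi[t_0]\|_{{\mathcal H}^{100}_{\loc}}$, and since $\E(\phi) \lesssim 1$, we have at our disposal both the energy-class estimates of Theorem \ref{envelop} (applied with $E \sim 1$) and the crude qualitative bounds of Proposition \ref{qual-prop}. The new ingredient we need is that a \emph{quantitative} amount of high-order Sobolev regularity survives at positive $s$; this will come from the ${\mathcal H}^{100}_{\loc}$ hypothesis.

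First I would record the starting point at $s=0$: from $\phi[t_0] \in {\mathcal H}^{100}_{\loc}$ (with the given norm bound) and the construction of the differentiated fields in $\SC(\phi[t_0])$ (via \cite[Theorem 3.12]{tao:heatwave2}, \cite[Lemma 4.8]{tao:heatwave2}), one gets, after localising, a bound of the shape $\|\psi_{t,x}(0,t_0)\|_{H^{95}_x(\R^2)} \lesssim 1$ and hence, via \eqref{inter4} and \eqref{heatflow}, $\|\psi_s(0,t_0)\|_{H^{94}_x(\R^2)} \lesssim 1$ (the precise exponent loss is unimportant, as long as we keep well above $90$; note the ${\mathcal H}^s_{\loc}$ norms are Lorentz-invariant, which matches the gauge freedom). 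Strictly speaking $A_x$ need not be Schwartz, so these bounds should be read, as in the remark after Theorem \ref{parab-thm}, in the completion of Schwartz space under the relevant norm; this causes no difficulty. Next, I would run the parabolic equation \eqref{psis-eq}, written schematically as in \eqref{psis-eq-star},
$$ \partial_s \psi^*_s = \Delta \psi^*_s + \bigO( \Psi^*_x \partial_x \psi^*_s ) + \bigO( \psi^*_s \partial_x \Psi^*_x ) + \bigO( (\Psi^*_x)^2 \psi^*_s ),$$
and use Duhamel's formula \eqref{duh} together with the parabolic regularity estimate \eqref{heat-lp} and the energy-class control on $\Psi^*_x$ from Theorem \ref{envelop} (i.e. $\|\partial_x^j \Psi^*_x(s,t_0)\|_{L^\infty_x} \lesssim_j c_0(s) s^{-(j+1)/2}$ and $\|\partial_x^j \partial_{t,x}\Psi^*_x(s,t_0)\|_{L^2_x} \lesssim_j c_0(s) s^{-(j+1)/2}$, with $c_0 = O(1)$) to close a Gronwall-type estimate in $s$ for $\sum_{j=0}^{95} \|\partial_x^j \psi_s(s,t_0)\|_{L^2_x}$ on the bounded range $0 < s \lesssim 1$. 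The point is that each nonlinear term has at most one derivative on a high-frequency factor and the resulting singular weights $s^{-1/2}$, $s^{-1}$ are integrable (or only logarithmically divergent, which Gronwall absorbs) on a bounded $s$-interval; this is exactly the same bookkeeping as in Section \ref{parab-sec} but now tracking an honest Sobolev norm rather than a frequency-envelope bound. This yields $\|\psi_s(s,t_0)\|_{H^{90}_x(\R^2)} \lesssim 1$ for $0 < s \lesssim 1$.

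For the time derivative, I would differentiate the same picture: $\partial_{t,x}\psi_s$ satisfies a linear parabolic equation obtained by applying $\partial_{t,x}$ to \eqref{psis-eq-star} (with forcing terms that are products of $\Psi^*_x$, $\partial_{t,x}\Psi^*_x$, $\psi_s$, $\partial_{t,x}\psi_s$, all now controlled either by the previous step or by Theorem \ref{envelop}). The initial condition $\partial_{t,x}\psi_s(0,t_0)$ in $H^{90}_x$ comes from the same localised elliptic/Hodge-type analysis of the data at $s=0$ (using \eqref{inter2}, \eqref{inter3} to trade the $t$-derivative for $s$- and $x$-derivatives of $\Psi_x$, and the ${\mathcal H}^{100}_{\loc}$ hypothesis). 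Running Duhamel and Gronwall once more on $0 < s \lesssim 1$ gives $\|\partial_{t,x}\psi_s(s,t_0)\|_{H^{90}_x(\R^2)} \lesssim 1$, completing the proof.

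\textbf{Main obstacle.} The genuinely delicate point is \emph{not} the forward parabolic propagation — that is routine Duhamel/Gronwall on a bounded $s$-range — but rather producing the \emph{quantitative} high-Sobolev bound at $s=0$ from the hypothesis $\|\phi[t_0]\|_{{\mathcal H}^{100}_{\loc}} \lesssim 1$, since ${\mathcal H}^{100}_{\loc}$ is only a \emph{local} (and Lorentz-twisted) norm, whereas $\psi_s, \psi_{t,x}$ are defined globally via the caloric gauge and the heat flow extension. One must combine the local coordinate patches (as in the proof of Theorem \ref{Class}, where neighbouring Lorentz rotations differ by bounded elements of $SO(m,1)$), the finite-speed / exponential-decay properties of the heat flow from \cite[Theorem 3.16]{tao:heatwave2}, and the gauge construction of \cite[Theorem 3.12]{tao:heatwave2}–\cite[Lemma 4.8]{tao:heatwave2}, to upgrade the local bound into a global $H^{\geq 90}_x$ bound on $\psi_s(0,t_0)$ and $\partial_{t,x}\psi_s(0,t_0)$; this gluing argument, though not conceptually hard, is where the real work lies.
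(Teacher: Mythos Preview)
There is a genuine gap in the propagation step. You propose to Gronwall the quantity $\sum_{j=0}^{95}\|\partial_x^j\psi_s(s,t_0)\|_{L^2_x}$ using only the energy-class bounds on $\Psi_x$ from Theorem~\ref{envelop}, and you assert that ``the resulting singular weights $s^{-1/2}$, $s^{-1}$ are integrable''. But this bookkeeping is not the same as in Section~\ref{parab-sec}: there the controlled quantity carries the scale-invariant weights $s^{(j+1)/2}$ (see \eqref{fdef}), which exactly cancel the $s^{-(j+1)/2}$ coming from $\|\partial_x^j\Psi_x\|_{L^\infty}$. Here you have dropped those weights (as you must, to get an honest $H^{90}$ bound), and then the Leibniz term $\partial_x^{a}\Psi_x\cdot\partial_x^{b+1}\psi_s$ with, say, $a=90$ contributes a factor $\|\partial_x^{90}\Psi_x\|_{L^\infty}\lesssim c_0(s)s^{-91/2}$, which is far from integrable on $(0,1]$. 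No placement of H\"older exponents rescues this with only scale-invariant control on $\Psi_x$: the point is that you are attempting a \emph{subcritical} estimate using only \emph{critical} information on the coefficients.

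The paper avoids this by working with covariant derivatives. It propagates the covariant energy $E_{100}(s):=\sum_{j=0}^{99}\|D_x^j\psi_{t,x}(s,t_0)\|_{L^2}^2$ using \eqref{psit-eq} and covariant integration by parts; since $A_x$ is absorbed into $D_x$, the only ``coefficients'' that appear are curvature factors $\psi_x\wedge\psi_x$, and $\psi_x$ is itself a component of $\psi_{t,x}$, so its high covariant derivatives are already inside $E_{100}$. The remaining low-order factors are estimated in $L^\infty$ by Theorem~\ref{envelop}, yielding $\partial_s E_{100}=O(c_0(s)^2 s^{-1})E_{100}$, which Gronwall closes. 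This same covariant viewpoint also dissolves the local-to-global obstacle you singled out: the magnitudes $|D_x^j\psi_{t,x}|=|(\phi^*\nabla_x)^j\partial_{t,x}\phi|_{\phi^*h}$ are invariant under the Lorentz rotations $U_{x_0}$, so the local ${\mathcal H}^{100}$ bounds translate directly into global $L^2$ bounds on $D_x^j\psi_{t,x}(0,t_0)$ without any gluing. Only at the end does one recover $A_{t,x}$ in high Sobolev norm from \eqref{a-eq} (integrating from $s=\infty$, where everything decays) and convert covariant derivatives back to ordinary ones.
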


\begin{proof}  By hypothesis, 
$$ \| \phi[t_0] \|_{{\mathcal H}^{100}} \lesssim 1.$$
From \eqref{hkdef}, \eqref{hkdef-loc}, for each $x_0$ one can find $U_{x_0} \in SO(m,1)$ such that
$$\| \eta(\cdot-x_0) U_{x_0}(\phi_0) \|_{H^{100}_x(\R^2)} + \| \eta(\cdot-x_0) U_{x_0}(\phi_1) \|_{H^{99}_x(\R^2)} \lesssim 1$$
which by Sobolev embedding implies that $U_{x_0}(\phi_0) = O(1)$ on $B(x_0,1)$.  From this and many applications of the Leibniz rule \eqref{leibnitz}, we conclude that
\begin{align*}
&\| |(\phi^* \nabla_x)^j \partial_x \phi_0|_{\phi^* h} \|_{L^2(B(x_0,1))} +
\| |(\phi^* \nabla_x)^j \phi_1|_{\phi^* h} \|_{L^2(B(x_0,1))} \\
&\quad \lesssim
\inf_{U \in SO(m,1)} \| \eta(\cdot-x_0) U(\phi_0) \|_{H^s_x(\R^2)} + \| \eta(\cdot-x_0) U(\phi_1) \|_{H^{s-1}_x(\R^2)}
\end{align*}
for all $x_0 \in \R^2$ and $0 \leq j \leq 99$.  Integrating this over $x_0$ and using \eqref{hkdef-loc}, we conclude that
$$ \| |(\phi^* \nabla_x)^j \partial_x \phi_0|_{\phi^* h} \|_{L^2(\R^2)} +
\| |(\phi^* \nabla_x)^j \phi_1|_{\phi^* h} \|_{L^2(\R^2)} \lesssim 1$$
and thus in the caloric gauge
$$ \| D_x^j \psi_{t,x}(0,t_0) \|_{L^2(\R^2)} \lesssim 1$$
for $0 \leq j \leq 99$.

For $s \geq 0$, define
$$ E_{100}(s) := \sum_{j=0}^{99} \int_{\R^2} |D_x^j \psi_{t,x}(s,t_0)|^2\ dx$$
thus $E_{100}(0) = O(1)$.  On the other hand, from Theorem \ref{envelop} we can find an envelope $c_0$ of energy $O(1)$ such that $D_x^j \psi_{t,x}(s,t_0) = O( c_0(s) s^{-(j+1)/2} )$ for all $0 \leq j \leq 100$.  Using this, \eqref{psit-eq}, and many applications of covariant integration by parts, one obtains the differential inequality
$$ \partial_s E_{100}(s) = O( c_0(s)^2 s^{-1} E_{100}(s) )$$
which by Gronwall's inequality and the fact that $c_0$ has energy $O(1)$ implies that $E_{100}(s) = O(1)$ for all $s$, thus
$$ \| D_x^j \psi_{t,x}(s,t_0) \|_{L^2_x(\R^2)} \lesssim 1$$
for all $0 \leq j \leq 99$ and $s > 0$.  Using the covariant Gagliardo-Nirenberg inequality
$$ \| \varphi \|_{L^\infty_x(\R^2)} \lesssim \| \varphi \|_{L^2_x(\R^2)}^{1/3}
\| D_x \varphi \|_{L^2_x(\R^2)}^{1/3} \| D_x^2 \varphi \|_{L^2_x(\R^2)}^{1/3}$$
(see the proof of \cite[Lemma 4.2]{tao:heatwave2}) we conclude that
$$ \| D_x^j \psi_{t,x}(s,t_0) \|_{L^\infty_x(\R^2)} \lesssim 1$$
for all $0 \leq j \leq 97$ and $s > 0$.  Combining this with what one gets from Theorem \ref{envelop}, we conclude
$$ \| D_x^j \psi_{t,x}(s,t_0) \|_{L^\infty_x(\R^2)} \lesssim \langle s \rangle^{-(j+1)/2}$$
for all $0 \leq j \leq 97$ and $s > 0$.  In particular, by \eqref{heatflow} this implies
$$ \| D_x^j \psi_s(s,t_0) \|_{L^\infty_x(\R^2)} \lesssim \langle s \rangle^{-(j+2)/2}$$
for all $0 \leq j \leq 96$ and $s > 0$.  Similar reasoning gives
$$ \| D_x^j \psi_{t,x}(s,t_0) \|_{L^2_x(\R^2)} \lesssim \langle s \rangle^{-j/2}$$
and
$$ \| D_x^j \psi_s(s,t_0) \|_{L^2_x(\R^2)} \lesssim \langle s \rangle^{-(j+1)/2}$$
for this range.  Applying \eqref{a-eq}, the covariant Leibniz rule, and Minkowski's inequality, we conclude
$$ \| D_x^j A_{t,x}(s,t_0) \|_{L^\infty_x(\R^2)} \lesssim \langle s \rangle^{-(j+1)/2}$$
and
$$ \| D_x^j A_{t,x}(s,t_0) \|_{L^2_x(\R^2)} \lesssim \langle s \rangle^{-j/2}$$
for all $0 \leq j \leq 96$ and $s>0$, where $D_x B := \partial_x B + [A_x,B]$ for any matrix field $B$.  (The $j=0$ $L^2$ bound on $A_{t,x}$ was already established in \cite[Proposition 4.3]{tao:heatwave2}.)  Writing non-covariant derivatives in terms of covariant ones, we conclude that the above two estimates continue to hold if $D_x$ is replaced by $\partial_x$.  We can then similarly replace $D_x$ by $\partial_x$ in all previous estimates.  We conclude that
$$\| \Psi_{t,x}(s,t_0) \|_{H^{96}_x(\R^2)} + \| \psi_s(s,t_0) \|_{H^{96}_x(\R^2)} \lesssim 1$$
for all $0 \leq s \lesssim 1$.  The claim then easily follows from \eqref{heatflow} and \eqref{zerotor-frame}.
\end{proof}

Let $c(s)$ be the envelope from the proof of \eqref{apriori2a}.  We can obtain a decay estimate:

\begin{lemma}[Breaking the scaling barrier]\label{little-scale} We have
$$ c(s) \lesssim_\mu \min(1, s^{-\delta_0/2})$$
for all $s > 0$.
\end{lemma}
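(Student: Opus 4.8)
The plan is to derive a refined integral inequality for $c(s)$ at small $s$ that forces the extra polynomial decay $s^{-\delta_0/2}$, using the frequency-localised control on $\psi_s$ from Lemma \ref{psis-lem} together with the smooth initial data control from Lemma \ref{psis-smooth}. First I would recall that, by construction \eqref{csdef} and the bound $\|\Psi_{s,t,x}\|_{S^1_\mu(I)} \lesssim 1$, we have $c(s) \lesssim_\mu 1$ for all $s$, so the content of the lemma is purely the decay as $s \to \infty$. To get this, I would revisit the definition of $c$ via \eqref{csdef}, \eqref{dmc}, which expresses $c(s)$ in terms of $\sup_{2^{-2k-2}\le s' \le 2^{-2k}} (s')^{1+j/2}\|\partial_x^j \psi_s(s')\|_{S_{\mu,k}}$ summed against the envelope weight; by \eqref{sksk-star} it suffices to control the $S_k$ (rather than $S_{\mu,k}$) norms up to a $\mu^{-1}$ loss, which is harmless here since we allow constants to depend on $\mu$.

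The key step is to insert the pointwise frequency-space estimate \eqref{psis-iter-eq} from Lemma \ref{psis-lem}: for each $k$ and $s$ with $2^{-2k}\sim s$,
\begin{equation*}
\|P_k \psi^*_s(s)\|_{S_k(I\times\R^2)} \lesssim_\eps \|\partial_{t,x}P_k\psi^*_s(s,t_0)\|_{L^2_x(\R^2)} + \mu^{2-\eps}c(s)s^{-1}.
\end{equation*}
The first term on the right is the ``initial data'' contribution. Here I would use Lemma \ref{psis-smooth}, which gives $\|\psi_s(s,t_0)\|_{H^{90}_x} + \|\partial_{t,x}\psi_s(s,t_0)\|_{H^{90}_x}\lesssim 1$ for $s\lesssim 1$, together with Lemma \ref{psioc-lemma}, \ref{psioc-lemma-2} (the frequency-localisation of $\psi_s$), which show that $\|P_k\partial_{t,x}\psi_s(s,t_0)\|_{L^2_x}$ is concentrated near $k=k(s)$ with amplitude $\sim c_0(s)s^{-1}$ — but now, because the data at $s=0$ is smooth (lies in $\mathcal H^{100}$), the corresponding envelope $c_0$ actually decays: one can run the argument of Lemma \ref{psioc-lemma-2} using the $H^{90}$ bound to get $\|P_k\partial_{t,x}\psi_s(s,t_0)\|_{L^2_x} \lesssim s^{-1}\min(1, s^{-\delta_0/2})\chi_{k=k(s)}^{0.1}$ for $s\lesssim 1$, and for $s\gtrsim 1$ the decay in $s$ from the $H^{90}$ smoothness of the data at $s=0$, propagated by the heat equation \eqref{psit-eq}, already beats any power. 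Actually, more directly: the data-dependent term has energy controlled by the smooth norm, so the frequency envelope it generates has energy $O_\mu(1)$ but is supported (up to rapidly decaying tails) at $k(s) = O(1)$, which forces it to be $O_\mu(s^{-\delta_0})$ or better for large $s$ by the envelope fluctuation bound \eqref{sm} run backwards from $s\sim 1$.

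Putting these together, plug back into \eqref{csdef}, \eqref{dmc} to obtain an inequality of the schematic form
\begin{equation*}
c(s) \lesssim_\mu g(s) + \mu^{2-\eps}c(s),
\end{equation*}
where $g(s) \lesssim_\mu \min(1, s^{-\delta_0/2})$ is the smooth-data contribution, and $g$ obeys \eqref{sm}. Choosing $\mu$ small enough to absorb the $\mu^{2-\eps}c(s)$ term on the left gives $c(s)\lesssim_\mu g(s)\lesssim_\mu \min(1,s^{-\delta_0/2})$. The main obstacle I anticipate is the bookkeeping in the initial-data term: one must show that the $\mathcal H^{100}$-smoothness of $\phi[t_0]$ genuinely translates into the decay $\min(1,s^{-\delta_0/2})$ for the heat-flow-evolved quantity $\|P_{k(s)}\partial_{t,x}\psi_s(s,t_0)\|_{L^2_x}$ — this requires rerunning the parabolic regularity and frequency-localisation lemmas (Theorem \ref{envelop}, Lemmas \ref{psioc-lemma}–\ref{psioc-lemma-2}) with a decaying frequency envelope $c_0$ in place of the merely bounded one, tracking the extra $s^{-\delta_0/2}$ factor through Duhamel's formula \eqref{duh} and the heat kernel mass bounds. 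The exponent $\delta_0/2$ (rather than a full power) appears precisely because the heat flow only damps frequencies polynomially near the origin, so the best one can extract from the $\delta_0$-controlled fluctuation of the envelope is a loss of $\delta_0/2$ in the decay rate; this is consistent with the general philosophy of frequency envelopes noted after Definition \ref{freqenv}.
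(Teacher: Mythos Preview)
Your high-level strategy---insert \eqref{psis-iter-eq} from Lemma \ref{psis-lem}, bound the initial-data term using the $\mathcal H^{100}$ smoothness via Lemma \ref{psis-smooth}, then absorb the $\mu^{2-\eps}c(s)$ term for $\mu$ small---is exactly what the paper does. However, you have the direction of the improvement backwards. The statement as printed contains a sign typo: both the paper's own proof and the downstream application (where integrability of ``$s^{-\delta_0/4}s^{-1}$'' on $(0,1]$ is explicitly invoked in \eqref{f-deriv-eq}, which only makes sense with a positive exponent) show that the intended bound is $c(s)\lesssim_\mu \min(1,s^{\delta_0/2})$, i.e.\ extra decay of $c(s)$ as $s\to 0$, not as $s\to\infty$.

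Your argument for decay as $s\to\infty$ does not work. The envelope fluctuation bound \eqref{sm} only limits the growth and decay \emph{rate} of $c$; knowing $c(1)=O(1)$ does not force $c(s)=O(s^{-\delta_0})$ for large $s$, since a constant envelope is perfectly consistent with \eqref{sm}. Nor does smooth data give extra decay at low frequencies: the $H^{90}$ regularity of $\psi_s(s,t_0)$ from Lemma \ref{psis-smooth} controls high-frequency tails but says nothing beyond the energy bound at frequencies $\lesssim 1$, so for $k(s)<0$ (large $s$) the data term in \eqref{psis-iter-eq} is no better than $O(1)$.

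The paper's argument runs in the opposite regime. For $0<s\lesssim 1$ (equivalently $k(s)\geq 0$), Lemma \ref{psis-smooth} gives $\|\partial_{t,x}P_k\psi_s(s,t_0)\|_{L^2_x}\lesssim \min(2^k,2^{-90k})$, a bound concentrated near $k\sim 0$ and hence far below the natural frequency $k(s)\gg 0$. Plugging this into \eqref{psis-iter-eq} and summing, the data contribution to $s^{1+j/2}\|\partial_x^j\psi_s(s)\|_{S_{k(s)}}$ is $O(s)$, yielding $d_{\mu,k}(0,\Psi)\lesssim \mu^{-1}\langle 2^k\rangle^{-1}+\mu^{1-\eps}c(2^{-2k})$ for $k\geq 0$; for $k<0$ one keeps the trivial bound $d_{\mu,k}(0,\Psi)\lesssim c(2^{-2k})$. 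Feeding this back into the definition \eqref{csdef} and absorbing the $\mu^{1-\eps}$ term gives $c(2^{-2k})\lesssim_\mu\langle 2^k\rangle^{-\delta_0}$, i.e.\ $c(s)\lesssim_\mu s^{\delta_0/2}$ for small $s$. The exponent $\delta_0/2$ arises not from heat-kernel damping but from the $\delta_0$-slack in the envelope convolution \eqref{csdef}, which smears the sharper bound $\langle 2^k\rangle^{-1}$ down to $\langle 2^k\rangle^{-\delta_0}$.
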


\begin{proof}
Let $c(s)$ be the envelope from the proof of \eqref{apriori3}.  From Lemma \ref{psis-smooth} and 
\eqref{psis-iter-eq} we have
$$ \| P_k \psi^*_s(s) \|_{S_k(I \times \R^2)} \lesssim_{\eps}  \min( 2^k, 2^{-90k} ) + \mu^{2-\eps} c(s) s^{-1} \chi_{k \geq k(s)}^{\delta_2/10} \chi_{k \leq k(s)}^{11}
$$
for $0 < s \lesssim 1$ and $\eps > 0$.  This implies that
$$
s^{1+\frac{j}{2}} \| \partial_x^j \psi_s(s) \|_{S_k(I \times \R^2)}
\lesssim_\eps s + \mu^{2-\eps} c(s)$$
for all $0 \leq j \leq 10$, $0 \leq s \lesssim 1$, and $\eps$; by \eqref{sksk-star}, \eqref{dkd} and \eqref{sm} this implies that
$$ d_k(0,\Psi) \lesssim_\eps \mu^{-1} 2^{-k} + \mu^{1-\eps} c(2^{-2k})$$
for $k \geq 0$.  For $k < 0$ we have the trivial bound $d_k(0,\Psi) \lesssim c(2^{-2k})$ (by \eqref{csdef}); we can unify these bounds to obtain
$$ d_k(0,\Psi) \lesssim_\eps \mu^{-1} \langle 2^k \rangle^{-1} + (\mu^{1-\eps} + \langle 2^k \rangle^{-1}) c(2^{-2k})$$
for all $k$.  Inserting this into \eqref{csdef} and using \eqref{sm} we obtain
$$ c(2^{-2k}) \lesssim_\eps \mu^{-1} \langle 2^k \rangle^{-\delta_0} + (\mu^{1-\eps} + \langle 2^k \rangle^{-\delta_0}) c(2^{-2k})$$
and thus (if $\mu$ is small enough, and $\eps$ is chosen to be, say, $1/2$) we see that
$$ c(2^{-2k}) \lesssim_\mu \langle 2^k \rangle^{-\delta_0}$$
for all $k$ (note that this already follows from \eqref{envelo} if $2^{-k} \lesssim 1$). The claim follows.
\end{proof}

We now fix a time $t \in I$.
From Lemma \ref{little-scale} and Lemma \ref{strichlem} one has
$$ \| \Psi_{t,x}(s,t) \|_{L^\infty_x(\R^2)} \lesssim_\mu \min( 1, s^{-\delta_0/2} ) s^{-1/2}$$
and
$$ \| \psi_s(s,t) \|_{L^\infty_x(\R^2)} \lesssim_\mu \min( 1, s^{-\delta_0/2} ) s^{-1}$$
for all $s > 0$.  Converting this back to the original wave map $\phi$, we see that
$$ |\partial_{t,x} \phi(s,t,x)|_{\phi^* h} \lesssim_\mu \min( 1, s^{-\delta_0/2} ) s^{-1/2}$$
and
$$ |\partial_s \phi(s,t,x)|_{\phi^* h} \lesssim_\mu \min( 1, s^{-\delta_0/2} ) s^{-1}$$
for all $s > 0$ and $x \in \R^2$.  Observe that the right-hand sides here are locally integrable in $s$.  By the fundamental theorem of calculus on $\H$, we conclude that
$$ d_\H( \phi(s_1,t,x_1), \phi(s_2,t,x_2) ) \lesssim_\mu 1$$
whenever $0 < s_1,s_2 \lesssim 1$ and $x_1,x_2\in \R^2$ are such that $|x_2-x_1| \lesssim 1$, and $d_\H$ is the distance function on $\H$ induced by the metric $h$.  (In fact we even obtain some H\"older continuity estimates here, but we will not exploit this.)

From Lemma \ref{little-scale} and Lemma \ref{strichlem} we have
$$ \| \partial_x^j \psi_s \|_{L^2_x(\R^2)} \lesssim_{\mu,j} s^{-\delta_0/2} s^{-(j+1)/2}$$
for all $0 < s \leq 1$ and $j > 0$.  By interpolation this implies that
\begin{equation}\label{psisdecay-eq}
\|\psi_s(s)\|_{H^{1+\delta_0/2}_x(\R^2)} \lesssim_\mu s^{-\delta_0/4} s^{-1}
\end{equation}
for all $0 < s \leq 1$.  Similarly we have
$$ \| \partial_x^j \partial_t \psi_s \|_{L^2_x(\R^2)} \lesssim_{\mu,j} s^{-\delta_0/2} s^{-(j+2)/2}$$
and hence
\begin{equation}\label{psisdecay-eq2}
\|\partial_t \psi_s(s)\|_{H^{\delta_0/2}_x(\R^2)} \lesssim_\mu s^{-\delta_0/4} s^{-1}.
\end{equation}

Let $\frac{1}{100} \Z^2 \subset \R^2$ be the standard lattice with spacing $\frac{1}{100}$ in the spatial domain $\R^2$.  For each $x_0 \in \frac{1}{100} \Z^2$, let $U_{x_0} \in SO(m,1)$ be a Lorentz rotation that sends $\phi(1,t,x_0)$ to the origin $(1,0) \in \H$.  We write $\phi_{x_0} := U_{x_0}(\phi)$, $e_{x_0} := U_{x_0} \circ e$.  By the preceding discussion, we see that 
$$
\phi_{x_0}(s,t,x_1) = O(1)
$$
whenever $x_1 = x_0 + O(1)$ and $0 \leq t \leq 1$; if we set $\Phi_{x_0} := (\phi_{x_0},e_{x_0})$, this implies that
\begin{equation}\label{phixo}
\Phi_{x_0}(s,t,x_1) = O(1)
\end{equation}
for the same range of $t, x_1$.  Also, from Lemma \ref{strichlem} we have
\begin{equation}\label{psijok}
 |\partial_x^j \Psi_{t,x}(1,t,x)| \lesssim_{\mu,j} 1
 \end{equation}
for $j \geq 0$ and $x \in \R^2$.  From \eqref{psij-def}, \eqref{Adef} (applied to the rotated wave map $\phi_{x_0}$ and its associated rotated frame $e_{x_0}$) we have the differential equations
$$ \partial_{t,x} \phi_{x_0} = e_{x_0}( \psi_{t,x} )$$
and
$$ (\phi_{x_0}^* \partial_{t,x}) (e_{x_0})_a = (A_{t,x})_{ab} e_b.$$
We write these equations schematically using \eqref{phicor} as
\begin{equation}\label{phicor-tx}
\partial_{t,x} \Phi_{x_0} = \bigO( \Phi_{x_0} \Psi_{t,x} ) + \bigO( \Phi_{x_0}^2 \Psi_{t,x} )
\end{equation}
where $\Phi_{x_0} := (\phi_{x_0},e_{x_0})$.
From this, \eqref{phixo}, \eqref{psijok}, and many applications of the Leibniz rule \eqref{leibnitz} and Gronwall's inequality, we thus conclude that
\begin{equation}\label{eok-eq} 
|\partial_x^j \Phi_{x_0}(1,t,x)|, |\partial_x^j \partial_t \Phi_{x_0}(1,t,x)| \lesssim_j 1
\end{equation}
whenever $j \geq 0$ and $|x-x_0| \lesssim 1$.

For each $0 \leq s \leq 1$, define
$$ f(s) := \sup_{x_0 \in \frac{1}{100} \Z^2} \| \eta(\cdot - x_0) \Phi_{x_0}(s,t) \|_{H^{1+\delta_0/2}_x(\R^2)} + \| \eta(\cdot - x_0) \partial_t \Phi_{x_0}(s,t) \|_{H^{\delta_0/2}_x(\R^2)}.$$
From \eqref{eok-eq} we see that
\begin{equation}\label{fs-1}
f(1) \lesssim 1.
\end{equation}

We shall shortly show that
\begin{equation}\label{f-deriv-eq}
\limsup_{ds \to 0} \frac{|f(s+ds)-f(s)|}{|ds|} \lesssim_\mu s^{-\delta_0/4} s^{-1} f(s)
\end{equation}
for all $0 < s \leq 1$.   Note that $s^{-\delta_0/4} s^{-1}$ is integrable on this interval.  From this, Gronwall's inequality and \eqref{fs-1} we see that $f(s) \lesssim 1$ for all $0 < s \leq 1$; taking limits as $s \to 0$ we conclude $f(0) \lesssim 1$, from which \eqref{phit-smo} (and hence \eqref{apriori3}) easily follows.  Thus the only remaining task is to show \eqref{f-deriv-eq}.

By Minkowski's inequality and the fundamental theorem of calculus (and the smoothness of $\phi$) it suffices to show that
$$\| \eta(\cdot - x_0) \partial_s \Phi_{x_0}(s,t) \|_{H^{1+\delta_0/2}_x(\R^2)} + \| \eta(\cdot - x_0) \partial_t \partial_s \Phi_{x_0}(s,t) \|_{H^{\delta_0/2}_x(\R^2)} \lesssim_\mu s^{-\delta_0/4} s^{-1} f(s)$$
for all $0 < s \leq 1$ and $x_0 \in \frac{1}{100} \Z^2$.

Fix $x_0, s$.  By repeating the derivation of \eqref{phicor-tx} we have
\begin{equation}\label{phicor-s} 
\partial_s \Phi_{x_0} = \bigO( \Phi_{x_0} \psi_s ) + \bigO( \Phi_{x_0}^2 \psi_s );
\end{equation}
using the standard Sobolev product estimates
$$ \| uv \|_{H^{1+\delta_0/2}_x(\R^2)} \lesssim \|u\|_{H^{1+\delta_0/2}_x(\R^2)} \|v\|_{L^\infty_x(\R^2)} +
\|u\|_{L^\infty_x(\R^2)} \|v\|_{H^{1+\delta_0/2}_x(\R^2)} $$
(see e.g. \cite[Lemma A.8]{tao:cbms} for a proof) together with \eqref{psisdecay-eq} and \eqref{phixo}, we conclude that
$$ \| \eta(\cdot - x_0) \partial_s \Phi_{x_0}(s,t) \|_{H^{1+\delta_0/2}_x(\R^2)} \lesssim_\mu s^{-\delta_0/4} s^{-1} f(s).$$
Similarly, applying $\partial_t$ to \eqref{phicor-s} and using the above estimates and the product estimate
$$ \| uv \|_{H^{\delta_0/2}_x(\R^2)} \lesssim \|u\|_{H^{1+\delta_0/2}_x(\R^2)} \|v\|_{H^{\delta_0/2}_x(\R^2)}$$
we obtain
$$ \| \eta(\cdot - x_0) \partial_t \partial_s \Phi_{x_0}(s,t) \|_{H^{\delta_0/2}_x(\R^2)} \lesssim_\mu s^{-\delta_0/4} s^{-1}  f(s).$$
The claim \eqref{f-deriv-eq} (and hence \eqref{apriori3a}) follows.

\section{Construction of function spaces}\label{func-sec}

We now prove Theorem \ref{func}.  The spaces $S_k$, $S_{\mu,k}$, $N_k$ we need to employ are quite complicated.  Fortunately, we can take their definition directly from \cite{tao:wavemap2}, with few modifications, so that the verification of Theorem \ref{func} largely consists of citing the relevant results from that paper.  (The spaces in \cite{tao:wavemap2} were in turn based on those in \cite{tataru:wave2}.)

More precisely, we will take $S_k$, $N_k$ to be the spaces introduced in \cite[Theorem 3]{tao:wavemap2}, and constructed in \cite[Section 10]{tao:wavemap2}.  The exact definition of these spaces is complicated, but will not actually be needed for this paper, as we can largely deduce everything we need from \cite[Theorem 3]{tao:wavemap2}.  For instance:
\begin{itemize}
\item  The product estimate \eqref{prod1} follows immediately from \cite[Equation (125)]{tao:wavemap2}.
\item  The estimate \eqref{fl1l2} is precisely \cite[Equation (25)]{tao:wavemap2}.
\item  The estimate \eqref{physical} follows immediately from \cite[Equation (87)]{tao:wavemap2}.
\item  The estimate \eqref{physicaln-eq} is precisely \cite[Equation (26)]{tao:wavemap2}.
\item  The estimate \eqref{energy-est} is precisely \cite[Equation (27)]{tao:wavemap2}.
\item  The estimate \eqref{second-prod} is precisely \cite[Equation (29)]{tao:wavemap2} (and some Littlewood-Paley decomposition).
\item  The estimate \eqref{outgo} is precisely \cite[Equation (35)]{tao:wavemap2}.
\item  The estimate \eqref{lstrich} follows from \cite[Lemma 3.1]{krieger:2d} (or \cite[Lemma 6.7]{krieger:2d}).
\end{itemize} 

We will however point out here one feature of the $S_k$ spaces that was not emphasised in those papers, and which is needed in order to establish the continuity claim in Theorem \ref{func}.  As noted in passing in \cite{tao:wavemap2}, the spaces $S_k$ constructed in that paper enjoy the discrete scale invariance
\begin{equation}\label{disc}
\| \phi^{(j)} \|_{S_{k+j}(2^{-j} I)} = \| \phi \|_{S_k(I \times \R^2)}
\end{equation}
for all intervals $I$, $\phi \in \Sch(I \times \R^2)$, and $k,j \in \Z$, where $\phi^{(j)}(t,x) := \phi(2^j t, 2^j x)$.  It is also possible to establish a continuous analogue 
\begin{equation}\label{cont-eq} \| \phi^{(j)} \|_{S_{k+[j]}(2^{-j} I)} \lesssim \| \phi \|_{S_k(I \times \R^2)}
\end{equation}
of this scale invariance, where $j$ is now taken to be real instead of integer, and $[j]$ is the nearest integer to $j$ (rounding down, say).  Indeed, to establish this, one can use \eqref{disc} to reduce to the case $-1/2 < j \leq 1/2$, in which case the claim follows by modifying the proof of \cite[Lemma 9]{tao:wavemap2}.  We omit the details\footnote{There are several alternate ways to deal with this.  One is to adjust the definition of the spaces in \cite{tao:wavemap2} so that frequency parameters such as $k, j$ vary over the reals rather than the integers.  Another is to abandon any proof of continuity, and settle instead for the weaker property of quasicontinuity as in \cite{tao:wavemap2}; it is not hard to see that quasicontinuity serves as a reasonable substitute for continuity for the purposes of performing continuity arguments.}.

Let us now define the modified $S_k$ norm
$$ \| \phi \|_{\overline{S_k}(I \times \R^2)} := \sup_{j\in \R} 2^{-100|j|} \| \phi^{(j)} \|_{S_{k}(2^{-j} I \times \R^2)}$$
From \eqref{cont-eq}, \eqref{physical} we see that the $\overline{S_k}(I \times \R^2)$ norm is equivalent to the $S_k(I \times \R^2)$ norm up to absolute constants.  Furthermore, observe that $\| \phi^{(j)} \|_{\overline{S_k(2^{-j}(I \times \R^2))}}$ varies continuously in $j \in \R$.  The construction of $S_k(I \times \R^2)$ in \cite{tao:wavemap2} ensures that this norm is monotone in $I$, and so the $\overline{S_k}(I \times \R^2)$ norm is monotone also.

We now define $S_{\mu,k}(I \times \R^2)$ to be the atomic Banach space whose atoms $\phi$ are one of two types:

\begin{itemize}
\item (Small atoms) $\phi \in \Sch(I \times \R^2)$ with $\|\phi\|_{\overline{S_k(I \times \R^2)}} \leq \mu$.
\item (Integrable atoms) $\phi \in \Sch(I \times \R^2)$ with $\|\phi\|_{\overline{S_k(I \times \R^2)}} \leq 1$, and
$\| \partial_{t,x} \phi \|_{L^1_t L^\infty_x(I \times \R^2)} \leq \mu^5$.
\end{itemize}

The estimate \eqref{sksk-star} is now immediate, as is the monotonicity of $S_{\mu,k}(I \times \R^2)$ in $I$, while \eqref{physical-star} follows from \eqref{physical}.  Also, these spaces are clearly invariant under space and time translations.  Now we establish continuity of $\| \phi \|_{S_{\mu,k}(I \times \R^2)}$ in $I$ for $\phi \in \Sch(I \times \R^2)$, using a scaling argument of Tataru (cf. \cite[Lemma 3.2]{krieger:2d}).  By monotonicity and time translation symmetry it will suffice to show that $\| \phi \|_{S_{\mu,k}([-T,T] \times \R^2)}$ is continuous in $T$ for $\phi \in \Sch([-T,T] \times \R^2)$.  We can extend $\phi$ to lie in $\Sch(I \times \R^2)$ for some neighbourhood $I$ of $[-T,T]$.  From the continuity of $\| \phi^{(j)} \|_{\overline{S_k(2^{-j} I)}}$ and $\| \partial_{t,x} \phi^{(j)} \|_{L^1_t L^\infty_x(2^{-j} I)}$ in $j$ (in fact the latter quantity is invariant in $j$) we already know that $\| \phi^{(j)} \|_{S_{\mu,k}([-2^{-j} T,2^{-j} T] \times \R^2)}$ is continuous in $j$, so by the triangle inequality it suffices to show that
$$ \lim_{j \to 0} \| \phi^{(j)} - \phi \|_{S_{\mu,k}([-2^{-j} T,2^{-j} T] \times \R^2)} = 0.$$
But from \eqref{split-energy-est}, \eqref{sksk-star}, we can bound
\begin{align*}
\| \phi^{(j)} - \phi \|_{S_{\mu,k}([-2^{-j} T,2^{-j} T] \times \R^2)}
&\lesssim_\mu \sum_{k'} \chi_{k=k'}^{-\delta_1} [\| P_{k'} \partial_{t,x} (\phi^{(j)}-\phi)(0) \|_{L^2_x(\R^2)} \\
&\quad +
\| P_{k'} \Box( \phi^{(j)} - \phi ) \|_{L^1_t L^2_x([-2^{-j} T, 2^{-j} T] \times \R^2)}].
\end{align*}
Since $\phi$ is Schwartz, it is not difficult to see (by a dominated convergence argument) that the right hand side goes to zero as $j \to 0$, and the claim follows.

The estimate \eqref{lstrich-4} follows from \eqref{lstrich} (for small atoms) and from interpolating \eqref{lstrich} with the $L^1_t L^\infty_x$ estimate (for integrable atoms).  Now we establish the vanishing property \eqref{shrinko}.  By time translation we may take $t_0=0$, and then we may extend $\phi$ slightly so that it lies in $\Sch([-\eps,\eps] \times \R^2)$ for some $\eps > 0$.  It suffices to show that
$$ \limsup_{T \to 0} \| \phi \|_{S_{\mu,k}([-T,T] \times \R^2)} \lesssim \sum_{k'} \chi_{k=k'}^{-\delta_1} \| \partial_{t,x} P_{k'} \phi(t_0) \|_{L^2_x(\R^2)}.$$
By definition of $S_{\mu,k}$ and the triangle inequality, we can estimate
$$ \| \phi \|_{S_{\mu,k}([-T,T] \times \R^2)} \lesssim \sum_{k'} \| P_{k'} \phi \|_{S_{k}([-T,T] \times \R^2)} + \frac{1}{\mu^5} \| \partial_{t,x} P_{k'} \phi \|_{L^1_t L^\infty_x([-T,T] \times \R^2)}.$$
From \eqref{split-energy-est} we have
$$ \| P_{k'} \phi \|_{S_{k}([-T,T] \times \R^2)} \lesssim \chi_{k=k'}^{-\delta_1} [ \| P_{k'} \partial_{t,x} \phi(0) \|_{L^2_x(\R^2)} + \| P_{k'} \Box \phi \|_{L^1_t L^2_x([-T,T] \times \R^2)} ].$$
Since $\phi \in \Sch([-\eps,\eps] \times \R^2)$, an easy application of the dominated and monotone convergence theorems shows that
$$ \lim_{T \to 0} \sum_{k'} \| \partial_{t,x} P_{k'} \phi \|_{L^1_t L^\infty_x([-T,T] \times \R^2)} = 0$$
and
$$ \lim_{T \to 0} \sum_{k'} \chi_{k=k'}^{-\delta_1} \| P_{k'} \Box \phi \|_{L^1_t L^2_x([-T,T] \times \R^2)} = 0,$$
and the claim follows.

To prove \eqref{parreg}, we can use exactly the same argument used to prove \eqref{parreg-abstract} in Section \ref{initial-sec}; note from \cite[Equation (87)]{tao:wavemap2} that the $S_k(\R)$ norm (which the $S_k(I \times \R^2)$ norm is the restriction of) has a form extremely similar to that of \eqref{seminorm-eq}.  We omit the details.

Finally, we verify \eqref{trilinear-improv}, \eqref{trilinear-improv2}.  The $\eps=1$ cases of these inequalities are exactly \cite[Equation (31)]{tao:wavemap2}, so it suffices to establish the $\eps=0$ case.

We begin with \eqref{trilinear-improv}.  By symmetry we may take $\mu_2=\mu$.  We may reduce to the case when $\phi^{(2)}$ is an atom.  If it is a small atom, the claim again follows from \cite[Equation (31)]{tao:wavemap2}.  If instead $\phi^{(2)}$ is an integrable atom, we use \eqref{fl1l2} and H\"older's inequality to estimate
\begin{align*}
 \| P_k( \phi^{(1)} \partial_\alpha \phi^{(2)} \partial^\alpha \phi^{(3)}) \|_{N_k(I \times \R^2)} &\lesssim
 \|\phi^{(1)} \partial_\alpha \phi^{(2)} \partial^\alpha \phi^{(3)}\|_{L^1_t L^2_x(I \times \R^2)}\\
 &\lesssim \| \phi^{(1)} \|_{L^\infty_t L^\infty_x(I \times \R^2)} \| \partial_{t,x} \phi^{(2)} \|_{L^1_t L^\infty_x(I \times \R^2)} \| \partial_{t,x} \phi^{(3)} \|_{L^\infty_t L^2_x(I \times \R^2)},
 \end{align*}
 which is acceptable (with several powers of $\mu$ to spare) from \eqref{lstrich}, \eqref{outgo}, and the definition of an integrable atom.  The proof of \eqref{trilinear-improv2} is similar; if both $\phi^{(2)}$ and $\phi^{(3)}$ are small atoms, one can use \cite[Equation (31)]{tao:wavemap2}, and when there is an integrable atom, one uses H\"older's inequality as above.  The proof of Theorem \ref{func} is complete.

\end{document}